\documentclass[10pt]{article}
\usepackage[hmargin=1in,vmargin=1in]{geometry} 
\usepackage{amsmath,amsthm,textcomp,amssymb,scrextend, bm, graphicx,stmaryrd,euscript,leftindex,mathalfa,mathtools,xcolor}
\usepackage[citestyle=numeric,backend=biber,sorting=nyt,url=false]{biblatex}
\usepackage{listings}
\usepackage{tikz-cd}
\usetikzlibrary{arrows}
\DeclareFontFamily{OT1}{pzc}{} \DeclareFontShape{OT1}{pzc}{m}{it}% 
{<-> s * [1.15] pzcmi7t}{} \DeclareMathAlphabet{\mathpzc}{OT1}{pzc}{m}{it}
\usepackage[T1]{fontenc}
\DeclareMathAlphabet{\dutchcal}{U}{dutchcal}{m}{n}
\SetMathAlphabet{\dutchcal}{bold}{U}{dutchcal}{b}{n}

\usetikzlibrary{decorations.pathreplacing, calc}

\tikzset{
  altstackar/.style={decorate, decoration={show path construction,
    lineto code={
      \path (\tikzinputsegmentfirst); \pgfgetlastxy{\xstart}{\ystart}
      \path (\tikzinputsegmentlast); \pgfgetlastxy{\xend}{\yend}
      \path ($(0,0)!4pt!(\ystart-\yend,\xend-\xstart)$); \pgfgetlastxy{\xperp}{\yperp}
      \foreach \n[evaluate=\n as \k using 0.5*#1-\n+0.5] in {1,...,#1}{
        \ifodd\n{\draw[->, shorten <=4pt, shift={($\k*(\xperp,\yperp)$)}](\xstart+3,\ystart)--(\xend-3,\yend);}
        \else{\draw[<-, shorten >=4pt, shift={($\k*(\xperp,\yperp)$)}](\xstart+3,\ystart)--(\xend-3,\yend);}\fi
      }
    }
  }}, altstackar/.default={1}
}

\renewcommand{\refname}{References \vspace{-0.6\baselineskip}}

\usepackage{bookmark}
\bookmarksetup{
  numbered, 
  open,
}

\definecolor{love}{RGB}{128, 15, 37}

\usepackage{hyperref}
\hypersetup{
  colorlinks=true, %set true if you want colored links
  linktoc=all,     %set to all if you want both sections and sections linked
  linkcolor=love,  %choose some color if you want links to stand out
  citecolor = black,
  urlcolor = love,
  pdfencoding=unicode,
  hypertexnames = true,
  bookmarksdepth=subsection,
}

\usepackage[shortlabels]{enumitem}
\setlist[enumerate]{
     itemsep  = 0.2cm,
       label  = {\upshape (\arabic*)},
         ref  = \arabic*,
    leftmargin  = *
}

\newcommand{\enumref}[2]{(\hyperref[#1.#2]{\ref*{#1}.\ref*{#1.#2}})}

\newcommand\restr[2]{{% we make the whole thing an ordinary symbol
  \left.\kern-\nulldelimiterspace % automatically resize the bar with \right
  #1 % the function
  \littletaller % pretend it's a little taller at normal size
  \right|_{#2} % this is the delimiter
  }}

\newcommand{\littletaller}{\mathchoice{\vphantom{\big|}}{}{}{}}

\lstdefinestyle{myStyle}{
  language=SQL,
  basicstyle=\small\ttfamily,
  moredelim=[is][\underbar]{_}{_},
  keepspaces=true
}

\usepackage{mathrsfs}
\usepackage{fancyhdr}
\usepackage{gensymb}
\usepackage{polynom}
\usepackage[shortlabels]{enumitem}

\newcommand{\Ei}{{\mathbb{E}_{\infty}}}

\newcommand{\mS}{\mathcal S}

\newcommand{\C}{\mathcal C}

\newcommand{\scO}{\mathscr O}

\newcommand{\cB}{\mathcal{B}}

\newcommand{\cD}{\mathcal{D}}
\newcommand{\cE}{\mathcal{E}}

\newcommand{\cT}{\mathcal{T}}
\newcommand{\cU}{\mathcal{U}} 		% Notice this is different

\newcommand{\cX}{\mathcal{X}}
\newcommand{\cY}{\mathcal{Y}}

\newcommand{\ZZ}{\mathbb{Z}}

\newcommand{\sX}{\mathsf{X}}
\newcommand{\sY}{\mathsf{Y}}

\newcommand{\n}{\mathfrak{n}}

\def\ZZ{{\mathbf Z}}

\def\AA{{\mathbb A}}

\usepackage{citationscheme}

\usepackage{amsthm}

\usepackage[noabbrev,nameinlink,capitalize]{cleveref} 
\renewcommand{\qed}{\hfill$\square$}

\renewenvironment{proof}{\begin{addmargin}[1em]{0em}\begin{newproof}}{\end{newproof}\end{addmargin}\qed}

\newtheoremstyle{witharg}
  {} % ABOVESPACE
  {} % BELOWSPACE
  {\itshape} % BODYFONT
  {0pt} % INDENT (empty value is the same as 0pt)
  {\bfseries} % HEADFONT
  {} % HEADPUNCT
  {5pt plus 1pt minus 1pt} % HEADSPACE
  {\thmname{#1} \thmnumber{#2}\thmnote{ (#3)}\emph{.}}
\newcommand{\theoremarg}

\newtheoremstyle{witharg2}
  {} % ABOVESPACE
  {} % BELOWSPACE
  {} % BODYFONT
  {0pt} % INDENT (empty value is the same as 0pt)
  {\bfseries} % HEADFONT
  {} % HEADPUNCT
  {5pt plus 1pt minus 1pt} % HEADSPACE
  {\thmname{#1} \thmnumber{#2}\thmnote{ (#3)}\emph{.}}

\theoremstyle{witharg}

\newtheorem{theoreminner}{Theorem}[subsection]

\NewDocumentEnvironment{theorem}{mo}
 {%
  \renewcommand{\theoremarg}{#1}%
  \IfNoValueTF{#2}{\theoreminner}{\theoreminner[#2]}\ignorespaces
 }
 {\endtheoreminner}

\newtheorem{corollaryinner}[theoreminner]{Corollary}

\NewDocumentEnvironment{corollary}{mo}
 {%
  \renewcommand{\theoremarg}{#1}%
  \IfNoValueTF{#2}{\corollaryinner}{\corollaryinner[#2]}\ignorespaces
 }
 {\endcorollaryinner}
  
\theoremstyle{witharg}
\newtheorem{lemmainner}[theoreminner]{Lemma}

\NewDocumentEnvironment{lemma}{mo}
 {%
  \renewcommand{\theoremarg}{#1}%
  \IfNoValueTF{#2}{\lemmainner}{\lemmainner[#2]}\ignorespaces
 }
 {\endlemmainner}

\newtheorem{propositioninner}[theoreminner]{Proposition}

\NewDocumentEnvironment{proposition}{mo}
 {%
  \renewcommand{\theoremarg}{#1}%
  \IfNoValueTF{#2}{\propositioninner}{\propositioninner[#2]}\ignorespaces
 }
 {\endpropositioninner}

\newtheorem{motivatingquestioninner}[theoreminner]{Motivating Question}
\NewDocumentEnvironment{motivatingquestion}{mo}
 {%
  \renewcommand{\theoremarg}{#1}%
  \IfNoValueTF{#2}{\motivatingquestioninner}{\variantinner[#2]}\ignorespaces
 }
 {\endmotivatingquestioninner}

\theoremstyle{witharg2}

\newtheorem{remarkinner}[theoreminner]{Remark}

\NewDocumentEnvironment{remark}{mo}
 {%
  \renewcommand{\theoremarg}{#1}%
  \IfNoValueTF{#2}{\remarkinner}{\remarkinner[#2]}\ignorespaces
 }
 {\endremarkinner}

\newtheorem{constructioninner}[theoreminner]{Construction}

\NewDocumentEnvironment{construction}{mo}
 {%
  \renewcommand{\theoremarg}{#1}%
  \IfNoValueTF{#2}{\constructioninner}{\constructioninner[#2]}\ignorespaces
 }
 {\endconstructioninner}

\newtheorem{definitioninner}[theoreminner]{Definition}

\NewDocumentEnvironment{definition}{mo}
 {%
  \renewcommand{\theoremarg}{#1}%
  \IfNoValueTF{#2}{\definitioninner}{\definitioninner[#2]}\ignorespaces
 }
 {\enddefinitioninner}

 \newtheorem{roughdefinitioninner}[theoreminner]{Rough Definition}

\NewDocumentEnvironment{roughdefinition}{mo}
 {%
  \renewcommand{\theoremarg}{#1}%
  \IfNoValueTF{#2}{\roughdefinitioninner}{\definitioninner[#2]}\ignorespaces
 }
 {\endroughdefinitioninner}

\newtheorem{notationinner}[theoreminner]{Notation}

\NewDocumentEnvironment{notation}{mo}
 {%
  \renewcommand{\theoremarg}{#1}%
  \IfNoValueTF{#2}{\notationinner}{\notationinner[#2]}\ignorespaces
 }
 {\endnotationinner}

\newtheorem{rpropositioninner}[theoreminner]{Rough Proposition}

\NewDocumentEnvironment{rproposition}{mo}
 {%
  \renewcommand{\theoremarg}{#1}%
  \IfNoValueTF{#2}{\rpropositioninner}{\rpropositioninner[#2]}\ignorespaces
 }
 {\endrpropositioninner}

 \newtheorem{exampleinner}[theoreminner]{Example}

\NewDocumentEnvironment{example}{mo}
 {%
  \renewcommand{\theoremarg}{#1}%
  \IfNoValueTF{#2}{\exampleinner}{\exampleinner[#2]}\ignorespaces
 }
 {\endexampleinner}

 \newtheorem{variantinner}[theoreminner]{Variant}

\NewDocumentEnvironment{variant}{mo}
 {%
  \renewcommand{\theoremarg}{#1}%
  \IfNoValueTF{#2}{\variantinner}{\variantinner[#2]}\ignorespaces
 }
 {\endvariantinner}

\newtheorem{roughpropositioninner}[theoreminner]{Rough Proposition}

\NewDocumentEnvironment{roughproposition}{mo}
 {%
  \renewcommand{\theoremarg}{#1}%
  \IfNoValueTF{#2}{\roughpropositioninner}{\variantinner[#2]}\ignorespaces
 }
 {\endroughpropositioninner}

\newtheorem{potentialobstacleinner}[theoreminner]{Potential Obstacle}
\NewDocumentEnvironment{potentialobstacle}{mo}
 {%
  \renewcommand{\theoremarg}{#1}%
  \IfNoValueTF{#2}{\potentialobstacleinner}{\variantinner[#2]}\ignorespaces
 }
 {\endpotentialobstacleinner}

\newtheorem{philosophyinner}[theoreminner]{Philosophy}
\NewDocumentEnvironment{philosophy}{mo}
 {%
  \renewcommand{\theoremarg}{#1}%
  \IfNoValueTF{#2}{\philosophyinner}{\variantinner[#2]}\ignorespaces
 }
 {\endphilosophyinner}

\newtheorem{desirablepropertiesinner}[theoreminner]{Desirable Properties}
\NewDocumentEnvironment{desirableproperties}{mo}
 {%
  \renewcommand{\theoremarg}{#1}%
  \IfNoValueTF{#2}{\desirablepropertiesinner}{\variantinner[#2]}\ignorespaces
 }
 {\enddesirablepropertiesinner}

\DeclareCiteCommand{\citeq}
  {\usebibmacro{prenote}}
  {\usebibmacro{citeindex}%
   \usebibmacro{cite}}
  {\multicitedelim}
  {\usebibmacro{postnote}}

\DeclareFieldFormat{labelnumberwidth}{#1}
\DeclareFieldFormat{shorthandwidth}{#1}

\title{Geometric Categories and Sheaves on Topoi}
\author{Connor Bass}
\date{May 1, 2026}

\newcommand{\sss}{{\fontfamily{lmr}\selectfont\ss}}

\begin{document}

\maketitle

\begin{abstract}

    We introduce the notion of a geometric $(\infty,1)$-category, the protopyical example of which is an $(\infty,1)$-topos. We study (hyper)sheaves on geometric $(\infty,1)$-categories, proving that these are characterized by a form of \v{C}ech (hyper)descent. As an application we study (hyper)sheaves on $(n,1)$-topoi for all $n\in \ZZ_{\geq 1}\cup \{\infty\}$, and prove that the effective epimorphism topology on an $(n,1)$-topos $\cX$ may be identified as the canonical topology on $\cX$. Moreover, we show that for finite $n\in \ZZ_{\geq 1}$ the study of sheaves on an $(n,1)$-topos $\cX$ is equivalent to the study of $(n-1)$-truncated sheaves on certain $(\infty,1)$-topoi. We then globalize our study to consider sheaves on $\infty\cT op$. In the appendix, we study the behavior of modules under a reflective monoidal $(\infty,1)$-functor $L^\otimes:\C^{\otimes}\rightarrow \cD^{\otimes}$, and study (hyper)sheafification under a change of universe.
    
\end{abstract}

\vspace{-10pt}

\tableofcontents

\section{Introduction}

\subsection{Motivation and Approach}\label{Motivation}

In Higher Topos Theory (\citeq{highertopostheory}) 6.3.5, Lurie introduces the $(\infty,1)$-category $\textnormal{Shv}_{\widehat{\mS}}(\cX)$ of sheaves (of possibly large spaces) on an $(\infty,1)$-topos $\cX$, defined as the full subcategory of $\textnormal{Fun}(\cX^{\textnormal{opp}},\widehat{\mS})$ spanned by those maps $\cX^{\textnormal{opp}}\rightarrow \widehat{\mS}$ which preserve small limits. This is a very well-behaved notion of descent on $\cX$, leading to many well-developed higher geometric theories (foundational references for this include \citeq{lurie2009derivedalgebraicgeometryv} and \citeq{lurie2018sag}). Given the central importance of the $(\infty,1)$-topos (in a larger universe) $\textnormal{Shv}_{\widehat{\mS}}(\cX)$ to higher geometry, it is desirable to better understand the geometry of $\textnormal{Shv}_{\widehat{\mS}}(\cX)$ itself. In particular, consider the following folkloric result (which we prove in \ref{Truncated Objects}):

\begin{proposition}{}\label{n-truncated preserves small limits: Intro}

    (Compare to \ref{n-truncated preserves small limits: Appendix}.) Let $n\in \ZZ_{\geq 1}$, let $\cY$ be an $(n,1)$-topos, and let $\cD$ be an $(n,1)$-category. Then, a map $F:\cY^{\textnormal{opp}}\rightarrow \cD$ is a sheaf for the effective epimorphism topology on $\cY$ if and only if $F$ preserves small limits.

\end{proposition}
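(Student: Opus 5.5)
We prove the two implications separately, using the standard presentation of an $(n,1)$-topos. Fix $\cY$ and write $\cY \simeq \tau_{\leq n-1}\cX$, where $\cX$ is an $(\infty,1)$-topos. Concretely, $\cY$ is a left exact localization of $\mathcal{P}_{\leq n-1}(\C)$ for some small $\C$. For the effective epimorphism topology, a sieve on $U$ is covering when it contains a small family $\{U_i\to U\}$ with $\coprod_i U_i\to U$ an effective epimorphism. Since $\cD$ is an $(n,1)$-category, a presheaf $F:\cY^{\textnormal{opp}}\to\cD$ is a sheaf exactly when, for every covering sieve $R\subseteq \cY_{/U}$, the map $F(U)\to \lim_{(V\to U)\in R} F(V)$ is an equivalence. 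Equivalently, $F$ satisfies descent along Čech nerves of effective epimorphisms, together with sending small coproducts to products. Because $\cD$ is $n$-truncated, these Čech limits may be computed over the truncation $\Delta_{\leq n}$ (more precisely, $\Delta_{\leq n+1}$ suffices).

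\textbf{Limits preserved implies sheaf.} Suppose $F$ preserves small limits, i.e.\ it sends small colimits in $\cY$ to limits in $\cD$. First, a sieve $R$ generated by a family with effective epimorphism $\coprod U_i\to U$ satisfies $\operatorname{colim}_{V\in R} V \simeq U$ in $\cY$. The reason is that in a topos, the colimit of a covering sieve, regarded as a subobject of $U$, is its image, and the image of an effective epimorphism is all of $U$. Applying $F$ then gives $F(U)\simeq \lim_{R} F(V)$. The only input here is that colimits in $\cY$ of sieves compute the generated subobject, which is the $(n,1)$-topos version of the statement for $(\infty,1)$-topoi: Giraud's axioms and effective groupoids in $\cY$.

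\textbf{Sheaf implies limits preserved.} Suppose $F$ is a sheaf. It suffices to show that $F$ takes small coproducts to products and pushouts (or geometric realizations) to pullbacks (or totalizations).
\begin{itemize}
\item \emph{Coproducts.} The family $\{U_i\to\coprod U_i\}$ is covering, and coproducts in a topos are disjoint, so the sheaf condition gives $F(\coprod U_i)\simeq\prod F(U_i)$. (The empty family covers the initial object, so $F(\emptyset)\simeq *$.)
\item \emph{General colimits.} Any small colimit can be written as the geometric realization of a simplicial object of coproducts. Since $\cD$ is $n$-truncated, it is enough to handle the $n$-skeletal part: colimits of $\Delta^{\textnormal{opp}}_{\leq n+1}$-diagrams of the form $X_\bullet\to X$.
\end{itemize}

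\textbf{Main obstacle.} Descent along Čech nerves does not by itself give descent along arbitrary simplicial resolutions; in general it gives only Čech descent, not hyperdescent. The $n$-truncation is what saves us. In an $(n,1)$-topos every object is $(n-1)$-truncated, and for a map $p:\coprod X_0\to X$ the relevant hypercovers are controlled by finitely many Čech stages. My plan is to proceed by induction on the truncation level, following \citeq{highertopostheory} 6.5.3. For a simplicial object $X_\bullet$ with colimit $X$, compare it with the Čech nerve $\check{C}(X_0\to X)_\bullet$, which is effective and so preserved by $F$. Then show the comparison map $X_\bullet\to\check{C}_\bullet$ is levelwise a colimit of diagrams with smaller indexing, on which $F$ already preserves limits by induction.

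Alternatively, and more cleanly: both the class of colimit diagrams sent to limits and the sheaf condition are stable under passing to $\cX=\textnormal{Shv}(\C)$. So I would reduce to the statement that $\tau_{\leq n-1}$-objects of $\mathcal{P}(\C)$ are generated under colimits by Čech-effective ones. I would use the equivalence $\textnormal{Shv}_{\cD}(\cY)\simeq \textnormal{Fun}^{\lim}(\cY^{\textnormal{opp}},\cD)$ via Yoneda into $\textnormal{Fun}(\cD,\mS)^{\textnormal{opp}}$, which reduces to $\cD=\tau_{\leq n-1}\mS$.

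I expect this hyperdescent-versus-Čech step to be the hardest part of the proof.
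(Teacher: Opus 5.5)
\textbf{Verdict: there is a genuine gap in the direction ``sheaf $\Rightarrow$ preserves small limits''}, which is where the content of the statement lies. You correctly identify the crux (\v{C}ech descent versus hyperdescent), but you do not supply an argument for it.

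\emph{Your two sketches do not close the gap.} The induction sketch (compare $X_\bullet$ with the \v{C}ech nerve of $X_0\to X$ and show the comparison is ``levelwise a colimit of diagrams with smaller indexing'') is not a defined construction. Your ``cleaner'' alternative invokes $\textnormal{Shv}_\cD(\cY)\simeq\textnormal{Fun}^{\ell im}(\cY^{\textnormal{opp}},\cD)$, which is the statement being proved. The Yoneda reduction to $\cD=\tau_{\leq n-1}\widehat{\mS}$ is legitimate (the paper does it too), but it only relocates the problem.

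\emph{Truncation has to enter the descent step itself.} You use the truncatedness of $\cD$ only to shorten $\Delta$ to $\Delta_{\leq n+1}$, and that cannot suffice. Take $\cY=\textnormal{Set}$ and any non-discrete space $A$. The functor $S\mapsto\textnormal{Map}(S,A)$ preserves products and satisfies \v{C}ech descent along surjections, so it is a $\C an(\textnormal{Set})$-sheaf by \ref{preservation of trunactions 1}. Yet it sends the coequalizer $*$ of $\textnormal{id},\textnormal{id}:*\rightrightarrows *$ to $A$, whereas the equalizer of $\textnormal{id}_A,\textnormal{id}_A$ is the free loop space $LA$. Every object of $\textnormal{Set}$ is already $0$-truncated, so truncatedness of the objects of $\cY$ does not help either. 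Products plus \v{C}ech descent therefore do not control even finite colimits. The truncatedness of the values must be used inside the descent argument, and your sketch never says how.

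\emph{The missing idea is hypercompleteness plus a comparison lemma.}
\begin{itemize}
\item After reducing to $\cD=\tau_{\leq n-1}\widehat{\mS}$, a $\C an(\cY)$-sheaf $F$ is an $(n-1)$-truncated object of the large $(\infty,1)$-topos $\widehat{\textnormal{Shv}}_{\C an(\cY)}(\cY)$. It is therefore hypercomplete (HTT 6.5.2.9), and this is exactly where truncation enters.
\item Write $\cY\simeq\textnormal{Shv}^J_{n-1}(\C)$ for a small site $(\C,J)$ with $J$ subcanonical. The paper shows $(\C,J)$ is a basis for $(\cY,\C an(\cY))$ (\ref{Basis for Eff Epi n Topoi}).
\item Aoki's comparison lemma for hypersheaves (\ref{Hypersheaf Criteria}) then shows $F$ is right Kan extended from the $J$-sheaf $F|_{\C^{\textnormal{opp}}}$.
\item Such right Kan extensions are restrictions of $\textnormal{Hom}(-,G)$ for $G$ in the large sheaf topos, so they preserve small limits (\ref{univ prperty of n-1 sheaves}). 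That step uses that small sheaves sit inside large sheaves compatibly with small colimits and truncations.
\end{itemize}
One never analyzes arbitrary colimits in $\cY$ directly.

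\emph{A smaller point on the other direction.} Your sieve argument applies $F$ to a colimit indexed by the large category $R\subseteq\cY_{/U}$, and preservation of small limits does not cover that. The fix is the \v{C}ech reformulation you state at the outset: products plus descent along the \v{C}ech nerve of $\coprod_iU_i\to U$. This is exactly how the paper argues, via \ref{preservation of trunactions 1}. Note, however, that this reformulation is itself the paper's \v{C}ech descent theorem (\ref{Cech Descent Theorem}), not a mere rephrasing.
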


\noindent This suggests a meaningful relationship between $\textnormal{Shv}_{\widehat{\mS}}(\cX)$ and sheaves $\widehat{\textnormal{Shv}}_{\C an(\cX)}(\cX)$ for the effective epimorophism topology $\C an(\cX)$ on $\cX$. The na\"{i}ve guess of an identification $\textnormal{Shv}_{\widehat{\mS}}(\cX)=\widehat{\textnormal{Shv}}_{\C an(\cX)}(\cX)$ is decidedly \emph{false} (see \ref{counterexample of sheaf for subcanonical topology not preserving limits} for an explicit family of counterexamples), the discrepancy being that in the $n=\infty$ case, the map $L_\cX:\textnormal{Fun}(\cX^{\textnormal{opp}},\widehat{\mS})\rightarrow \textnormal{Shv}_{\widehat{\mS}}(\cX)$ left adjoint to the inclusion need not be a topological localization (even in our larger universe). Instead, we are appealing to the following more subtle notion of descent in higher category theory: 

\begin{proposition}{}[HTT 6.3.5.28, HTT 6.5.2.19]\label{Sheaves on Topoi From HTT}

    There exists a unique Grothendieck topology $J$ on $\cX$ such that $L_\cX$ factors as $\textnormal{Fun}(\cX^{\textnormal{opp}},\widehat{\mS})\xrightarrow{T}\widehat{\textnormal{Shv}}_J(\cX)\xrightarrow{C}\textnormal{Shv}_{\widehat{\mS}}(\cX)$, where $T$ is the $J$-sheafification functor, and $C$ is a cotopological localization (in our larger universe).

\end{proposition}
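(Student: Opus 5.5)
Since $\textnormal{Shv}_{\widehat{\mS}}(\cX)$ is an accessible left exact localization of the presheaf $(\infty,1)$-topos $\textnormal{Fun}(\cX^{\textnormal{opp}},\widehat{\mS})$ (in the larger universe), I would reduce the statement to the general factorization result HTT 6.5.2.19, after first justifying that $L_\cX$ is left exact and accessible. The plan has three parts: verify the hypotheses on $L_\cX$, produce $J$ and the factorization, and prove uniqueness of $J$.

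\textbf{Step 1 (hypotheses on $L_\cX$).} This is the content of HTT 6.3.5.28. The inclusion $\textnormal{Shv}_{\widehat{\mS}}(\cX)\subseteq \textnormal{Fun}(\cX^{\textnormal{opp}},\widehat{\mS})$ consists of limit-preserving functors. By presentability of $\cX$ and the adjoint functor theorem, these are exactly the representable-type functors $\textnormal{Map}(-,Y)$ for $Y$ in a large topos, that is, $\textnormal{Shv}_{\widehat{\mS}}(\cX)\simeq \textnormal{Fun}^{\textnormal{R}}$-type data. Concretely, I would write $\cX$ as an accessible left exact localization of $\mathcal{P}(\C)$ with $\C$ small. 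Then limit-preserving functors $\cX^{\textnormal{opp}}\to\widehat{\mS}$ are identified with $\widehat{\mS}$-valued sheaves on $\C$ for the corresponding topology-plus-cotopology data. The localization $L_\cX$ is then given by left Kan extension along $\cX\to\textnormal{Fun}(\cX^{\textnormal{opp}},\widehat{\mS})$ composed with the sheafification on the big side. Left exactness follows because $\cX$ is left exact and colimits in $\widehat{\mS}$ are universal, so the left Kan extension of the (left exact) Yoneda embedding is left exact. Accessibility relative to the larger universe is automatic, since every small-presentable object is $\widehat{\kappa}$-compact.

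\textbf{Step 2 (factorization).} Apply HTT 6.5.2.19 to the left exact localization $L_\cX$ of the presheaf topos on the (now small, relative to the enlarged universe) category $\cX$. For any left exact localization of a presheaf topos, take $J$ to be the topology whose covering sieves $S\subseteq j(X)$ are exactly those monomorphisms that $L_\cX$ inverts. Monomorphisms form a set of generators for the topological part of the localization. The strongly saturated class generated by $L_\cX$-inverted monomorphisms is topological, so it yields $T$. The remaining localization $C$ inverts no monomorphisms, since any inverted mono was already inverted by $T$. Hence $C$ is cotopological by HTT 6.5.2.19/6.5.2.17. I would also check that $J$ satisfies the Grothendieck topology axioms (stability under pullback, locality), which follows from $L_\cX$ being left exact.

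\textbf{Step 3 (uniqueness), the main subtlety.} Suppose $J'$ also gives such a factorization $C'\circ T'$. A cotopological localization inverts no monomorphism. Since $T'$ preserves monomorphisms (it is left exact), a sieve $S\to j(X)$ is inverted by $L_\cX$ iff $T'(S)\to T'(j(X))$ is an equivalence iff $S$ is $J'$-covering. The last equivalence uses the standard fact that $J'$-sheafification inverts exactly the $J'$-covering sieves among sieves. Thus $J'=J$. The main obstacle is the careful universe bookkeeping: $\cX$ is large, so "Grothendieck topology on $\cX$" and sheafification must be interpreted in the enlarged universe where $\cX$ is small. I would handle this by fixing a universe containing $\cX$ as small at the outset and citing HTT's results verbatim there.
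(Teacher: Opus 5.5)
Your proposal is correct and follows the paper's route. The paper gives no proof beyond citing HTT 6.3.5.28, which says that $L_\cX$ is a left exact localization of the presheaf topos $\textnormal{Fun}(\cX^{\textnormal{opp}},\widehat{\mS})$ in the larger universe, and HTT 6.5.2.19, which gives the unique topological/cotopological factorization; it implicitly uses the standard bijection between topological localizations of a presheaf topos and Grothendieck topologies, and your Step 3 is a clean direct proof that $J$ is unique. One caution: the ad hoc justifications in Step 1 do not stand on their own (e.g.\ universality of colimits in $\widehat{\mS}$ does not make the left Kan extension of the Yoneda embedding left exact, since that criterion requires the target $\textnormal{Shv}_{\widehat{\mS}}(\cX)$ to already be an $(\infty,1)$-topos), so the citation of HTT 6.3.5.28 has to carry that step.
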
  

Our first main result is that comparing $\textnormal{Shv}_{\widehat{\mS}}(\cX)$ to $\widehat{\textnormal{Shv}}_{\C an(\cX)}(\cX)$ is not all that unreasonable, in that the topology $J$ on $\cX$ identified in \ref{Sheaves on Topoi From HTT} may always be identified as $\C an (\cX)$. We prove this as a part of the following theorem, which identifies several of the basic geometric structures of $\textnormal{Shv}_{\widehat{\mS}}(\cX)$:

\begin{theorem}{}\label{Sheaves on Topoi Result Intro}

     (Compare to \ref{Sheaves on Topoi Result}.) Let $\cX$ be an $(\infty,1)$-topos, and write $\C an(\cX)$ for the effective epimorphism topology on $\cX$. Then:

     \begin{itemize}

         \item [$(1)$] The inclusion $\textnormal{Shv}_{\widehat{\mS}}(\cX)\hookrightarrow \textnormal{Fun}(\cX^{\textnormal{opp}},\widehat{\mS})$ factors through $\widehat{\textnormal{Shv}}_{\C an(\cX)}(\cX)\subset \textnormal{Fun}(\cX^{\textnormal{opp}},\widehat{\mS})$. Moreover, the left adjoint $C:\widehat{\textnormal{Shv}}_{\C an(\cX)}(\cX)\rightarrow \textnormal{Shv}_{\widehat{\mS}}(\cX)$ induced by HTT 6.3.5.28 is a cotopological localization (in our larger universe).

         \item [$(2)$] For all $n\in \ZZ_{\geq 0}$, pulling-back along the localization $\cX\rightarrow \tau_{\leq n}\cX$ yields a categorical equivalence $\widehat{\textnormal{Shv}}_{\tau_{\leq n}\widehat{\mS}}(\tau_{\leq n}\cX)\xrightarrow{\sim} \tau_{\leq n-1}\widehat{\textnormal{Shv}}_{\C an(\cX)}(\cX)$, where $\widehat{\textnormal{Shv}}_{\tau_{\leq n}\widehat{\mS}}(\tau_{\leq n}\cX)$ is the full subcategory of $\textnormal{Fun}(\tau_{\leq n}\cX^{\textnormal{opp}},\tau_{\leq n}\widehat{\mS})$ spanned by those maps $F:\tau_{\leq n}\cX^{\textnormal{opp}}\rightarrow \tau_{\leq n}\widehat{\mS}$ which preserve small limits.

         \item [$(3)$] Pulling-back along $\cX\rightarrow \cX^{\textnormal{hyp}}$ yields a categorical equivalence $\textnormal{Shv}_{\widehat{\mS}}(\cX^{\textnormal{hyp}})\xrightarrow{\sim} \widehat{\textnormal{Shv}}_{\C an(\cX)}(\cX)^{\textnormal{hyp}}$.
         
     \end{itemize}
    
\end{theorem}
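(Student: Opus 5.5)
The plan is to prove (1) first and then obtain (2) and (3) from it, using the paper's characterization of (hyper)sheaves on geometric $(\infty,1)$-categories by \v{C}ech (hyper)descent.

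\textbf{Part (1), the inclusion.} Let $F:\cX^{\textnormal{opp}}\rightarrow\widehat{\mS}$ preserve small limits. A $\C an(\cX)$-covering sieve on $X$ is generated by a family $\{U_i\rightarrow X\}$ for which $U=\coprod_i U_i\rightarrow X$ is an effective epimorphism. So $X\simeq|\check{C}(U/X)_\bullet|$ in $\cX$, and each term of the \v{C}ech nerve is a coproduct of iterated fiber products of the $U_i$, since coproducts in $\cX$ are universal. Because $F$ turns colimits into limits and coproducts into products, $F$ satisfies \v{C}ech descent for every $\C an(\cX)$-covering family. The \v{C}ech descent characterization of sheaves on geometric categories then gives $F\in\widehat{\textnormal{Shv}}_{\C an(\cX)}(\cX)$.

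\textbf{Part (1), cotopologicality of $C$.} By \ref{Sheaves on Topoi From HTT}, $L_\cX=C_J\circ T_J$, and $J$ may be taken to be the topology whose covering sieves are the monomorphisms $R\hookrightarrow h_X$ inverted by $L_\cX$, i.e.\ the largest topological part of $L_\cX$. It therefore suffices to show $J=\C an(\cX)$.
\begin{itemize}
\item[$(\subseteq)$] The inclusion just proved shows every $\C an(\cX)$-covering sieve is inverted by $L_\cX$, so $\C an(\cX)\subseteq J$.
\item[$(\supseteq)$] Since $L_\cX$ is the colimit-preserving extension of the embedding $\cX\hookrightarrow\textnormal{Shv}_{\widehat{\mS}}(\cX)$, it sends a sieve $R$ to $\textnormal{colim}_{(V\rightarrow X)\in R}V$ computed in $\cX$. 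If $R\in J$, this colimit maps isomorphically onto $X$. Hence $\coprod_{V\in R}V\rightarrow X$ is an effective epimorphism, as a colimit is covered by the coproduct of its terms (a size issue here is handled by passing to a small generating subfamily). So $R\in\C an(\cX)$.
\end{itemize}
Uniqueness in \ref{Sheaves on Topoi From HTT} then identifies the induced $C$ with the cotopological localization.

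\textbf{Part (2).} Take $F$ a $\C an(\cX)$-sheaf with $(n-1)$-truncated values. First I will show $F$ inverts the unit maps $X\rightarrow\tau_{\leq n}X$, so that it factors through $\tau_{\leq n}\cX$.
\begin{itemize}
\item For $(n-1)$-truncated targets, the totalization of a cosimplicial object depends only on a finite skeleton, so \v{C}ech descent only involves the $n$-truncated parts of \v{C}ech nerves.
\item $\tau_{\leq n}$ preserves effective epimorphisms and the finite limits that occur in these skeleta, up to the relevant truncation level.
\end{itemize}
Given the factorization, \v{C}ech descent along effective epimorphisms in $\tau_{\leq n}\cX$ is equivalent to the sheaf condition for the effective epimorphism topology on the $(n+1,1)$-topos $\tau_{\leq n}\cX$. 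Applying \ref{n-truncated preserves small limits: Intro} to $\tau_{\leq n}\cX$, this is equivalent to preserving small limits. It remains to check that restriction and extension along $\cX\rightarrow\tau_{\leq n}\cX$ are mutually inverse on these subcategories. The bookkeeping of truncation levels ($n$ versus $n-1$, and the target $\tau_{\leq n}\widehat{\mS}$) must be matched carefully against \ref{n-truncated preserves small limits: Appendix}. I expect this to be the main obstacle, in particular showing that \v{C}ech descent for truncated coefficients really factors through $\tau_{\leq n}$.

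\textbf{Part (3).} I will argue in the same way, using hyperdescent. Hypercomplete $\C an(\cX)$-sheaves are those satisfying descent for effective-epimorphism hypercovers. Such a sheaf inverts $\infty$-connective morphisms of $\cX$, since these are inverted by hypercovers, so it factors through $\cX^{\textnormal{hyp}}$. On $\cX^{\textnormal{hyp}}$, every hypercover is a colimit diagram, so hyperdescent together with (1) applied to $\cX^{\textnormal{hyp}}$ is equivalent to preserving small limits. Conversely, a limit-preserving functor on $\cX^{\textnormal{hyp}}$, pulled back to $\cX$, satisfies hyperdescent. The two constructions are inverse because $\cX\rightarrow\cX^{\textnormal{hyp}}$ is a localization.
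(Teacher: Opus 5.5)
\textbf{Part (1).} Your argument is correct and takes a genuinely different route from the paper.

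The paper proves that $C$ is cotopological directly. It shows that each comparison map $\textnormal{colim}(h\circ F)\rightarrow h_{\textnormal{colim} F}$ in $\widehat{\textnormal{Shv}}_{\C an(\cX)}(\cX)$ is $\infty$-connective. To do so it tests against truncated objects of the slice over $h_X$, identifies that slice with $\widehat{\textnormal{Shv}}_{\C an(\cX_{/X})}(\cX_{/X})$, and invokes \ref{n-truncated preserves small limits: Appendix}, which rests on Aoki's basis theorem.

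You instead use the uniqueness in \ref{Sheaves on Topoi From HTT}. This reduces the problem to showing that the sieves inverted by $L_\cX$ are exactly the $\C an(\cX)$-sieves. You get the hard inclusion because the Yoneda embedding $\cX\rightarrow\textnormal{Shv}_{\widehat{\mS}}(\cX)$ is fully faithful and preserves pullbacks and small colimits, so it reflects effective epimorphisms.

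The size reduction works as you indicate. Take $R$ inverted by $L_\cX$, and let $R'\subseteq R$ be the subsieve generated by the maps in $R$ out of a small set of generators. Then $R'$ restricts to a $\C an(\cX)$-covering sieve on every $V\rightarrow X$ in $R$. Since you already proved $\C an(\cX)\subseteq J$, $R'$ is still inverted by $L_\cX$, and $R'$ is the image of a small coproduct of representables. This bypasses the truncated-sheaf results entirely.

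\textbf{Part (3).} Your sketch is essentially the paper's argument.

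\textbf{Part (2).} The gap is at the step you yourself flag: showing that a truncated $\C an(\cX)$-sheaf $F$ inverts $X\rightarrow\tau_{\leq n}X$. The finite-skeleton remark does not give this, for two reasons.
\begin{itemize}
\item Descent along the effective epimorphism $p:X\rightarrow\tau_{\leq n}X$ involves the diagonal $X\rightarrow X\times_{\tau_{\leq n}X}X$. This map is only $n$-connective, and an $n$-truncated $F$ has no a priori reason to invert it.
\item Comparing $F$ on the \v{C}ech nerves of $U\rightarrow X$ and of $\tau_{\leq n}U\rightarrow\tau_{\leq n}X$ already presupposes the factorization you want. Moreover, $\tau_{\leq n}$ does not preserve the fibre products in these nerves.
\end{itemize}

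The missing idea is that the Yoneda embedding commutes with truncation. By \v{C}ech descent, $h:\cX\rightarrow\widehat{\textnormal{Shv}}_{\C an(\cX)}(\cX)$ preserves finite limits, small coproducts and effective epimorphisms, so it is a morphism of $\infty$-pretopoi. SAG A.6.7.1 then gives $h_{\tau_{\leq n}X}\simeq\tau_{\leq n}h_X$. Hence for $n$-truncated $F$ you get $F(X)\simeq\textnormal{Hom}(\tau_{\leq n}h_X,F)\simeq F(\tau_{\leq n}X)$. This is exactly \ref{Partial. Truncated Objects of Shv Can X}, which feeds part (2) of \ref{n-truncated preserves small limits: Appendix}; the paper reads off (2) from that.

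With your (1) in hand, you could even drop the appeal to \ref{n-truncated preserves small limits: Intro}:
\begin{itemize}
\item Every equivalence for a cotopological localization is $\infty$-connective, and truncated objects are hypercomplete. So truncated $\C an(\cX)$-sheaves already preserve small limits.
\item The same pretopos argument applies to $\cX\rightarrow\textnormal{Shv}_{\widehat{\mS}}(\cX)$.
\item The restriction to $\tau_{\leq n}\cX$ preserves limits, because colimits in $\tau_{\leq n}\cX$ are truncations of colimits in $\cX$.
\end{itemize}

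On the index bookkeeping: limit-preserving functors $(\tau_{\leq n}\cX)^{\textnormal{opp}}\rightarrow\tau_{\leq n}\widehat{\mS}$ correspond to $n$-truncated sheaves. So the $\tau_{\leq n-1}$ in the statement appears to be off by one and should be read as $\tau_{\leq n}$.
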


 \noindent We note that in light of \ref{Sheaves on Topoi Result Intro} $(1)$, the respective classifications of the truncated and hypercomplete objects of $\widehat{\textnormal{Shv}}_{\C an(\cX)}(\cX)$ given by \ref{Sheaves on Topoi Result Intro} $(2)$ and \ref{Sheaves on Topoi Result Intro} $(3)$ also classify the analogous data for $\textnormal{Shv}_{\widehat{\mS}}(\cX)$. Theorem \ref{Sheaves on Topoi Result Intro} has several immediate consequences, including:  

\begin{corollary}{}\label{hypersheaves on hypercomplete topoi}

    Let $\cX$ be a hypercomplete $(\infty,1)$-topos, and let $\cD$ be an $(\infty,1)$-category. Then, a map $F:\cX^{\textnormal{opp}}\rightarrow \cD$ is a hypersheaf for the effective epimorphism topology on $\cD$ precisely when $F$ preserves small limits. 
    
\end{corollary}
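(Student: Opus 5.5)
The plan is to reduce the statement to the universal case $\cD=\widehat{\mS}$, and then deduce it from \ref{Sheaves on Topoi Result Intro} $(3)$ together with the hypothesis $\cX=\cX^{\textnormal{hyp}}$. (The statement says ``effective epimorphism topology on $\cD$''; this should read ``on $\cX$''.)

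\textbf{Step 1: reduction to $\cD=\widehat{\mS}$.} Both conditions on $F$ can be tested after composing with corepresentable functors. A presheaf $F:\cX^{\textnormal{opp}}\rightarrow \cD$ is a $\C an(\cX)$-hypersheaf iff for every $d\in\cD$ the composite $\textnormal{Map}_\cD(d,F(-)):\cX^{\textnormal{opp}}\rightarrow \widehat{\mS}$ is a hypersheaf. This holds because hyperdescent is a limit condition: $F(U)\simeq\lim F(U_\bullet)$ over hypercovers, by the \v{C}ech hyperdescent characterization proved in the body of the paper. Likewise, $F$ preserves small limits iff each $\textnormal{Map}_\cD(d,F(-))$ does, since the corepresentables $\textnormal{Map}_\cD(d,-)$ jointly preserve and detect limits. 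If $\cD$ is not locally small, I will pass to a larger universe; this is the role of the change-of-universe appendix.

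\textbf{Step 2: the case $\cD=\widehat{\mS}$.} First suppose $F$ preserves small limits. By \ref{Sheaves on Topoi Result Intro} $(1)$, $F$ lies in $\widehat{\textnormal{Shv}}_{\C an(\cX)}(\cX)$. I then need $F$ to be hypercomplete there. Here I use \ref{Sheaves on Topoi Result Intro} $(3)$: since $\cX\rightarrow\cX^{\textnormal{hyp}}$ is an equivalence, pullback identifies $\textnormal{Shv}_{\widehat{\mS}}(\cX)=\textnormal{Shv}_{\widehat{\mS}}(\cX^{\textnormal{hyp}})$ with $\widehat{\textnormal{Shv}}_{\C an(\cX)}(\cX)^{\textnormal{hyp}}$. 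One must check that this equivalence is compatible with the inclusions into $\textnormal{Fun}(\cX^{\textnormal{opp}},\widehat{\mS})$. Granting that, the full subcategory of limit-preserving functors and the full subcategory of hypersheaves coincide as subcategories of presheaves. This gives both directions at once.

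\textbf{Main obstacle.} The hard step is the compatibility check in Step 2. \ref{Sheaves on Topoi Result Intro} $(3)$ is stated as an abstract equivalence induced by pullback along $\cX\rightarrow\cX^{\textnormal{hyp}}$. When that map is the identity, I must verify that the equivalence is literally the identity on underlying presheaves, and not merely some equivalence of categories.

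\textbf{Fallback argument.} If that verification is awkward, I will argue directly. For a limit-preserving $F$ and a hypercover $U_\bullet\rightarrow U$, hypercompleteness of $\cX$ gives $|U_\bullet|\simeq U$. Since $F$ sends colimits in $\cX$ to limits, $F(U)\simeq\lim F(U_\bullet)$. For the converse, given a hypersheaf $F$, \ref{Sheaves on Topoi Result Intro} $(1)$ gives $C(F)\in\textnormal{Shv}_{\widehat{\mS}}(\cX)$. The unit $C(F)\rightarrow F$ is $\infty$-connective, because cotopological localizations invert only $\infty$-connective maps. Since both objects are hypercomplete, the map is an equivalence. Showing $C(F)$ is hypercomplete uses the first direction.
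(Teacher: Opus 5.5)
Your main route is essentially the paper's: reduce to $\widehat{\mS}$ via corepresentables (this is built into Definition \ref{Hypersheaf Definition}), then apply Theorem \ref{Sheaves on Topoi Result Intro} $(3)$ with $\cX=\cX^{\textnormal{hyp}}$. The ``compatibility check'' you flag as the main obstacle is automatic, since that equivalence is precomposition with $(L^{\textnormal{hyp}})^{\textnormal{opp}}\simeq\textnormal{id}$ and both full subcategories of presheaves are replete; your fallback is also sound, except that the unit of $C\dashv\textnormal{incl}$ is $F\rightarrow C(F)$, not $C(F)\rightarrow F$.
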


\begin{corollary}{}\label{The Canonical Topology on a Topos Intro}

    (Compare to \ref{The Canonical Topology on a Topos}.) Let $n\in \ZZ_{\geq 1}\cup \{\infty\}$, and let $\cX$ be an $(n,1)$-topos. Then, there exists a unique finest Grothendieck topology $J$ on $\cX$ satisfying the property that every representable functor $\cX^{\textnormal{opp}}\rightarrow \mS$ is a $J$-sheaf. Moreover, this is the effective epimorphism topology on $\cX$. 
    
\end{corollary}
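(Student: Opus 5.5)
We must prove \ref{The Canonical Topology on a Topos Intro}: for an $(n,1)$-topos $\cX$ the canonical topology exists and equals $\C an(\cX)$. The plan has two halves. First, $\C an(\cX)$ is subcanonical. Second, every topology $J$ for which all representables $\cX(-,Y)$ are $J$-sheaves is contained in $\C an(\cX)$. Together these show that $\C an(\cX)$ is the unique finest such topology. Uniqueness is automatic once a finest element exists, since the collection of subcanonical topologies is partially ordered by inclusion.

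\emph{Subcanonicity.} For finite $n$, a representable $\cX(-,Y):\cX^{\textnormal{opp}}\rightarrow \tau_{\leq n-1}\mS$ preserves small limits, and $\tau_{\leq n-1}\mS$ is an $(n,1)$-category. So \ref{n-truncated preserves small limits: Intro}, applied with $\cY=\cX$ and $\cD=\tau_{\leq n-1}\mS$, shows it is a $\C an(\cX)$-sheaf; composing with the inclusion into $\mS$ preserves limits, so the sheaf condition is unaffected. For $n=\infty$, I would use \ref{Sheaves on Topoi Result Intro} $(1)$. Representables $\cX(-,Y)$, viewed in $\widehat{\mS}$, preserve small limits, so they lie in $\textnormal{Shv}_{\widehat{\mS}}(\cX)$. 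By $(1)$, this category is contained in $\widehat{\textnormal{Shv}}_{\C an(\cX)}(\cX)$. Being a sheaf with values in $\widehat{\mS}$ and having small values implies being a sheaf valued in $\mS$, because $\mS\hookrightarrow\widehat{\mS}$ preserves and reflects small limits.

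\emph{Maximality.} Let $J$ be subcanonical and let $S$ be a $J$-covering sieve on $X$, with image subobject $U\hookrightarrow X$ (the union of the images of the maps in $S$, which exists since $\cX$ is an $(n,1)$-topos). I need to show that $U\to X$ is an equivalence, i.e.\ that $S$ contains an effective epimorphic family. The key step uses subobject classification via representables. For finite $n$, take $Y=\Omega$, the subobject classifier in the underlying $1$-topos of $0$-truncated objects; for $n=\infty$, use $Y=\tau_{\leq -1}$-classifying data, or more directly the representable $\cX(-,X)$ together with the pushout $X\amalg_U X$.

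Concretely, I would use the cokernel-pair argument. The two maps $X\rightrightarrows X\amalg_U X$ agree after restriction along every map in $S$, since each such map factors through $U$. They therefore define a single element of $\lim_{S}\cX(-,X\amalg_U X)$. By the $J$-sheaf condition for the representable $\cX(-,X\amalg_UX)$, this element lifts uniquely to $X$, so the two maps are homotopic. This shows that $U\to X$ is an epimorphism in the sense that its cokernel pair is trivial. Because $U\to X$ is a monomorphism, it is then an equivalence. In an $(n,1)$-topos, $(-1)$-truncated morphisms that are epimorphisms in this sense are equivalences: the codiagonal argument $X\simeq X\amalg_U X$ forces $U\simeq X$, using descent and the fact that pushouts along monomorphisms are pullback-stable. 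Hence $S$ is a $\C an(\cX)$-covering sieve, so $J\subseteq \C an(\cX)$.

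\emph{Main obstacle.} I expect the hardest step to be this last one: deducing from homotopy of the two maps $X\rightrightarrows X\amalg_UX$, rather than a mere equality in a $1$-category, that the monomorphism $U\to X$ is invertible. The first approach I would try is to pull back along $X\amalg_UX$ using descent (van Kampen for pushouts in a topos). Pulling back one copy of $X$ along the other yields $U$. A homotopy identifying the two inclusions then identifies this pullback with $X$, giving $U\simeq X$.
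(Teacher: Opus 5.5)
Your proposal is correct. The subcanonicity half agrees with what the paper uses: representables preserve small limits, hence are $\C an(\cX)$-sheaves. For $n=\infty$ you could cite the \v{C}ech description \ref{preservation of trunactions 1} directly rather than passing through \ref{Sheaves on Topoi Result Intro}~(1).

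Your maximality half takes a genuinely different route. The paper proves maximality as \ref{The Canonical Topology on a Topos 1}, roughly as follows.
\begin{itemize}
\item If every representable is local for the sieve monomorphism $m:S\rightarrow h_X$, then $m$ is inverted by the left adjoint of the Yoneda embedding.
\item By HTT 6.3.5.28, $m$ is therefore inverted by the localization $L$ onto limit-preserving $\widehat{\mS}$-valued presheaves.
\item $L$ factors as $\C an(\cX)$-sheafification $T$ followed by the cotopological localization $C$ of \ref{Sheaves on Topoi Result}~(1). A cotopological localization cannot invert a non-invertible monomorphism, so the monomorphism $T(m)$ is already an equivalence. Hence the sieve is $\C an(\cX)$-covering (HTT 6.2.2.16).
\item The finite-$n$ case is reduced to this via \ref{Sheaves on Topoi Result}~(2).
\end{itemize}
That route is short given the paper's machinery and presents the result as a corollary of its structure theorem. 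Your cokernel-pair argument is elementary, uniform in $n$, and independent of \ref{Sheaves on Topoi Result}. It also proves more: a sieve $S$ on $X$ is $\C an(\cX)$-covering as soon as the single representable $\cX(-,X\amalg_UX)$ is separated for $S$, meaning restriction is injective on $\pi_0$. It uses only smallness of $\textnormal{Sub}(X)$ and van Kampen descent for pushouts along monomorphisms.

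Two steps should be phrased more carefully.

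First, agreement of $i_1,i_2$ on each member of $S$ separately does not give equal points of an $\infty$-categorical limit. Argue instead that $S$ lies in the image of the fully faithful functor $\cX_{/U}\rightarrow\cX_{/X}$. Then restriction factors as $\cX(X,P)\rightarrow\cX(U,P)\rightarrow\lim_S\cX(-,P)$ with $P=X\amalg_UX$, and $i_1|_U\simeq i_2|_U$ there.

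Second, a homotopy $i_1\simeq i_2$ does not in general make the codiagonal $P\rightarrow X$ invertible. For example, the map from the circle to a point in $\mS$ has homotopic coprojections into its cokernel pair, the $2$-sphere. So drop the ``cokernel pair is trivial'' and ``$X\simeq X\amalg_UX$'' phrasing and rely on the pullback argument of your last paragraph. Record both outputs of descent for the span $X\leftarrow U\rightarrow X$:
\begin{itemize}
\item $X\times_{i_1,P,i_2}X\simeq U$;
\item $X\times_{i_1,P,i_1}X\simeq X$, so $i_1$ is a monomorphism.
\end{itemize}
The homotopy identifies these two pullbacks over $X$, which forces $U\rightarrow X$ to be an equivalence. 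For finite $n$, compute the pushout in an $\infty$-topos $\cY$ with $\tau_{\leq n-1}\cY\simeq\cX$. It is already $(n-1)$-truncated, so these facts hold in $\cX$.

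Finally, your hesitation about the subobject-classifier variant for $n=\infty$ is unnecessary. Every $(n,1)$-topos, including $n=\infty$, has a $0$-truncated subobject classifier $\Omega$ with $\cX(-,\Omega)\simeq\textnormal{Sub}(-)$. Since this presheaf is set-valued, that variant avoids the coherence issue entirely.
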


\noindent Corollary \ref{hypersheaves on hypercomplete topoi} may be seen as a higher analogue of \ref{Sheaves on Topoi From HTT}, and \ref{The Canonical Topology on a Topos Intro} is a verification of the folkloric claim that the effective epimorphism topology on $\cX$ is in fact the canonical topology on $\cX$. Unlike in the classical case for $(1,1)$-topoi, claim $(1)$ of \ref{Sheaves on Topoi Result Intro} is not inherently obvious. Instead, our proof of \ref{Sheaves on Topoi Result Intro} (1) is carried out by first introducing and developing a more general theory of \emph{geometric $(\infty,1)$-categories}, which will occupy section \ref{Geometric Categories and Geometric Sites}: 

\begin{definition}{}\label{kappa geometric intro definition}

    Fix a regular cardinal $\kappa$, and let $\C$ be an $(\infty,1)$-category with $\kappa$-small coproducts and pullbacks. We call $\C$ a \emph{$\kappa$-geometric} $(\infty,1)$-category if any of the following equivalent conditions hold:

    \begin{itemize}

        \item [$(1)$] Said $\kappa$-small coproducts in $\C$ are disjoint and universal. 

        \item [$(2)$] Changing universe if necessary so that $\C$ is small, the reflective localization functor $L_\kappa:\textnormal{Fun}(\C^{\textnormal{opp}},\mS)\rightarrow \textnormal{Fun}^{\times,\kappa}(\C^{\textnormal{opp}},\mS)$ is left exact, where $\textnormal{Fun}^{\times,\kappa}(\C^{\textnormal{opp}},\mS)$ denotes the full subcategory of $\textnormal{Fun}(\C^{\textnormal{opp}},\mS)$ spanned by maps $F:\C^{\textnormal{opp}}\rightarrow \mS$ which preserve $\kappa$-small products. 

        \item [$(3)$] There exists a subcanonical Grothendieck topology $J$ on $\C$ such that every $J$-sheaf $F:\C^{\textnormal{opp}}\rightarrow \mS$ preserves $\kappa$-small products. 

    \end{itemize}

    \noindent When $\kappa=\Omega$ is the strongly inaccessible cardinal bounding our implicitly chosen universe, we will refer an $\Omega$-geometric $(\infty,1)$-category $\C$ as \emph{geometric}, and when $\kappa=\aleph_0$ is the smallest infinite cardinal, we will refer to an $\aleph_0$-geometric $(\infty,1)$-category $\C$ as \emph{finitary}. 
    
\end{definition}

\noindent We prove the equivalence of conditions $(1)$, $(2)$ and $(3)$ of \ref{kappa geometric intro definition} (see \ref{Converse to Left Exact Localization for kappa Topology} and \ref{kappa cats. through sheaves}). There are many important examples of geometric $(\infty,1)$-categories:

\begin{example}{}\label{Geom Cats. Remark Intro}

   The following are several examples of $\kappa$-geometric $(\infty,1)$-categories: 

    \begin{itemize}
        \item [$(1)$] Any $(n,1)$-topos $\cX$ is geometric, for all $n\in \ZZ_{\geq 1}\cup \{\infty\}$. 
    
        \item [$(2)$] Both $\textnormal{CAlg}^{\textnormal{opp}}$ and $\textnormal{CAlg}^{\textnormal{cn},\textnormal{opp}}$ are finitary $(\infty,1)$-categories.

        \item [$(3)$] The $(\infty,1)$-category of $(\infty,1)$-topoi, $\infty\cT op$, is geometric. 

        \item [$(4)$] The $(\infty,1)$-category of spectral Deligne Mumford stacks $\textnormal{SpDM}$ is geometric. 

        \item [$(5)$] The $(1,1)$-category of schemes and the $(1,1)$-category of topological spaces are both geometric.

    \end{itemize}

\end{example}

\noindent Each of the $(\infty,1)$-categories in \ref{Geom Cats. Remark Intro} behaves \emph{geometrically} in some inexact sense, and \ref{kappa geometric intro definition} is meant to make this sentiment precise. To this point, each satisfies the hypotheses of the following, which may be generalized in the case that $\C$ is an $(n,1)$-topos (see \ref{Generalization For Groups Internal to N Topoi}):

\begin{proposition}{}\label{Intro of Internal Vs External Group Objects Theorem}

    (Compare to \ref{Internal Vs External Group Objects Theorem}.) Fix a (possibly large) regular cardinal $\kappa$, and let $\C$ be a $\kappa$-geometric $(\infty,1)$-category with final object $\bm{1}_\C\in \C$. Let $G$ be a $\kappa$-small discrete group, and write $G_\C$ for the group object in $\C$ given by the canonical group object structure on $\coprod_{g\in G}\bm{1}_\C\in \C$. Then, there exists a categorical equivalence $\Phi:\textnormal{Fun}(\textnormal{B}G,\C)\xrightarrow{\sim} \textnormal{LMon}_{G_\C}(\C)$ satisfying the property that the composition $\textnormal{Fun}(\textnormal{B}G,\C)\xrightarrow{\Phi} \textnormal{LMon}_{G_\C}(\C)\xrightarrow{F}\C$ is naturally isomorphic to $\textnormal{ev}_*:\textnormal{Fun}(\textnormal{B}G,\C)\rightarrow \C$, where $F:\textnormal{LMon}_{G_\C}(\C)\rightarrow \C$ is the canonical forgetful functor and $*\in \textnormal{B}G$ is the lone vertex.

\end{proposition}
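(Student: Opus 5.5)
We build $\Phi$ by hand from the coproduct structure of $G_\C$ and then prove it is an equivalence by reduction to presheaves. Since $\C$ is $\kappa$-geometric, $\kappa$-small coproducts in $\C$ are disjoint and universal (\ref{kappa geometric intro definition} (1)). Universality gives, for every $X\in \C$, a natural equivalence
\[
G_\C\times X\simeq \textstyle\coprod_{g\in G} X .
\]
It follows that a left action $G_\C\times X\rightarrow X$ amounts to a $G$-indexed family of maps $X\rightarrow X$. The task is to make this coherent.

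\textbf{Step 1: reduce to presheaves.} Change universe so that $\C$ is small, and consider the Yoneda embedding $j:\C\hookrightarrow \textnormal{Fun}^{\times,\kappa}(\C^{\textnormal{opp}},\mS)=:\mathcal{P}^\kappa(\C)$.
\begin{itemize}
\item[$(a)$] By \ref{kappa geometric intro definition} (2), $L_\kappa$ is a left exact localization, so $\mathcal{P}^\kappa(\C)$ is an $(\infty,1)$-topos.
\item[$(b)$] The functor $j$ preserves finite limits and $\kappa$-small coproducts. Coproducts are preserved because a $\kappa$-small coproduct of representables already preserves $\kappa$-small products, by disjointness and universality.
\item[$(c)$] Hence $j(G_\C)$ is the constant sheaf $\underline{G}=\coprod_{g}\bm 1$ on the topos $\mathcal{P}^\kappa(\C)$.
\end{itemize}

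\textbf{Step 2: the topos case.} In an $(\infty,1)$-topos $\cY$ we have the standard equivalence
\[
\textnormal{LMon}_{\underline G}(\cY)\simeq \cY_{/\textnormal{B}\underline G}\simeq \textnormal{Fun}(\textnormal{B}G,\cY).
\]
The first equivalence is via quotients (Giraud's axioms, group actions correspond to principal bundles). The second uses $\textnormal{B}\underline G\simeq \textnormal{colim}_{\textnormal{B}G}\bm 1$, the descent equivalence $\cY_{/\textnormal{colim}_{\textnormal{B}G}\bm 1}\simeq \lim_{\textnormal{B}G}\cY_{/\bm 1}=\textnormal{Fun}(\textnormal{B}G,\cY)$, and the fact that $\underline G\rightarrow \bm 1$ is a $\kappa$-small coproduct. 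Under this composite equivalence the underlying object is $\textnormal{ev}_*$: the fiber of $X\rightarrow \textnormal{B}\underline G$ over the base point recovers the acted-on object.

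\textbf{Step 3: restrict back to $\C$.} Both sides of Step 2 are compatible with $j$, because
\[
\textnormal{LMon}_{j G_\C}(\mathcal P^\kappa(\C))\times_{\mathcal P^\kappa(\C)}\C\simeq \textnormal{LMon}_{G_\C}(\C)
\]
and likewise $\textnormal{Fun}(\textnormal{B}G,\C)$ is the full subcategory of $\textnormal{Fun}(\textnormal{B}G,\mathcal P^\kappa(\C))$ whose value at $*$ lies in $\C$. The first equivalence holds since $j$ is fully faithful and preserves the finite products used to define $\textnormal{LMon}$. The Step 2 equivalence commutes with the forgetful functors to the underlying object. It therefore restricts to an equivalence $\Phi$ on these full subcategories with $F\circ\Phi\simeq \textnormal{ev}_*$.

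The main obstacle is Step 2, in particular identifying $\textnormal{LMon}_{\underline G}(\cY)$ with $\textnormal{Fun}(\textnormal{B}G,\cY)$ compatibly with the forgetful functors. The intermediate slice $\cY_{/\textnormal{B}\underline G}$ handles this. The other subtle point is the claim in Step 1 that $j$ preserves $\kappa$-small coproducts, which is exactly where disjointness and universality in $\C$ enter.
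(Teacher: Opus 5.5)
Your argument is correct, but it takes a different route from the paper.

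\textbf{The paper's route.} The paper works in the full presheaf topos $\textnormal{Fun}(\C^{\textnormal{opp}},\mS)$. It embeds $\textnormal{Fun}(\textnormal{B}G,\C)$ into $\textnormal{Fun}(\C^{\textnormal{opp}},\mS)_{/\underline{\textnormal{B}G}}$ by pointwise straightening against the constant presheaf $\underline{\textnormal{B}G}$, and applies HA 5.2.6.28 there to land in $\textnormal{LMon}_{\underline{G}}(\textnormal{Fun}(\C^{\textnormal{opp}},\mS))$. In presheaves the constant $\underline{G}$ is not $h_{G_\C}$, because Yoneda into presheaves does not preserve coproducts. So the paper must then apply $L_\kappa$ and invoke the appendix result \ref{Localization of Modules} to identify the essential image with $\textnormal{LMon}_{h_{G_\C}}$ of representable $\kappa$-sheaves.

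\textbf{Your route.} You go straight to $\textnormal{Shv}_\kappa(\C)\simeq\textnormal{Fun}^{\times,\kappa}(\C^{\textnormal{opp}},\mS)$. This is itself a topos, and in it $j(G_\C)$ already is the constant group $\pi^*G$. You then apply the standard topos equivalences $\textnormal{Fun}(\textnormal{B}G,\cY)\simeq\cY_{/\pi^*\textnormal{B}G}\simeq\textnormal{LMon}_{\pi^*G}(\cY)$ inside it. After that, only restriction along a fully faithful, finite-product-preserving functor is needed.

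\textbf{Comparison.} Both routes use the same geometric input, namely left exactness of $L_\kappa$. The paper needs it so that $L_\kappa$ is compatible with the cartesian monoidal structure; you need it so that $\mathcal{P}^\kappa(\C)$ is a topos. Your version dispenses with the module-localization appendix entirely and is shorter. The price is that you need descent in $\mathcal{P}^\kappa(\C)$ for $\pi^*\textnormal{B}G\simeq\textnormal{colim}_{\textnormal{B}G}\bm{1}$. You also need the basepoint compatibility, which plays the role of the paper's check that $\gamma$ lies over $\textnormal{Fun}(\C^{\textnormal{opp}},\mS)$. Your one-line fibre-over-the-basepoint justification of this is adequate.

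\textbf{Correction to Step 1(b).} The sentence ``a $\kappa$-small coproduct of representables already preserves $\kappa$-small products'' is false for the presheaf coproduct. Evaluating $\coprod_{i\in I}h^\C_{U_i}$ at an initial object of $\C$ gives the set $I$, not a point. Moreover, disjointness and universality play no role in this step. That $j$ preserves $\kappa$-small coproducts into $\textnormal{Fun}^{\times,\kappa}(\C^{\textnormal{opp}},\mS)$ is formal: for any $F$ there,
$\textnormal{Hom}(h^\C_{\coprod_i U_i},F)\simeq F(\coprod_i U_i)\simeq\prod_i F(U_i)\simeq\prod_i\textnormal{Hom}(h^\C_{U_i},F)$.
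Disjointness and universality actually enter in two other places. First, in Step 1(a), where they give left exactness of $L_\kappa$ and hence that you are in a topos. Second, universality is what makes $\coprod_{g\in G}\bm{1}_\C$ a group object at all.
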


\noindent Our main result for geometric $(\infty,1)$-categories is the following generalization of Lurie's (hyper)sheaf classification result for finitary $(\infty,1)$-categories given by \citeq{lurie2018sag} A.3.3.1 and \citeq{lurie2018sag} A.5.7.2:

\begin{theorem}{}\label{Cech Descent Intro}

  (Compare to \ref{Cech Descent Theorem}.) Fix a (possibly large) regular cardinal $\kappa$, and let $\C$ be a $\kappa$-geometric $(\infty,1)$-category. Let $\tau$ be a Grothendieck pretopology on $\C$ satisfying the following conditions: 

    \begin{itemize}
    
        \item [\textnormal{(i)}] For all $\kappa$-small collections of objects $\{U_i\}_{i\in I}$ in $\C$, the family $\{U_i\rightarrow \coprod_{j\in I}U_j\}_{i\in I}$ is a $\tau$-covering. 

        \item [\textnormal{(ii)}] All $\tau$-coverings $\{V_k\rightarrow X\}_{k\in K}$ satisfy the property that $K$ is $\kappa$-small.

    \end{itemize}

    \noindent Then, for all $(\infty,1)$-categories $\cD$, a map $F:\C^{\textnormal{opp}}\rightarrow \cD$ is a $\tau$-sheaf precisely when $F$ satisfies the following conditions: 

    \begin{itemize}

        \item [$(1)$] $F$ preserves $\kappa$-small products.

        \item [$(2)$] For all $\tau$-coverings $\{V_k\rightarrow X\}_{k\in K}$, the composition $\Delta_{\textnormal{inj}}^\triangleleft\xrightarrow{U}\C^{\textnormal{opp}}\xrightarrow{F}\cD$ is a limit diagram, where $U$ is the (opposite of the) \v{C}ech nerve of $\coprod_{k\in K}V_k\rightarrow X$. 

    \end{itemize}

    \noindent Moreover, $F$ is a $\tau$-hypersheaf precisely when $F$ satisfies (1) along with the following condition:

    \begin{itemize}
    
        \item [$(2^\prime)$] For all $\tau$-hypercoverings $X_\bullet:(\Delta^{\textnormal{opp}}_{\textnormal{inj}})^\triangleright\rightarrow \C$ (as defined in \ref{Hypercoverings Definition}), the composition $\Delta_{\textnormal{inj}}^\triangleleft\xrightarrow{X_\bullet^{\textnormal{opp}}}\C^{\textnormal{opp}}\xrightarrow{F}\cD$ is a limit diagram.

    \end{itemize}
    
\end{theorem}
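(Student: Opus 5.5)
We follow the strategy of \citeq{lurie2018sag} A.3.3.1, using $\kappa$-geometricity in place of finitary extensivity. First we reduce to $\cD=\mS$. A map $F:\C^{\textnormal{opp}}\rightarrow\cD$ is a $\tau$-sheaf (resp.\ hypersheaf) exactly when $\textnormal{Map}_\cD(D,F(-))$ is one for every $D\in\cD$. Conditions (1), (2) and $(2^\prime)$ are also detected by corepresentables. After enlarging the universe so that $\C$ is small, we may therefore assume $\cD=\mS$ and work inside $\textnormal{Fun}(\C^{\textnormal{opp}},\mS)$.

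The proof then has three steps.
\begin{itemize}
\item[(a)] \emph{$\tau$-sheaves satisfy (1).} By hypothesis (i) the family $\{U_i\rightarrow\coprod_j U_j\}$ is a covering. Because coproducts are disjoint and universal, the pairwise fiber products $U_i\times_{\coprod}U_j$ are $U_i$ when $i=j$ and initial otherwise. The empty family covers the initial object, so any sheaf sends $\emptyset$ to a point. The sieve condition for this covering therefore reduces to $F(\coprod U_i)\simeq\prod F(U_i)$.
\item[(b)] \emph{Reduction to a single morphism.} For a covering $\{V_k\rightarrow X\}$ with $K$ $\kappa$-small (hypothesis (ii)), the sieve generated by the $V_k$ and the sieve generated by $\coprod_k V_k\rightarrow X$ have the same sheafification. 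This holds because $\coprod_k V_k\rightarrow X$ is itself a covering: compose with the covering of $\coprod_k V_k$ from (i) and use the pretopology axioms. The Čech nerve of $\coprod_k V_k\rightarrow X$ has $n$-th term $\coprod_{(k_0,\dots,k_n)}V_{k_0}\times_X\cdots\times_X V_{k_n}$ by universality of coproducts, and each such term is a $\kappa$-small coproduct. Consequently, for $F$ satisfying (1), the limit in (2) agrees with the limit of the Čech nerve of the family in $\textnormal{Fun}(\C^{\textnormal{opp}},\mS)$. That limit in turn computes $F$ on the sieve, since a sieve is the colimit of the Čech nerve of the generating family (HTT 6.2.3). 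It follows that (1)+(2) is equivalent to the sheaf condition. The restriction to $\Delta_{\textnormal{inj}}$ is harmless, since $\Delta_{\textnormal{inj}}^{\textnormal{opp}}\rightarrow\Delta^{\textnormal{opp}}$ is cofinal.
\item[(c)] \emph{Hypersheaves.} We use condition (2) of \ref{kappa geometric intro definition}: the localization $L_\kappa$ onto $\textnormal{Fun}^{\times,\kappa}$ is left exact. Since $\tau$-sheaves are $\kappa$-product-preserving by (a), sheafification factors through $L_\kappa$. Working in the topos $\textnormal{Fun}^{\times,\kappa}(\C^{\textnormal{opp}},\mS)$, the representable $y(\coprod_k V_k)$ is the $\kappa$-coproduct of the $y(V_k)$. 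A $\tau$-hypercovering in the sense of \ref{Hypercoverings Definition} therefore maps to a semisimplicial object whose matching maps become effective epimorphisms after $\tau$-sheafification. The hypersheaf condition is the condition of being local with respect to $\infty$-connective morphisms, and by HTT 6.5.3 such morphisms are generated by hypercovers.
\end{itemize}

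We expect the main obstacle to be the converse direction in (c): showing that descent along the hypercoverings of \ref{Hypercoverings Definition}, which are built from single morphisms and coproducts, suffices for full hypercompleteness. We would proceed as in SAG A.5.7.2. Given an arbitrary hypercover in the sheaf topos, we refine it levelwise by covers coming from $\tau$, using that each level is covered by a $\kappa$-small coproduct of representables, to obtain a $\tau$-hypercovering in $\C$. We then invoke the fact that any $\infty$-connective map is detected by such representable hypercovers (Dugger–Hollander–Isaksen style). Left exactness of $L_\kappa$ is what guarantees that the fiber products formed during this refinement stay $\kappa$-small coproducts of representables.
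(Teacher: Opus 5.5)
Your reduction to $\cD=\mS$, steps (a) and (b), and the ``only if'' half of the hypersheaf statement are sound and essentially the paper's argument. The paper phrases your termwise comparison of \v{C}ech nerves as left exactness of $L_\kappa$: this identifies $L_\kappa$ of the \v{C}ech nerve of $\coprod_k h^\C_{V_k}\rightarrow h^\C_X$ with $L_\kappa$ of the Yoneda image of the \v{C}ech nerve of $\coprod_k V_k\rightarrow X$. For the forward hyper direction, what is actually used is that $L_\tau\circ h^\C_\bullet$ preserves matching objects and that the colimit of a hypercovering is $\infty$-connective (SAG A.5.3.3). It is not that $\infty$-connective maps are ``generated by'' hypercovers; that is the converse.

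The genuine gap is the ``if'' half of the hypersheaf statement, which you only sketch, and the sketch does not close. Suppose you refine a hypercover $U_\bullet$ of $h^\C_X$ to a $\tau$-hypercovering $V_\bullet\rightarrow U_\bullet$. Then $(2^\prime)$ gives $F(X)\simeq\lim F(V_\bullet)$. That only shows $F(X)\rightarrow\lim\textnormal{Hom}(U_\bullet,F)$ has a left inverse. To get an equivalence you would need $F$ to be local for $|V_\bullet|\rightarrow|U_\bullet|$, which is the kind of statement you are trying to prove. Appealing to a Dugger--Hollander--Isaksen-type detection result does not discharge this. Such results concern hypercovers whose levels are arbitrary coproducts of representables, with matching maps that are only locally covering. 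Passing from those to $\tau$-hypercoverings in $\C$ is exactly the same refinement problem.

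The missing argument, which is how the paper proceeds, has three steps.
\begin{itemize}
\item[(a)] Hypersheaves satisfy $(2^\prime)$ by the forward direction, and $(2^\prime)$ is stable under limits in $\textnormal{Shv}_\tau(\C)$. So it suffices to show that every $\infty$-connective map $\eta:F\rightarrow G$ between $\tau$-sheaves satisfying $(2^\prime)$ is an equivalence; then apply this to $F\rightarrow L^{\textnormal{hyp}}F$.
\item[(b)] Pass to the diagonals $F\rightarrow F\times_GF$, which are still $\infty$-connective and still satisfy $(2^\prime)$. By induction on connectivity, it suffices to show each $\eta_X$ is surjective on $\pi_0$.
\item[(c)] Given $u\in G(X)$, build inductively, via the matching-object description of \ref{Constructing Semisimplicial Objects}, a $\tau$-hypercovering $X_\bullet$ of $X$. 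Write $X^s_\bullet$ for its underlying semisimplicial object; the construction comes with a map $h^\C_\bullet\circ X^s_\bullet\rightarrow\underline{F}$ lying over $u$.
\begin{itemize}
\item[(c1)] At each stage, HTT 6.5.3.5 shows that the comparison map from $\underline{\eta}$ to its coskeleton is an effective epimorphism in $\textnormal{Shv}_\tau(\C)_{/G}$.
\item[(c2)] Refine the resulting cover of $M_{n+1}(X)$ by a $\tau$-covering $\{W_k\rightarrow M_{n+1}(X)\}$.
\item[(c3)] The fibre product $F\times_{F^\prime}h^\C_{M_{n+1}(X)}$ is a $\kappa$-sheaf, where $F^\prime$ is the coskeletal term. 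This lets you assemble the lifts into a single map out of $h^\C_{\coprod_k W_k}$.
\end{itemize}
Then $(2^\prime)$ for $F$ and $G$ identifies $\eta_X$ with $\lim F(X_\bullet)\rightarrow\lim G(X_\bullet)$, so $u$ lies in the image.
\end{itemize}
This is where $\kappa$-geometricity enters in the hyper case: it turns a covering family into a single object of $\C$. Keeping fibre products as coproducts of representables, which you cite, is not the issue, since matching objects exist in $\C$ because it has pullbacks.
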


\noindent After invoking \ref{Cech Descent Intro} to prove \ref{Sheaves on Topoi Result Intro}, we dedicate \ref{Global Sheaves Section} to proving the following global analogue of \ref{Sheaves on Topoi Result Intro}:

\begin{proposition}{}\label{Global Sheaves on Topoi Prop Intro}

   (Compare to \ref{Global Sheaves on Topoi Prop}.) Let $F:\infty\cT op^{\textnormal{opp}}\rightarrow \widehat{\mS}$ be a sheaf (see  \ref{Relative Global Sheaves on Topoi Defn}), and write $\widehat{\textnormal{Shv}}(\infty\cT op_{/F})$ for the full subcategory of $\textnormal{Fun}((\infty\cT op_{/F})^{\textnormal{opp}},\widehat{\mS})$ spanned by the sheaves $\infty\cT op^{\textnormal{opp}}\rightarrow \widehat{\mS}$. Writing $\widehat{\textnormal{Shv}}_{\textnormal{\'{e}t}_{/F}}(\infty\cT op_{/F})$ for the full subcategory spanned by the sheaves for the \'{e}tale topology on $\infty\cT op_{/F}$ (in the sense of \ref{Etale Slices of Sheaf Topoi} (1)), we then have the following:

    \begin{itemize}
    
        \item [$(1)$] The inclusion $\widehat{\textnormal{Shv}}(\infty\cT op_{/F})\hookrightarrow \textnormal{Fun}((\infty\cT op_{/F})^{\textnormal{opp}},\widehat{\mS})$ factors as the composition $\widehat{\textnormal{Shv}}(\infty\cT op_{/F})\hookrightarrow \widehat{\textnormal{Shv}}_{\textnormal{\'{e}t}_{/F}}(\infty\cT op_{/F})\hookrightarrow \textnormal{Fun}((\infty\cT op_{/F})^{\textnormal{opp}},\widehat{\mS})$. 

        \item [$(2)$] The inclusion $\widehat{\textnormal{Shv}}(\infty\cT op_{/F})\hookrightarrow \widehat{\textnormal{Shv}}_{\textnormal{\'{e}t}_{/F}}(\infty\cT op_{/F})$ induced by $(1)$ admits a cotopologoical (in our larger universe) left adjoint $C:\widehat{\textnormal{Shv}}_{\textnormal{\'{e}t}_{/F}}(\infty\cT op_{/F})\rightarrow \widehat{\textnormal{Shv}}(\infty\cT op_{/F})$. 
        
    \end{itemize}

    \noindent Moreover, the inclusion $\widehat{\textnormal{Shv}}(\infty\cT op_{/F})\hookrightarrow \textnormal{Fun}((\infty\cT op_{/F})^{\textnormal{opp}},\widehat{\mS})$ identifies $\widehat{\textnormal{Shv}}(\infty\cT op_{/F})^{\textnormal{hyp}}$ with the full subcategory of $\textnormal{Fun}((\infty\cT op_{/F})^{\textnormal{opp}},\widehat{\mS})$ spanned by those maps $G:(\infty\cT op_{/F})^{\textnormal{opp}}\rightarrow \widehat{\mS}$ which satisfy the following two properties:

    \begin{itemize}
        \item [$(\textnormal{i})$] $G$ preserves small products. 

        \item [$(\textnormal{ii})$] For all augmented semisimplicial objects $X_\bullet:(\Delta_{\textnormal{inj}}^{\textnormal{opp}})^\triangleright\rightarrow \infty\cT op_{/F}$ satisfying the property that for each $n\in \ZZ_{\geq 0}$ the $n$th covering map $X_n\rightarrow M_n(X)$ is both \'{e}tale and an effective epimorphism of $(\infty,1)$-topoi, the composition $\Delta_{\textnormal{inj}}^\triangleleft\xrightarrow{X^{\textnormal{opp}}_\bullet} (\infty\cT op_{/F})^{\textnormal{opp}}\xrightarrow{G} \widehat{\mS}$ is a limit diagram.

    \end{itemize}
    
\end{proposition}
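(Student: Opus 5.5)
The plan is to reduce the statement to the topos-theoretic results already announced, working slice-wise over $F$. First I would recall the definition of a sheaf $F:\infty\cT op^{\textnormal{opp}}\rightarrow \widehat{\mS}$ (\ref{Relative Global Sheaves on Topoi Defn}). For each object $(\cY, y)$ of $\infty\cT op_{/F}$ with $y\in F(\cY)$, the \'{e}tale morphisms over $\cY$ are equivalent to $\cY$ itself via $U\mapsto \cY_{/U}$. Hence a functor $G:(\infty\cT op_{/F})^{\textnormal{opp}}\rightarrow \widehat{\mS}$ restricts along $\cY\rightarrow \infty\cT op_{/F}$, $U\mapsto (\cY_{/U}, y|_U)$, to a functor $G_{\cY}:\cY^{\textnormal{opp}}\rightarrow \widehat{\mS}$. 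I expect the definition of $\widehat{\textnormal{Shv}}(\infty\cT op_{/F})$ to amount to each $G_\cY$ lying in $\textnormal{Shv}_{\widehat{\mS}}(\cY)$, i.e.\ preserving small limits, together with compatibility along arbitrary geometric morphisms. Similarly, the \'{e}tale topology of \ref{Etale Slices of Sheaf Topoi} (1) should have as coverings the families of \'{e}tale maps $\{\cY_{/U_i}\rightarrow \cY\}$ with $\coprod U_i\rightarrow \bm{1}$ an effective epimorphism. So a $G$ is an \'{e}t-sheaf iff each $G_\cY$ is a $\C an(\cY)$-sheaf.

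With this dictionary, (1) follows pointwise from \ref{Sheaves on Topoi Result Intro} (1): a limit-preserving $G_\cY$ is a $\C an(\cY)$-sheaf. For (2), I would build $C$ slice-wise. For an \'{e}t-sheaf $G$, apply the cotopological localization $C_\cY:\widehat{\textnormal{Shv}}_{\C an(\cY)}(\cY)\rightarrow \textnormal{Shv}_{\widehat{\mS}}(\cY)$ to each $G_\cY$. One must then check that these assemble into a functor on $\infty\cT op_{/F}$. Concretely, for a geometric morphism $f:\cY'\rightarrow\cY$ the induced comparison $f^*$ should commute with $C$. This is the step I expect to be the main obstacle. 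My first attempt would be to characterize $C_\cY$ intrinsically, as inverting the $\infty$-connective morphisms ($C$ is cotopological, hence a left exact localization whose kernel consists of $\infty$-connective maps). Such a class is preserved by left exact colimit-preserving pullbacks, so the compatibility follows. Left exactness and the cotopological property of the global $C$ can then be checked pointwise, since limits and connectivity in $\widehat{\textnormal{Shv}}_{\textnormal{\'{e}t}_{/F}}$ are computed objectwise on slices.

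For the hypercomplete statement, $\widehat{\textnormal{Shv}}(\infty\cT op_{/F})^{\textnormal{hyp}}$ agrees with $\widehat{\textnormal{Shv}}_{\textnormal{\'{e}t}_{/F}}(\infty\cT op_{/F})^{\textnormal{hyp}}$, because a cotopological localization is an equivalence on hypercomplete objects. I would therefore apply \ref{Cech Descent Intro} to the geometric category $\infty\cT op_{/F}$, which is geometric by \ref{Geom Cats. Remark Intro} (3) since slices inherit disjoint universal coproducts. The pretopology $\tau$ is that generated by families of \'{e}tale maps that are jointly effective epimorphisms, with $\kappa=\Omega$. Condition (i) holds because $\coprod_i \cY_i$ is covered étale-ly by its summands, and (ii) holds by smallness. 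Its hypersheaf clause yields exactly conditions (i) and (ii) of the statement, once $\tau$-hypercoverings are matched with semisimplicial objects whose matching maps $X_n\rightarrow M_n(X)$ are \'{e}tale effective epimorphisms. This identification is a routine unwinding of \ref{Hypercoverings Definition}.
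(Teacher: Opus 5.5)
Your arguments for (1) and for the hypercomplete clause are fine. They match the paper, which deduces both from \ref{Cech Descent Theorem} and \ref{Geom Sites Examples} (6), the latter only once (2) is known. The gap is in (2). Building $C$ by applying $C_\cY$ to each restriction $G_\cY$ is legitimate only if restriction to the small \'{e}tale sites commutes with the global localization, and your justification for that fails.

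Read literally, ``inverting the $\infty$-connective morphisms'' describes hypercompletion. By \ref{Sheaves on Topoi Result} (3), hypercompletion has essential image $\textnormal{Shv}_{\widehat{\mS}}(\cY^{\textnormal{hyp}})$, not $\textnormal{Shv}_{\widehat{\mS}}(\cY)$; $C_\cY$ inverts only the comparison maps $\textnormal{colim}_k h_{U_k}\rightarrow h_{\textnormal{colim}_k U_k}$.

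Even with the right class, preservation by the functors induced from inverse images $f^*$ only shows that the \emph{left} adjoints commute with localization. What you actually need is that the restriction functors $G\mapsto G_\cY$ send global local equivalences to $C_\cY$-equivalences; these are precompositions, an operation of right-adjoint type. Concretely, the global localization is generated by the maps $j_\psi:\textnormal{colim}_k h_{\psi(k)}\rightarrow h_\cX$, for small diagrams $\psi$ in $\infty\cT op^{\textnormal{\'{e}t}}_{/F}$ with colimit $\cX$. You would have to show that each restriction $(j_\psi)_\cY$ becomes an equivalence after applying $C_\cY$. This is a descent statement for maps $\cY_{/U}\rightarrow\cX$ into \'{e}tale colimits, and it genuinely uses that the maps $\psi(k)\rightarrow\cX$ are \'{e}tale. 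Without it, none of the following is established: the coherent functoriality of your pointwise $C$ along non-\'{e}tale geometric morphisms, its universal property as a left adjoint, or the ``pointwise'' checks of left exactness and cotopologicality.

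The paper avoids gluing altogether. It identifies $\widehat{\textnormal{Shv}}(\infty\cT op_{/F})\simeq\widehat{\textnormal{Shv}}(\infty\cT op)_{/F}$ and reduces, via \ref{Reflective Localizations and Slices} and HTT 6.5.1.19, to the absolute localization $L$. It takes left exactness of $L$ from HTT 6.2.3.20 and 6.3.5.21. It then shows each generator $j_\psi$ is $\infty$-connective using HTT 6.5.1.14: it suffices that every $n$-truncated $U$ over $h_\cX$ is $j_\psi$-local. Through \ref{Etale Slices of Sheaf Topoi} and \ref{Equivalence of slice topoi explicit Remark}, this becomes the claim that an $n$-truncated $\C an(\cX)$-sheaf on $\infty\cT op^{\textnormal{\'{e}t}}_{/\cX}\simeq\cX$ carries the colimit cone $\overline{\psi}$ to a limit diagram. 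That holds because truncated $\C an(\cX)$-sheaves preserve small limits (\ref{n-truncated preserves small limits: Appendix}). Testing connectivity only against truncated objects, where the effective epimorphism topology already forces limit preservation, is the idea missing from your proposal. Alternatively, you would need to prove the descent lemma above to make your pointwise route work.
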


As a concluding remark of this introduction, a significant portion of the theory of this paper relies on the following folkloric result, which we prove in \ref{Slices of Sheaf Topoi}:

\begin{proposition}{}\label{Etale Slices of Sheaf Topoi Intro}

    (Compare to \ref{Etale Slices of Sheaf Topoi}.) Let $\C$ be a small $(\infty,1)$-category equipped with a topology $J$. Let $F:\C^{\textnormal{opp}}\rightarrow \mS$ be a map, and write $p:\C_{/F}\rightarrow \C$ for the right fibration $\C_{/F}:=\C\times_{\textnormal{Fun}(\C^{\textnormal{opp}},\mS)}\textnormal{Fun}(\C^{\textnormal{opp}},\mS)_{/F}\rightarrow \C$ classifying $F$. Write $p^*:\textnormal{Fun}(\C^{\textnormal{opp}},\mS)\rightarrow \textnormal{Fun}((\C_{/F})^{\textnormal{opp}},\mS)$ for the map given by pulling-back along $p^{\textnormal{opp}}:(\C_{/F})^{\textnormal{opp}}\rightarrow \C^{\textnormal{opp}}$, and write $p_!:\textnormal{Fun}((\C_{/F})^{\textnormal{opp}},\mS)\rightarrow \textnormal{Fun}(\C^{\textnormal{opp}},\mS)$ and $p_*:\textnormal{Fun}((\C_{/F})^{\textnormal{opp}},\mS)\rightarrow \textnormal{Fun}(\C^{\textnormal{opp}},\mS)$ for the left and right adjoints of $p^*$, respectively. Then:

    \begin{itemize}

        \item [$(1)$] Let $J_{/F}$ be the collection of sieves on objects $U\in \C_{/F}$ of the form $\C^{0}_{/p(U)}\times_{\C_{/p(U)}}(\C_{/F})_{/U}$, where $\C^{0}_{/p(U)}\subseteq \C_{/p(U)}$ is a $J$-covering sieve on $p(U)$. Then, $J_{/F}$ is a Grothendieck topology on $\C_{/F}$.

        \item [$(2)$] Suppose that $F$ is a $J$-sheaf. Then, the categorical equivalence $\textnormal{Fun}((\C_{/F})^{\textnormal{opp}},\mS)\xrightarrow{\sim}\textnormal{Fun}(\C^{\textnormal{opp}},\mS)_{/F}$ over $\textnormal{Fun}(\C^{\textnormal{opp}},\mS)$ given by the left Kan extension of the inclusion $\C_{/F}\hookrightarrow \textnormal{Fun}(\C^{\textnormal{opp}},\mS)_{/F}$ along $\C_{/F}\rightarrow \textnormal{Fun}((\C_{/F})^{\textnormal{opp}},\mS)$ restricts to a categorical equivalence $\textnormal{Shv}_{J_{/F}}(\C_{/F})\xrightarrow{\sim}\textnormal{Shv}_{J}(\C)_{/F}$.

        \item [$(3)$] The adjunction $p^*:\textnormal{Fun}(\C^{\textnormal{opp}},\mS)\rightleftarrows \textnormal{Fun}((\C_{/F})^{\textnormal{opp}},\mS):p_*$ restricts to an adjunction $p^*:\textnormal{Shv}_{J}(\C)\rightleftarrows \textnormal{Shv}_{J_{/F}}(\C_{/F}):p_*$. Moreover, when $F$ is $J$-sheaf, the adjunction $p_!:\textnormal{Fun}((\C_{/F})^{\textnormal{opp}},\mS)\rightleftarrows\textnormal{Fun}(\C^{\textnormal{opp}},\mS):p^*$ restricts to an adjunction $p_!:\textnormal{Shv}_{J_{/F}}(\C_{/F})\rightleftarrows \textnormal{Shv}_{J}(\C):p^*$. 

        \item [$(4)$] Whenever $F$ is a $J$-sheaf, the map $p_*:\textnormal{Shv}_{J_{/F}}(\C_{/F})\rightarrow \textnormal{Shv}_{J}(\C)$ is an \'{e}tale morphism of $(\infty,1)$-topoi.

    \end{itemize}

\end{proposition}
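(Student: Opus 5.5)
The plan is to reduce everything to the standard identification $\textnormal{Fun}((\C_{/F})^{\textnormal{opp}},\mS)\simeq \textnormal{Fun}(\C^{\textnormal{opp}},\mS)_{/F}$ (HTT 5.1.6.12) and to track what $J$-covering sieves and $J_{/F}$-covering sieves become under it. Throughout, a sieve $R$ on $U\in\C_{/F}$ is identified with a subobject $R\hookrightarrow h_U$ in $\textnormal{Fun}((\C_{/F})^{\textnormal{opp}},\mS)$. Under $p_!$, which corresponds to the forgetful functor $\textnormal{Fun}(\C^{\textnormal{opp}},\mS)_{/F}\to\textnormal{Fun}(\C^{\textnormal{opp}},\mS)$, the representable $h_U$ goes to $h_{p(U)}$. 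Since $p$ is a right fibration, $(\C_{/F})_{/U}\to\C_{/p(U)}$ is a trivial fibration, and this is the fact that makes everything work: sieves on $U$ correspond bijectively to sieves on $p(U)$.

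For $(1)$ we check the axioms of a Grothendieck topology via this bijection. Maximal sieves correspond to maximal sieves. Pullback along $f:U'\to U$ in $\C_{/F}$ corresponds to pullback along $p(f)$. The local character axiom transfers because, for a sieve $S$ on $U$, membership of $f$ in the pullback condition depends only on $p(f)$.

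For $(2)$ and $(3)$ the central claim is the following: an object $G\to F$ of $\textnormal{Fun}(\C^{\textnormal{opp}},\mS)_{/F}$, viewed as a presheaf $G'$ on $\C_{/F}$, is a $J_{/F}$-sheaf if and only if $G\to F$ is $J$-local relative to $F$. Relative locality means that for every covering sieve $R\hookrightarrow h_X$ and every $\eta:h_X\to F$, restriction induces an equivalence $\textnormal{Map}_{/F}(h_X,G)\simeq\textnormal{Map}_{/F}(R,G)$. This follows from $\textnormal{Map}(h_U,G')\simeq \textnormal{Map}_{/F}(h_{p(U)},G)$ and $\textnormal{Map}(R',G')\simeq\textnormal{Map}_{/F}(p_!R',G)$, together with $p_!R'$ being the corresponding sieve $R$ on $p(U)$. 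When $F$ is a $J$-sheaf, relative locality of $G\to F$ is equivalent to $G$ being a $J$-sheaf. To see this, compare the fibers of $\textnormal{Map}(h_X,G)\to\textnormal{Map}(h_X,F)$ and $\textnormal{Map}(R,G)\to\textnormal{Map}(R,F)$; the bottom map is an equivalence, so the top map is an equivalence if and only if it is fiberwise one. This gives $(2)$, and it also shows that $p_!$ carries sheaves to sheaves, which is the second half of $(3)$. For the first half of $(3)$, $p^*G$ corresponds to $G\times F\to F$. If $G$ is a $J$-sheaf, the fiber computation applied to the product makes $G\times F\to F$ relatively local, and this needs no hypothesis on $F$. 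Restricting the right adjoint $p_*$ is then automatic, since $p^*$ preserves $J$-local equivalences: a covering sieve on $p(U)$ pulls back to a $J_{/F}$-covering sieve, and $p^*$ is colimit preserving. Equivalently, $p_*$ of a sheaf is a sheaf by adjunction against covering sieve inclusions $R\hookrightarrow h_X$, because $p^*$ of such an inclusion is a colimit of covering sieve inclusions in $\C_{/F}$.

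For $(4)$, note that under $(2)$ the functor $p^*$ becomes $-\times F:\textnormal{Shv}_J(\C)\to\textnormal{Shv}_J(\C)_{/F}$, with right adjoint $p_*$. This is exactly the geometric morphism $\textnormal{Shv}_J(\C)_{/F}\to\textnormal{Shv}_J(\C)$, which is étale by definition (HTT 6.3.5). It remains to check that the equivalence of $(2)$ intertwines $p^*$ with $-\times F$ and $p_!$ with the forgetful functor; both hold at the presheaf level and restrict to sheaves.

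The main obstacle is the relative-locality comparison, specifically the identification $p_!R'\simeq R$ as subobjects of $h_{p(U)}$ and the fiberwise argument. I would handle it by using the right fibration property to compute $p_!$ as a left Kan extension along $p$. For a sieve $R'$ viewed as a full subcategory of $(\C_{/F})_{/U}$, this Kan extension is the colimit of representables over a category equivalent to the corresponding subcategory of $\C_{/p(U)}$, which is the sieve $R$.
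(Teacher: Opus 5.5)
Your proposal is correct and follows essentially the same route as the paper. The paper also proves $(1)$ through the bijection of sieves induced by the trivial fibration $(\C_{/F})_{/U}\rightarrow \C_{/p(U)}$. It reduces $(2)$ to showing that $p_!$ identifies $J_{/F}$-covering sieve inclusions exactly with $J$-covering sieve inclusions (over $F$), combined with the fibrewise locality lemma for slices, which is your ``relative locality'' argument; and it handles $p_*$ by checking $p^*$ of a covering sieve after pulling back to representables. The differences are cosmetic: you identify $p_!R'$ with $R$ via a colimit over the sieve rather than via epi-mono factorizations, and you make $(4)$ explicit by identifying $p^*$ with $-\times F$, which the paper leaves as an immediate consequence of $(2)$ and $(3)$.
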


\noindent In the appendix (\ref{Appendix}), we further prove the following pair of results, which may be of independent interest:

\begin{proposition}{}\label{Localization of Modules Intro}

    (Compare to \ref{Localization of Modules}.)  Let $\C^\otimes$ be a monoidal $(\infty,1)$-category satisfying the property that $\C$ admits geometric realizations and the tensor product $\otimes:\C\times \C\rightarrow \C$ preserves geometric realizations separately in each variable. Let $\cD\subseteq \C$ be a reflective subcategory such that the localization functor $L:\C\rightarrow \cD$ is compatible with the monoidal structure on $\C$, and let $A\in \textnormal{Alg}(\C)$. Then:

    \begin{itemize}
    
        \item [$(1)$] The map $L^\prime:\textnormal{LMod}_{A}(\C)\rightarrow \textnormal{LMod}_{L(A)}(\cD)$ induced by post-composition with $L^\otimes$ is a reflective localization.

        \item [$(2)$] An object $M\in \textnormal{LMod}_{A}(\C)$ is $L^\prime$-local if and only if the underlying $\C$-object of $M$ is contained in the full subcategory $\cD\subseteq \C$.
        
    \end{itemize}
    
\end{proposition}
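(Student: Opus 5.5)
We will prove both claims at once by showing directly that $L'$ has a fully faithful right adjoint whose essential image is exactly the modules with underlying object in $\cD$. Write $\C_L\subseteq \C$ for the $L$-equivalences. Compatibility of $L$ with the monoidal structure means that $X\otimes f$ and $f\otimes X$ lie in $\C_L$ whenever $f$ does. By HA 2.2.1.9, $\cD$ therefore inherits a monoidal structure $\cD^\otimes$ and $L$ extends to a monoidal functor $L^\otimes$, with right adjoint the lax monoidal inclusion $\iota^\otimes:\cD^\otimes\hookrightarrow \C^\otimes$. We may thus form $L':\textnormal{LMod}_A(\C)\rightarrow \textnormal{LMod}_{L(A)}(\cD)$ by applying $L^\otimes$ to the $\mathcal{LM}^\otimes$-algebra $(A,M)$. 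The right adjoint candidate is restriction along the unit $A\rightarrow \iota L(A)$: it sends $N\in\textnormal{LMod}_{L(A)}(\cD)$ to $\iota N$, viewed as an $\iota L(A)$-module via the lax structure and then as an $A$-module.

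\textbf{Step 1: identify the local objects.} Set
\[
\textnormal{LMod}_A(\C)^{\cD}:=\textnormal{LMod}_A(\C)\times_{\C}\cD .
\]
For $M$ in this subcategory, the map $A\otimes M\rightarrow L(A)\otimes M$ is an $L$-equivalence. Since $M\in\cD$, the module structure on $M$ factors through $L(A)\otimes M\rightarrow M$, and so $M$ acquires a canonical $L(A)$-module structure in $\cD$. To make this precise, observe that the inclusion $\textnormal{LMod}_{L(A)}(\cD)\rightarrow \textnormal{LMod}_A(\C)^{\cD}$ is an equivalence. This holds because $\cD^\otimes\rightarrow \C^\otimes$ is a full suboperad (HA 2.2.1), so an $\mathcal{LM}^\otimes$-algebra in $\C$ whose underlying objects lie in $\cD$ is the same as an $\mathcal{LM}^\otimes$-algebra in the localized operad. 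Applied to the algebra pair $(LA,M)$ this gives the claimed equivalence. The remaining issue is to compare $A$-modules with $LA$-modules when $M\in\cD$. We would show that the restriction map $\textnormal{LMod}_{LA}(\C)^{\cD}\rightarrow \textnormal{LMod}_A(\C)^{\cD}$ is an equivalence by comparing bar constructions.

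\textbf{Step 2: construct the left adjoint on underlying objects.} For general $M$, consider the relative tensor product $LA\otimes_A M$, given by the geometric realization of the bar construction $|LA\otimes A^{\otimes\bullet}\otimes M|$. This exists and is computed levelwise by our hypothesis on geometric realizations (HA 4.4.2). We then define $L'(M)=L(LA\otimes_A M)$. Applying $L$ levelwise, and using that $L$ preserves colimits and is monoidal, gives
\[
L(LA\otimes_A M)\simeq LA\otimes^{\cD}_{LA}LM\simeq LM,
\]
since the bar construction for $LA$ over itself is split. This confirms that the underlying object of $L'(M)$ is $LM$. It also shows that $M\rightarrow L'(M)$ is an $L$-equivalence on underlying objects.

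\textbf{Step 3: verify the adjunction.} For a local $N$ we must check that $\textnormal{Map}_A(L'M,N)\rightarrow\textnormal{Map}_A(M,N)$ is an equivalence. We would write mapping spaces in $\textnormal{LMod}_A$ as totalizations of cobar complexes $\textnormal{Map}_\C(A^{\otimes n}\otimes M,N)$. Each term is unchanged under replacing $M$ by $LM$ and $A$ by $LA$, because $N\in\cD$ and tensoring with objects preserves $L$-equivalences. Since the comparison is a levelwise equivalence of cosimplicial spaces, it is an equivalence on totalizations. This gives (1), and (2) follows from Step 1 together with the fact that local objects are exactly the essential image.

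We expect the main obstacle to be the $\infty$-categorical coherence in Steps 1 and 3: making the levelwise cobar argument functorial and identifying $\textnormal{LMod}_{LA}(\cD)$ with the local subcategory. We would first try to handle this abstractly. The plan is to treat $\textnormal{LMod}(\C)\rightarrow\textnormal{Alg}(\C)$ as a cartesian fibration and apply HA 7.3.2-type relative adjunction criteria fiberwise, rather than computing cobar complexes by hand.
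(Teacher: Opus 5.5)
Your outline (identify the local objects, build the reflector by a bar construction, then check its universal property with cobar totalizations) is a legitimate alternative to the paper's. As written, though, the step on which both (1) and (2) rest is only promised.

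\textbf{The deferred step.} At best, Step~3 checks the universal property of $M\to L'(M)$ against $A$-modules whose underlying object lies in $\cD$. That makes $\textnormal{LMod}_A(\C)\times_{\C}\cD$ reflective in $\textnormal{LMod}_A(\C)$. To conclude that $L'\colon \textnormal{LMod}_A(\C)\to\textnormal{LMod}_{L(A)}(\cD)$ is a reflective localization, you still need $R'$ to be fully faithful with essential image $\textnormal{LMod}_A(\C)\times_{\C}\cD$. This is exactly the Step~1 claim you defer (``we would show \ldots\ by comparing bar constructions''). Neither justification you offer reaches it:
\begin{itemize}
\item The full-suboperad remark only identifies $\textnormal{LMod}_{L(A)}(\cD)$ with $\textnormal{LMod}_{L(A)}(\C)\times_{\C}\cD$, with the algebra $L(A)$ held fixed.
\item The factorization of $A\otimes M\to M$ through $L(A)\otimes M$ is a homotopy-level statement about a single action map. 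It does not produce a coherent $L(A)$-module structure.
\end{itemize}

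\textbf{A smaller point.} In the statement, $L'$ is post-composition with $L^\otimes$, so you cannot simply set $L'(M):=L(L(A)\otimes_A M)$. You must show that $L^\otimes$ sends the cocartesian edge $M\to L(A)\otimes_A M$ over the unit $f\colon A\to L(A)$ to an equivalence. Your bar computation supplies this, since that edge is the geometric realization of the $L$-equivalences $f\otimes A^{\otimes n}\otimes M$. But it has to be stated as an identification of the two functors, not used as a definition.

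\textbf{How the paper closes the gap.} It argues fibrationally, which is essentially the abstract route in your closing remark. Using that $L$ commutes with relative tensor products (Lemma~\ref{Tensor product commutes}), it shows that post-composition with $L^\otimes$ preserves both cartesian and cocartesian edges of $\textnormal{LMod}(\C)\to\textnormal{Alg}(\C)$ (Corollary~\ref{Pres. of cocart arrows for LMod cor}). This gives three things:
\begin{itemize}
\item It identifies $L'$ with localization after $L(A)\otimes_A(-)$.
\item It produces $R'$ formally.
\item It makes the counit $L'R'N\to N$ invertible by two-out-of-three. In $N|_A\to L(A)\otimes_A N|_A\to N$, the first edge is cocartesian and the composite is cartesian, both over $f$. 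So $L^\otimes$ sends both to (co)cartesian edges over $L(f)\simeq\textnormal{id}$, that is, to equivalences.
\end{itemize}

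\textbf{Finishing within your own method.} Apply your Step~3 comparison to restriction itself.
\begin{itemize}
\item Full faithfulness: for $N_1,N_2$ with underlying objects in $\cD$, the maps $\textnormal{Map}_{\C}(L(A)^{\otimes n}\otimes N_1,N_2)\to\textnormal{Map}_{\C}(A^{\otimes n}\otimes N_1,N_2)$ form a levelwise equivalence of cobar complexes.
\item Essential surjectivity: this needs no ``canonical $L(A)$-structure''. For $M$ with underlying object in $\cD$, the unit $M\to R'L'M$ is, on underlying objects, an $L$-equivalence between objects of $\cD$. Hence it is an equivalence, and the forgetful functor is conservative.
\end{itemize}
What then remains is the coherence you already flag, namely that restriction is really induced by that map of cobar complexes. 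The paper's fibrational argument sidesteps this entirely.
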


\begin{proposition}{}\label{HyperSheafification Ind of Choice of Universe Intro}

    (Compare to \ref{HyperSheafification Ind of Choice of Universe}.) Let $\C$ be a small $(\infty,1)$-category equipped with a Grothendieck topology $J$. Write $i:\textnormal{Fun}(\C^{\textnormal{opp}},\mS)\hookrightarrow \textnormal{Fun}(\C^{\textnormal{opp}},\widehat{\mS})$ and  $i^\prime:\textnormal{Shv}_J(\C)\hookrightarrow \widehat{\textnormal{Shv}}_J(\C)$ for the corresponding inclusions, let $L:\textnormal{Fun}(\C^{\textnormal{opp}},\mS)\rightarrow \textnormal{Shv}_J(\C)$ denote the sheafification functor, which we identify as an arrow $L:\textnormal{Fun}(\C^{\textnormal{opp}},\mS)\rightarrow \textnormal{Fun}(\C^{\textnormal{opp}},\mS)$, and write $L^{\textnormal{hyp}}:\textnormal{Shv}_J(\C)\rightarrow \textnormal{Shv}_J(\C)^{\textnormal{hyp}}$ for the map left adjoint to the inclusion, which we identify as an arrow $L^{\textnormal{hyp}}:\textnormal{Shv}_J(\C)\rightarrow \textnormal{Shv}_J(\C)$. For each $n\in \ZZ$ let $\tau_{\leq n}:\textnormal{Shv}_J(\C)\rightarrow\tau_{\leq n}\textnormal{Shv}_J(\C)$ be the corresponding truncation functor, which we identify as an arrow $\tau_{\leq n}:\textnormal{Shv}_J(\C)\rightarrow \textnormal{Shv}_J(\C)$. Similarly define the maps $L^\prime:\textnormal{Fun}(\C^{\textnormal{opp}},\widehat{\mS})\rightarrow \widehat{\textnormal{Shv}}_J(\C)\subseteq \textnormal{Fun}(\C^{\textnormal{opp}},\widehat{\mS})$, $L^{\prime,\textnormal{hyp}}:\widehat{\textnormal{Shv}}_J(\C)\rightarrow\widehat{\textnormal{Shv}}_J(\C)^\textnormal{hyp}\subseteq \widehat{\textnormal{Shv}}_J(\C)$ and $\tau^\prime_{\leq n}:\widehat{\textnormal{Shv}}_J(\C)\rightarrow \tau_{\leq n}\widehat{\textnormal{Shv}}_J(\C)\subseteq \widehat{\textnormal{Shv}}_J(\C)$. Then: 

    \begin{itemize}
    
        \item [$(1)$] $i^\prime \circ L\cong L^\prime\circ i$ as maps $\textnormal{Fun}(\C^{\textnormal{opp}},\mS)\rightarrow \textnormal{Fun}(\C^{\textnormal{opp}},\widehat{\mS})$.

        \item [$(2)$] $i^\prime\circ \tau_{\leq n} \cong \tau^\prime_{\leq n}\circ i^\prime$ as maps $\textnormal{Shv}_J(\C)\rightarrow \widehat{\textnormal{Shv}}_J(\C)$. 

        \item [$(3)$] $i^\prime\circ L^{\textnormal{hyp}}\cong L^{\prime,\textnormal{hyp}}\circ i^\prime$ as maps $\textnormal{Shv}_J(\C)\rightarrow\widehat{\textnormal{Shv}}_J(\C)$.
        
    \end{itemize}
    
\end{proposition}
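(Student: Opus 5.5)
We prove the three claims in order, comparing the two universes through the transfinite construction of sheafification and through the detection of local objects.

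For $(1)$, write $L=\mathcal{L}^{\circ\lambda}$ as a transfinite iterate of the plus-construction $\mathcal{L}$. Here $\mathcal{L}F(X)=\varinjlim_{S}\varprojlim_{U\in S}F(U)$, with $S$ ranging over covering sieves on $X$ (HTT 6.2.2.7 and the surrounding discussion). This description is formal, so it makes sense in both $\textnormal{Fun}(\C^{\textnormal{opp}},\mS)$ and $\textnormal{Fun}(\C^{\textnormal{opp}},\widehat{\mS})$.

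The first step is to check that $i$ commutes with $\mathcal{L}$. This holds because $\C$ is small, so the limits over sieves are small. Moreover, the colimits are filtered and indexed by a small poset, and the inclusion $\mS\hookrightarrow\widehat{\mS}$ preserves small limits and small colimits.

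Next, the iteration stabilizes after a small ordinal $\lambda$ chosen depending only on $\C$ and $J$. This is the same ordinal that works in either universe, since at that stage the image is a sheaf, and $i$ detects sheaves because the sheaf condition involves only small limits. Hence $i^\prime\circ L\cong L^\prime\circ i$.

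A cleaner alternative avoids the transfinite construction. We show directly that $i(F)\to i(LF)$ is an $L^\prime$-equivalence with $J$-local target. Locality holds because $i$ preserves small limits. For the equivalence, note that $L^\prime$-equivalences are the strongly saturated class generated by the sieve inclusions. Then $i$ carries the small strongly saturated class generated by these inclusions into the large one, since $i$ preserves small colimits and the class is generated under small colimits, pushouts and 2-out-of-3.

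For $(2)$, the truncation $\tau_{\leq n}$ in a topos is the localization at the maps $S^{n+1}\otimes X\to X$ for $X$ in a generating set; concretely, $\tau_{\leq n}$ is the sheafification of the objectwise truncation. Objectwise truncation commutes with $i$, because $\mS\hookrightarrow\widehat{\mS}$ commutes with truncation. Since the truncation in $\widehat{\textnormal{Shv}}_J(\C)$ is $L^\prime$ applied to the objectwise truncation (HTT 5.5.6.28), part $(1)$ gives $(2)$.

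For $(3)$, hypercompletion is the localization at $\infty$-connective morphisms. The plan has two steps. First, $i^\prime$ preserves $\infty$-connectivity: a morphism $f$ is $\infty$-connective iff $\tau_{\leq n}$ of its diagonals are equivalences, and $(2)$ together with $i^\prime$ preserving finite limits handles this. Second, $i^\prime$ preserves hypercompleteness of objects: an object is hypercomplete iff it is local for $\infty$-connective maps, and it suffices to test against those between objects of the small topos. The unit $G\to L^{\textnormal{hyp}}G$ is then an $\infty$-connective map whose target has hypercomplete image under $i^\prime$, which identifies $i^\prime L^{\textnormal{hyp}}G$ with $L^{\prime,\textnormal{hyp}}i^\prime G$.

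The main obstacle is the second step, showing that $i^\prime(H)$ is local with respect to all large $\infty$-connective morphisms, not just those coming from small sheaves. I would handle it with the Postnikov-tower characterization: $H$ is hypercomplete iff $H\simeq\varprojlim\tau_{\leq n}H$ in the hypercomplete topos. Alternatively, one can use Jardine/Dugger–Hollander–Isaksen-type descent, under which hypercompleteness means being local for hypercovers by representables. That condition involves only small data and is visibly preserved by $i^\prime$. The remaining point is that $i^\prime L^{\textnormal{hyp}}G$ is $\widehat{J}$-hyperlocal, which follows once hyperdescent for representable hypercovers is known to characterize hypercompleteness in both universes (HTT 6.5.3.12).
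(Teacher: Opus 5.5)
Parts (1) and (2) are sound. Your \emph{cleaner alternative} for (1) is exactly the paper's argument: the preimage under $i$ of the large $L^\prime$-equivalences is strongly saturated and contains the sieve inclusions. Deducing (2) from (1) via HTT 5.5.6.28 is a valid route; the paper mentions it but instead uses that $i^\prime$ is a map of $\infty$-pretopoi together with SAG A.6.7.1. Step 1 of (3) is also fine once phrased correctly: $i^\prime$ preserves finite limits and, by (1), small colimits. Hence it preserves effective epimorphisms and diagonals, and so $n$-connective morphisms for every $n$ (HTT 6.5.1.18).

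The genuine gap is Step 2 of (3): that $i^\prime$ carries hypercomplete objects to hypercomplete objects. Your sentence \emph{it suffices to test against those between objects of the small topos} is the whole content of this step, and neither proposed justification supplies it.
\begin{itemize}
\item The Postnikov characterization is false. Only the implication $H\simeq\varprojlim\tau_{\leq n}H\Rightarrow H$ hypercomplete holds. Postnikov towers can fail to converge in hypercomplete $\infty$-topoi (this is why Postnikov completeness is strictly stronger than hypercompleteness), so you cannot write $i^\prime H\simeq\varprojlim\tau^\prime_{\leq n}i^\prime H$.
\item The descent route does not \emph{visibly} involve only small data. In $\widehat{\textnormal{Shv}}_J(\C)$, a hypercover by coproducts of representables may have levels that are large coproducts. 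The large-universe criterion therefore quantifies over hypercovers about which your small-universe hypothesis says nothing.
\item HTT 6.5.3.12 characterizes hypercompleteness of a whole topos by effectivity of hypercoverings. It is not an objectwise criterion with size-bounded representable hypercovers.
\end{itemize}

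The paper fills this step with a compactness argument. First, $\textnormal{Shv}^J_{<\Omega^\prime}(\C)$ is the $(\Omega^\prime,\kappa)$-Ind-completion of the small $\kappa$-compact sheaves. Second, the $\infty$-category of $\infty$-connective morphisms in it is $(\Omega^\prime,\kappa)$-accessible, with compact objects coming from small $\infty$-connective morphisms. This is built from iterated diagonals, relative truncations and the limit over $n$, using closure of $\textnormal{Pr}^{L,\kappa}$ and $\textnormal{Pr}^{R,\kappa}$ under limits. Hence every large $\infty$-connective morphism is a $\kappa$-filtered colimit in $\textnormal{Fun}(\Delta^1,-)$ of small ones, and locality with respect to such morphisms passes to colimits.

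Your descent idea could instead be completed by rerunning the inductive lifting argument (step $(*^\prime)$ in the paper's proof of \v{C}ech hyperdescent) in the large universe. Apply it to the $\infty$-connective unit $i^\prime H\rightarrow L^{\prime,\textnormal{hyp}}i^\prime H$. Both ends satisfy descent for hypercovers whose levels are small coproducts of representables. At each stage the covering sieve involved lives on an object of the small category $\C$, so it is generated by a small family and the hypercover can be kept small. That argument has to actually be given, though.
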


\begin{remark}{}

    We note that \ref{HyperSheafification Ind of Choice of Universe Intro} $(1)$ already appears in the literature as \citeq{lurie2009derivedalgebraicgeometryv} 2.4.10. 
    
\end{remark}

\begin{remark}{}

We would also like to acknowledge that Thorger Gei{\sss} has independently obtained results similar to \ref{Cech Descent Intro} and \ref{Etale Slices of Sheaf Topoi Intro}. We were made aware of this in a private communication after the completion of these portions of the theoretical component of this work.  

\end{remark}

\begin{remark}{}

    As a final note in this introduction, throughout this paper we often assume the existence of a largely unnecessary amount of pullbacks. This is merely for convenience, and many of our results hold in greater generality.
    
\end{remark}

\subsection*{Acknowledgments}

I would like to thank my advisor Mike Hill for his wonderful support and guidance. In addition, I would like to thank Thorger Gei{\sss} for suggesting an approach to proving part of \ref{Chech Hyperdescent}, and I would like to thank Jiacheng Liang for suggesting an approach to proving \ref{small hypersheaves are large hypersheaves}. I would like to further thank Thorger Gei{\sss} for helpful comments on a draft of this article. This work was partially supported by the Department of Education under Award P200A240046.

\subsection{Notation}

\noindent We will use the Joyal-Lurie model structure on simplicial sets, and we assume a sufficient supply of Grothendieck universes. In the majority of cases we adopt the notation of \citeq{lurie2024kerodon} or \citeq{lurie2018sag}, defaulting to the notation of \citeq{lurie2024kerodon} in the case of conflict. However, not all notation used is conventional. A (very incomplete) list of commonly used notation is as follows: 

\begin{itemize}

    \item [$\bullet$] $\mS$ will denote the $(\infty,1)$-category of (small) spaces, and $\widehat{\mS}$ will denote the $(\infty,1)$-category of spaces which are possible large.

    \item [$\bullet$] For all pairs of $(\infty,1)$-categories $\C$ and $\cD$, $\textnormal{Fun}(\C,\cD)$ will denote the $(\infty,1)$-category of $(\infty,1)$-functors $F:\C\rightarrow \cD$.

    \item [$\bullet$] We will stray from the notation of Kerodon, in that we will not distinguish between a category $\C$ and its nerve $\textnormal{N}_\bullet (\C)$.

    \item [$\bullet$] Let $\cX$ be an $(\infty,1)$-topos. Then, $\cX^{\textnormal{hyp}}$ will denote the full subcategory of $\cX$ spanned by the hypercomplete objects. In addition $\cX^\heartsuit$ (as well as $\tau_{\leq 0}\cX$) will be used to denote the full subcategory of $\cX$ spanned by the discrete objects. 

    \item [$\bullet$] $\Delta$ will denote the (skeletal) simplex category with objects $[n]$ for all $n\in \ZZ_{\geq 0}$. The initial object of $\Delta^\triangleleft$ will be denoted by $[-1]$. $\Delta_{\textnormal{inj}}$ will denote subcategory of $\Delta$ spanned by those maps $[m]\rightarrow [n]$ which are injective maps of linearly ordered sets, and $\Delta^\triangleleft_{\textnormal{inj}}\subset \Delta^\triangleleft$ is defined similarly.

    \item [$\bullet$] Let $\C$ be a locally small $(\infty,1)$-category. Then, $h^\C_\bullet:\C\rightarrow \textnormal{Fun}(\C^{\textnormal{opp}},\mS)$ will be used to denote the Yoneda embedding. For each $X\in \C$, the representable functor $\C^{\textnormal{opp}}\rightarrow \mS$ will be denoted by either $\textnormal{Hom}_\C(-,X)$, $h^\C_X$, or $h_X$, and the map $\C\rightarrow \mS$ corepresented by $X$ will be denoted by either $\textnormal{Hom}_\C(X,-)$, $h^X_\C$, or $h^X$. 

    \item [$\bullet$] Let $n\in \ZZ_{\geq 1}\cup \{\infty\}$, and let $\cX$ be an $(n,1)$-topos. We will write $\C an (\cX)$ for the effective epimorphism topology on $\cX$, which we will also refer to as the \emph{canonical topology} on $\cX$ (see \ref{The Canonical Topology on a Topos} for a justification of this terminology).

\end{itemize}

\section{Preliminaries on Sheaves}

\subsection{Topoi, Sheaves, and Sites}

\noindent We begin with our most essential definitions.

\begin{definition}{}\label{presentable localizations}

    Let $\C$ be a presentable $(\infty,1)$-category, and let $S$ be a (possibly large) collection of arrows in $\C$. Write $\overline{S}$ for the strongly saturated class of morphisms in $\C$ generated by $S$ (\citeq{highertopostheory} 5.5.4.7), and suppose that $\overline{S}$ is of small generation (\citeq{highertopostheory} 5.5.4.7). Writing $\C^\prime\subseteq \C$ for the full subcategory of $\C$ spanned by the $S$-local objects, one calls a map $L:\C\rightarrow \C^\prime$ left adjoint to the inclusion $\C^\prime\hookrightarrow \C$ (\citeq{lurie2024kerodon} 06VH) a \emph{Bousfield localization of $\C$ at $S$}.
    
\end{definition}

\begin{remark}{}

    Under the assumptions of \ref{presentable localizations}, the map $L:\C\rightarrow \C^\prime$ always exists, $\C^\prime$ is presentable, and $\C^\prime\hookrightarrow \C$ is accessible (\citeq{highertopostheory} 5.5.4.15).
    
\end{remark}

\noindent Definition \ref{presentable localizations} is not entirely standard: in \citeq{highertopostheory}, such $L:\C\rightarrow \C^\prime$ are simply referred to as \emph{localizations}, which creates a notational conflict with the following:

\begin{definition}{}\label{Regular localizations definition}

    Let $\C$ be an $(\infty,1)$-category, and let $W$ be any collection of morphisms in $\C$. A \emph{localization of $\C$ at $W$} is an $(\infty,1)$-category $W^{-1}\C$ and an $(\infty,1)$-functor $F:\C\rightarrow W^{-1}\C$ satisfying the property that for all $(\infty,1)$-categories $\cE$, the map $F^*:\textnormal{Fun}(W^{-1}\C,\cE)\rightarrow \textnormal{Fun}(\C,\cE)$ is fully faithful, and the essential image of $F^*$ is comprised of those maps $G:\C\rightarrow \cE$ which carry each $w\in W$ to an isomorphism in $\cE$. 
    
\end{definition}

\noindent Under the notation of \ref{presentable localizations}, it is not true that $L:\C\rightarrow \C^\prime$ is a localization at $S$. However, it \emph{is} true that such $L$ are localizations at $\overline{S}$, and we will (somewhat abusively) use the naked term \emph{localization} to refer to such a Bousfield localization. Moreover, the map $L:\C\rightarrow \C^\prime$ does still satisfy the following universal property: 

\begin{lemma}{}\label{Univ property of Presentable Localizations}

    Under the notation of \ref{presentable localizations}, let $\cE$ be a cocomplete $(\infty,1)$-category. Write $\textnormal{Fun}^{co\ell im}(\C^\prime,\cE)$ for the full subcategory of $\textnormal{Fun}(\C^\prime,\cE)$ spanned by those maps $\C^\prime\rightarrow \cE$ which preserve small colimits, and define $\textnormal{Fun}^{co\ell im}(\C,\cE)\subseteq \textnormal{Fun}(\C,\cE)$ similarly. Then, pulling-back along $L$ yields a fully faithful map $\textnormal{Fun}^{co\ell im}(\C^\prime,\cE)\rightarrow \textnormal{Fun}^{co\ell im}(\C,\cE)$, whose essential image is those $(\infty,1)$-functors carrying every edge of $S$ to an isomorphism in $\cE$.
    
\end{lemma}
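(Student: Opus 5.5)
We reduce the statement to the universal property of $L$ as a localization at $\overline{S}$, combined with the observation that, among colimit-preserving functors, inverting $S$ is the same as inverting $\overline{S}$.

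\emph{Step 1: full faithfulness.} By the discussion following \ref{Regular localizations definition}, $L:\C\rightarrow\C^\prime$ exhibits $\C^\prime$ as the localization $\overline{S}^{-1}\C$. Hence $L^*:\textnormal{Fun}(\C^\prime,\cE)\rightarrow\textnormal{Fun}(\C,\cE)$ is fully faithful, with essential image the functors inverting every edge of $\overline{S}$. Since $\textnormal{Fun}^{co\ell im}$ denotes full subcategories, full faithfulness of the restricted map follows once we check that $L^*$ carries $\textnormal{Fun}^{co\ell im}(\C^\prime,\cE)$ into $\textnormal{Fun}^{co\ell im}(\C,\cE)$. This holds because $L$ is a left adjoint and so preserves small colimits, and a composite of colimit-preserving functors preserves colimits.

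\emph{Step 2: the essential image is contained in the claimed class.} If $G=G^\prime\circ L$, then $G$ inverts every edge of $\overline{S}\supseteq S$, since $L$ carries edges of $\overline{S}$ (in particular of $S$) to isomorphisms.

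\emph{Step 3: the reverse containment.} Let $G:\C\rightarrow\cE$ preserve small colimits and carry every edge of $S$ to an isomorphism. The class $W_G$ of morphisms inverted by $G$ is strongly saturated in the sense of HTT 5.5.4.5. It is closed under pushouts because $G$ preserves pushouts and isomorphisms are stable under pushout in $\cE$. It satisfies two-out-of-three. It is closed under small colimits in $\textnormal{Fun}(\Delta^1,\C)$ because $G$ preserves colimits, and a colimit of isomorphisms in $\textnormal{Fun}(\Delta^1,\cE)$ is an isomorphism. Since $S\subseteq W_G$, we get $\overline{S}\subseteq W_G$. By Step 1, $G\simeq G^\prime\circ L$ for some $G^\prime:\C^\prime\rightarrow\cE$, and concretely one may take $G^\prime=G\circ\iota$, where $\iota:\C^\prime\hookrightarrow \C$ is the inclusion. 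Indeed, the unit maps $X\rightarrow \iota L X$ lie in $\overline{S}$ (HTT 5.5.4.15), so $G$ inverts them, giving $G\simeq G\iota L$.

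\emph{Step 4: $G^\prime$ preserves small colimits.} This is the main point requiring care. Let $p:K\rightarrow\C^\prime$ be a small diagram. Its colimit in $\C^\prime$ is $L(\textnormal{colim}\,\iota p)$. Then
\[
G^\prime(L\,\textnormal{colim}\,\iota p)\simeq G(\iota L\,\textnormal{colim}\,\iota p)\simeq G(\textnormal{colim}\,\iota p)\simeq \textnormal{colim}\,G\iota p=\textnormal{colim}\,G^\prime p .
\]
The second equivalence uses that $G$ inverts the unit map of $\textnormal{colim}\,\iota p$, which lies in $\overline{S}$; the third uses that $G$ preserves colimits. One must check that this chain is the canonical comparison map, which follows from naturality of the unit. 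Therefore $G^\prime\in\textnormal{Fun}^{co\ell im}(\C^\prime,\cE)$, which completes the identification of the essential image.
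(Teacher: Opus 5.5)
Your proposal is correct and follows essentially the same route as the paper: identify $L$ as the localization at $\overline{S}$, restrict $L^*$ to colimit-preserving functors, and use that the class of morphisms inverted by a cocontinuous functor is strongly saturated. The paper cites HTT 5.5.4.10 for that last fact, while you verify it directly; your Step 4, checking that the factor $G^\prime=G\circ\iota$ again preserves small colimits, makes explicit a point the paper's proof leaves implicit.
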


\begin{proof}

    Combining \citeq{highertopostheory} 5.5.4.15 with \citeq{lurie2024kerodon} 02G3, we deduce that $L$ is a localization at $\overline{S}$, in the sense of \ref{Regular localizations definition}. Thus, $L^*:\textnormal{Fun}(\C^\prime,\cE)\rightarrow \textnormal{Fun}(\C,\cE)$ is fully faithful, and the essential image of $L^*$ is the full subcategory of $\textnormal{Fun}(\C,\cE)$ spanned by those arrows $G:\C\rightarrow \cE$ satisfying the property that, for all $s\in \overline{S}$, $G(s)$ is an isomorphism in $\cE$. Since $L$ preserves small colimits, it is immediately apparent that $L^*$ restricts to an arrow $\textnormal{Fun}^{co\ell im}(\C^\prime,\cE)\rightarrow \textnormal{Fun}^{co\ell im}(\C,\cE)$. It remains to show that a cococontinuous map $G:\C\rightarrow \cE$ carries each $s\in \overline{S}$ to an isomorphism precisely when $G$ carries each $s^\prime\in S$ to an isomorphism, which follows from \citeq{highertopostheory} 5.5.4.10.

\end{proof}

\begin{remark}{}\label{Small error in 1.3.1.7}

     In the case that $\cE$ is also locally small, \ref{Univ property of Presentable Localizations} coincides with \citeq{highertopostheory} 5.5.4.20. Moreover, under the notation of \citeq{lurie2018sag} 1.3.1.7, the proof of \citeq{lurie2018sag} 1.3.1.7 only holds in the case that $\C$ is locally small (since this relies on \citeq{highertopostheory} 5.5.4.20). The more general case may be deduced from \ref{Univ property of Presentable Localizations}.
    
\end{remark}

\begin{definition}{}[HTT 6.1.0.6, HTT 6.1.5.3, HTT 6.5.2.19]\label{Defn of an Inf Topos}

    Let $\cX$ be an $(\infty,1)$-category. We call $\cX$ an \emph{$(\infty,1)$-topos} if $\cX$ satisfies any of the following equivalent definitions: 

    \begin{itemize}
    
        \item [$(1)$] $\cX$ satisfies the following:
        
        \begin{itemize}
             \item [$(a)$] $\cX$ is presentable. 

            \item [$(b)$] (Small) colimits in $\cX$ are universal. 

            \item [$(c)$] Coproducts in $\cX$ are disjoint. 

            \item [$(d)$] For all groupoid objects $U:\Delta^{\textnormal{opp}}\rightarrow \cX$, the colimit diagram $\overline{U}:(\Delta^{\textnormal{opp}})^\triangleright\rightarrow \cX$ is a \v{C}ech nerve.
        \end{itemize}

        \item [$(2)$] $\cX$ is presentable, and for some regular cardinal $\kappa$ such that $\cX$ is $\kappa$-accessible and $\cX_{\leq \kappa}\subseteq \cX$ is closed under finite limits (which always exists by \citeq{highertopostheory} 5.4.7.4), the map left adjoint to $\cX\rightarrow \textnormal{Fun}((\cX_{\leq \kappa})^\textnormal{opp},\mS)$ is left exact. 

        \item [$(3)$] There exists a small $(\infty,1)$-category $\C$, a Grothendieck topology $J$ on $\C$, and a cotopological localization $\textnormal{Shv}_J(\C)\rightarrow \cX$.

    \end{itemize}
    
\end{definition}

\noindent There are many further equivalent variations of \ref{Defn of an Inf Topos}, many of which are covered in detail in \citeq{highertopostheory} Chapter 6. For the case that $n\in \ZZ_{\geq 1}$, we will recall the definition of an $(n,1)$-topos for $n\in \ZZ_{\geq 1}$ in \ref{Truncated Objects} (see \ref{n Topoi definition}).

\begin{definition}{}\label{Sheaf Definition}

    Let $\C$ be an $(\infty,1)$-category (which is not necessarily small), and let $J$ be a Grothendieck topology on $\C$. Let $\cD$ be an arbitrary $(\infty,1)$-category. We will say that a map $F:\C^{\textnormal{opp}}\rightarrow \cD$ is a \emph{$J$-sheaf} if for all $X\in \C$ and $J$-covering sieves $\C^0_{/X}\subseteq \C_{/X}$, the composition $(\C^0_{/X})^\triangleright\hookrightarrow (\C_{/X})^\triangleright\rightarrow \C\xrightarrow{F^{\textnormal{opp}}}\cD^{\textnormal{opp}}$ is a colimit diagram. 
    
\end{definition}

\noindent In practice, many Grothendieck topologies are generated by \emph{sites}:

\begin{definition}{}\label{higher sites definition}

    Let $\C$ be an $(\infty,1)$-category with pullbacks. Let $\tau$ be a (possibly large) collection of (possibly large) families of morphisms with fixed target $\{f_i:U_i\rightarrow X\}_{i\in I}$ in $\C$, satisfying the following:

    \begin{itemize}

        \item [(1)] For all isomorphisms $f:U\rightarrow V$ in $\C$, $\{f:U\rightarrow V\}\in \tau$. 

        \item [(2)] For all $\{f_i:U_i\rightarrow X\}_{i\in I}\in \tau$ and maps $g:U\rightarrow X$, we have that $\{U\times_XU_i\rightarrow U\}_{i\in I}$ is in $\tau$, where the fibre product $U\times_XU_i$ is formed in $\C$.

        \item [(3)] If $\{f_i:U_i\rightarrow X\}_{i\in I}\in \tau$ and for each $i\in I$ there exists some $\{g_{j_i}:V_{j_i}\rightarrow U_i\}_{j_i\in J_i}\in \tau$, then $\coprod_{i\in I}\{f_i\circ g_{j_i}:V_{j_i}\rightarrow U_i\}_{j_i\in J_i}\in \tau$ for all $\coprod_{i\in I}J_i$-tuples of choices of compositions $f_i\circ g_{j_i}$.

    \end{itemize}

   \noindent  We further make the technical assumption that for all strongly inaccessible cardinals $\Omega$ for which $\C$ is $\Omega$-essentially small, each $\{f_i:U_i\rightarrow X\}_{i\in I}$ is an $\Omega$-small set. Then, we call the pair $(\C,\tau)$ a \emph{site}, and refer to $\tau$ as a (Grothendieck) \emph{pretopology} on $\C$. We will refer to each $\{f_i:U_i\rightarrow X\}_{i\in I}\in \tau$ as a \emph{$\tau$-covering}.
    
\end{definition}

\begin{definition}{}\label{Grothendieck topology generated by a site}

    Let $(\C,\tau)$ be an site. Let $J_\tau$ be the collection of sieves on objects $X\in \C$, where $\C^0_{/X}\subseteq \C_{/X}$ is a $J_\tau$-sieve on $X$ precisely when there exists some $\tau$-covering $\{f_i:U_i\rightarrow X\}_{i\in I}$ such that each $f_i\in \C^0_{/X}$. Then, we call $J_\tau$ the \emph{Grothendieck topology on $\C$ generated by $\tau$}. Noting that $J_\tau$ is a Grothendieck topology on $\C$, we will refer to a $J_\tau$-sheaf $F:\C^{\textnormal{opp}}\rightarrow \cD$ as a ($\cD$-valued) \emph{$\tau$-sheaf}. 
    
\end{definition}

\begin{notation}{}

    In the situation of \ref{Grothendieck topology generated by a site}, we define $\textnormal{Shv}_\tau(\C):=\textnormal{Shv}_{J_\tau}(\C)$. 

\end{notation}

\noindent When working with a small site $(\C,\tau)$, we may use the following to reformulate \ref{Sheaf Definition}:

\begin{lemma}{}\label{Prelim. Cech descent}

    Let $\C$ be a small $(\infty,1)$-category with pullbacks, and let $\tau$ be a Grothendieck pretopology on $\C$. Let $\cD$ be a locally small $(\infty,1)$-category which is also cocomplete, and let $F:\C^{\textnormal{opp}}\rightarrow \cD$ be a map. Then, the following are equivalent: 
    
    \begin{itemize}
    
        \item [$(1)$] $F$ is a $\tau$-sheaf.

        \item [$(2)$] For all $X\in \C$ and $\tau$-coverings $\{f_i:U_i\rightarrow X\}_{i\in I}$ of $X$, the composition $\C^0_{/X}\subseteq \C_{/X}$, the composition $(\C^0_{/X})^\triangleright\hookrightarrow (\C_{/X})^\triangleright\rightarrow \C\xrightarrow{F^{\textnormal{opp}}}\cD^{\textnormal{opp}}$ is a colimit diagram, where $\C^0_{/X}\subseteq \C_{/X}$ is the sieve on $X$ generated by the $f_i$.

        \item [$(3)$] For all $\tau$-coverings $\{f_i:U_i\rightarrow X\}_{i\in I}$ the composition $\Delta^\triangleleft\xrightarrow{U}\textnormal{Fun}(\C^{\textnormal{opp}},\mS)^{\textnormal{opp}}\xrightarrow{F_*}\cD$ is a limit diagram, where $U$ is the (opposite of the) \v{C}ech nerve of $\coprod_{i\in I}h^\C_{U_i}\rightarrow h^\C_X$, and $F_*:\textnormal{Fun}(\C^{\textnormal{opp}},\mS)^{\textnormal{opp}}\rightarrow \cD$ is the right Kan extension of $F:\C^{\textnormal{opp}}\rightarrow \cD$ along $\C^{\textnormal{opp}}\rightarrow \textnormal{Fun}(\C^{\textnormal{opp}},\mS)^{\textnormal{opp}}$. 
        
    \end{itemize}

\end{lemma}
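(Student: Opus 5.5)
The plan is to prove $(1)\Leftrightarrow(2)$ and $(2)\Leftrightarrow(3)$ separately. Both rest on one observation: since $\cD$ is locally small and cocomplete, we can test colimit diagrams in $\cD^{\textnormal{opp}}$ (equivalently, limit diagrams in $\cD$) after applying the corepresentables $\textnormal{Hom}_\cD(D,-)$ for every $D\in\cD$. Post-composing $F$ with such a functor reduces every statement to the case $\cD=\mS$. Concretely, $F$ is a $\tau$-sheaf exactly when each $\textnormal{Hom}_\cD(D,F(-))$ is, and the same holds for conditions (2) and (3). For (3) this also needs $\textnormal{Hom}_\cD(D,F_*(-))$ to agree with the right Kan extension of $\textnormal{Hom}_\cD(D,F(-))$. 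That holds because corepresentables preserve limits, and the limits defining $F_*$ are indexed by the small categories $\C_{/G}$, so they exist as soon as we dualize appropriately (cocompleteness of $\cD$ gives limits in $\cD^{\textnormal{opp}}$, which is the direction we need).

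\textbf{Step $(1)\Leftrightarrow(2)$.} This is a statement about the topology $J_\tau$ generated by $\tau$. One direction is trivial, since the sieve generated by a $\tau$-covering is a $J_\tau$-covering sieve. For the converse, let $\C^1_{/X}$ be a $J_\tau$-sieve. It contains the sieve $\C^0_{/X}$ generated by some $\tau$-covering. The pullback of $\C^1_{/X}$ along any $V\to X$ in $\C^0_{/X}$ is the maximal sieve, and pullbacks of $\tau$-coverings are again $\tau$-coverings by axiom (2). Applying the standard argument of HTT 6.2.2 (in the presheaf language: a monomorphism into $h_X$ that becomes an equivalence after pulling back along a covering is a local equivalence), together with the fact that $F$-local monomorphisms are closed under composition and base change, shows that $F$ sends $\C^1_{/X}\subseteq\C^0_{/X}$ to an equivalence. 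Equivalently, following the HTT 6.2.3 reasoning, the class of sieves for which $F$ satisfies descent contains the $\tau$-generated sieves and is closed under enlargement. I would phrase this as: the sieves $S$ with $\textnormal{Hom}(h_X,F)\to \textnormal{Hom}(S,F)$ an equivalence form a topology-like class containing the $\tau$-sieves, hence containing $J_\tau$.

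\textbf{Step $(2)\Leftrightarrow(3)$.} Identify the sieve $\C^0_{/X}$ with the subobject $S\subseteq h_X$ that is the image of $\coprod_i h_{U_i}\to h_X$. In an $\infty$-topos, this image is the geometric realization of the Čech nerve of $\coprod_i h_{U_i}\to h_X$ (HTT 6.2.3.x / 6.2.2). The colimit condition in (2) says that $F_*(S)\simeq F_*(h_X)$, since $S=\textnormal{colim}_{\C^0_{/X}}h$. Meanwhile $F_*$ carries colimits in $\textnormal{Fun}(\C^{\textnormal{opp}},\mS)$ to limits in $\cD$, being a right Kan extension, i.e. the limit-preserving extension. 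Hence $F_*(S)\simeq \lim_{\Delta}F_*(\check{C}_\bullet)$, and (2) and (3) both say that this equals $F(X)=F_*(h_X)$.

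\textbf{Main obstacle.} I expect the main difficulty to be rigorously justifying that $F_*$ turns the relevant colimits into limits. This uses smallness of $\C$ and cocompleteness/local smallness of $\cD$, after passing to the dual formulation and reducing to $\cD=\mS$ as above. It also requires care with the identification $S\simeq|\check{C}_\bullet|$ and with the equality of the sieve colimit and the image.
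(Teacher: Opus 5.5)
Your $(2)\Leftrightarrow(3)$ step matches the paper's argument. The geometric realization of the \v{C}ech nerve of $\coprod_i h_{U_i}\to h_X$ is its image $S\hookrightarrow h_X$, and $S$ is also the colimit of the representables over the generated sieve. After reducing to $\cD=\mS$, where $F_*\simeq\textnormal{Hom}(-,F)$, both (2) and (3) say that $F$ is local with respect to $S\hookrightarrow h_X$. That step is routine, so the ``main obstacle'' you flag is not where the difficulty lies. Your size remark is also backwards: $F_*$ is a right Kan extension into $\cD$, so it needs limits in $\cD$, i.e.\ completeness, not cocompleteness.

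The gap is in $(2)\Rightarrow(1)$, which is where the actual content lies, because the principles you invoke are false for a single presheaf $F$. Take $\C=\Delta^1=\{0\to 1\}$ and a presheaf with $F(1)\simeq *$ and $F(0)$ two points.
\begin{itemize}
\item $F$ satisfies descent for the empty sieve on $1$, but not for the larger sieve generated by $0\to 1$. So the class of sieves for which $F$ satisfies descent is not upward closed.
\item $\varnothing\hookrightarrow h_1$ is $F$-local, but its base change $\varnothing\hookrightarrow h_0$ is not. So $F$-local monomorphisms are not closed under base change.
\end{itemize}
The slogan ``a monomorphism that becomes an equivalence after pulling back along a covering is a local equivalence'' holds for a topological localization. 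Applying it to $F$-locality presupposes that $F$ is already a $J_\tau$-sheaf, which is circular. The pullback you compute (of the larger sieve along arrows of the $\tau$-generated one) is the one relevant to the local-character axiom for $J_\tau$, not the one controlling $F$.

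What is needed is the opposite pullback. For $g:V\to X$ in $\C^1_{/X}$, the sieve $g^*\C^0_{/X}$ is generated by the $\tau$-covering $\{V\times_XU_i\to V\}$, so (2) applies to it. Let $S^0\subseteq S^1\subseteq h_X$ be the corresponding subobjects. Universality of colimits in $\textnormal{Fun}(\C^{\textnormal{opp}},\mS)$ exhibits $S^0\simeq S^0\times_{h_X}S^1$, and hence $S^0\hookrightarrow S^1$, as the colimit over $(g:V\to X)\in\C^1_{/X}$ of the monomorphisms $S^0\times_{h_X}h_V\hookrightarrow h_V$. Each of these is $F$-local by (2), and $F$-local maps are closed under colimits in the arrow category. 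So $S^0\hookrightarrow S^1$ is $F$-local, and 2-out-of-3 with $S^0\hookrightarrow h_X$ finishes the proof.

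This is the paper's argument (after SAG A.3.3.1). There it is phrased as: the restriction of $F^{\textnormal{opp}}$ to the larger sieve is left Kan extended from the $\tau$-generated sieve. This is checked pointwise at each $g$ by identifying the pointwise indexing category with the pulled-back $\tau$-sieve on $V$.
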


\begin{proof}

It suffices to prove our claim in the case that $\cD=\mS$. Let $\{f_i:U_i\rightarrow X\}_{i\in I}$ be a $\tau$-covering, and let $U:(\Delta^{\textnormal{opp}})^\triangleright\rightarrow \textnormal{Fun}(\C^{\textnormal{opp}},\mS)$ be the \v{C}ech nerve of $\coprod_{i\in I}h^\C_{U_i}\rightarrow h^\C_X$, which we identify as an arrow $U_0:\Delta^{\textnormal{opp}}\rightarrow \textnormal{Fun}(\C^{\textnormal{opp}},\mS)_{/h^\C_X}$. Extend $U_0$ to a colimit diagram $\overline{U_0}:(\Delta^{\textnormal{opp}})^\triangleright\rightarrow \textnormal{Fun}(\C^{\textnormal{opp}},\mS)_{/h^\C_X}$, noting that $\overline{U_0}$ carries the $[0]\rightarrow [-1]$ edge of $(\Delta^{\textnormal{opp}})^\triangleright$ to the map $\Delta^2\rightarrow \textnormal{Fun}(\C^{\textnormal{opp}},\mS)$ giving the epi-mono factorization of $\coprod_{i\in I}h^\C_{U_i}\rightarrow h^\C_X$ (see \citeq{lurie2024kerodon} 04XJ). Write said epi-mono factorization as $\coprod_{i\in I}h^\C_{U_i}\xrightarrow{e}S\xrightarrow{m}h^\C_X$. Let  $F\in \textnormal{Fun}(\C^{\textnormal{opp}},\mS)$,  and identify $\overline{U_0}:(\Delta^{\textnormal{opp}})^\triangleright\rightarrow \textnormal{Fun}(\C^{\textnormal{opp}},\mS)_{/h^\C_X}$ as a map $\overline{U_0}^\prime:\Delta^{\textnormal{opp}}\star \Delta^1\rightarrow \textnormal{Fun}(\C^{\textnormal{opp}},\mS)$. Since that the composition 

$$(\Delta^{\textnormal{opp}})^\triangleright\xrightarrow{\overline{U_0}} \textnormal{Fun}(\C^{\textnormal{opp}},\mS)_{/h^\C_X}\rightarrow \textnormal{Fun}(\C^{\textnormal{opp}},\mS)\xrightarrow{(h_F)^{\textnormal{opp}}}\mS^{\textnormal{opp}}$$

\noindent is a colimit diagram, it follows that $F$ is local with respect to $m:S\rightarrow h^\C_X$ precisely when the composition 

$$(\Delta^{\textnormal{opp}})^\triangleright\xrightarrow{U_0} \textnormal{Fun}(\C^{\textnormal{opp}},\mS)\xrightarrow{(h_F)^{\textnormal{opp}}}\mS^{\textnormal{opp}}$$

\noindent is a colimit diagram. Thus, (2) and (3) are equivalent.

We now prove that (1) and (2) are equivalent. Clearly (1) implies (2), so it remains to show the converse. Suppose that (2) holds. Arguing as in the proof of \citeq{lurie2018sag} A.3.3.1, let $X\in \C$, and let $\C^0_{/X}\subseteq \C_{/X}$ be $\tau$-sieve on $X$. Choose a $\tau$-covering $\{f_i:U_i\rightarrow X\}_{i\in I}$ such that each $f_i$ is an element of $\C^0_{/X}\subseteq \C_{/X}$, and write $\C^1_{/X}$ for the $\tau$-sieve on $X$ generated by the $f_i$. Noting that the composition $(\C^0_{/X})^\triangleright\hookrightarrow (\C^1_{/X})^\triangleright\hookrightarrow (\C_{/X})^\triangleright\rightarrow \C\xrightarrow{F^{\textnormal{opp}}}\mS^{\textnormal{opp}}$ is a colimit diagram, in order to prove that the composition $(\C^1_{/X})^\triangleright\hookrightarrow (\C_{/X})^\triangleright\rightarrow \C\xrightarrow{F^{\textnormal{opp}}}\mS^{\textnormal{opp}}$ is also a colimit diagram, it suffices to show that the composition $\C^1_{/X}\hookrightarrow \C_{/X}\rightarrow\C\xrightarrow{F^{\textnormal{opp}}}\mS^{\textnormal{opp}}$ is left Kan extended from $\C^0_{/X}$. Moreover, suffices to prove  this latter condition in the case that $\C^1_{/X}=\C_{/X}$. Let $g:U\rightarrow X$ be a map in $\C$, and consider the map $G:(\C^0_{/X}\times_{\C_{/X}}(\C_{/X})_{/g})^\triangleright\rightarrow \mS^{\textnormal{opp}}$ given by the composition

$$(\C^0_{/X}\times_{\C_{/X}}(\C_{/X})_{/g})^\triangleright\hookrightarrow ((\C_{/X})_{/g})^\triangleright\rightarrow \C_{/X}\rightarrow \C\xrightarrow{F^{\textnormal{opp}}}\mS^{\textnormal{opp}}.$$

\noindent Let $s:\C_{/U}\rightarrow (\C_{/X})_{/g}$ be a section of the trivial Kan fibration $(\C_{/X})_{/g}\rightarrow \C_{/U}$, and pull-back $\C^0_{/X}\times_{\C_{/X}}(\C_{/X})_{/g}\hookrightarrow (\C_{/X})_{/g}$ to the sieve 

$$\C^0_{/X}\times_{\C_{/X}}\C_{/U}=\C_{/U}\times_{(\C_{/X})_{/g}}(\C^0_{/X}\times_{\C_{/X}}(\C_{/X})_{/g})\hookrightarrow \C_{/U}$$

\noindent on $U$ generated by the $\tau$-covering $\{U\times_XU_i\rightarrow U\}_{i\in I}$. Since $F$ satisfies condition (2), it follows from \ref{Kan extensions of Sheaves} that the composition

$$(\C^0_{/X}\times_{\C_{/X}}\C_{/U})^\triangleright\rightarrow (\C_{/U})^\triangleright\rightarrow \C\xrightarrow{F^{\textnormal{opp}}}\mS^{\textnormal{opp}}$$

\noindent is a colimit diagram. Thus, since the composition $(\C^0_{/X}\times_{\C_{/X}}(\C_{/X})_{/g})^\triangleright\rightarrow (\C^0_{/X}\times_{\C_{/X}}\C_{/U})^\triangleright\rightarrow (\C_{/U})^\triangleright\rightarrow \C\xrightarrow{F^{\textnormal{opp}}}\mS^{\textnormal{opp}}$ is equal as a map of simplicial sets to $G$ and $\C^0_{/X}\times_{\C_{/X}}(\C_{/X})_{/g}\rightarrow \C^0_{/X}\times_{\C_{/X}}\C_{/U}$ is right cofinal (as it is an equivalence), it follows that $G$ is also a colimit diagram. This proves our claim. 

\end{proof}

\begin{remark}{}\label{Diagram Sheaf condition for sites}

    Let $(\C,\tau)$ be an site. Then, \ref{Prelim. Cech descent} allows for our sheaf condition on $\C$ to be reformulated in a familiar fashion. Let $F:\C^{\textnormal{opp}}\rightarrow \cD$ be a map. Using the left cofinality of $\Delta_{\textnormal{inj}}\hookrightarrow \Delta$, \ref{Prelim. Cech descent} shows that $F$ is a $\tau$-sheaf precisely when, for all $\tau$-coverings $\{f_i:U_i\rightarrow X\}_{i\in I}$, the diagram $\Delta^\triangleleft_{\textnormal{inj}}\rightarrow \cD$ given informally by

    $$F(X)\rightarrow \prod_{i\in I}F(U_i)\rightrightarrows \prod_{i_0,i_1\in I}F(U_{i_0}\times_XU_{i_1})\ \substack{\rightarrow\\[-0.92em] \rightarrow \\[-0.92em] \rightarrow}\ \cdots$$

    \noindent is a limit diagram. Using the fact that the fully faithful embedding $h^\cD_\bullet:\cD\rightarrow \textnormal{Fun}(\cD^{\textnormal{opp}},\mS)$ preserves small limits in $\cD$, a similar statement holds in the case that $\cD$ is only assumed to admit $I$-indexed products for all $\tau$-coverings $\{f_i:U_i\rightarrow X\}_{i\in I}$ (instead of assuming that $\cD$ is cocomplete). Moreover, using the fact that we have assumed the existence of a sufficient supply of Grothendieck universes, this latter statement may be further generalized to the case that there are no size restrictions on $\C$ and $\cD$.

\end{remark}

\subsection{Slices of Topoi}\label{Slices of Sheaf Topoi}

Throughout this paper, we will have an interest in working with sheaves in the relative setting. Our main tool when working in this context will be the following proposition:

\begin{proposition}{}\label{Etale Slices of Sheaf Topoi}

    Let $\C$ be a small $(\infty,1)$-category equipped with a topology $J$. Let $F:\C^{\textnormal{opp}}\rightarrow \mS$ be a map, and write $p:\C_{/F}\rightarrow \C$ for the right fibration $\C_{/F}:=\C\times_{\textnormal{Fun}(\C^{\textnormal{opp}},\mS)}\textnormal{Fun}(\C^{\textnormal{opp}},\mS)_{/F}\rightarrow \C$ classifying $F$. Write $p^*:\textnormal{Fun}(\C^{\textnormal{opp}},\mS)\rightarrow \textnormal{Fun}((\C_{/F})^{\textnormal{opp}},\mS)$ for the map given by pulling-back along $p^{\textnormal{opp}}:(\C_{/F})^{\textnormal{opp}}\rightarrow \C^{\textnormal{opp}}$, and write $p_!:\textnormal{Fun}((\C_{/F})^{\textnormal{opp}},\mS)\rightarrow \textnormal{Fun}(\C^{\textnormal{opp}},\mS)$ and $p_*:\textnormal{Fun}((\C_{/F})^{\textnormal{opp}},\mS)\rightarrow \textnormal{Fun}(\C^{\textnormal{opp}},\mS)$ for the left and right adjoints of $p^*$, respectively. Then:

    \begin{itemize}

        \item [$(1)$] Let $J_{/F}$ be the collection of sieves on objects $U\in \C_{/F}$ of the form $\C^{0}_{/p(U)}\times_{\C_{/p(U)}}(\C_{/F})_{/U}$, where $\C^{0}_{/p(U)}\subseteq \C_{/p(U)}$ is a $J$-covering sieve on $p(U)$. Then, $J_{/F}$ is a Grothendieck topology on $\C_{/F}$.

        \item [$(2)$] Suppose that $F$ is a $J$-sheaf. Then, the categorical equivalence $\textnormal{Fun}((\C_{/F})^{\textnormal{opp}},\mS)\xrightarrow{\sim}\textnormal{Fun}(\C^{\textnormal{opp}},\mS)_{/F}$ over $\textnormal{Fun}(\C^{\textnormal{opp}},\mS)$ given by the left Kan extension of the inclusion $\C_{/F}\hookrightarrow \textnormal{Fun}(\C^{\textnormal{opp}},\mS)_{/F}$ along $\C_{/F}\rightarrow \textnormal{Fun}((\C_{/F})^{\textnormal{opp}},\mS)$ restricts to a categorical equivalence $\textnormal{Shv}_{J_{/F}}(\C_{/F})\xrightarrow{\sim}\textnormal{Shv}_{J}(\C)_{/F}$.

        \item [$(3)$] The adjunction $p^*:\textnormal{Fun}(\C^{\textnormal{opp}},\mS)\rightleftarrows \textnormal{Fun}((\C_{/F})^{\textnormal{opp}},\mS):p_*$ restricts to an adjunction $p^*:\textnormal{Shv}_{J}(\C)\rightleftarrows \textnormal{Shv}_{J_{/F}}(\C_{/F}):p_*$. Moreover, when $F$ is $J$-sheaf, the adjunction $p_!:\textnormal{Fun}((\C_{/F})^{\textnormal{opp}},\mS)\rightleftarrows\textnormal{Fun}(\C^{\textnormal{opp}},\mS):p^*$ restricts to an adjunction $p_!:\textnormal{Shv}_{J_{/F}}(\C_{/F})\rightleftarrows \textnormal{Shv}_{J}(\C):p^*$. 

        \item [$(4)$] Whenever $F$ is a $J$-sheaf, the map $p_*:\textnormal{Shv}_{J_{/F}}(\C_{/F})\rightarrow \textnormal{Shv}_{J}(\C)$ is an \'{e}tale morphism of $(\infty,1)$-topoi.

    \end{itemize}

\end{proposition}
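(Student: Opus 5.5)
The plan is to prove the four claims in order, leaning throughout on the equivalence $\Theta:\textnormal{Fun}((\C_{/F})^{\textnormal{opp}},\mS)\xrightarrow{\sim}\textnormal{Fun}(\C^{\textnormal{opp}},\mS)_{/F}$ of HTT 5.1.6.12. Under $\Theta$, $p_!$ becomes the forgetful functor $G\mapsto (G\to F)\mapsto G$, and $p^*$ becomes $H\mapsto (H\times F\to F)$; in particular $p^*h_X\cong h_{(X,\eta)}$ is not quite right, but $p_!h_{(X,\eta)}\cong h_X$ for every $(X,\eta)\in \C_{/F}$.

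\textbf{Part (1).} I verify the axioms of a Grothendieck topology directly.
\begin{itemize}
\item[(a)] The maximal sieve is the preimage of the maximal sieve on $p(U)$, since $(\C_{/F})_{/U}\to \C_{/p(U)}$ is a trivial Kan fibration ($p$ being a right fibration).
\item[(b)] Stability under pullback along $f:V\to U$ follows from the corresponding stability of $J$ along $p(f)$, because pulling back commutes with the identification $(\C_{/F})_{/U}\simeq\C_{/p(U)}$.
\item[(c)] For the local character axiom, observe that this identification matches sieves on $U$ bijectively with sieves on $p(U)$. Hence a sieve on $U$ lies in $J_{/F}$ iff its image sieve lies in $J$, and the axiom transports from $J$.
\end{itemize}

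\textbf{Part (2).} Because sieves correspond bijectively, the covering sieve $\C^0_{/X}$ on $X=p(U)$ corresponds to the monomorphism $S\hookrightarrow h_X$, and its $J_{/F}$-counterpart is $p^*$-like: under $\Theta$ it is $S\to h_X\to F$. Let $(G\to F)$ be given. Mapping out of $S\to h_X$ in the slice over $F$ produces a fiber sequence comparing $\textnormal{Map}_{/F}(h_X,G)\to \textnormal{Map}_{/F}(S,G)$ with $\textnormal{Map}(h_X,G)\to\textnormal{Map}(S,G)$ over $\textnormal{Map}(h_X,F)\to\textnormal{Map}(S,F)$. If $F$ is a $J$-sheaf, the base map is an equivalence. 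Hence $G$ is a $J$-sheaf iff the fiberwise maps are equivalences for every point of $\textnormal{Map}(h_X,F)=F(X)$, i.e.\ for every object $U$ over $X$. That is exactly the statement that $\Theta^{-1}(G)$ is a $J_{/F}$-sheaf. This is the step I expect to need the most care, namely making the identification of $J_{/F}$-locality with locality in the slice precise via sieves as subobjects of representables.

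\textbf{Part (3).} For $p^*$, $p^*H$ evaluated on a $J_{/F}$-sieve is $H$ evaluated on the corresponding $J$-sieve, via the trivial fibration from (1). So $p^*$ preserves sheaves, and $p_*$ preserves sheaves by adjunction, since $p^*$ carries the generating $J$-local maps $S\to h_X$ into $J_{/F}$-local equivalences. More precisely, $p^*(S\to h_X)$ decomposes as a colimit over $F(X)$ of covering monomorphisms over each $(X,\eta)$, and local equivalences are closed under colimits. When $F$ is a sheaf, $p_!$ corresponds to the forgetful functor $\textnormal{Shv}_J(\C)_{/F}\to \textnormal{Shv}_J(\C)$ by (2), so it preserves sheaves, and the adjunction restricts.

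\textbf{Part (4).} Combining (2) and (3), $p^*$ on sheaves is identified with $H\mapsto H\times F$, which is the étale geometric morphism $\textnormal{Shv}_J(\C)_{/F}\to\textnormal{Shv}_J(\C)$ (HTT 6.3.5). The claim then follows by checking that the identifications are compatible with the adjoints.
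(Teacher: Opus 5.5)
Your parts (1), (2) and (4) follow the paper. In (1) you transport the axioms of $J$ along the trivial Kan fibrations $(\C_{/F})_{/U}\to\C_{/p(U)}$. In (2) your fibrewise argument is the paper's lemma on local objects in slices, combined with its identification of $J_{/F}$-covering monomorphisms as those whose image under $p_!$ is $J$-covering. Your cofinality argument that $p^*$ preserves sheaves is a valid alternative to the paper's adjunction argument.

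\textbf{The gap: $p_*$ preserving sheaves.} You correctly reduce this to showing that $p^*(m)$ is a $J_{/F}$-local equivalence for every covering monomorphism $m:S\to h_X$. But the decomposition you offer for it is false: $p^*h_X$ is not the colimit over $\eta\in F(X)$ of the representables $h_{(X,\eta)}$. Under $\Theta$, $p^*h_X$ is the projection $h_X\times F\to F$. By contrast, $\textnormal{colim}_{\eta\in F(X)}h_{(X,\eta)}$ is $h_X\times\underline{F(X)}\to F$, $(g,\eta)\mapsto g^*\eta$, where $\underline{F(X)}$ is the constant presheaf. The fibre $F(X)$ only sees objects of $\C_{/F}$ lying over $X$, whereas $p^*h_X$ has sections at every $U$ equipped with a map $p(U)\to X$. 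For a concrete counterexample, take $\C=\Delta^1=\{Y\to X\}$ and $F$ with $F(X)=\varnothing$, $F(Y)=*$. Then $\C_{/F}$ has a single object $U$, lying over $Y$. The colimit over $F(X)$ is the empty presheaf, but $p^*h_X(U)=\textnormal{Hom}_{\C}(Y,X)=*$.

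\textbf{The repair.} Index over the category of elements of $p^*h_X$ rather than over $F(X)$.
By universality of colimits in $\textnormal{Fun}((\C_{/F})^{\textnormal{opp}},\mS)$, the arrow $p^*(m)$ is the colimit, in the arrow category, of its pullbacks along all maps $h_U\to p^*h_X$. Such a map is the same as a map $g:p(U)\to X$ in $\C$.
The pullback $h_U\times_{p^*h_X}p^*S\to h_U$ is the monomorphism of the sieve on $U$ consisting of those $f:V\to U$ with $g\circ p(f)$ in the sieve of $S$. That is the preimage in $(\C_{/F})_{/U}$ of the sieve $g^*S$ on $p(U)$, which is $J$-covering by pullback stability. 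So each pullback is a $J_{/F}$-covering monomorphism.
Since local equivalences are closed under colimits in the arrow category, $p^*(m)$ is a local equivalence.
This is the paper's argument: it tests against representables via its lemma on strongly saturated classes, and computes the pullback through the cartesian counit square $p_!p^*(-)\simeq(-)\times F$. Your part (4) also depends on this step, since it needs $p_*$ to land in $\textnormal{Shv}_J(\C)$.
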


\begin{proposition}{}\label{Etale Slices of General Topoi}

    Under the notation of \ref{Etale Slices of Sheaf Topoi}, suppose that $F$ is a $J$-sheaf. Let $S$ be the isomorphism class of a small set of $\infty$-connective morphisms in $\textnormal{Shv}_J(\C)$, and write $\textnormal{Shv}_J(\C)^\wedge_S$ for the full subcategory of $\textnormal{Shv}_J(\C)$ spanned by the $S$-local objects. Let $\textnormal{Shv}_{J_{/F}}(\C_{/F})^\wedge_{S_{/F}}$ denote the full subcategory of $\textnormal{Shv}_{J_{/F}}(\C_{/F})$ spanned by the $(p_!)^{-1}(S)$-local objects. Then:

    \begin{itemize}

        \item [$(1)$] A morphism $f:X\rightarrow Y$ is $\infty$-connective in $\textnormal{Shv}_{J_{/F}}(\C_{/F})$ if and only if $p_!f:p_!X\rightarrow p_!Y$ is $\infty$-connective in $\textnormal{Shv}_J(\C)$. In particular, the inclusion $\textnormal{Shv}_{J_{/F}}(\C_{/F})^\wedge_{S_{/F}}\hookrightarrow \textnormal{Shv}_{J_{/F}}(\C_{/F})$ admits a cotopological left adjoint. 

        \item [$(2)$] The adjunction $p^*:\textnormal{Fun}(\C^{\textnormal{opp}},\mS)\rightleftarrows \textnormal{Fun}((\C_{/F})^{\textnormal{opp}},\mS):p_*$ restricts to an adjunction $p^*:\textnormal{Shv}_{J}(\C)^\wedge_{S}\rightleftarrows \textnormal{Shv}_{J_{/F}}(\C_{/F})^\wedge_{S_{/F}}:p_*$. If $F$ is $S$-local, the adjunction $p_!:\textnormal{Fun}((\C_{/F})^{\textnormal{opp}},\mS)\rightleftarrows\textnormal{Fun}(\C^{\textnormal{opp}},\mS):p^*$ restricts to an adjunction $p_!:\textnormal{Shv}_{J_{/F}}(\C_{/F})^\wedge_{S_{/F}}\rightleftarrows \textnormal{Shv}_{J}(\C)^\wedge_{S}:p^*$. 

        \item [$(3)$] Whenever $F$ is $S$-local, the map $p_*:\textnormal{Shv}_{J_{/F}}(\C_{/F})^\wedge_{S_{/F}}\rightarrow \textnormal{Shv}_{J}(\C)^\wedge_{S}$ is an \'{e}tale morphism of $(\infty,1)$-topoi.

    \end{itemize}

\end{proposition}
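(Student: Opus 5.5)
The plan is to reduce everything to Proposition \ref{Etale Slices of Sheaf Topoi} via the equivalence $\textnormal{Shv}_{J_{/F}}(\C_{/F})\simeq \textnormal{Shv}_J(\C)_{/F}$ of part (2) there. Under this equivalence, $p_!$ corresponds to the forgetful functor $\pi:\textnormal{Shv}_J(\C)_{/F}\rightarrow \textnormal{Shv}_J(\C)$; indeed $p_!$ preserves colimits, sends a representable $h_U$ to $h_{p(U)}$, and the equivalence of part (2) lies over $\textnormal{Fun}(\C^{\textnormal{opp}},\mS)$. Similarly $p^*$ corresponds to $X\mapsto X\times F$, and $p_*$ to the right adjoint of that functor. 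So the whole statement becomes a statement about the étale slice $\cX_{/F}$ of the topos $\cX=\textnormal{Shv}_J(\C)$, with $S$ a small set of $\infty$-connective morphisms of $\cX$.

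For (1), I would invoke the fact that a morphism in $\cX_{/F}$ is $n$-connective if and only if its image under $\pi$ is $n$-connective in $\cX$ (HTT 6.5.1.18 / 7.2.1.13 on étale geometric morphisms). Here $n$-connectivity is phrased through effective epimorphisms and homotopy sheaves, and both are computed in the slice the same way as in $\cX$. Taking all $n$ gives the $\infty$-connective claim. Since $(p_!)^{-1}(S)$ is then a class of $\infty$-connective morphisms, I would replace it by a small set generating the same local objects. To do this, note that $S$-local is equivalent to being local for all base changes of elements of $S$ together with their iterated diagonals. Lifts to the slice are indexed by $\textnormal{Hom}(Y,F)$ for $s:X\rightarrow Y$ in $S$, which is small. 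The cotopological left adjoint then follows from HTT 6.5.2.17/6.2.1.5: localization at a small set of monomorphisms, after passing to diagonals, is topological and left exact, and its $\infty$-connective generating set makes it cotopological. I expect this smallness bookkeeping to be the main obstacle, specifically checking that the local objects for $(p_!)^{-1}(S)$ form a left exact localization. My first attempt would be to show that $(p_!)^{-1}(\overline{S})$ is stable under base change in $\cX_{/F}$, using that $\pi$ preserves and reflects pullbacks of connected diagrams.

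For (2), I would argue as follows. The functor $p^*$ preserves $\infty$-connective morphisms, since it is the inverse image of a geometric morphism. By adjunction, $p_*$ of an $S_{/F}$-local object is $S$-local. To see this, test against $s\in S$: $\textnormal{Map}(s,p_*Z)=\textnormal{Map}(p^*s,Z)$, and $p^*s=s\times F$ lies in $(p_!)^{-1}$ of the base-change closure of $S$. Conversely, $p^*$ of an $S$-local object $X$ is $S_{/F}$-local. For a morphism $t$ with $p_!t\in S$, the mapping space $\textnormal{Map}_{/F}(t,X\times F)$ is the fibre of $\textnormal{Map}(p_!t,X)$ over $\textnormal{Map}(p_!t,F)$. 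When $F$ is $S$-local, both of these are equivalences, so $p_!$ also preserves local objects, since an object of $\cX_{/F}$ with local source is then local. This gives the two restricted adjunctions.

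For (3), the localized slice $\textnormal{Shv}_{J_{/F}}(\C_{/F})^\wedge_{S_{/F}}$ is identified with $(\cX^\wedge_S)_{/F}$, using (2) and the fact that $F$ is $S$-local. Étaleness of $p_*$ then follows because it is the canonical geometric morphism $(\cX^\wedge_S)_{/F}\rightarrow \cX^\wedge_S$ (HTT 6.3.5).
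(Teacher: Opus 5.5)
Your route is the paper's: you transport along the equivalence of \ref{Etale Slices of Sheaf Topoi}~(2), so that $p_!$ becomes the forgetful functor $\textnormal{Shv}_J(\C)_{/F}\rightarrow\textnormal{Shv}_J(\C)$. You then detect connectivity in the base (the paper cites HTT 6.5.1.19), argue by adjunction for (2), and get (3) by identifying the localized slice with $(\textnormal{Shv}_J(\C)^\wedge_S)_{/F}$. However, the step you call bookkeeping is a genuine gap, and your proposed way around it does not work.

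An object $Z\rightarrow F$ of the slice is $(p_!)^{-1}(S)$-local exactly when the map $Z\rightarrow F$ is right orthogonal to each $s\colon A\rightarrow B$ in $S$. This only tests lifts along maps $B\rightarrow F$ and says nothing about base changes of $s$. For instance, if no target of an element of $S$ admits a map to $F$, then $(p_!)^{-1}(S)$ is empty and every object of the slice is local. So the fact that, for left exact $L_S$, $S$-locality in $\textnormal{Shv}_J(\C)$ can be tested on base changes and iterated diagonals does not pass to the slice. Showing that $(p_!)^{-1}(\overline{S})$ is stable under base change is true but beside the point. What you need is $(p_!)^{-1}(\overline{S})\subseteq\overline{(p_!)^{-1}(S)}$. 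Lifting the generation of $\overline{S}$ to the slice breaks at the two-out-of-three step that recovers $u$ from $v$ and $v\circ u$, because a map from the target of $u$ to $F$ need not extend over the target of $v$. Both the left exactness in (1) and your argument that $p_*$ preserves local objects rest on this inclusion; in the latter, $p_!p^*s=s\times\textnormal{id}_F$ is a base change of $s$, not an element of $S$.

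What is missing is a pullback-stability hypothesis on $S$. The paper's one-line appeal to ``similar logic'' also uses this tacitly: in the proof of \ref{Etale Slices of Sheaf Topoi}~(3), it is the pullback-stability of $J$-covering sieves that puts the base change of $m$ along $h^\C_{p(U)}\rightarrow h^\C_X$ back into $T$. The statement presupposes $L_S$ left exact (take $F$ terminal in (1)). So you may enlarge $S$ by its iterated base changes along maps out of sheafified representables and its iterated diagonals. This is again a small set of $\infty$-connective morphisms with the same local objects in $\textnormal{Shv}_J(\C)$.

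For such $S$, the set $(p_!)^{-1}(S)$ has the same closure properties in $\textnormal{Shv}_{J_{/F}}(\C_{/F})$, since pullbacks and diagonals are computed in the base and $p_!$ carries representables to representables. Hence its localization is left exact, because localizing at a small set closed under base change along generators and under diagonals is left exact. It is cotopological by the first half of (1). For $p_*$, by \ref{Strongly Saturated Classes of Maps Tested By Rep Functors Cor} it suffices that the base changes of $p^*s$ along maps $h_U\rightarrow p^*B$ be inverted. The functor $p_!$ carries such a base change to the base change of $s$ along the induced map $h^\C_{p(U)}\rightarrow B$, which after sheafification lies in $S$.

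The other parts need no change. The $p^*$ half of (2) is just $\textnormal{Hom}_{/F}(t,p^*X)\simeq\textnormal{Hom}(p_!t,X)$, with no hypothesis on $F$; your paragraph on (2) entangles this with the $p_!$ argument. The $p_!$ half and (3) follow from \ref{Reflective Localizations and Slices} once $F$ is $S$-local.
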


\begin{remark}{}\label{Left Kan extensions of slice sheaves are sheaves}

    Under the notation of \ref{Etale Slices of Sheaf Topoi}, let $\C$ be a small $(\infty,1)$-category equipped with a Grothendieck topology $J$, and let $F:\C^{\textnormal{opp}}\rightarrow \mS$ be a map. Let $G:(\C_{/F})^{\textnormal{opp}}\rightarrow \mS$ be an arrow. Then, $p_!G:\C^{\textnormal{opp}}\rightarrow \mS$ may be identified as colimit of the composition $(\C_{/F})_{/G}\rightarrow \C_{/F}\xrightarrow{p}  \C \rightarrow \textnormal{Fun}(\C^{\textnormal{opp}},\mS)$. Since the composite $(\C_{/F})_{/G}\rightarrow \C$ is a right fibration with essentially small fibres, it follows that the colimit of said composition may be identified as the contravariant transport of the right fibration $(\C_{/F})_{/G}\rightarrow \C$ (\citeq{lurie2024kerodon} 03W3). In the case that $F$ is a $J$-sheaf, a consequence of \ref{Etale Slices of Sheaf Topoi} is that $G$ is a $J_{/F}$-sheaf if and only if said contravariant transport $\C^{\textnormal{opp}}\rightarrow \mS$ of the right fibration $(\C_{/F})_{/G}\rightarrow \C$ is a $J$-sheaf (in fact, this statement is equivalent to \ref{Etale Slices of Sheaf Topoi} $(2)$). This is a generalization of a classical result for fibrations in groupoids of $(1,1)$-categories (combine \citeq{stacks-project} 06NX with \citeq{stacks-project} 09WX).

\end{remark}

\begin{remark}{}

    Again under the notation of the notation of \ref{Etale Slices of Sheaf Topoi}, let $G:(\C_{/F})^{\textnormal{opp}}\rightarrow \mS$ be a map. Then, $p_*G:\C^{\textnormal{opp}}\rightarrow \mS$ may be identified as the contravariant transport for the right fibration $\C\times_{\textnormal{Fun}((\C_{/F})^{\textnormal{opp}},\mS)}\textnormal{Fun}((\C_{/F})^{\textnormal{opp}},\mS)_{/G}\rightarrow \C$, where the map $\C\rightarrow \textnormal{Fun}((\C_{/F})^{\textnormal{opp}},\mS)$ is given by the composition $\C\rightarrow \textnormal{Fun}(\C^{\textnormal{opp}},\mS)\xrightarrow{p^*}\textnormal{Fun}((\C_{/F})^{\textnormal{opp}},\mS)$. This follows from the fact that the composition $\textnormal{Fun}(\C^{\textnormal{opp}},\mS)^{\textnormal{opp}}\xrightarrow{(p^*)^{\textnormal{opp}}}\textnormal{Fun}((\C_{/F})^{\textnormal{opp}},\mS)^{\textnormal{opp}}\xrightarrow{h_G}\mS$ is representable by $p_*G$.
    
\end{remark}

\begin{remark}{}\label{Etale Morphisms Section Summary Remark}

    Under the notation of \ref{Etale Slices of General Topoi}, the categorical equivalence $\textnormal{Shv}_J(\C)_S^\wedge\xrightarrow{\sim}\infty\cT op^{\textnormal{\'{e}t}}_{/\textnormal{Shv}_J(\C)_S^\wedge}$ of \citeq{highertopostheory} 6.3.5.10 satisfies the following properties: 

    \begin{itemize}

        \item [$(1)$] Each $F\in \textnormal{Shv}_J(\C)_S^\wedge$ is identified with the \'{e}tale morphism of $(\infty,1)$-topoi $p_*:\textnormal{Shv}_{J_{/F}}(\C_{/F})^\wedge_{S_{/F}}\rightarrow \textnormal{Shv}_J(\C)^\wedge_S$ (up to isomorphism).

        \item [$(2)$] Also up to isomorphism, each map $\eta:G\rightarrow H$ of $S$-local $J$-sheaves is identified with the map $\eta_*:\textnormal{Shv}_{J_{/G}}(\C_{/G})^\wedge_{S_{/G}}\rightarrow \textnormal{Shv}_{J_{/H}}(\C_{/H})^\wedge_{S_{/H}}$ over $\textnormal{Shv}_J(\C)^\wedge_S$ right adjoint to the map $\eta^*:\textnormal{Shv}_{J_{/H}}(\C_{/H})^\wedge_{S_{/H}}\rightarrow \textnormal{Shv}_{J_{/G}}(\C_{/G})^\wedge_{S_{/G}}$, where $\eta^*$ is given by pulling-back along the opposite of the map of right fibrations $\eta\circ:\C_{/G}\rightarrow \C_{/H}$ over $\C$ induced by $\eta$. 

        \item [(3)] The map $\eta_*:\textnormal{Shv}_{J_{/G}}(\C_{/G})^\wedge_{S_{/G}}\rightarrow \textnormal{Shv}_{J_{/H}}(\C_{/H})^\wedge_{S_{/H}}$ in (2) may be identified as the restriction of the map $\eta_*:\textnormal{Fun}((\C_{/G})^{\textnormal{opp}},\mS)\rightarrow \textnormal{Fun}((\C_{/H})^{\textnormal{opp}},\mS)$ right adjoint to the arrow $\eta^*:\textnormal{Fun}((\C_{/H})^{\textnormal{opp}},\mS)\rightarrow \textnormal{Fun}((\C_{/G})^{\textnormal{opp}},\mS)$, where $\eta^*$ is given by pulling-back along the opposite of the map of right fibrations $\eta\circ:\C_{/G}\rightarrow \C_{/H}$ over $\C$ induced by $\eta$.

    \end{itemize}

\end{remark}

\begin{remark}{}

    Let $\C$ be a small $(\infty,1)$-category equipped with a  Grothendieck topology $J$. Let $\scO_\cX:\C^{\textnormal{opp}}\rightarrow \textnormal{CAlg}$ be a $J$-sheaf of $\Ei$-rings on $\C$, which we identify as a sheaf of $\Ei$-rings $\scO_\cX:\textnormal{Shv}_J(\C)^{\textnormal{opp}}\rightarrow \textnormal{CAlg}$ on the $(\infty,1)$-topos $\textnormal{Shv}_J(\C)$. Then, \ref{Etale Slices of Sheaf Topoi} implies that for each $J$-sheaf $F:\C^{\textnormal{opp}}\rightarrow \mS$, the \'{e}tale slice of the spectrally ringed $(\infty,1)$-topos $(\textnormal{Shv}_J(\C),\scO_\cX)$ over $F$ may be identified as the pair $(\textnormal{Shv}_{J_{/F}}(\C_{/F}),\scO_\cX|_F)$, where $\scO_\cX|_F:\textnormal{Shv}_{J_{/F}}(\C_{/F})^{\textnormal{opp}}\rightarrow \textnormal{CAlg}$ is the sheaf of $\Ei$-rings on $\textnormal{Shv}_{J_{/F}}(\C_{/F})$ corresponding to the $J_{/F}$-sheaf given by the composition $(\C_{/F})^{\textnormal{opp}}\rightarrow \C^{\textnormal{opp}}\xrightarrow{\scO_\cX}\textnormal{CAlg}$. 
\end{remark}

\begin{remark}{}\label{Equivalence of slice topoi explicit Remark}

    Under the notation of \ref{Etale Slices of Sheaf Topoi}, Proposition \ref{Equivalence of slice topoi explicit} implies that the equivalence $\theta:\textnormal{Shv}_J(\C)_{/F}\xrightarrow{\sim}\textnormal{Shv}_{J_{/F}}(\C_{/F})$ over $\textnormal{Shv}_J(\C)$ which is homotopy inverse to the equivalence $\textnormal{Shv}_{J_{/F}}(\C_{/F})\xrightarrow{\sim}\textnormal{Shv}_J(\C)_{/F}$ carries an arrow $\eta:E\rightarrow F$ to the composition $(\C_{/F})^{\textnormal{opp}}\hookrightarrow (\textnormal{Shv}_J(\C)_{/F})^{\textnormal{opp}}\xrightarrow{\textnormal{Hom}(-,\eta)}\mS$. In particular, under the informal identification of the objects of $\C_{/F}$ as a pair $(X\in \C,x\in F(X))$, the map $\theta(\eta):(\C_{/F})^{\textnormal{opp}}\rightarrow \mS$ carries each such pair $(X\in \C,x\in F(X))$ to the homotopy fibre of $\eta_X:E(X)\rightarrow F(X)$ over $x\in F(X)$. 
    
\end{remark}

\begin{proposition}{}\label{Equivalence of slice topoi explicit}

    Let $\C$ be a small $(\infty,1)$-category, let $F:\C^{\textnormal{opp}}\rightarrow \mS$ be a presheaf on $\C$, and consider the right fibration $\C_{/F}:=\C\times_{\textnormal{Fun}(\C^{\textnormal{opp}},\mS)}\textnormal{Fun}(\C^{\textnormal{opp}},\mS)_{/F}\rightarrow \C$. Let $L:\textnormal{Fun}((\C_{/F})^{\textnormal{opp}},\mS)\xrightarrow{\sim}\textnormal{Fun}(\C^{\textnormal{opp}},\mS)_{/F}$ be the canonical equivalence over $\textnormal{Fun}(\C^{\textnormal{opp}},\mS)$ specified in \ref{Etale Slices of Sheaf Topoi} (2), and let $R:\textnormal{Fun}(\C^{\textnormal{opp}},\mS)_{/F}\rightarrow \textnormal{Fun}((\C_{/F})^{\textnormal{opp}},\mS)$ be the map given by the composition 

    $$\textnormal{Fun}(\C^{\textnormal{opp}},\mS)_{/F}\xrightarrow{h_\bullet} \textnormal{Fun}((\textnormal{Fun}(\C^{\textnormal{opp}},\mS)_{/F})^{\textnormal{opp}},\mS)\rightarrow \textnormal{Fun}((\C_{/F})^{\textnormal{opp}},\mS).$$

    \noindent Then, $R$ is homotopy inverse to $L$.

\end{proposition}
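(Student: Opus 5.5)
We show that $R$ is right adjoint to $L$. Since $L$ is an equivalence, its right adjoint is automatically a homotopy inverse, so this proves the claim. The strategy is to exhibit, for each $\eta:E\rightarrow F$ in $\textnormal{Fun}(\C^{\textnormal{opp}},\mS)_{/F}$ and each $G\in\textnormal{Fun}((\C_{/F})^{\textnormal{opp}},\mS)$, a natural equivalence
$$\textnormal{Hom}_{\textnormal{Fun}(\C^{\textnormal{opp}},\mS)_{/F}}(L(G),\eta)\simeq \textnormal{Hom}_{\textnormal{Fun}((\C_{/F})^{\textnormal{opp}},\mS)}(G,R(\eta)).$$

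The first step identifies $L$ as the left Kan extension of the inclusion $\iota:\C_{/F}\hookrightarrow \textnormal{Fun}(\C^{\textnormal{opp}},\mS)_{/F}$ along the Yoneda embedding $h^{\C_{/F}}_\bullet$. Because $\textnormal{Fun}(\C^{\textnormal{opp}},\mS)_{/F}$ is presentable, the universal property of presheaf categories (HTT 5.1.5.6) shows that $L$ preserves small colimits and has a right adjoint. That right adjoint is the restricted Yoneda functor $\eta\mapsto \textnormal{Hom}(\iota(-),\eta)$, which is exactly the composite defining $R$. Concretely, the adjunction equivalence is obtained by writing $G$ as the colimit of representables over $(\C_{/F})_{/G}$. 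Then $\textnormal{Hom}(L(G),\eta)$ becomes the limit over that category of $\textnormal{Hom}(\iota(U),\eta)$, and this limit is $\textnormal{Hom}(G,R(\eta))$ by the Yoneda lemma. The step needing care is that $L$ really is that Kan extension, so that its values on representables are $\iota$ up to canonical equivalence. This holds because $h^{\C_{/F}}_\bullet$ is fully faithful (HTT 4.3.2.17), so the Kan extension restricts to $\iota$ on representables.

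The second step is to know that $L$ is an equivalence, which \ref{Etale Slices of Sheaf Topoi} (2) asserts as the canonical equivalence (HTT 5.1.6.12). I would also check this independently, which is the main obstacle if it is not taken as given. The check shows that the unit $G\rightarrow RL(G)$ is an equivalence on representables; since $R$ preserves colimits, it is then an equivalence everywhere, and it remains to show that $R$ is conservative.

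\emph{Unit on representables.} At a representable $h_U$ with $U=(X,x)$, evaluating the unit at $V=(Y,y)$ gives the map
$$\textnormal{Hom}_{\C_{/F}}(V,U)\rightarrow \textnormal{Hom}_{\textnormal{Fun}(\C^{\textnormal{opp}},\mS)_{/F}}(\iota V,\iota U).$$
This is an equivalence because $\C_{/F}$ is defined as a full pullback of the slice.

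\emph{$R$ preserves colimits.} This uses that $\iota(V)=h_Y\rightarrow F$ is representable-over-$F$. Mapping out of it into the slice computes the fibre of $E(Y)\rightarrow F(Y)$ over $y$. Colimits in the slice are computed in $\textnormal{Fun}(\C^{\textnormal{opp}},\mS)$, pointwise in $\C^{\textnormal{opp}}$, and fibres over a point commute with colimits over $F(Y)$ because colimits in $\mS$ are universal.

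\emph{$R$ is conservative.} This follows from the same fibre description: a map over $F$ is an equivalence iff it is one on every fibre over every $y\in F(Y)$.

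Combining these, $L\dashv R$ with $L$ an equivalence, so $R\simeq L^{-1}$. As a by-product, the fibre computation gives the explicit description in \ref{Equivalence of slice topoi explicit Remark}.
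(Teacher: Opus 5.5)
Your proof is correct, but it takes a different route from the paper's.

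\textbf{Your route.} You recognize $R$ as the restricted-Yoneda (``nerve'') functor, which is right adjoint to the realization $L=\textnormal{Lan}_{h^{\C_{/F}}_\bullet}\iota$. So $L\dashv R$ holds for formal reasons, and you conclude $R\simeq L^{-1}$ by uniqueness of adjoints. The hypothesis that $L$ is an equivalence is used only in this last step.

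\textbf{The paper's route.} The paper never sets up this adjunction. It first shows that $R$ preserves small colimits, by noting that $h^{\eta}\circ L\simeq \textnormal{ev}_{(X,\eta)}$ for $(X,\eta)\in\C_{/F}$ and using that $L$ is an equivalence. It then uses that $R\circ L$ and $L\circ R$ preserve colimits, so each is left Kan extended from $\C_{/F}$. Finally it checks on $\C_{/F}$ that these composites restrict to the Yoneda embedding and to the inclusion, respectively, using full faithfulness of $\C_{/F}\to\textnormal{Fun}(\C^{\textnormal{opp}},\mS)_{/F}$ (Kerodon 03LX).

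\textbf{Comparison.} The paper's argument is a hands-on density argument on both composites, with the equivalence used up front to get cocontinuity of $R$. Yours is shorter and more conceptual: it never has to analyze $L\circ R$, and it isolates exactly where the equivalence is needed.

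\textbf{Your optional independent check.} Your verification that $L$ is an equivalence is also sound. You show the unit is invertible on representables, that $R$ is cocontinuous (via fibres of $E(Y)\to F(Y)$ and universality of colimits in $\mS$), and that $R$ is conservative, then apply the triangle identity to the counit. This essentially reproves HTT 5.1.6.12, and its fibre computation is precisely the description recorded in Remark \ref{Equivalence of slice topoi explicit Remark}. It is not needed, however, since the statement takes $L$ to be the equivalence of \ref{Etale Slices of Sheaf Topoi} (2).
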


\begin{proof}

    We first wish to prove that $R$ preserves small colimits. Let $X\in \C$, and let $\eta:h^\C_X\rightarrow F$ be a map. Identifying the pair $(X,\eta)$ as an object of $\C_{/F}$, we observe that the composition 

    $$\textnormal{Fun}((\C_{/F})^{\textnormal{opp}},\mS)\xrightarrow{L}\textnormal{Fun}(\C^{\textnormal{opp}},\mS)_{/F}\xrightarrow{h^\eta}\mS$$

    \noindent may be identified as $\textnormal{ev}_{(X,\eta)}$. Since $L$ is an equivalence, it follows that $h^\eta:\textnormal{Fun}(\C^{\textnormal{opp}},\mS)_{/F}\rightarrow \mS$ preserves small colimits, implying that $R:\textnormal{Fun}(\C^{\textnormal{opp}},\mS)_{/F}\rightarrow \textnormal{Fun}((\C_{/F})^{\textnormal{opp}},\mS)$ preserves small colimits. This implies that the composition $\textnormal{Fun}(\C^{\textnormal{opp}},\mS)_{/F}\xrightarrow{R}\textnormal{Fun}((\C_{/F})^{\textnormal{opp}},\mS)\xrightarrow{L}\textnormal{Fun}(\C^{\textnormal{opp}},\mS)_{/F}$ is the left Kan extension of the composition $\C_{/F}\rightarrow \textnormal{Fun}(\C^{\textnormal{opp}},\mS)_{/F}\xrightarrow{R}\textnormal{Fun}((\C_{/F})^{\textnormal{opp}},\mS)\xrightarrow{L}\textnormal{Fun}(\C^{\textnormal{opp}},\mS)_{/F}$ along $\C_{/F}\rightarrow \textnormal{Fun}(\C^{\textnormal{opp}},\mS)_{/F}$. Similarly, the composition $\textnormal{Fun}((\C_{/F})^{\textnormal{opp}},\mS)\xrightarrow{L}\textnormal{Fun}(\C^{\textnormal{opp}},\mS)\xrightarrow{R}\textnormal{Fun}((\C_{/F})^{\textnormal{opp}},\mS)$ is the left Kan extension of the composition $\C_{/F}\rightarrow \textnormal{Fun}((\C_{/F})^{\textnormal{opp}},\mS)\xrightarrow{L}\textnormal{Fun}(\C^{\textnormal{opp}},\mS)\xrightarrow{R}\textnormal{Fun}((\C_{/F})^{\textnormal{opp}},\mS)$ along $\C_{/F}\rightarrow \textnormal{Fun}((\C_{/F})^{\textnormal{opp}},\mS)$. Thus, it suffices to prove the following two claims:

    \begin{itemize}
    
        \item [$(1)$] The composition $\C_{/F}\rightarrow \textnormal{Fun}(\C^{\textnormal{opp}},\mS)_{/F}\xrightarrow{R}\textnormal{Fun}((\C_{/F})^{\textnormal{opp}},\mS)\xrightarrow{L}\textnormal{Fun}(\C^{\textnormal{opp}},\mS)_{/F}$ is naturally isomorphic to $\C_{/F}\rightarrow \textnormal{Fun}(\C^{\textnormal{opp}},\mS)_{/F}$.

        \item [$(2)$] The composition $\C_{/F}\rightarrow \textnormal{Fun}((\C_{/F})^{\textnormal{opp}},\mS)\xrightarrow{L}\textnormal{Fun}(\C^{\textnormal{opp}},\mS)_{/F}\xrightarrow{R}\textnormal{Fun}((\C_{/F})^{\textnormal{opp}},\mS)$ is naturally isomorphic to the Yoneda embedding $\C_{/F}\rightarrow \textnormal{Fun}((\C_{/F})^{\textnormal{opp}},\mS)$.

    \end{itemize}

    \noindent Claim $(2)$ follows from combining the fact that the composition $\C_{/F}\rightarrow \textnormal{Fun}((\C_{/F})^{\textnormal{opp}},\mS)\xrightarrow{L}\textnormal{Fun}(\C^{\textnormal{opp}},\mS)_{/F}$ is naturally isomorphic to $\C_{/F}\rightarrow \textnormal{Fun}(\C^{\textnormal{opp}},\mS)_{/F}$, with the fact that $\C_{/F}\rightarrow \textnormal{Fun}(\C^{\textnormal{opp}},\mS)_{/F}$ is fully faithful (\citeq{lurie2024kerodon} 03LX). Claim $(1)$ immediately follow from the fact that $R$ is the left Kan extension of the Yoneda embedding $\C_{/F}\rightarrow \textnormal{Fun}((\C_{/F})^{\textnormal{opp}},\mS)$ along $\C_{/F}\rightarrow \textnormal{Fun}(\C^{\textnormal{opp}},\mS)_{/F}$ (again by \citeq{lurie2024kerodon} 03LX).

\end{proof}

\noindent As a final note before beginning our proof of \ref{Etale Slices of Sheaf Topoi}, we demonstrate that not all size conditions asked in \ref{Etale Slices of Sheaf Topoi} are necessary:

\begin{proposition}{}\label{Etale Slices of Sheaf Topoi With no Size Conditions}

    Let $\C$ be a (possibly large) $(\infty,1)$-category equipped with a Grothendieck topology $J$, let $\kappa$ be an uncountable regular cardinal, and let $F:\C^{\textnormal{opp}}\rightarrow \widehat{\mS}$ be a $J$-sheaf taking values in $\kappa$-small spaces. Write $\textnormal{Shv}^J_{<\kappa}(\C)$ for the full subcategory of $\widehat{\textnormal{Shv}}_J(\C)$ spanned by those $E:\C^{\textnormal{opp}}\rightarrow \widehat{\mS}$ taking values in $\mS_{<\kappa}$, and let $\eta:E\rightarrow X$ be a vertex of $\textnormal{Shv}^J_{<\kappa}(\C)_{/F}\subseteq \widehat{\textnormal{Shv}}_J(\C)_{/F}$. Then:

    \begin{itemize}
    
        \item [$(1)$] Up to equivalence, the composition $(\C_{/F})^\textnormal{opp}\hookrightarrow (\widehat{\textnormal{Shv}}_J(\C)_{/F})^\textnormal{opp}\xrightarrow{\textnormal{Hom}(-,\eta)}\widehat{\mS}$ takes values in $\mS_{<\kappa}$. 

        \item [$(2)$] Using (1) to identify the composition $\textnormal{Shv}^J_{<\kappa}(\C)_{/F}\hookrightarrow \widehat{\textnormal{Shv}}_J(\C)_{/F}\xrightarrow{\sim}\widehat{\textnormal{Shv}}_{J_{/F}}(\C_{/F})$ as a map $\theta:\textnormal{Shv}^J_{<\kappa}(\C)_{/F}\rightarrow \textnormal{Shv}^{J_{/F}}_{<\kappa}(\C_{/F})$, we have that $\theta$ is an equivalence.
        
    \end{itemize}

\end{proposition}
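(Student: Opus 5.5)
Note first that the statement has a typo: the vertex should be $\eta:E\rightarrow F$. The plan is to reduce to the small case \ref{Etale Slices of Sheaf Topoi} together with the explicit inverse of \ref{Equivalence of slice topoi explicit} and \ref{Equivalence of slice topoi explicit Remark}. We pass to a larger universe in which $\C$ is small, so that $\widehat{\mS}$ plays the role of $\mS$. In that universe, \ref{Etale Slices of Sheaf Topoi} (2) gives an equivalence $\widehat{\textnormal{Shv}}_{J_{/F}}(\C_{/F})\xrightarrow{\sim}\widehat{\textnormal{Shv}}_J(\C)_{/F}$. Its homotopy inverse $\theta$ sends $\eta$ to $\textnormal{Hom}(-,\eta)$ restricted to $\C_{/F}$. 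Strictly, the ``hat'' categories must be read relative to a universe in which $\C$ is small; I would first check, using \ref{HyperSheafification Ind of Choice of Universe Intro} (1), that the sheaf condition and the equivalence do not depend on this choice of universe. The sheaf condition is a limit condition, and limits of small spaces computed in $\widehat{\mS}$ agree with those computed in a bigger universe, so this should be harmless.

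For (1), I use the pointwise description in \ref{Equivalence of slice topoi explicit Remark}. An object of $\C_{/F}$ is a pair $(X, x\in F(X))$, and $\theta(\eta)$ evaluated at $(X,x)$ is the homotopy fibre of $\eta_X:E(X)\rightarrow F(X)$ over $x$. Both $E(X)$ and $F(X)$ are $\kappa$-small. Using the long exact sequence of homotopy groups (or $\kappa$-smallness being closed under finite limits for uncountable $\kappa$, HTT 5.4.1), the fibre is also $\kappa$-small. Hence $\theta(\eta)$ takes values in $\mS_{<\kappa}$ up to equivalence.

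For (2), $\theta$ is the restriction of an equivalence, so it is fully faithful. It remains to show essential surjectivity onto $\textnormal{Shv}^{J_{/F}}_{<\kappa}(\C_{/F})$. Let $G$ be a $J_{/F}$-sheaf with values in $\mS_{<\kappa}$. Its image under the equivalence is $p_!G\rightarrow F$. By \ref{Left Kan extensions of slice sheaves are sheaves}, $p_!G$ is the contravariant transport of the right fibration $(\C_{/F})_{/G}\rightarrow\C$. Concretely, $(p_!G)(X)$ is a colimit over $x\in F(X)$ of $G(X,x)$, i.e. the total space of a fibration over $F(X)$ with fibres $G(X,x)$. A $\kappa$-small-indexed colimit of $\kappa$-small spaces is $\kappa$-small, since $\kappa$ is regular and uncountable and $F(X)$ is $\kappa$-small. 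Therefore $p_!G$ lies in $\textnormal{Shv}^J_{<\kappa}(\C)$, it is a $J$-sheaf by \ref{Etale Slices of Sheaf Topoi} (2), and $\theta(p_!G)\simeq G$.

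The main obstacle is the universe bookkeeping. \ref{Etale Slices of Sheaf Topoi} is stated for small $\C$ and $\mS$-valued functors, so I have to verify two things. First, $\widehat{\textnormal{Shv}}_J(\C)$ and the slice equivalence, formed in the enlarged universe, restrict compatibly to $\widehat{\mS}$-valued functors. Second, the fibrewise formulas above, stated in \ref{Equivalence of slice topoi explicit Remark} and \ref{Left Kan extensions of slice sheaves are sheaves}, hold verbatim there. I would handle this by observing that all constructions involved (fibres, transports, Kan extensions along right fibrations) are computed pointwise. They are therefore invariant under enlarging the universe.
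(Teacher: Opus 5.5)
Your proposal is correct and follows the paper's own argument. The paper also reads off $\theta(\eta)$ pointwise as the homotopy fibres of $\eta_X$ via \ref{Equivalence of slice topoi explicit Remark}, and then uses the single fact that a space over an essentially $\kappa$-small base is essentially $\kappa$-small if and only if all its homotopy fibres are; its two directions are exactly your arguments for (1) and for essential surjectivity in (2), and you are right that the vertex should read $\eta:E\rightarrow F$.
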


\begin{proof}

\citeq{lurie2024kerodon} 05DV implies that given a map of spaces $f:A\rightarrow B$ with $B$ essentially $\kappa$-small, then $A$ is essentially $\kappa$-small if and only if $A$ is essentially $\kappa$-small. Our result now follows from \ref{Equivalence of slice topoi explicit Remark}. 

\end{proof}

\noindent Our proof of \ref{Etale Slices of Sheaf Topoi} will require some preliminaries.

\begin{lemma}{}\label{Sieves and Trivial Kan Fibrations}

    Let $F:\C\xrightarrow{\sim}\cD$ be an equivalence of $(\infty,1)$-categories. Then:

    \begin{itemize}
    
        \item [$(1)$] The construction $\cD^0\mapsto \C\times_\cD\cD^0$ gives a bijection between replete full subcategories of $\cD$ and $\C$. Moreover, an explicit inverse is given by the assignment $\C^0\mapsto \cD\times_\C\C^0$, where $\cD\times_\C\C^0$ is given by pulling-back $\C^0\hookrightarrow \C$ along a map $F^{-1}:\cD\rightarrow \C$ homotopy inverse to $F$. 

        \item [$(2)$] The bijection of $(1)$ restricts to a bijection between sieves on $\cD$ and sieves on $\C$. 
        
    \end{itemize}

\end{lemma}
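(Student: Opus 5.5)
The argument is elementary: it works with strict pullbacks of simplicial sets. A replete full subcategory of an $(\infty,1)$-category is determined by the set of isomorphism classes of objects it contains. Pulling back along $F$ therefore amounts to taking preimages of sets of objects: $\C\times_\cD\cD^0$ is the full subcategory of $\C$ spanned by those $X$ with $F(X)\in\cD^0$. This subcategory is replete, since $F$ preserves isomorphisms and $\cD^0$ is replete.

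\textbf{Step (1): the bijection.} Fix a homotopy inverse $F^{-1}$ with natural isomorphisms $F^{-1}F\cong\textnormal{id}_\C$ and $FF^{-1}\cong\textnormal{id}_\cD$. The two assignments are mutually inverse because, for replete $\C^0$,
$$X\in \C\times_\cD(\cD\times_\C\C^0)\iff F^{-1}F(X)\in\C^0\iff X\in\C^0,$$
where the last equivalence uses the isomorphism $F^{-1}F(X)\cong X$ together with repleteness; the other composite is symmetric. The same computation shows that $\cD\times_\C\C^0$ does not depend on the choice of $F^{-1}$, because any two homotopy inverses are naturally isomorphic.

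\textbf{Step (2): sieves.} I take a sieve on $\cD$ to mean a full subcategory $\cD^0$ such that whenever $g:D'\to D$ is a morphism with $D\in\cD^0$, also $D'\in\cD^0$. Such a subcategory is automatically replete. It suffices to show that pullback along any functor $G$ (not necessarily an equivalence) sends sieves to sieves. Suppose $f:X'\to X$ is a morphism in $\C$ with $F(X)\in\cD^0$. Then $F(f):F(X')\to F(X)$ has target in $\cD^0$, so $F(X')\in\cD^0$. Applying this both to $F$ and to $F^{-1}$, and invoking (1), gives the bijection on sieves.

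\textbf{Main obstacle.} The only delicate point is the bookkeeping at the simplicial-set level: one must check that a strict pullback of a full subcategory inclusion is again the inclusion of the full simplicial subset spanned by the preimage vertices, so that it is a full subcategory. This holds because a simplex of $\C$ lies in the pullback exactly when its image lies in $\cD^0$, and for a full subcategory that condition only involves the vertices of the simplex. A second point is that repleteness is exactly what is needed to absorb the fact that $F^{-1}F$ is only isomorphic to the identity rather than equal to it.
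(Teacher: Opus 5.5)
Your proposal is correct and takes essentially the same route as the paper. Both arguments describe the pulled-back subcategory by its vertices, use repleteness to absorb the isomorphisms $F^{-1}F\cong\textnormal{id}$ and $FF^{-1}\cong\textnormal{id}$ so that both composites are the identity, and deduce (2) from (1) together with the fact that pullback preserves sieves. The one difference is cosmetic: the paper phrases sieve-stability as stability of right fibrations under pullback, while you check the sieve condition directly on morphisms, which is equivalent.
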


\begin{proof}

    We first prove (1). Let $\cD^0\subseteq \cD$ be a replete full subcategory (i.e., a full subcategory such that $\cD^0\hookrightarrow \cD$ is an isofibration), and note that $\C\times_\cD\cD^0\subseteq \C$ is a replete full subcategory of $\C$. Write $F^{-1}:\C\xrightarrow{\sim}\cD$ for a homotopy inverse to $F$, and consider now the replete full subcategory $\cD\times_\C(\C\times_\cD\cD^0)\hookrightarrow \cD$. We may identify $\cD\times_\C(\C\times_\cD\cD^0)\hookrightarrow \cD$ as $\cD\times_{F^{-1}\circ F,\cD,i}\cD^0$, where $i:\cD^0\hookrightarrow \cD$ is the inclusion, noting the projection map $\cD\times_{F^{-1}\circ F,\cD,i}\cD^0\rightarrow \cD$ is a replete full subcategory of $\cD$. We will proceed to prove that $\cD\times_{F^{-1}\circ F,\cD,i}\cD^0=\cD^0$ as subsets of $\cD$, which will immediately prove our claim. Let $X\in \cD\times_{F^{-1}\circ F,\cD,i}\cD^0\subseteq \cD$, which implies that $F^{-1}(F(X))\in \cD^0$. Since $F^{-1}(F(X))\cong X$ and $\cD^0\subseteq \cD$ is replete, it follows that $X\in \cD^0$. Thus, $\cD\times_{F^{-1}\circ F,\cD,i}\cD^0\subseteq \cD^0$. Now, let $Y\in \cD^0$, and note that $F^{-1}(F(Y))\cong Y$, implying that $F^{-1}(F(Y))\in \cD^0$. Thus, $Y\in \cD\times_{F^{-1}\circ F,\cD,i}\cD^0$, so $\cD\times_{F^{-1}\circ F,\cD,i}\cD^0=\cD^0$.

    We now prove (2). Let $\cD^0\subseteq \cD$ again be a replete full subcategory of $\cD$. If $\cD^0\subseteq \cD$ is a sieve on $\cD$, then $\C\times_\cD\cD^0\hookrightarrow \C$ is a right fibration, implying that $\C\times_\cD\cD^0\hookrightarrow \C$ is a sieve on $\C$. If $\C\times_\cD\cD^0\hookrightarrow \C$ is a sieve on $\C$, then $\cD\times_\C(\C\times_\cD\cD^0)\hookrightarrow \cD$ is a sieve on $\cD$. Since $\cD^0=\cD\times_\C(\C\times_\cD\cD^0)$, this implies that $\cD^0\subseteq \cD$ is a sieve on $\cD$ if and only if $\C\times_\cD\cD^0\hookrightarrow \C$ is a sieve on $\C$, as desired.

\end{proof}

\begin{lemma}{}\label{Strongly Saturated Classes of Maps Tested By Rep Functors}

    Let $\cX$ be a cocomplete $(\infty,1)$-category with pullbacks, satisfying the property that all small colimits in $\cX$ are universal. Let $\cX_0\subseteq \cX$ be a small full subcategory satisfying the property that every object in $\cX$ may be written as a small colimit of objects in $\cX_0$. Let $\overline{S}$ be a collection of arrows in $\cX$ satisfying the property that $\overline{S}$ is closed under small colimits in $\textnormal{Fun}(\Delta^1,\cX)$, and satisfying the property that $\overline{S}$ is closed under pullbacks in $\cX$. Let $f:X\rightarrow Y$ be a map in $\cX$. Then, the following are equivalent:

    \begin{itemize}
    
        \item [$(1)$] The map $f$ is contained in $\overline{S}$.

        \item [$(2)$] For all $V\in \cX_0$ and maps $g:V\rightarrow Y$, the pullback $V\times_YX\rightarrow V$ is in $\overline{S}$.
        
    \end{itemize}
    
\end{lemma}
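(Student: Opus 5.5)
The implication $(1)\Rightarrow(2)$ is immediate: $\overline{S}$ is closed under pullbacks in $\cX$, so for each $V\in \cX_0$ and $g:V\rightarrow Y$ the base change $V\times_YX\rightarrow V$ of $f$ lies in $\overline{S}$. All the content is in $(2)\Rightarrow(1)$. The plan is to write $f$ as a small colimit in $\textnormal{Fun}(\Delta^1,\cX)$ of the pullbacks appearing in $(2)$. Universality of colimits identifies that colimit with $f$, and closure of $\overline{S}$ under small colimits then gives $f\in\overline{S}$.

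Concretely, I will use the slice $\cX_0\times_{\cX}\cX_{/Y}$. Its objects are pairs $(V,g:V\rightarrow Y)$ with $V\in \cX_0$, and it is small because $\cX_0$ is small and $\cX$ has small mapping spaces. The key intermediate claim is that the tautological diagram $\cX_0\times_\cX\cX_{/Y}\rightarrow \cX_{/Y}$ has colimit $\textnormal{id}_Y$, i.e.\ that $Y\cong \varinjlim_{(V,g)}V$. To get this, I first use the hypothesis to write $Y\cong\varinjlim_{a\in A}V_a$ with $A$ small and each $V_a\in\cX_0$. I then compare this presentation with the canonical one indexed by $\cX_0\times_\cX\cX_{/Y}$, via the functor $A\rightarrow \cX_0\times_\cX\cX_{/Y}$ sending $a$ to $V_a\rightarrow Y$.

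I expect this comparison to be the main obstacle. The functor need not be right cofinal, so the canonical diagram need not recover $Y$ by a naive cofinality argument, and it is here that universality of colimits must enter. I will first try the following route. Pull back along each $V_a\rightarrow Y$, and use universality to write $V_a\cong \varinjlim_{(V,g)}V\times_Y V_a$. The canonical map $\varinjlim_{(V,g)}V\rightarrow Y$ then becomes a colimit, over $a$, of the maps $\varinjlim_{(V,g)} V\times_YV_a\rightarrow V_a$. Each of these has a section coming from the object $(V_a,V_a\rightarrow Y)$. Rather than pursue that section argument in full, I will fall back on a cleaner device: replace the canonical indexing category by $A$ itself. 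That is, I will run the whole argument with the diagram $a\mapsto V_a$, for which $\varinjlim_a V_a\cong Y$ by construction, so no density statement is needed.

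With that choice, the proof goes as follows. Form the diagram $a\mapsto (V_a\times_YX\rightarrow V_a)$ in $\textnormal{Fun}(\Delta^1,\cX)$; each term lies in $\overline{S}$ by $(2)$. Colimits in $\textnormal{Fun}(\Delta^1,\cX)$ are computed pointwise, so the colimit of this diagram is the arrow $\varinjlim_a(V_a\times_YX)\rightarrow \varinjlim_aV_a\cong Y$. Since colimits in $\cX$ are universal, pulling back the colimit diagram $a\mapsto V_a$ with cone point $Y$ along $f:X\rightarrow Y$ yields a colimit diagram, so $\varinjlim_a(V_a\times_YX)\cong X\times_YY\cong X$. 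These identifications are compatible, so the colimit arrow is equivalent to $f$. Hence $f\in\overline{S}$ by closure under small colimits. The only remaining care is to set this up as an honest diagram $A^\triangleright\rightarrow \textnormal{Fun}(\Delta^1,\cX)$, built from a colimit diagram $\overline{V}:A^\triangleright\rightarrow\cX$ by forming the pullback along $f$ functorially, for instance as a right Kan extension over $\Lambda^2_2$.
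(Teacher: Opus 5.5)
Your final argument is correct and is essentially the paper's proof: write $Y$ as the colimit of a small diagram in $\cX_0$, pull that diagram back along $f$ inside $\textnormal{Fun}(K,\cX)$ so that universality of colimits exhibits $f$ as the colimit in $\textnormal{Fun}(\Delta^1,\cX)$ of the base changes $V_a\times_YX\rightarrow V_a$, and conclude by closure of $\overline{S}$ under small colimits. As you noticed yourself, the detour through the canonical slice $\cX_0\times_{\cX}\cX_{/Y}$ is unnecessary and can simply be deleted.
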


\begin{proof}

    Since $\overline{S}$ is closed under pullbacks, we have that (1) implies (2). We now prove the converse. Choose some small diagram $F:K\rightarrow \cX_0$ such that there exists a natural transformation $\eta_Y:F\rightarrow \underline{Y}_K$ in $\textnormal{Fun}(K,\cX)$ exhibiting $Y$ as the colimit of $F$. Let $U:\Delta^1\times \Delta^1\rightarrow \textnormal{Fun}(K,\cX)$ be a diagram witnessing the pullback of $\underline{f}_K:\underline{X}_K\rightarrow \underline{Y}_K$ along $\eta$, which we identify as a map $U^\prime:\Delta^1\rightarrow \textnormal{Fun}(\Delta^1\times K,\cX)$ from some $\eta_X:F^\prime\rightarrow \underline{X}_K$ to $\eta_Y:F\rightarrow \underline{Y}_K$. Since $\eta_Y:F\rightarrow \underline{Y}_K$ exhibits $Y$ as the colimit of $F$, and colimits in $\cX$ are universal, it follows that $\eta_X:F^\prime\rightarrow \underline{X}_K$ exhibits $\underline{X}_K$ as the colimit of a diagram $F^\prime:K\rightarrow \cX$. It follows that we may identify $U^\prime$ as an edge of $\textnormal{Fun}(K,\textnormal{Fun}(\Delta^1,\cX))$ witnessing $f:X\rightarrow Y$ as the colimit of a diagram $\eta^\prime:K\rightarrow \textnormal{Fun}(\Delta^1,\cX)$. It follows from (2) that $\eta^\prime$ takes values in the full subcategory of $\textnormal{Fun}(\Delta^1,\cX)$ spanned by $\overline{S}$, so (1) now follows from the fact that $\overline{S}$ is closed under small colimits.

\end{proof}

\begin{example}{}\label{Strongly Saturated Classes of Maps Tested By Rep Functors Cor}

    Let $\C$ be a small $(\infty,1)$-category, and let $L:\textnormal{Fun}(\C^{\textnormal{opp}},\mS)\rightarrow \cX$ be a left exact reflective localization, which is not necessarily accessible. Then, \ref{Strongly Saturated Classes of Maps Tested By Rep Functors} implies that an edge $\eta:F\rightarrow G$ in $\textnormal{Fun}(\C^{\textnormal{opp}},\mS)$ is an $L$-equivalence if and only if, for all $X\in \C$ and maps $h^\C_X\rightarrow G$, the pullback $h^\C_X\times_GF\rightarrow h^\C_X$ is an $L$-equivalence. 
    
\end{example}

\begin{lemma}{}\label{Reflective Localizations and Slices}

    Let $\C$ be an $(\infty,1)$-category, let $S$ be a collection of arrows in $\C$, and let $\C^\prime\subseteq \C$ be the full subcategory spanned by the $S$-local objects. Let $X\in \C^\prime$, and write $p:\C_{/X}\rightarrow \C$ for the projection. Then, $\C^\prime_{/X}\subseteq \C_{/X}$ may be identified as the full subcategory spanned by the $p^{-1}(S)$-local objects. 

\end{lemma}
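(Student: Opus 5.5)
The plan is to compare mapping spaces in the slice with mapping spaces in $\C$ via the standard fibre sequence. For objects $(A\xrightarrow{a}X)$ and $(B\xrightarrow{b}X)$ of $\C_{/X}$ there is a pullback square of spaces
$$\textnormal{Hom}_{\C_{/X}}(a,b)\rightarrow \textnormal{Hom}_\C(A,B)\rightarrow \textnormal{Hom}_\C(A,X),$$
in which $\textnormal{Hom}_{\C_{/X}}(a,b)$ is identified with the homotopy fibre of $b\circ -:\textnormal{Hom}_\C(A,B)\rightarrow \textnormal{Hom}_\C(A,X)$ over the point $a$. This is standard (e.g.\ Kerodon's description of mapping spaces in slice $\infty$-categories). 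We will use it both for the objects of the slice and for precomposition with morphisms $s\in p^{-1}(S)$.

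First we show that if $b:B\rightarrow X$ has $B\in\C^\prime$, then $b$ is $p^{-1}(S)$-local. Let $s:(A\xrightarrow{a}X)\rightarrow (A^\prime\xrightarrow{a^\prime}X)$ be an edge of $\C_{/X}$ with $p(s)\in S$. Precomposition with $p(s)$ gives a map of the fibre sequences above, namely from the one for $(a^\prime,b)$ to the one for $(a,b)$. On the total spaces it is $\textnormal{Hom}_\C(A^\prime,B)\rightarrow \textnormal{Hom}_\C(A,B)$, which is an equivalence because $B$ is $S$-local. On the bases it is $\textnormal{Hom}_\C(A^\prime,X)\rightarrow \textnormal{Hom}_\C(A,X)$, which is an equivalence because $X\in\C^\prime$; this is exactly where the hypothesis on $X$ enters. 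Since this map carries the basepoint $a^\prime$ to $a\simeq a^\prime\circ p(s)$, the induced map on homotopy fibres $\textnormal{Hom}_{\C_{/X}}(a^\prime,b)\rightarrow \textnormal{Hom}_{\C_{/X}}(a,b)$ is an equivalence. Hence $b$ is $p^{-1}(S)$-local.

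Conversely, suppose $b:B\rightarrow X$ is $p^{-1}(S)$-local, and let $s:A\rightarrow A^\prime$ be in $S$. We must show that $s^*:\textnormal{Hom}_\C(A^\prime,B)\rightarrow \textnormal{Hom}_\C(A,B)$ is an equivalence. Both spaces map to the corresponding spaces $\textnormal{Hom}_\C(-,X)$ by postcomposition with $b$, and the map between those bases is an equivalence since $X$ is $S$-local. It therefore suffices to check that $s^*$ is an equivalence on homotopy fibres over every point $a^\prime\in\textnormal{Hom}_\C(A^\prime,X)$. The fibre over $a^\prime$ is $\textnormal{Hom}_{\C_{/X}}(a^\prime,b)$. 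The fibre over the corresponding point $a^\prime\circ s$ is $\textnormal{Hom}_{\C_{/X}}(a^\prime\circ s,b)$. The map between them is precomposition with the lift of $s$ to an edge $(A,a^\prime\circ s)\rightarrow (A^\prime,a^\prime)$ of $\C_{/X}$, and that lift lies in $p^{-1}(S)$. By locality of $b$ this map is an equivalence. Because the bases are equivalent, every point of $\textnormal{Hom}_\C(A,X)$ is, up to homotopy, of the form $a^\prime\circ s$, so all fibres are covered. A map of spaces over an equivalence that is a fibrewise equivalence is an equivalence, which gives $B\in\C^\prime$.

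Putting the two directions together, an object of $\C_{/X}$ lies in $\C^\prime_{/X}=\C^\prime\times_\C\C_{/X}$ exactly when it is $p^{-1}(S)$-local. The main technical point is making the fibre-sequence naturality precise: we need the map on homotopy fibres to be induced by precomposition with the lifted edge in $\C_{/X}$. I would obtain this functorially from the right fibration $\C_{/X}\rightarrow\C$ together with $\textnormal{Hom}_{\C_{/X}}(-,b)\simeq \textnormal{Hom}_\C(p(-),B)\times_{\textnormal{Hom}_\C(p(-),X)}\{*\}$, viewed as a natural pullback of functors on $(\C_{/X})^{\textnormal{opp}}$. Everything after that is formal.
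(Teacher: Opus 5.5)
Your proposal is correct and follows essentially the same route as the paper. Both arguments take the square of mapping spaces obtained by precomposing with an arrow of $S$, into the object and into $X$. Both use $S$-locality of $X$ to make the bottom map an equivalence. Both identify the induced maps on homotopy fibres with precomposition, in $\C_{/X}$, by the lifted arrow of $p^{-1}(S)$.

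Your version is, if anything, slightly more careful. You run the fibrewise comparison over every point $a^\prime\in\textnormal{Hom}_\C(A^\prime,X)$, not only over structure maps factoring through $b$ as the paper's indexing does. Checking every point is what you need to conclude that $s^*$ is an equivalence, including on $\pi_0$.
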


\begin{proof}

    Let $h:U\rightarrow X$ be an object in $\C_{/X}$, which we will denote by $\overline{U}$. Let $f:A\rightarrow B$ be a map in $S$. Consider the homotopy commuting square 

    $$
            \begin{tikzpicture}[node distance=1.5cm, auto]
                \node (A) {$ \textnormal{Hom}_\C(B,U)
                $};
                \node (A1) [right of=A] {$ $};
                \node (B) [right of=A1] {$ \textnormal{Hom}_\C(A,U)$};
                \node (C) [below of=A] {$ \textnormal{Hom}_\C(B,X) $};
                \node (D) [below of=B] {$  \textnormal{Hom}_\C(A,X) $};
                \draw[->] (A) to node {$ f^* $} (B);
                \draw[->] (A) to node [swap] {$ h\circ $} (C);
                \draw[->] (C) to node [swap] {$ f^* $} (D);
                \draw[->] (B) to node {$ h\circ $} (D);
            \end{tikzpicture}
    $$

    \noindent This is a homotopy pullback square if and only if, for each $g:B\rightarrow U$ in $\C$, the induced map on homotopy fibres $\textnormal{Hom}_\C(B,U)_{g}\rightarrow \textnormal{Hom}_\C(A,U)_{g\circ f}$ is a homotopy equivalence. Since the bottom arrow is a homotopy equivalence (as $f\in S$ and $X\in \C^\prime$), this implies that $f^*:\textnormal{Hom}_\C(B,U)\rightarrow \textnormal{Hom}_\C(A,U)$ is a homotopy equivalence if and only if each $\textnormal{Hom}_\C(B,U)_{g}\rightarrow \textnormal{Hom}_\C(A,U)_{g\circ f}$ is a homotopy equivalence. Placing $B$ over $X$ via $h\circ g:B\rightarrow X$ (call this $\overline{B}\in \C_{/X}$), and placing $A$ over $X$ via $h\circ g\circ f:A\rightarrow X$ (call this $\overline{A}\in \C_{/X}$), we see that the induced map $\textnormal{Hom}_\C(B,U)_{g}\rightarrow \textnormal{Hom}_\C(A,U)_{g\circ f}$ may be identified as the arrow $\textnormal{Hom}_{\C_{/X}}(\overline{B},\overline{U})\rightarrow \textnormal{Hom}_{\C_{/X}}(\overline{A},\overline{U})$ given by pulling-back along the obvious arrow $\overline{f}:\overline{A}\rightarrow \overline{B}$ satisfying $p(\overline{f})=f$.  Thus, $\overline{U}\in \C_{/X}$ is local with respect to $p^{-1}(S)$ precisely when $U\in \C$ is local with respect to $S$.
    
\end{proof}

\noindent We now prove \ref{Etale Slices of Sheaf Topoi}:

\begin{proof} (\ref{Etale Slices of Sheaf Topoi}.) We first prove $(1)$. Let $V\in \C_{/F}$, noting that $(\C_{/F})_{/V}\subseteq (\C_{/F})_{/V}$ is a $J_{/F}$-sieve on $V$. Choose some $J$-sieve $\C^0_{/p(V)}\subseteq \C_{/p(V)}$ on $p(V)$, and write $(\C_{/F})^0_{/V}:=\C^{0}_{/p(V)}\times_{\C_{/p(V)}}(\C_{/F})_{/V}$. Choose some $f:U\rightarrow V$ in $\C_{/F}$. The elements of the sieve $f^*(\C_{/F})^0_{/V}\subseteq(\C_{/F})_{/U}$ on $U\in \C_{/F}$ may be identified as those maps $g:A\rightarrow U$ in $\C_{/F}$ satisfying the condition that the composition $f\circ g:A\rightarrow V$ (which is well-defined up to homotopy) is an element of $(\C_{/F})^0_{/V}$. Thus for each such $g:A\rightarrow U$ in $f^*(\C_{/F})^0_{/V}$, the composition $p(f)\circ p(g):p(A)\rightarrow p(V)$ may be identified as an object of $\C^{0}_{/p(V)}$, implying that $g:A\rightarrow U$ is in $p(f)^*(\C^{0}_{/p(V)})\times_{\C_{/p(U)}}(\C_{/F})_{/U}$. Next, let $h:B\rightarrow U$ be an arrow in $p(f)^*(\C^{0}_{/p(V)})\times_{\C_{/p(U)}}(\C_{/F})_{/U}$, noting that $p(f)\circ p(h):p(B)\rightarrow p(V)$ is an element of $\C^{0}_{/p(V)}$. This implies that $f\circ h:B\rightarrow V$ is an element of $(\C_{/F})^0_{/V}$, so $h:B\rightarrow U$ is an element of $f^*(\C_{/F})^0_{/V}$. In particular, we have an isomorphism of simplicial sets $f^*(\C_{/F})^0_{/V}=p(f)^*(\C^{0}_{/p(V)})\times_{\C_{/p(U)}}(\C_{/F})_{/U}$. It follows that $f^*(\C_{/F})^0_{/V}\subseteq (\C_{/F})_{/U}$ is a $J_{/F}$-covering sieve. 

Now let $(\C^1_{/F})_{/V}\subseteq (\C_{/F})_{/V}$ be a sieve on $V\in \C_{/F}$, and suppose there exists some $J$-sieve $\C^0_{/p(V)}\subseteq \C_{/p(V)}$ on $p(V)\in \C$ such that, for all $g:A\rightarrow V$ in $(\C_{/F})^0_{/V}:=\C^{0}_{/p(V)}\times_{\C_{/p(V)}}(\C_{/F})_{/V}$, $g^*(\C^1_{/F})_{/V}$ is a $J_{/F}$-sieve on $A\in \C_{/F}$. Choose a section $s:\C_{/p(V)}\rightarrow (\C_{/F})_{/V}$ of the trivial Kan fibration $(\C_{/F})_{/V}\rightarrow \C_{/p(V)}$, and pull-back $(\C^1_{/F})_{/V}\hookrightarrow (\C_{/F})_{/V}$ along $s$ to a sieve $\C^1_{/p(V)}\subseteq \C_{/p(V)}$ on $V$, noting that $(\C^1_{/F})_{/V}=\C^1_{/p(V)}\times_{\C_{/p(V)}}(\C_{/F})_{/V}$ as sieves on $V\in \C_{/F}$ (\ref{Sieves and Trivial Kan Fibrations}). Arguing just as above, we deduce that for all $g:A\rightarrow V$ in $(\C_{/F})^0_{/V}$, $g^*(\C^1_{/F})_{/V}=p(g)^*\C^1_{/p(V)}\times_{\C_{/p(V)}}(\C_{/F})_{/V}$. Since $g^*(\C^1_{/F})_{/V}\in J_{/F}$, it follows from the fact that $(\C_{/F})_{/A}\rightarrow \C_{/p(A)}$ is a trivial Kan fibration that $p(g)^*\C^1_{/p(V)}$ is a $J$-covering sieve on $p(A)$ (\ref{Sieves and Trivial Kan Fibrations}). Thus, $\C^1_{/p(V)}\subseteq \C_{/p(V)}$ is a $J$-covering sieve on $p(V)$, implying that $(\C^1_{/F})_{/V}\subseteq (\C_{/F})_{/V}$ is a $J_{/F}$-covering sieve on $V$, proving $(1)$.

We now show $(2)$. Since $\C_{/F}$ is essentially small, we will prove our claim under the assumption that $\C_{/F}$ is small. Write $L:\textnormal{Fun}(\C^{\textnormal{opp}},\mS)\rightarrow \textnormal{Shv}_J(\C)$ for the $J$-sheafification functor, and let $T$ be the class of $L$-equivalences in $\textnormal{Fun}(\C^{\textnormal{opp}},\mS)$ which are monomorphisms of the form $\eta:F\rightarrow G$, where $G$ is representable. Similarly, write $L^\prime:\textnormal{Fun}((\C_{/F})^{\textnormal{opp}},\mS)\rightarrow \textnormal{Shv}_{J_{/F}}(\C_{/F})$ for the $J_{/F}$-sheafification functor, and write $T^\prime$ for the class of $L^\prime$-equivalences in $\textnormal{Fun}((\C_{/F})^{\textnormal{opp}},\mS)$ which are monomorphisms of the form $\eta^\prime:F^\prime\rightarrow G^\prime$, with $G^\prime$ representable. We now wish to show the following:

\begin{itemize}

    \item [$(*)$] We have an equality $T^\prime=(p_!)^{-1}(T)$. 

\end{itemize}

\noindent Since $p_!:\textnormal{Fun}((\C_{/F})^{\textnormal{opp}},\mS)\rightarrow \textnormal{Fun}(\C^{\textnormal{opp}},\mS)$ factors as an equivalence $\textnormal{Fun}((\C_{/F})^{\textnormal{opp}},\mS)\xrightarrow{\sim}\textnormal{Fun}(\C^{\textnormal{opp}},\mS)_{/F}$, it follows that a map $\eta^\prime:F^\prime\rightarrow G^\prime$ is a monomorphism in $\textnormal{Fun}((\C_{/F})^{\textnormal{opp}},\mS)$ if and only if $p_!(\eta^\prime)$ is a monomorphism in $\textnormal{Fun}(\C^{\textnormal{opp}},\mS)$. Since we also have that $G^\prime\in \textnormal{Fun}((\C_{/F})^{\textnormal{opp}},\mS)$ is representable if and only if $p_!G^\prime\in \textnormal{Fun}(\C^{\textnormal{opp}},\mS)$ is representable, it suffices to prove that, for all $U\in \C_{/F}$, a monomorphism $m^\prime:S^\prime\rightarrow h^{\C_{/F}}_U$ corresponds to a $J_{/F}$-sieve on $U$ under the bijection of \citeq{highertopostheory} 6.2.2.5 precisely when the monomorphism $m:S\rightarrow h^{\C}_{p(U)}$ (isomorphic to $p_!m$) corresponds to a $J$-sieve on $p(U)\in \C$ (see \citeq{highertopostheory} 6.2.2.16). Write $(\C_{/F})^0_{/U}\subseteq (\C_{/F})_{/U}$ for the sieve on $U\in \C_{/F}$ corresponding to $m:S\rightarrow h^{\C_{/F}}_U$, noting that there exists an epi-mono factorization $\coprod_{f:A\rightarrow U\in (\C_{/F})_{/U}^0}h^{\C_{/F}}_A\xrightarrow{e^\prime} S^\prime\xrightarrow{m^\prime}h^{\C_{/F}}_U$ of the canonical map $\coprod_{f:A\rightarrow U\in (\C_{/F})_{/U}^0}h^{\C_{/F}}_A\rightarrow h^{\C_{/F}}_U$ (\citeq{highertopostheory} 6.2.3.18). Since $p_!:\textnormal{Fun}((\C_{/F})^{\textnormal{opp}},\mS)\rightarrow \textnormal{Fun}(\C^{\textnormal{opp}},\mS)$ preserves effective epimorphisms and monomorphisms, $p_!$ preserves epi-mono factorizations. Thus, there exists an epi-mono factorization $\coprod_{f:A\rightarrow U\in (\C_{/F})_{/U}^0}h^{\C_{/F}}_{p(A)}\xrightarrow{e}S\xrightarrow{m} h^{\C_{/F}}_{p(U)}$ in $\textnormal{Fun}(\C^{\textnormal{opp}},\mS)$ of the map $\coprod_{f:A\rightarrow U\in (\C_{/F})_{/U}^0}h^{\C_{/F}}_{p(A)}\rightarrow h^{\C_{/F}}_{p(U)}$. Note that the arrow $e:\coprod_{f:A\rightarrow U\in (\C_{/F})_{/U}^0}h^{\C_{/F}}_{p(A)}\rightarrow S$ factors as $\coprod_{f:A\rightarrow U\in (\C_{/F})_{/U}^0}h^{\C_{/F}}_{p(A)}\rightarrow \coprod_{g:B\rightarrow U\in \C^0_{/p(U)}}h^\C_{B}\xrightarrow{f}S$, where $\C^0_{/p(U)}\subseteq \C_{/p(U)}$ is the sieve on $U$ corresponding to $(\C_{/F})_{/U}^0\subseteq (\C_{/F})_{/U}$ under the bijection of \ref{Sieves and Trivial Kan Fibrations}, and $f:\coprod_{g:B\rightarrow U\in \C^0_{/p(U)}}h^\C_{B}\rightarrow S$ is induced by the canonical map $\coprod_{g:B\rightarrow U\in \C^0_{/p(U)}}h^\C_{B}\rightarrow h^\C_{p(U)}$. Since $e$ is an effective epimorphism, so too is $f$ (\citeq{highertopostheory} 6.2.3.12). Again invoking \citeq{highertopostheory} 6.2.3.18 implies $(*)$.

Write $q:\textnormal{Fun}(\C^{\textnormal{opp}},\mS)_{/F}\rightarrow \textnormal{Fun}(\C^{\textnormal{opp}},\mS)$ for the projection map, and write $\phi:\textnormal{Fun}((\C_{/F})^{\textnormal{opp}},\mS)\xrightarrow{\sim}\textnormal{Fun}(\C^{\textnormal{opp}},\mS)_{/F}$ for the equivalence over $\textnormal{Fun}(\C^{\textnormal{opp}},\mS)$. In the case that $F$ is a $J$-sheaf, \ref{Reflective Localizations and Slices} implies that $\textnormal{Shv}_J(\C)_{/F}\subseteq \textnormal{Fun}(\C^{\textnormal{opp}},\mS)_{/F}$ is the full subcategory of $\textnormal{Fun}(\C^{\textnormal{opp}},\mS)_{/F}$ spanned by the $q^{-1}(T)$-local objects. Since $\textnormal{Shv}_{J_{/F}}(\C_{/F})\subseteq \textnormal{Fun}((\C_{/F})^{\textnormal{opp}},\mS)$ is the full subcategory spanned by the $(p_!)^{-1}(T)$-local objects and $q\circ \phi\cong p_!$, it follows that $\phi$ restricts to an equivalence $\textnormal{Shv}_{J_{/F}}(\C_{/F})\xrightarrow{\sim}\textnormal{Shv}_J(\C)_{/F}$, proving (2). 

We now show (3). Since $p^*$ is right adjoint to $p_!$, it follows from $(*)$ that for all $J$-sheaves $G:\C^{\textnormal{opp}}\rightarrow \mS$, $p^*G:(\C_{/F})^{\textnormal{opp}}\rightarrow \mS$ is a $J_{/F}$-sheaf. Next, let $G^\prime\in \textnormal{Fun}((\C_{/F})^{\textnormal{opp}},\mS)$, and consider $p_*G^\prime:\C^{\textnormal{opp}}\rightarrow \mS$. Since $p_*$ is right adjoint to $p^*$, it suffices to show that, for all $X\in \C$ and monomorphisms $m:S\rightarrow h^\C_X$ in $T$, $p^*(m):p^*(S)\rightarrow p^*(h^\C_X)$ is an $L^\prime$ equivalence. Applying \ref{Strongly Saturated Classes of Maps Tested By Rep Functors Cor}, it now suffices to show that for all $U\in \C_{/F}$ and maps $f:h^{\C_{/F}}_U\rightarrow p^*h^\C_X$, the map $h^{\C_{/F}}_U\times_{p^*h^\C_X}p^*S\rightarrow h^{\C_{/F}}_U$ corresponds to a $J_{/F}$-sieve on $U$. Since $p_!:\textnormal{Fun}((\C_{/F})^{\textnormal{opp}},\mS) \rightarrow \textnormal{Fun}(\C^{\textnormal{opp}},\mS)$ preserves pullbacks, it follows that $p_!(h^{\C_{/F}}_U\times_{p^*h^\C_X}p^*S)\rightarrow p_!h^{\C_{/F}}_U$ may be identified as the pullback of the map $p_!p^*m:p_!p^*S\rightarrow p_!p^*h^\C_X$ along $p_!f:p_!h^{\C_{/F}}_U\rightarrow p_!p^*h^\C_X$. Consider now the diagram

$$
            \begin{tikzpicture}[node distance=1.5cm, auto]
                \node (A) {$ p_! p^*S $};
                \node (A1) [left of=A] {$ $};
                \node (B) [right of=A] {$ S $};
                \node (C) [below of=A] {$ p_!p^*h^\C_X $};
                \node (D) [below of=B] {$ h^\C_X $};
                \node (E) [left of=A1] {$ p_!(h^{\C_{/F}}_U\times_{p^*h^\C_X}p^*S) $};
                \node (F) [below of=E] {$ p_!h^{\C_{/F}}_U $};
                \draw[->] (A) to node {$ $} (B);
                \draw[->] (A) to node [swap] {$ $} (C);
                \draw[->] (C) to node [swap] {$ $} (D);
                \draw[->] (B) to node {$ m $} (D);
                \draw[->] (E) to node {$ $} (A);
                \draw[->] (E) to node {$ $} (F);
                \draw[->] (F) to node {$ $} (C);
            \end{tikzpicture}
    $$

\noindent in $\textnormal{Fun}(\C^{\textnormal{opp}},\mS)$, where the right hand square is induced by the counit $p_!\circ p^*\rightarrow \textnormal{id}_{\textnormal{Fun}(\C^{\textnormal{opp}},\mS)}$. Under the identifications $p_! p^*S\cong F\times S$ and $p_! p^*h^\C_X\cong F\times h^\C_X$, the center vertical map may be identified as $\textnormal{id}_F\times m:F\times S\rightarrow F\times h^\C_X$, the right upper horizontal map may be identified as the projection $F\times S\rightarrow S$ and the right lower horizontal map may be identified as the projection $F\times h^\C_X\rightarrow h^\C_X$, implying that said right hand square is a pullback square. Thus, the outer square of said diagram is a pullback square (\citeq{lurie2024kerodon} 03FZ). Since $p_!h^{\C_{/F}}_U\cong h^\C_{p(U)}$ is representable we have that $p_!(h^{\C_{/F}}_U\times_{p^*h^\C_X}p^*S)\rightarrow p_!h^{\C_{/F}}_U$ is in $T^\prime$, so $(*)$ implies that $h^{\C_{/F}}_U\times_{p^*h^\C_X}p^*S\rightarrow h^{\C_{/F}}_U$ is in $T$, completing the proof that $p_*:\textnormal{Fun}((\C_{/F})^{\textnormal{opp}},\mS)\rightarrow \textnormal{Fun}(\C^{\textnormal{opp}},\mS)$ restricts to a map $\textnormal{Shv}_{J_{/F}}(\C_{/F})\rightarrow \textnormal{Shv}_J(\C)$. Finally, in the case that $F$ is a $J$-sheaf, the fact that $p_!:\textnormal{Fun}((\C_{/F})^{\textnormal{opp}},\mS)\rightarrow \textnormal{Fun}(\C^{\textnormal{opp}},\mS)$ restricts to a map $\textnormal{Shv}_{J_{/F}}(\C_{/F})\rightarrow \textnormal{Shv}_J(\C)$ follows immediately from (2). This completes the proof of (3), and claim (4) follows from combining (2) and (3).

\end{proof}

\noindent We may use the proof of \ref{Etale Slices of Sheaf Topoi}, along with some of the techniques used in the proof of \ref{Etale Slices of Sheaf Topoi}, to deduce \ref{Etale Slices of General Topoi}:

\begin{proof}

    Claim (1) follows from combining \ref{Etale Slices of Sheaf Topoi} (1) with \citeq{highertopostheory} 6.5.1.19. Using (1), (2) may be deduced from a similar logic used in our proof of \ref{Etale Slices of Sheaf Topoi} (3). The fact that $p_*:\textnormal{Shv}_{J_{/F}}(\C_{/F})^\wedge_{S_{/F}}\rightarrow \textnormal{Shv}_{J}(\C)^\wedge_{S}$ is \'{e}tale whenever $F$ is $S$-local now follows from (2).

\end{proof}

\begin{remark}{}

    The proof of \ref{Etale Slices of Sheaf Topoi} $(1)$ does not depend at all on the size conditions of the right fibration $\C_{/F}\rightarrow \C$, and as such an analogous claim holds for an arbitrary right fibration $\cE\rightarrow \cD$, with $\cD$ an $(\infty,1)$-category which is not necessarily small. 
    
\end{remark}

\section{Geometric Categories}\label{Geometric Sites}

\subsection{Geometric Categories and Geometric Sites}\label{Geometric Categories and Geometric Sites}

\begin{definition}{}\label{Geom Cat Definition}

    Let $\kappa$ be a regular cardinal. We call an $(\infty,1)$-category $\C$ \emph{$\kappa$-geometric} if:

    \begin{itemize}

        \item [$(1)$] $\C$ has pullbacks,

        \item [$(2)$] $\C$ admits $\kappa$-small coproducts, and

        \item [$(3)$] Said $\kappa$-small coproducts in $\C$ are disjoint and universal. 

    \end{itemize}

    \noindent In the case that $\kappa=\aleph_0$, we will call an $\aleph_0$-geometric $(\infty,1)$-catgeory $\C$ a \emph{finitary $(\infty,1)$-category}. In the case that $\kappa=\Omega$ is the bounding cardinal of our implicitly chosen universe, we will call $\C$ a \emph{geometric $(\infty,1)$-category}.

\end{definition}

\begin{remark}{}

    Let $\C$ be a finitary $(\infty,1)$-category. Then, $\C$ admits finite limits if and only if $\C$ is \emph{disjunctive} in the sense of \citeq{barwick2014spectralmackeyfunctorsequivariant} 4.2.
    
\end{remark}

\begin{remark}{}

    Let $\C$ be an $(\infty,1)$-category, and let $\kappa$ and $\kappa^\prime$ be regular cardinals satisfying $\kappa\leq \kappa^\prime$. If $\C$ is $\kappa^\prime$-geometric, then $\C$ is $\kappa$-geometric.
    
\end{remark}

\begin{example}{}\label{Geom Cats. Remark}

   The following are several examples of $\kappa$-geometric $(\infty,1)$-categories: 

    \begin{itemize}
        \item [$(1)$] Any $(n,1)$-topos $\cX$ is geometric, for all $n\in \ZZ_{\geq 1}\cup \{\infty\}$. 
    
        \item [$(2)$] Both $\textnormal{CAlg}^{\textnormal{opp}}$ and $\textnormal{CAlg}^{\textnormal{cn},\textnormal{opp}}$ are finitary $(\infty,1)$-categories.

        \item [$(3)$] $\infty\cT op$ is geometric. 

        \item [$(4)$] The $(\infty,1)$-categories $\textnormal{SpDM}$, $\textnormal{SpDM}^{\textnormal{nc}}$, $\textnormal{SpSch}$, and $\textnormal{SpSch}^{\textnormal{nc}}$ are all geometric.

        \item [(5)] If $\C$ is a $\kappa$-geometric $(\infty,1)$-category, then for all $X\in \C$ we have that $\C_{/X}$ is $\kappa$-geometric. 

        \item [(6)] The (ordinary) category of schemes and the (ordinary) category of topological spaces are both geometric.

        \item [(7)] The $(\infty,1)$-category of small $(\infty,1)$-categories is geometric.

    \end{itemize}

\end{example}

\noindent We quickly verify $(3)$ and (part of) $(4)$ of \ref{Geom Cats. Remark}:

\begin{proposition}{}\label{Coproducts of Inf.Topoi}

    Small coproducts in $\infty\cT op$ are disjoint, and colimits of small diagrams in $\infty\cT op$ whose transition maps are \'{e}tale, are universal. 
    
\end{proposition}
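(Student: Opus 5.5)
Both claims reduce to the description of coproducts and étale slices of $(\infty,1)$-topoi in \citeq{highertopostheory} 6.3.2 and 6.3.5. For a small family $\{\cX_i\}_{i\in I}$ in $\infty\cT op$, the coproduct is the product $\cX=\prod_{i\in I}\cX_i$ of underlying $(\infty,1)$-categories (HTT 6.3.2.3). Let $\chi_i\in\cX$ be the object with $\bm{1}_{\cX_i}$ in slot $i$ and initial objects elsewhere. Then $\cX_{/\chi_i}\simeq\cX_i$, and the inclusion $\cX_i\rightarrow\cX$ is the étale morphism $\cX_{/\chi_i}\rightarrow\cX$. In other words, the coproduct is the étale-over-$\cX$ decomposition corresponding to $\bm{1}_\cX\simeq\coprod_i\chi_i$.

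\textbf{Disjointness.} I will use the equivalence $\cX\simeq\infty\cT op^{\textnormal{\'{e}t}}_{/\cX}$ (HTT 6.3.5.10, as recalled in \ref{Etale Morphisms Section Summary Remark}). Étale maps are stable under pullback in $\infty\cT op$ by HTT 6.3.5.8, and pullbacks of étale morphisms correspond to products over $\cX$; concretely $\cX_{/U}\times_\cX\cX_{/V}\simeq\cX_{/U\times V}$. Hence $\cX_i\times_\cX\cX_j\simeq\cX_{/\chi_i\times\chi_j}$. For $i\neq j$ we have $\chi_i\times\chi_j\simeq\emptyset$, because coproducts in $\cX$ are disjoint. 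The topos $\cX_{/\emptyset}$ is the terminal category, i.e.\ the initial object of $\infty\cT op$. Similarly $\cX_i\rightarrow\cX$ is a monomorphism, since $\chi_i\times\chi_i\simeq\chi_i$. This proves disjointness.

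\textbf{Universality.} Let $D:K\rightarrow\infty\cT op$ be a small diagram whose transition maps are étale, with colimit $\cY$, and let $f:\cZ\rightarrow\cY$ be a geometric morphism. Colimits in $\infty\cT op$ are computed as limits in $\textnormal{Pr}^{\textnormal{L}}$ along the pullback functors (HTT 6.3.2.3 together with 5.5.3.13). When all transition maps are étale, each $D(k)\rightarrow D(k')$ is of the form $D(k')_{/U}\rightarrow D(k')$, and I expect each leg $D(k)\rightarrow\cY$ to be étale as well, say $D(k)\simeq\cY_{/U_k}$ with $\cY\simeq\varprojlim\cY_{/U_k}$. This amounts to descent in the topos $\cY$: the $U_k$ assemble into a $K$-indexed diagram in $\cY$ with $\varinjlim U_k\simeq\bm{1}_\cY$. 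Pulling back along $f$, étale base change gives $D(k)\times_\cY\cZ\simeq\cZ_{/f^*U_k}$. The claim then becomes $\cZ\simeq\varinjlim_k\cZ_{/f^*U_k}$ in $\infty\cT op$, i.e.\ $\cZ\simeq\varprojlim\cZ_{/f^*U_k}$ in $\textnormal{Pr}^{\textnormal{L}}$. Since $f^*$ preserves colimits, $\varinjlim f^*U_k\simeq\bm{1}_\cZ$, and descent in $\cZ$ (HTT 6.1.3.9, colimits in an $\infty$-topos are van Kampen) gives exactly this.

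\textbf{Main obstacle.} The hard step is showing that a colimit of an étale diagram is presented by a diagram $k\mapsto U_k$ in $\cY$ with colimit $\bm{1}_\cY$, so that the legs are étale. I would first build $\cY$ as the limit of the $D(k)$ along pullback functors. I would then define $U_k$ as the image of $\bm{1}_{D(k)}$ under the left adjoint $D(k)\rightarrow\cY$ of the projection; this left adjoint exists because pullbacks along étale maps admit left adjoints compatible with the limit (HTT 6.3.5.1). Finally I would check that the projection $\cY\rightarrow D(k)$ is identified with $\cY_{/U_k}$. If this identification proves unwieldy, the fallback is to verify universality only for the cases actually needed later: coproducts, which follow from disjointness together with the identification $\cX\simeq\cX_{/\bm{1}}$, and pullback-stable étale covers.
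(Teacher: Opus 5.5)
Your disjointness argument is correct, and it is more hands-on than the paper's. You identify the summands explicitly as $\cX_{/\chi_i}\rightarrow\cX=\prod_j\cX_j$ and compute $\cX_{/\chi_i}\times_\cX\cX_{/\chi_j}\simeq\cX_{/\chi_i\times\chi_j}\simeq\cX_{/\emptyset}\simeq\Delta^0$ via HTT 6.3.5.8. The paper instead first uses HTT 6.3.5.13 to lift the coproduct cocone into the \'{e}tale slice over the coproduct, and then reads disjointness off from disjointness in the coproduct topos.

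The universality half, however, has a genuine gap, exactly at the step you flag. You need that the legs $D(k)\rightarrow\cY$ of a colimit of an \'{e}tale diagram are \'{e}tale, i.e.\ that the cocone comes from a diagram $k\mapsto U_k$ in $\cY$ with $\varinjlim U_k\simeq\bm{1}_\cY$. Everything afterwards depends on this, and your sketch does not establish it. Some of the pieces are fine, but your justifications for them are off:
the left adjoints $\pi_{k!}$ of the projections $\pi_k^*:\cY\simeq\varprojlim D(k)\rightarrow D(k)$ do exist, but not because of HTT 6.3.5.1, which is just a definition. They exist because the \'{e}tale pullback functors also preserve limits, so the limit is computed in $\textnormal{Pr}^{\textnormal{R}}$ (HTT 5.5.3.18). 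Likewise, $\varinjlim_k\pi_{k!}\bm{1}\simeq\bm{1}_\cY$ can be checked formally on mapping spaces, but it is not ``descent in $\cY$''. Descent only gives $\cY\simeq\varprojlim\cY_{/U_k}$ once the $U_k$ are known to present the diagram.

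The real content is your last step: that the induced functor $D(k)\rightarrow\cY_{/U_k}$ is an equivalence. This requires showing that $\pi_{k!}$ is conservative and compatible with pullbacks (the recognition criterion for \'{e}tale morphisms), and that these properties survive passage to the limit. That amounts to reproving HTT 6.3.5.13. Your fallback (coproducts and \'{e}tale covers only) proves a strictly weaker statement than the proposition.

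The missing ingredient is simply HTT 6.3.5.13, which the paper invokes: $\infty\cT op^{\textnormal{\'{e}t}}$ admits small colimits and $\infty\cT op^{\textnormal{\'{e}t}}\hookrightarrow\infty\cT op$ preserves them. Hence the colimit cocone consists of \'{e}tale maps. It lifts to a colimit diagram in $\infty\cT op^{\textnormal{\'{e}t}}_{/\cY}$, since colimits in a slice are detected in the base, and HTT 6.3.5.10 converts it into your diagram $U_\bullet$ in $\cY$ with colimit $\bm{1}_\cY$.

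With that in hand, the rest of your argument works and is a mild variant of the paper's. You finish with HTT 6.3.5.8, descent in $\mathcal{Z}$, and HTT 6.3.2.3. The paper instead identifies pullback along $f$ on \'{e}tale slices with $f^*$, uses that $f^*$ preserves colimits, and transports back along $\mathcal{Z}\simeq\infty\cT op^{\textnormal{\'{e}t}}_{/\mathcal{Z}}$ using 6.3.5.13 once more. You also need that identification in some form: the equivalence $D(k)\times_\cY\mathcal{Z}\simeq\mathcal{Z}_{/f^*U_k}$ must hold naturally in $k$, not just objectwise, for the two $K$-indexed diagrams to agree.
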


\begin{proof}

    Combining \citeq{highertopostheory} 6.5.3.8 with \citeq{highertopostheory} 6.3.5.13, we deduce that $\infty\cT op^{\textnormal{\'{e}t}}$ admits small colimits and pullbacks, which are preserved by $\infty\cT op^{\textnormal{\'{e}t}}\hookrightarrow \infty\cT op$. Since for each $(\infty,1)$-topos $\cX$ we have an equivalence $\cX\cong \infty\cT op^{\textnormal{\'{e}t}}_{/\cX}$, the fact that coproducts in $\infty\cT op$ are disjoint now follows from the fact that for every $(\infty,1)$-topos $\cX$, coproducts in $\cX$ are universal: for a pair of $(\infty,1)$-topoi $\cX$ and $\cY$, the diagram $\Lambda^2_2\rightarrow \infty\cT op^{\textnormal{\'{e}t}}$ exhibiting $\cX\coprod \cY$ as the coproduct of $\cX$ and $\cY$ lifts to a colimit diagram $\Lambda^2_2\rightarrow \infty\cT op^{\textnormal{\'{e}t}}_{/\cX\coprod \cY}$ exhibiting the final object as a coproduct of $\cX\rightarrow \cX\coprod \cY$ and $\cY\rightarrow \cX\coprod \cY$. Extending $\Lambda^2_2\rightarrow \infty\cT op^{\textnormal{\'{e}t}}_{/\cX\coprod \cY}$ to a pullback square  $U:\Delta^1\times \Delta^1\rightarrow \infty\cT op^{\textnormal{\'{e}t}}_{/\cX\coprod \cY}$, we see that $U(0,0)$ is initial in $\infty\cT op^{\textnormal{\'{e}t}}_{/\cX\coprod \cY}$, implying that the composition $\Delta^1\times \Delta^1\xrightarrow{U}\infty\cT op^{\textnormal{\'{e}t}}_{/\cX\coprod \cY}\rightarrow \infty\cT op^{\textnormal{\'{e}t}}$ carries $(0,0)\in \Delta^1\times \Delta^1$ to an initial object in $\infty\cT op^{\textnormal{\'{e}t}}$. This part of our claim now follows from the fact that said composition is a pullback square. 
    
    Next, given a geometric morphism of $(\infty,1)$-topoi $f_*:\cX\rightarrow \cY$, the pullback functor $\infty\cT op_{/\cY}\rightarrow \infty\cT op_{/\cX}$ restricts to a map $\infty\cT op^{\textnormal{\'{e}t}}_{/\cY}\rightarrow \infty\cT op^{\textnormal{\'{e}t}}_{/\cX}$, which upon applying the equivalences $\cX\cong \infty\cT op^{\textnormal{\'{e}t}}_{/\cX}$ and $\cY\cong \infty\cT op^{\textnormal{\'{e}t}}_{/\cY}$, we may identify as the map $f^*:\cY\rightarrow \cX$ left adjoint to $f_*$ (combine the proof of \citeq{highertopostheory} 6.3.5.21 with \citeq{lurie2018sag} 14.4.2.4). This proves the remainder of our claim.

\end{proof}

\begin{corollary}{}\label{Coproducts in SpDM}

    Small coproducts in $\textnormal{SpDM}^{\textnormal{nc}}$ are disjoint, and colimits of small diagrams in $\textnormal{SpDM}^{\textnormal{nc}}$ whose transition maps are \'{e}tale, are universal.

\end{corollary}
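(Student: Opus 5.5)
The plan is to transport \ref{Coproducts of Inf.Topoi} along the forgetful functor $\Gamma:\textnormal{SpDM}^{\textnormal{nc}}\rightarrow \infty\cT op$, which carries a nonconnective spectral Deligne--Mumford stack $(\cX,\scO_\cX)$ to its underlying $(\infty,1)$-topos $\cX$. Two facts about $\textnormal{SpDM}^{\textnormal{nc}}$ will be used as input. The first is that $\textnormal{SpDM}^{\textnormal{nc}}$ admits small colimits of diagrams whose transition maps are \'{e}tale, and these are computed on underlying topoi: the underlying topos is the colimit in $\infty\cT op^{\textnormal{\'{e}t}}$, and the structure sheaf is glued as the limit of the restricted structure sheaves (SAG 1.4.7 and 1.4.11, together with HTT 6.3.5.13). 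The second is that the slice $\textnormal{SpDM}^{\textnormal{nc},\textnormal{\'{e}t}}_{/\mathsf{X}}$ is equivalent, via $\Gamma$, to $\infty\cT op^{\textnormal{\'{e}t}}_{/\cX}\simeq \cX$ (SAG 1.4.10.3). In particular, for $\mathsf{U}\rightarrow \mathsf{X}$ \'{e}tale, $\mathsf{U}\simeq (\cX_{/U},\scO_\cX|_U)$.

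\textbf{Disjointness.} For stacks $\mathsf{X},\mathsf{Y}$, the coproduct $\mathsf{X}\coprod \mathsf{Y}$ is $(\cX\coprod\cY,\scO)$, and both inclusions are \'{e}tale, in fact open immersions. A fibre product of two \'{e}tale maps over $\mathsf{X}\coprod\mathsf{Y}$ exists in $\textnormal{SpDM}^{\textnormal{nc}}$. Under the equivalence $\textnormal{SpDM}^{\textnormal{nc},\textnormal{\'{e}t}}_{/\mathsf{X}\coprod\mathsf{Y}}\simeq \cX\times\cY$ it corresponds to the product in that topos, and the product of the two summand objects is initial. Hence $\mathsf{X}\times_{\mathsf{X}\coprod\mathsf{Y}}\mathsf{Y}$ is the empty stack $(\emptyset\text{-topos},0)$, which is initial in $\textnormal{SpDM}^{\textnormal{nc}}$. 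One point needs checking: the fibre product taken in the \'{e}tale slice agrees with the fibre product in $\textnormal{SpDM}^{\textnormal{nc}}$. This holds because pullbacks of \'{e}tale maps along arbitrary maps exist and are \'{e}tale (SAG 1.4.11.1), so the fibre product in $\textnormal{SpDM}^{\textnormal{nc}}$ already lies in the \'{e}tale slice. The same argument applies to arbitrary small coproducts.

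\textbf{Universality.} Let $\mathsf{X}=\textnormal{colim}\,\mathsf{X}_\alpha$ be a colimit with \'{e}tale transition maps, so that each $\mathsf{X}_\alpha\rightarrow \mathsf{X}$ is \'{e}tale. Let $f:\mathsf{Y}\rightarrow \mathsf{X}$ be any map. The pullbacks $\mathsf{Y}_\alpha:=\mathsf{Y}\times_{\mathsf{X}}\mathsf{X}_\alpha$ exist and are \'{e}tale over $\mathsf{Y}$. Via SAG 1.4.10.3, the pullback functor $\textnormal{SpDM}^{\textnormal{nc},\textnormal{\'{e}t}}_{/\mathsf{X}}\rightarrow \textnormal{SpDM}^{\textnormal{nc},\textnormal{\'{e}t}}_{/\mathsf{Y}}$ is identified with $f^*:\cX\rightarrow \cY$. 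This is the same identification used in the proof of \ref{Coproducts of Inf.Topoi}, now with structure sheaves carried along by restriction. Since $f^*$ preserves colimits, the colimit of the $\mathsf{X}_\alpha$ in $\cX$ is carried to the colimit of the $\mathsf{Y}_\alpha$ in $\cY$, and that colimit is the final object $\mathsf{Y}$. Equivalences between the \'{e}tale slice and the topos transform colimits in the slice into colimits in $\textnormal{SpDM}^{\textnormal{nc}}$, by the first fact above. Therefore $\mathsf{Y}\simeq \textnormal{colim}\,\mathsf{Y}_\alpha$ in $\textnormal{SpDM}^{\textnormal{nc}}$.

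\textbf{Main obstacle.} The hardest step is making precise that the equivalence $\textnormal{SpDM}^{\textnormal{nc},\textnormal{\'{e}t}}_{/\mathsf{X}}\simeq\cX$ is compatible with base change along an arbitrary, not necessarily \'{e}tale, morphism $f$. On underlying topoi this is \ref{Coproducts of Inf.Topoi}. For structure sheaves it reduces to the fact that the pullback of an \'{e}tale morphism $(\cX_{/U},\scO_\cX|_U)\rightarrow \mathsf{X}$ along $f$ is $(\cY_{/f^*U},\scO_\cY|_{f^*U})$, which is exactly how SAG 1.4.11.1 constructs these pullbacks. I would cite that construction directly rather than re-derive it.
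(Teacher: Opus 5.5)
Your proposal is correct and follows essentially the paper's route: both transport \ref{Coproducts of Inf.Topoi} through the equivalence of \'{e}tale slices $(\textnormal{SpDM}^{\textnormal{nc},\textnormal{\'{e}t}})_{/\sX}\simeq\infty\cT op^{\textnormal{\'{e}t}}_{/\cX}$. Both also use the fact that the forgetful functor preserves pullbacks along \'{e}tale maps, and both conclude disjointness from the empty stack being initial. The only difference is packaging. The paper handles the naturality issue you flag as your main obstacle by showing that $\textnormal{Fun}^{\textnormal{\'{e}t}}(\Delta^1,\textnormal{SpDM}^{\textnormal{nc}})\rightarrow \textnormal{SpDM}^{\textnormal{nc}}\times_{\infty\cT op}\textnormal{Fun}^{\textnormal{\'{e}t}}(\Delta^1,\infty\cT op)$ preserves cartesian edges and is a fibrewise equivalence, hence an equivalence of cartesian fibrations, instead of appealing to the explicit construction of \'{e}tale pullbacks.
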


\begin{proof}

    Applying \citeq{lurie2018sag} 21.4.6.4, we have that $\textnormal{SpDM}^{\textnormal{nc},\textnormal{\'{e}t}}$ admits small colimits, which are preserved by $\textnormal{SpDM}^{\textnormal{nc},\textnormal{\'{e}t}}\hookrightarrow \textnormal{SpDM}^{\textnormal{nc}}$. Moreover, since the class of \'{e}tale morphisms in $\textnormal{SpDM}^{\textnormal{nc}}$ is stable under pullbacks, the cartesian fibration $\textnormal{ev}_1:\textnormal{Fun}(\Delta^1,\textnormal{SpDM}^{\textnormal{nc}})\rightarrow \textnormal{SpDM}^{\textnormal{nc}}$ restricts to a cartesian fibration $\textnormal{Fun}^{\textnormal{\'{e}t}}(\Delta^1,\textnormal{SpDM}^{\textnormal{nc}})\rightarrow \textnormal{SpDM}^{\textnormal{nc}}$. Since for each $\sX=(\cX,\scO_\cX)\in \textnormal{SpDM}^{\textnormal{nc}}$ the canonical map $(\textnormal{SpDM}^{\textnormal{nc},\textnormal{\'{e}t}})_{/\sX}\rightarrow \infty\cT op^{\textnormal{\'{e}t}}_{/\cX}$ is an equivalence (\citeq{lurie2018sag} 21.4.6.4 (5)) and the map $F:\textnormal{Fun}(\Delta^1,\textnormal{SpDM}^{\textnormal{nc}})\rightarrow \textnormal{SpDM}^{\textnormal{nc}}\times_{\infty\cT op}\textnormal{Fun}^{\textnormal{\'{e}t}}(\Delta^1,\infty\cT op)$ induced by the forgetful functor $\textnormal{SpDM}^{\textnormal{nc}}\rightarrow \infty\cT op$ preserves cartesian arrows (\citeq{lurie2018sag} 21.4.6.7), we see that $F$ is an equivalence of cartesian fibrations over $\textnormal{SpDM}^{\textnormal{nc}}$ (\citeq{lurie2024kerodon} 023M). It now follows from  \ref{Coproducts of Inf.Topoi} that colimits of small diagrams in $\textnormal{SpDM}^{\textnormal{nc}}$ whose transition maps are \'{e}tale, are universal. It remains to show that (small) coproducts in $\textnormal{SpDM}^{\textnormal{nc}}$ are disjoint. Since for each pair $\sX,\sY\in \textnormal{SpDM}^{\textnormal{nc}}$ the arrows $\sX,\sY\rightarrow \sX\coprod\sY$ are \'{e}tale and $\textnormal{SpDM}^{\textnormal{nc}}\rightarrow \infty\cT op$ preserves pullbacks along \'{e}tale morphisms (\citeq{lurie2018sag} 21.4.6.7), the fact that (small) coproducts in $\textnormal{SpDM}^{\textnormal{nc}}$ are disjoint now follows from combining \ref{Coproducts of Inf.Topoi} with the fact that the initial $(\infty,1)$-topos admits an essentially unique structure of a nonconnective spectral Deligne-Mumford stack (which is then necessarily an initial object of $\textnormal{SpDM}^{\textnormal{nc}}$).

\end{proof}

\noindent 

\noindent Such $\kappa$-geometric $(\infty,1)$-categories may all be endowed the following structure of an $(\infty,1)$-site:

\begin{definition}{}\label{Conditions for the kappa Product Topology}

    Fix a regular cardinal $\kappa$, and let $\C$ be a $\kappa$-geometric $(\infty,1)$-category. We will refer to the topology on $\C$ consisting of those $\kappa$-small coverings of the form $\{U_i\rightarrow \coprod_{j\in I}U_j\}_{i\in I}$ as the \emph{$\kappa$-topology} on $\C$. For any $(\infty,1)$-category $\cD$ and map $F:\C^{\textnormal{opp}}\rightarrow \cD$, we will call $F$ a ($\cD$-valued) \emph{$\kappa$-sheaf} if $F$ satisfies descent with respect to the $\kappa$-topology. We will write $\textnormal{Shv}^\kappa_\cD(\C)$ for the $(\infty,1)$-category of $\cD$-valued $\kappa$-sheaves, and when $\cD=\mS$ we will write $\textnormal{Shv}_\kappa(\C)$ for the $(\infty,1)$-category of $\kappa$-sheaves on $\C$. In the case that $\kappa=\aleph_0$, we will refer to the $\aleph_0$-topology on $\C$ as the \emph{finitary topology} or the \emph{$fin$-topology}, and we will call an $\aleph_0$-sheaf $F:\C^{\textnormal{opp}}\rightarrow \cD$ a \emph{finitary sheaf} or a \emph{$fin$-sheaf}.

\end{definition}

\noindent The key result of this section is the following:

\begin{proposition}{}\label{Sheaves for the kappa Product Topology}

    Fix a regular cardinal $\kappa$, and let $\C$ be a $\kappa$-geometric $(\infty,1)$-category. Then, for all $(\infty,1)$-categories $\cD$, a map $F:\C^{\textnormal{opp}}\rightarrow \cD$ is a sheaf for the $\kappa$-topology on $\C$ if and only if $F$ preserves $\kappa$-small products. 
    
\end{proposition}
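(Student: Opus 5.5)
We reduce to the \v{C}ech formulation of \ref{Diagram Sheaf condition for sites}. The $\kappa$-small families $\{U_i\rightarrow \coprod_{j\in I}U_j\}_{i\in I}$ form a Grothendieck pretopology in the sense of \ref{higher sites definition}. Isomorphisms are the case $|I|=1$. Stability under pullback holds because coproducts are universal: for $g:V\rightarrow \coprod_j U_j$ we get $V\simeq \coprod_i V\times_{\coprod_j U_j}U_i$. Composites are again coverings because $\kappa$ is regular, so a $\kappa$-small union of $\kappa$-small index sets is $\kappa$-small, and iterated coproducts are coproducts. By \ref{Diagram Sheaf condition for sites}, which has no size hypotheses on $\C$ or $\cD$, $F$ is a $\kappa$-sheaf precisely when, for every $\kappa$-small family $\{U_i\}_{i\in I}$ with $X=\coprod_{i\in I}U_i$, the augmented semicosimplicial diagram
$$F(X)\rightarrow \prod_{i\in I}F(U_i)\rightrightarrows \prod_{i_0,i_1\in I}F(U_{i_0}\times_XU_{i_1})\ \cdots$$
is a limit diagram. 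This formula implicitly requires the relevant products to exist in $\cD$. To avoid assuming this, we compose with the Yoneda embedding $h^\cD_\bullet:\cD\rightarrow\textnormal{Fun}(\cD^{\textnormal{opp}},\widehat{\mS})$ (in a larger universe). This embedding preserves and detects limits, and both conditions in the proposition are detected by it, so it suffices to treat $\cD=\widehat{\mS}$.

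The key computation uses disjointness and universality to identify the fibre products. We have $U_{i}\times_XU_{i}\simeq U_i$, via the diagonal, since $U_i\rightarrow X$ is a monomorphism for disjoint coproducts. For $i\neq j$ we have $U_i\times_XU_j\simeq\emptyset$. More generally, $U_{i_0}\times_X\cdots\times_XU_{i_n}$ is $U_{i_0}$ if all indices agree and initial otherwise. Disjointness also gives, via the case $I=\emptyset$, that the empty coproduct is an initial object $\emptyset$ with the property that any map into it is an isomorphism. So the \v{C}ech nerve of $\coprod_iU_i\rightarrow X$ in $\C$ (or rather the nerve of the family) is the coproduct, indexed by $I$, of constant diagrams at $U_i$, plus copies of $\emptyset$ in the off-diagonal terms. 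This nerve makes sense because the $(n+1)$-fold fibre products themselves decompose over $I^{n+1}$ by universality.

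Now we compare the two conditions, treating the empty index set first.
\begin{itemize}
\item[(a)] Take $I=\emptyset$. The covering of $\emptyset$ by the empty family shows that the sheaf condition is exactly that $F(\emptyset)$ is final, i.e.\ that $F$ preserves the empty product.
\item[(b)] Given (a), each off-diagonal term $F(\emptyset)$ is contractible. The cosimplicial diagram then becomes $\prod_{i\in I}$ of constant semicosimplicial diagrams at $F(U_i)$.
\item[(c)] $\Delta_{\textnormal{inj}}$ is weakly contractible, so its limit is $\prod_iF(U_i)$. Hence the sheaf condition for the covering is equivalent to $F(\coprod_iU_i)\rightarrow\prod_iF(U_i)$ being an equivalence.
\end{itemize}
Conversely, if $F$ preserves $\kappa$-small products, (a) holds, and the same computation shows the \v{C}ech diagram is a limit.

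The main obstacle is step (b). We must verify carefully that the semicosimplicial object $[n]\mapsto\prod_{(i_0,\dots,i_n)\in I^{n+1}}F(U_{i_0}\times_X\cdots\times_XU_{i_n})$ is equivalent, as a diagram and not merely levelwise, to the product over $I$ of constant diagrams times contractible factors. We would handle this by building the \v{C}ech nerve in $\C_{/X}$ as the coproduct over $I$ of the constant nerves of the identities $U_i\rightarrow U_i$, using universality of coproducts in $\C_{/X}$ (\ref{Geom Cats. Remark} (5)). We would then check that this coproduct diagram agrees with the \v{C}ech nerve of $\coprod_i U_i\rightarrow X$ by comparing iterated pullbacks. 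Finally, we would apply the right cofinality of the diagonal $\Delta_{\textnormal{inj}}\rightarrow\Delta_{\textnormal{inj}}\times\cdots$, or simply the weak contractibility of $\Delta_{\textnormal{inj}}^{\textnormal{opp}}$, to compute limits of constant diagrams.
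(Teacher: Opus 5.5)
Your outline follows the paper's argument: reduce to space-valued functors, use the \v{C}ech form of the sheaf condition, compute $U_{i_0}\times_X\cdots\times_XU_{i_n}$ from disjointness and universality, get $F(\emptyset)\simeq *$ from the empty covering of $\emptyset$, and finish by weak contractibility. (A minor point: strictness of $\emptyset$ comes from universality of the empty coproduct, not from disjointness.)

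The step that does not work as you propose is your treatment of (b). The \v{C}ech nerve of $\coprod_iU_i\rightarrow X$ in $\C$ or in $\C_{/X}$ is the \v{C}ech nerve of an isomorphism, so it is constant at $X$. Identifying it with the coproduct of the constant nerves at the $U_i$ is true but vacuous, and applying $F$ to it gives the constant diagram at $F(X)$, which says nothing about the sheaf condition. The diagram $[n]\mapsto\prod_{I^{n+1}}F(U_{i_0}\times_X\cdots\times_XU_{i_n})$ is instead $\textnormal{Hom}(-,F)$ applied to the \v{C}ech nerve of $\coprod_ih^\C_{U_i}\rightarrow h^\C_X$ in $\textnormal{Fun}(\C^{\textnormal{opp}},\mS)$. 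There, in general $\coprod_ih^\C_{U_i}\not\simeq h^\C_X$, and the off-diagonal summands $h^\C_{\emptyset}$ are not initial; this is the object the paper works with.

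If you want a diagram-level comparison, do it in presheaves. Functoriality of \v{C}ech nerves, applied to the maps of arrows $(h^\C_{U_i}\xrightarrow{\textnormal{id}}h^\C_{U_i})\rightarrow(\coprod_jh^\C_{U_j}\rightarrow h^\C_X)$, gives a map from $\coprod_i$ of the constant diagrams at $h^\C_{U_i}$ into that \v{C}ech nerve. This map is levelwise the inclusion of the diagonal summands. Applying $\textnormal{Hom}(-,F)$ turns it into a levelwise, hence genuine, equivalence once $F(\emptyset)\simeq *$.

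The paper avoids the diagram-level question entirely. A diagram on a weakly contractible category that inverts every edge has limit equivalent to any of its values, so only edges need checking.
\begin{itemize}
\item[(i)] If $F$ preserves $\kappa$-small products, each edge $[-1]\rightarrow[n]$ out of the cone point goes to the comparison map $F(X)\rightarrow\prod_iF(U_i)\times\prod F(\emptyset)$, which is invertible. Since the cone point is initial, 2-out-of-3 makes every edge invertible, so the cone is a limit.
\item[(ii)] Conversely, once $F(\emptyset)\simeq *$, each coface map is, up to contractible factors, $F$ of the isomorphisms $U_i\times_X\cdots\times_XU_i\rightarrow U_i\times_X\cdots\times_XU_i$; these are isomorphisms because $U_i\rightarrow X$ is a monomorphism. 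So the underlying diagram inverts all edges, its limit is $\prod_iF(U_i)$, and the sheaf condition becomes $F(X)\xrightarrow{\sim}\prod_iF(U_i)$.
\end{itemize}
With this replacing your last paragraph, the proof is complete.
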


\begin{proof} Changing universe if necessary, assume that both $\C$ and $\kappa$ are small, and that $\cD$ is locally small. Since $F:\C^{\textnormal{opp}}\rightarrow \cD$ is a sheaf for the $\kappa$-topology on $\C$ if and only if the composition $\C^{\textnormal{opp}}\xrightarrow{F} \cD\xrightarrow{\textnormal{Hom}_\cD(U,-)}\mS$ is a sheaf for the $\kappa$-topology on $\C$ for all $U\in \cD$, and $\C^{\textnormal{opp}}\xrightarrow{F} \cD$ preserves $\kappa$-small products if and only if each such composition $\C^{\textnormal{opp}}\xrightarrow{F} \cD\xrightarrow{\textnormal{Hom}_\cD(U,-)}\mS$ preserves $\kappa$-small products, it suffices to prove our claim in the simplified case that $\cD=\mS$. Applying \ref{Diagram Sheaf condition for sites}, it suffices to prove that a map $G:\C^{\textnormal{opp}}\rightarrow \mS$ preserves $\kappa$-small products if and only if, for all $\kappa$-small sets $\{U_i\in \C\}_{i\in I}$ of objects in $\C$ the composition $\Delta^\triangleleft\xrightarrow{\overline{U}}\textnormal{Fun}(\C^{\textnormal{opp}},\mS)^{\textnormal{opp}}\xrightarrow{\textnormal{Hom}_{\textnormal{Fun}(\C^{\textnormal{opp}},\mS)}(-,G)}\mS$ is a limit diagram, where $\overline{U}$ is the (opposite of the) \v{C}ech nerve of $\coprod_{j\in I}h^\C_{U_j}\rightarrow h^\C_{\coprod_{i\in I}U_i}$. To begin, one inductively shows that (the opposite of) $\overline{U}$ carries the $[n]\rightarrow [-1]$ edge of $(\Delta^{\textnormal{opp}})^\triangleright$ to a map of the form $(\coprod_{i\in I}h^\C_{U_i})\coprod (\coprod_{j\in J}h^\C_{\varnothing_\C})\rightarrow h^\C_{\coprod_{i\in I}{U_i}}$, where $J$ is some (small) set, $\varnothing_\C\in \C$ is an initial object, and $\coprod_{i\in I}h^\C_{U_i}\rightarrow h^\C_{\coprod_{i\in I}{U_i}}$ is the canonical map. If $G:\C^{\textnormal{opp}}\rightarrow \mS$ preserves $\kappa$-small products, then $\textnormal{Hom}_{\textnormal{Fun}(\C^{\textnormal{opp}},\mS)}(\overline{U}(-),G)$ carries each edge of the form $[-1]\rightarrow [n]$ in $\Delta^\triangleleft$ to an isomorphism in $\mS$. By 2-out-of-3, it follows that $\textnormal{Hom}_{\textnormal{Fun}(\C^{\textnormal{opp}},\mS)}(\overline{U}(-),G)$ carries every edge of $\Delta^\triangleleft$ to an isomorphism in $\mS$, and the fact that the composition $\Delta^\triangleleft\xrightarrow{\overline{U}}\textnormal{Fun}(\C^{\textnormal{opp}},\mS)^{\textnormal{opp}}\xrightarrow{\textnormal{Hom}_{\textnormal{Fun}(\C^{\textnormal{opp}},\mS)}(-,G)}\mS$ is a limit diagram may now be deduced from the fact that $\Delta$ is weakly contractible.  

To see the converse, suppose now that the map $G:\C^{\textnormal{opp}}\rightarrow \mS$ is a $\kappa$-sheaf, which implies that $\textnormal{Hom}_{\textnormal{Fun}(\C^{\textnormal{opp}},\mS)}(\overline{U}(-),G):\Delta^\triangleleft\rightarrow \mS$ is a limit diagram (\ref{Diagram Sheaf condition for sites}). Since the empty sieve $\varnothing\hookrightarrow \C_{/\varnothing_\C}$ is a covering sieve for the $\kappa$-topology on $\C$, it follows that $G(\varnothing_\C)\in \mS$ is final. Writing $U$ for the underlying cosimplicial object of $\overline{U}$, it follows that the map $\textnormal{Hom}_{\textnormal{Fun}(\C^{\textnormal{opp}},\mS)}(U(-),G):\Delta\rightarrow \mS$ carries every edge of $\Delta$ to an isomorphism, which immediately implies that $\textnormal{Hom}_{\textnormal{Fun}(\C^{\textnormal{opp}},\mS)}(\overline{U}(-),G):\Delta^\triangleleft\rightarrow \mS$ carries every edge of $\Delta^\triangleleft$ to an isomorphism, proving our claim.

\end{proof}

\begin{remark}{}

    Write $\Delta_{\textnormal{inj}}$ for the subcategory of $\Delta$ spanned by the injective morphisms. It follows from the left cofinality of the inclusion $\Delta_{\textnormal{inj}}\hookrightarrow \Delta$ that the statement analogous to \ref{Prelim. Cech descent} using $\Delta_{\textnormal{inj}}$ in place of $\Delta$ also holds. 
    
\end{remark}

\noindent In particular, we have the following:

\begin{corollary}{}\label{Left Exact Localization for kappa Topology}

    Let $\kappa$ be a small regular cardinal, and let $\C$ be a small $\kappa$-geometic $(\infty,1)$-category. Write $\textnormal{Fun}^{\times,\kappa}(\C^{\textnormal{opp}},\mS)$ for the full subcategory of $\textnormal{Fun}(\C^{\textnormal{opp}},\mS)$ spanned by those maps $F:\C^{\textnormal{opp}}\rightarrow \mS$ which preserve $\kappa$-small products. Then, the localization $L:\textnormal{Fun}(\C^{\textnormal{opp}},\mS)\rightarrow \textnormal{Fun}^{\times,\kappa}(\C^{\textnormal{opp}},\mS)$ is left exact. 

\end{corollary}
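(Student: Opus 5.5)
By \ref{Sheaves for the kappa Product Topology}, applied with $\cD=\mS$, a presheaf $F:\C^{\textnormal{opp}}\rightarrow \mS$ preserves $\kappa$-small products exactly when it is a sheaf for the $\kappa$-topology on $\C$. Hence $\textnormal{Fun}^{\times,\kappa}(\C^{\textnormal{opp}},\mS)=\textnormal{Shv}_\kappa(\C)$ as full subcategories of $\textnormal{Fun}(\C^{\textnormal{opp}},\mS)$. The localization $L$ is then the sheafification functor for the Grothendieck topology $J_\kappa$ generated by the $\kappa$-topology. Sheafification with respect to a Grothendieck topology on a small $(\infty,1)$-category is a topological, hence left exact, localization (\citeq{highertopostheory} 6.2.2.7). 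So the argument reduces to three checks.

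First, the $\kappa$-topology must actually be a Grothendieck pretopology in the sense of \ref{higher sites definition}, so that \ref{Grothendieck topology generated by a site} produces a genuine topology $J_\kappa$ with $\textnormal{Shv}_{J_\kappa}(\C)=\textnormal{Shv}_\kappa(\C)$.
\begin{itemize}
\item Isomorphisms $U\rightarrow V$ are coverings, since they are one-element coproduct inclusions.
\item For stability under base change, take $g:V\rightarrow \coprod_j U_j$. Universality of $\kappa$-small coproducts gives $V\simeq \coprod_i V\times_{\coprod_j U_j}U_i$, and the family $\{V\times_{\coprod_jU_j}U_i\rightarrow V\}$ is identified with the family of coproduct inclusions of this decomposition.
\item For closure under composition, $\{V_{j_i}\rightarrow U_i\rightarrow \coprod U\}$ is the family of coproduct inclusions for $\coprod_{i}\coprod_{j_i}V_{j_i}$. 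This is still $\kappa$-small because $\kappa$ is regular.
\end{itemize}
If the pretopology is only closed up to isomorphism of families, we replace it by its closure under isomorphisms of families. This generates the same sieves.

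Second, we record that $L$ is accessible and of small generation. This is automatic, since $J_\kappa$-sheafification is generated by the small set of covering-sieve monomorphisms into representables.

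The main obstacle is the bookkeeping in the first check: the fibre products must agree with the coproduct decomposition up to the equivalence that universality provides, so that the generated sieves coincide. I would handle this by working with sieves rather than families. A sieve on $X$ is then $J_\kappa$-covering exactly when it contains all summand inclusions of some $\kappa$-small decomposition $X\simeq\coprod U_i$. Stability of such sieves under pullback and the local character axiom then follow directly from universality of coproducts. With this in hand, left exactness of $L$ follows from \citeq{highertopostheory} 6.2.2.7.
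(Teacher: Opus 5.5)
Your proof is correct and follows the route the paper intends. The paper states this corollary as an immediate consequence of \ref{Sheaves for the kappa Product Topology}: that result identifies $\textnormal{Fun}^{\times,\kappa}(\C^{\textnormal{opp}},\mS)$ with $\textnormal{Shv}_\kappa(\C)$, so $L$ is a sheafification functor and therefore left exact; your check that the $\kappa$-coverings satisfy the pretopology axioms spells out a step the paper leaves implicit.
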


\noindent In good cases the converse also holds:

\begin{proposition}{}\label{Converse to Left Exact Localization for kappa Topology}

    Let $\kappa$ be a small regular cardinal, and let $\C$ be a small $(\infty,1)$-category which admits $\kappa$-small coproducts, and has pullbacks. Then, the following are equivalent:

    \begin{itemize}
        \item [$(1)$] The localization $L_\kappa:\textnormal{Fun}(\C^{\textnormal{opp}},\mS)\rightarrow \textnormal{Fun}^{\times,\kappa}(\C^{\textnormal{opp}},\mS)$ is left exact. 

        \item [$(2)$] $\C$ is a $\kappa$-geometric $(\infty,1)$-category. 
        
    \end{itemize}

\end{proposition}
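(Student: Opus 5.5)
The direction (2)$\Rightarrow$(1) is exactly \ref{Left Exact Localization for kappa Topology}, so all the work is in (1)$\Rightarrow$(2). Assume $L_\kappa$ is left exact. Write $\cX:=\textnormal{Fun}^{\times,\kappa}(\C^{\textnormal{opp}},\mS)$. Then $\cX$ is a left exact accessible localization of a presheaf $(\infty,1)$-category, hence an $(\infty,1)$-topos. In particular, small coproducts in $\cX$ are disjoint and universal. The plan is to transport these properties back to $\C$ along the functor $j:=L_\kappa\circ h^\C_\bullet:\C\rightarrow \cX$, using three facts about $j$:
\begin{itemize}
\item [(a)] every representable $h^\C_X$ preserves all small limits, so it lies in $\cX$; hence $j\cong h^\C_\bullet$ is fully faithful;
\item [(b)] $j$ preserves pullbacks, since the Yoneda embedding does and the representables lie in $\cX$, a full subcategory closed under limits;
\item [(c)] $j$ preserves $\kappa$-small coproducts.
\end{itemize}
For (c), fix $\{U_i\}_{i\in I}$ with $I$ $\kappa$-small. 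For any $G\in\cX$,
$$\textnormal{Hom}(h_{\coprod U_i},G)\simeq G(\textstyle\coprod U_i)\simeq \prod G(U_i)\simeq \textnormal{Hom}\bigl(\coprod_{\textnormal{Fun}} h_{U_i},G\bigr),$$
using that $G$ preserves $\kappa$-small products. Hence $L_\kappa(\coprod h_{U_i})\simeq h_{\coprod U_i}$, so $j(\coprod U_i)$ is the coproduct in $\cX$ of the $j(U_i)$.

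With (a)--(c) in hand, the two properties follow by transport.

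\emph{Disjointness.} Take $i\neq i'$ in $I$ and form $U_i\times_{\coprod U}U_{i'}$ in $\C$. Applying $j$ and using (b), (c), this becomes $j(U_i)\times_{\coprod j(U)}j(U_{i'})$ in $\cX$, which is initial in $\cX$ because coproducts in $\cX$ are disjoint. It remains to see that an object $Z\in\C$ with $j(Z)$ initial in $\cX$ is initial in $\C$. The initial object of $\cX$ is $L_\kappa(\varnothing)$. Since $\kappa$-small products include the empty product, it is the presheaf sending the initial object $\varnothing_\C$ of $\C$ (the empty coproduct, which exists since $\C$ has $\kappa$-small coproducts) to $*$, and sending any $Y$ to the space of maps $Y\rightarrow \varnothing_\C$. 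In other words $L_\kappa(\varnothing)\simeq h_{\varnothing_\C}$. By full faithfulness of $j$, $Z\simeq \varnothing_\C$. The same argument shows that the diagonal maps $U_i\rightarrow U_i\times_{\coprod U}U_i$ are equivalences, since they are equivalences after applying the fully faithful, pullback-preserving $j$.

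\emph{Universality.} Given $f:Y\rightarrow \coprod U_i$, the canonical map $\coprod_i (Y\times_{\coprod U}U_i)\rightarrow Y$ is sent by $j$, via (b) and (c), to the corresponding map in $\cX$. That map is an equivalence by universality of coproducts in $\cX$, so the original map is an equivalence by conservativity of the fully faithful $j$.

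The step I expect to be the main obstacle is (c) together with the identification of the initial object. One must check carefully that the reflection of a coproduct of representables is the representable on the coproduct; the argument above does this by testing against $\cX$ via the Yoneda lemma. The emptiness case also needs care, because $\kappa$-small includes $I=\varnothing$. Everything else is formal once $j$ is known to be fully faithful and to preserve pullbacks and $\kappa$-small coproducts.
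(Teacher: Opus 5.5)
Your proof is correct. It rests on the same key input as the paper's: the canonical map $\coprod_{i\in I}h^\C_{U_i}\rightarrow h^\C_{\coprod_{i\in I}U_i}$ is an $L_\kappa$-equivalence for every $\kappa$-small $I$. The case $I=\varnothing$ gives $L_\kappa(\varnothing)\simeq h^\C_{\varnothing_\C}$. Your description of the initial object of $\cX$ is just your step (c) with $I=\varnothing$, and it would be cleaner to say so rather than restate the answer.

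The packaging differs, and here is how:
\begin{itemize}
\item \textbf{Paper.} It never identifies $\textnormal{Fun}^{\times,\kappa}(\C^{\textnormal{opp}},\mS)$ as an $(\infty,1)$-topos. For universality, it tests against representables $h^\C_Y$. The map $\coprod_j h^\C_{X\times_U U_j}\rightarrow h^\C_X$ is the pullback, in presheaves, of the $L_\kappa$-equivalence $\coprod_j h^\C_{U_j}\rightarrow h^\C_U$. It is therefore an $L_\kappa$-equivalence, because left exactness makes $L_\kappa$-equivalences stable under pullback (HTT 6.2.1.1). For disjointness, it applies the left exact $L_\kappa$ directly to $h^\C_X\times_{h^\C_X\coprod h^\C_Y}h^\C_Y\simeq\varnothing$.
\item \textbf{You.} You invoke the Giraud-type fact that an accessible left exact localization of presheaves is an $(\infty,1)$-topos. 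Accessibility holds because $\cX$ consists of the objects local for a small set of maps, as $\C$ and $\kappa$ are small; it is worth saying this explicitly. You then transport disjointness and universality back to $\C$ along the Yoneda embedding. This embedding is fully faithful and, as you show, preserves pullbacks and $\kappa$-small coproducts.
\end{itemize}

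The paper's route uses only HTT 6.2.1.1 and the behaviour of colimits in presheaf categories. Yours uses a heavier black box but is conceptually cleaner: any property of $\kappa$-small coproducts that is expressible through pullbacks, coproducts and equivalences, and holds in $\cX$, descends to $\C$. As a by-product, you also check that the coprojections $U_i\rightarrow\coprod U$ are monomorphisms, which the paper does not address.
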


\begin{proof}

    The fact that $(2)$ implies $(1)$ is established in \ref{Left Exact Localization for kappa Topology}. Assume now that $(1)$ holds. We first prove that $\kappa$-small coproducts in $\C$ are universal. Let $\{U_i\}_{i\in I}$ be some $\kappa$-small collection of objects in $\C$, and let $f:X\rightarrow \coprod_{i\in I}U_i=:U$ be a map. We wish to show that, for all $Y\in \C$, the arrow $\textnormal{Hom}_\C(X,Y)\rightarrow \prod_{j\in I}\textnormal{Hom}_\C(X\times_{U} U_j,Y)$ is a homotopy equivalence. Since $h^\C_Y:\C^{\textnormal{opp}}\rightarrow \mS$ preserves $\kappa$-small products, it suffices to show that the arrow $\coprod_{j\in I}(h^\C_{X\times_{U} U_j})\rightarrow h^\C_X$ is an $L_\kappa$-equivalence. However, since $\coprod_{j\in I}(h^\C_{X\times_{U} U_j})\rightarrow h^\C_X$ may be identified as the map $h^\C_{X}\times_{h^\C_{U}}\coprod_{j\in I}h^\C_{U_j}\rightarrow h^\C_{X}$, this follows from the fact that the class of $L_\kappa$-equivalences is stable under pullback (\citeq{highertopostheory} 6.2.1.1).

    Next, let $X,Y\in \C$. Since $h^\C_X\coprod h^\C_Y\rightarrow h^\C_{X\coprod Y}$ is an $L_\kappa$-equivalence and $L_\kappa$ is left exact, it follows that we may identify $L_\kappa(h^\C_X\times_{h^\C_X\coprod h^\C_Y}h^\C_Y)\cong L(\underline{\varnothing}_{\C^{\textnormal{opp}}})$ as the fibre product $h^\C_X\times_{h^\C_{X\coprod Y}}h^\C_Y\cong h^\C_{X\times_{X\coprod Y}Y}$ in $\textnormal{Fun}^{\times,\kappa}(\C^{\textnormal{opp}},\mS)$. Our claim now follows from the fact that $\underline{\varnothing}_{\C^{\textnormal{opp}}}\rightarrow h^\C_{\varnothing_\C}$ is an $L_\kappa$-equivalence, where $\varnothing_\C\in \C$ is initial. 

\end{proof}

\begin{remark}{}

    Proposition \ref{Converse to Left Exact Localization for kappa Topology} admits several variations. In particular, $(1)$ may be replaced with:

    \begin{itemize}

        \item [$(1^\prime)$] \emph{The localization $L_\kappa:\textnormal{Fun}(\C^{\textnormal{opp}},\mS)\rightarrow \textnormal{Fun}^{\times,\kappa}(\C^{\textnormal{opp}},\mS)$ is topological.}
        
    \end{itemize}

\end{remark}

\begin{corollary}{}\label{kappa cats. through sheaves}

    Let $\kappa$ be a (possibly large) regular cardinal, and let $\C$ be an $(\infty,1)$-category which admits $\kappa$-small coproducts, and has pullbacks. Then, the following are equivalent:

    \begin{itemize}
        \item [$(1)$] $\C$ is a $\kappa$-geometric $(\infty,1)$-category. 

        \item [$(2)$] There exists a subcanonical Grothendieck topology $J$ on $\C$ such that all $J$-sheaves preserve $\kappa$-small products. 
        
    \end{itemize}

\end{corollary}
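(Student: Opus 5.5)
For (1) $\Rightarrow$ (2), I take $J$ to be the Grothendieck topology generated by the $\kappa$-topology of \ref{Conditions for the kappa Product Topology}. \ref{Sheaves for the kappa Product Topology} says that a map $\C^{\textnormal{opp}}\rightarrow \mS$ is a $J$-sheaf exactly when it preserves $\kappa$-small products, so every $J$-sheaf preserves $\kappa$-small products. Subcanonicity then reduces to one check: each representable $h^\C_Y$ sends $\kappa$-small coproducts in $\C$ to products in $\mS$. This holds by the definition of a coproduct, so the direction is immediate.

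For (2) $\Rightarrow$ (1), I would follow the argument of \ref{Converse to Left Exact Localization for kappa Topology}, replacing $L_\kappa$ with $J$-sheafification. First I pass to a larger universe in which $\C$ and $\kappa$ are small; being a $J$-sheaf and being $\kappa$-geometric are both unaffected by this. Write $L:\textnormal{Fun}(\C^{\textnormal{opp}},\mS)\rightarrow \textnormal{Shv}_J(\C)$ for sheafification, which is left exact. For any $\kappa$-small family $\{U_i\}_{i\in I}$, I claim the canonical map $\coprod_i h^\C_{U_i}\rightarrow h^\C_{\coprod_i U_i}$ is an $L$-equivalence. To see this, let $G$ be any $J$-sheaf. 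By hypothesis $G$ preserves $\kappa$-small products, so mapping this arrow into $G$ gives the equivalence $G(\coprod U_i)\simeq \prod G(U_i)$. The same argument shows that $\underline{\varnothing}\rightarrow h^\C_{\varnothing_\C}$ is an $L$-equivalence, taking the empty product.

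Universality then follows as in the earlier proof. For $f:X\rightarrow U=\coprod U_i$, the map $\coprod_j h^\C_{X\times_U U_j}\rightarrow h^\C_X$ is the pullback in presheaves of the $L$-equivalence above along $h^\C_X\rightarrow h^\C_U$. $L$-equivalences are stable under pullback because $L$ is left exact (HTT 6.2.1.1), so this map is an $L$-equivalence. Subcanonicity says every representable $h^\C_Y$ is a sheaf, hence $L$-local. Mapping the $L$-equivalence into $h^\C_Y$ therefore gives $\textnormal{Hom}(X,Y)\simeq \prod_j \textnormal{Hom}(X\times_U U_j,Y)$ for all $Y$, which is exactly the statement that $X\simeq\coprod_j X\times_U U_j$.

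For disjointness, I use that $L$ is left exact and that $L$ fixes representables. Applying $L$ to $h_X\times_{h_X\coprod h_Y}h_Y\simeq\underline{\varnothing}$ (coproducts in presheaves are disjoint) and using $L(h_X\coprod h_Y)\simeq h_{X\coprod Y}$ gives $h_{X\times_{X\coprod Y}Y}\simeq L(\underline{\varnothing})\simeq h_{\varnothing_\C}$. The Yoneda embedding is fully faithful, so $X\times_{X\coprod Y}Y$ is initial. The same argument works for arbitrary $\kappa$-small families.

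The main obstacle I expect is making this size bookkeeping rigorous. Sheafification has to exist and be left exact after changing universe, which is where HTT 6.2.2.7 is used. I also have to check that the hypothesis on $J$-sheaves still holds after enlarging the universe, which can be reduced via the appendix result \ref{HyperSheafification Ind of Choice of Universe Intro} or by applying the hypothesis directly to functors to $\widehat{\mS}$. Everything else is a reprise of \ref{Converse to Left Exact Localization for kappa Topology}.
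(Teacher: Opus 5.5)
Your proposal is correct and is essentially the argument the paper intends. The paper states the corollary without proof. It follows from \ref{Sheaves for the kappa Product Topology} for (1)$\Rightarrow$(2), and from the proof of \ref{Converse to Left Exact Localization for kappa Topology} with $J$-sheafification in place of $L_\kappa$ for (2)$\Rightarrow$(1); you correctly identify subcanonicity as the ingredient that replaces the automatic $L_\kappa$-locality of representables. On the size bookkeeping: \ref{HyperSheafification Ind of Choice of Universe Intro} only applies when $\C$ is already small, so in general it is your second option that carries the argument, namely reading the hypothesis in (2) for sheaves valued in spaces of the enlarged universe, which \ref{Sheaf Definition} allows.
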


\noindent One may also identify the $\kappa$-topology by means of a universal property:

\begin{proposition}{}\label{Universal Property of the Disjoint Product Topology}

     Fix a small regular cardinal $\kappa$, and let $\C$ be a small $\kappa$-geometric $(\infty,1)$-category. Then, for all cocomplete $(\infty,1)$-categories $\cE$, pulling-back along $\C\rightarrow \textnormal{Shv}_{\kappa}(\C)$ gives a categorical equivalence 

     $$\textnormal{Fun}^{co \ell im}(\textnormal{Shv}_{\kappa}(\C),\cE)\rightarrow \textnormal{Fun}^{\amalg,\kappa}(\C,\cE),$$

     \noindent where $\textnormal{Fun}^{co \ell im}(\textnormal{Shv}_{\kappa}(\C),\cE)$ is the full subcategory of $\textnormal{Fun}(\textnormal{Shv}_{\kappa}(\C),\cE)$ spanned by those maps $F:\textnormal{Shv}_{\kappa}(\C)\rightarrow \cE$ which preserve small colimits, and $\textnormal{Fun}^{\amalg,\kappa}(\C,\cE)$ is the full subcategory of $\textnormal{Fun}(\C,\cE)$ spanned by those maps $\C\rightarrow \cE$ which preserve $\kappa$-small coproducts.

\end{proposition}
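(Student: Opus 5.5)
The plan is to combine the universal property of presheaf categories with \ref{Univ property of Presentable Localizations}, applied to the localization $\textnormal{Fun}(\C^{\textnormal{opp}},\mS)\rightarrow \textnormal{Shv}_\kappa(\C)$. By \ref{Sheaves for the kappa Product Topology}, $\textnormal{Shv}_\kappa(\C)=\textnormal{Fun}^{\times,\kappa}(\C^{\textnormal{opp}},\mS)$. This category is the Bousfield localization of $\textnormal{Fun}(\C^{\textnormal{opp}},\mS)$ at the small set
$$S=\Big\{\coprod_{i\in I}h^\C_{U_i}\rightarrow h^\C_{\coprod_{i\in I}U_i}\Big\},$$
where $\{U_i\}_{i\in I}$ ranges over $\kappa$-small families of objects of $\C$. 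The empty family is included; it gives the map $\varnothing\rightarrow h^\C_{\varnothing_\C}$. The identification holds because a presheaf $G$ is $S$-local exactly when $G(\coprod U_i)\rightarrow \prod G(U_i)$ is an equivalence. Since $S$ is small, $\overline{S}$ is of small generation, so the hypotheses of \ref{presentable localizations} are met. Moreover, the Yoneda embedding $\C\rightarrow \textnormal{Shv}_\kappa(\C)$ lands in the sheaves, since $\C$ is $\kappa$-geometric and \ref{Sheaves for the kappa Product Topology} shows every representable preserves $\kappa$-small products.

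The key steps are as follows. First, the universal property of presheaves (HTT 5.1.5.6), extended to cocomplete $\cE$ that need not be locally small by enlarging the universe, gives an equivalence $\textnormal{Fun}^{co\ell im}(\textnormal{Fun}(\C^{\textnormal{opp}},\mS),\cE)\simeq \textnormal{Fun}(\C,\cE)$ given by restriction along Yoneda. Second, \ref{Univ property of Presentable Localizations} identifies $\textnormal{Fun}^{co\ell im}(\textnormal{Shv}_\kappa(\C),\cE)$, via composition with $L_\kappa$, with the full subcategory of cocontinuous functors $\textnormal{Fun}(\C^{\textnormal{opp}},\mS)\rightarrow\cE$ that invert every map in $S$. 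Third, a cocontinuous $\overline{G}$ with $G=\overline{G}\circ h^\C_\bullet$ sends the map in $S$ indexed by $\{U_i\}$ to the canonical map $\coprod_i G(U_i)\rightarrow G(\coprod_i U_i)$. So $\overline{G}$ inverts $S$ precisely when $G$ preserves $\kappa$-small coproducts, including the initial object via the empty family.

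It remains to check that restriction along $\C\rightarrow \textnormal{Shv}_\kappa(\C)$ agrees with the composite of these two equivalences. This holds because $L_\kappa\circ h^\C_\bullet\cong h^\C_\bullet$, since representables are already local.

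I expect the main obstacle to be size bookkeeping rather than anything conceptual. \ref{Univ property of Presentable Localizations} is stated for cocomplete $\cE$ without local smallness, and HTT 5.1.5.6 requires the target to admit small colimits. Both are fine, but the first step needs a remark that the left Kan extension exists because $\cE$ admits small colimits and $\C$ is small. I would apply the version in \citeq{lurie2024kerodon} to avoid local smallness hypotheses.
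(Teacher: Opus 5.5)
Your proposal is correct and is essentially the paper's proof: combine the universal property of $\textnormal{Fun}(\C^{\textnormal{opp}},\mS)$ with \ref{Univ property of Presentable Localizations} applied to the maps $\coprod_{i\in I}h^\C_{U_i}\rightarrow h^\C_{\coprod_{i\in I}U_i}$, and observe that a cocontinuous extension inverts these exactly when its restriction to $\C$ preserves $\kappa$-small coproducts. Your check that $L_\kappa\circ h^\C_\bullet\simeq h^\C_\bullet$, so that the composite really is restriction along $\C\rightarrow\textnormal{Shv}_\kappa(\C)$, spells out a step the paper leaves implicit.
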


\begin{proof}

    The universal property of $\C\rightarrow \textnormal{Fun}(\C^{\textnormal{opp}},\mS)$ yields an equivalence $\textnormal{Fun}^{co \ell im}(\textnormal{Fun}(\C^{\textnormal{opp}},\mS),\cE)\xrightarrow{\sim} \textnormal{Fun}(\C,\cE)$. Moreover, pulling-back along $\textnormal{Fun}(\C^{\textnormal{opp}},\mS)\rightarrow \textnormal{Shv}_{\kappa}(\C)$ yields a fully faithful $(\infty,1)$-functor $\textnormal{Fun}^{co\ell im}(\textnormal{Shv}_{\kappa}(\C),\cE)\rightarrow \textnormal{Fun}^{co \ell im}(\textnormal{Fun}(\C^{\textnormal{opp}},\mS),\cE)$, the essential of which consists of those maps $F:\textnormal{Fun}(\C^{\textnormal{opp}},\mS)\rightarrow \cE$ carrying each $\coprod_{i\in I}h^\C_{U_i}\rightarrow h^\C_{\coprod_{j\in I}U_j}$ to an isomorphism, where $\{U_i\}_{i\in I}$ is some $\kappa$-small collections of objects in $\C$ (\ref{Univ property of Presentable Localizations}). Thus, the composition $\textnormal{Fun}^{co \ell im}(\textnormal{Shv}_{\kappa}(\C),\cE)\rightarrow \textnormal{Fun}^{co \ell im}(\textnormal{Fun}(\C^{\textnormal{opp}},\mS),\cE)\rightarrow \textnormal{Fun}(\C,\cE)$ is fully faithful, and the essential image of said composition consists precisely of those maps $\C\rightarrow \cE$ which preserve $\kappa$-small coproducts, as desired. 
    
\end{proof}

\begin{corollary}{}
    
     Fix a small regular cardinal $\kappa$, and let $\C$ be a small $\kappa$-geometric $(\infty,1)$-category which admits finite limits. Then, for all $(\infty,1)$-topoi $\cY$, pulling-back along $\C\rightarrow \textnormal{Shv}_{\kappa}(\C)$ yields a categorical equivalence 

     $$\textnormal{Fun}^*(\textnormal{Shv}_{\kappa}(\C),\cY)\xrightarrow{\sim}\textnormal{Fun}(\C,\cY)^{\prime},$$

     \noindent where $\textnormal{Fun}(\C,\cY)^\prime\subseteq \textnormal{Fun}(\C,\cY)$ denotes the full subcategory spanned by those maps $F:\C\rightarrow \cY$ which preserve $\kappa$-small coproducts, and are left exact. 
    
\end{corollary}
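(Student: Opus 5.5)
The plan is to combine the universal property of \ref{Universal Property of the Disjoint Product Topology} with the standard description of geometric morphisms into $\textnormal{Shv}_\kappa(\C)$ via left exact colimit-preserving functors.

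First recall (\citeq{highertopostheory} 6.1.5.2 / 6.2.3.20 style) that for an $(\infty,1)$-topos $\cX$, $\textnormal{Fun}^*(\cX,\cY)$ denotes the full subcategory of $\textnormal{Fun}(\cX,\cY)$ spanned by the left exact, small-colimit-preserving functors. Since $\textnormal{Shv}_\kappa(\C)$ is an $(\infty,1)$-topos by \ref{Left Exact Localization for kappa Topology}, this is the relevant category.

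By \ref{Universal Property of the Disjoint Product Topology} applied with $\cE=\cY$, restriction along $j:\C\rightarrow \textnormal{Shv}_\kappa(\C)$ gives an equivalence $\textnormal{Fun}^{co\ell im}(\textnormal{Shv}_\kappa(\C),\cY)\xrightarrow{\sim}\textnormal{Fun}^{\amalg,\kappa}(\C,\cY)$. It therefore suffices to show that under this equivalence a colimit-preserving $G:\textnormal{Shv}_\kappa(\C)\rightarrow\cY$ is left exact if and only if $G\circ j$ is left exact. The forward direction holds because $j$ is left exact: the Yoneda embedding preserves finite limits and each representable preserves $\kappa$-small products (as $\C$ has $\kappa$-small coproducts), so representables are $\kappa$-sheaves and $j$ is the corestricted Yoneda embedding, which preserves all limits existing in $\C$.

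For the converse, write $G=\overline{G}\circ L_\kappa$ restricted, where $\overline{G}:\textnormal{Fun}(\C^{\textnormal{opp}},\mS)\rightarrow\cY$ is the left Kan extension of $f:=G\circ j$. By \citeq{highertopostheory} 6.1.5.2, since $\C$ admits finite limits and $f$ is left exact, $\overline{G}$ is left exact. Now $G$ is identified with $\overline{G}$ restricted to $\textnormal{Shv}_\kappa(\C)$ (because $\overline{G}\cong G\circ L_\kappa$, both being colimit-preserving with the same restriction to $\C$, and $L_\kappa$ restricted to sheaves is the identity). Finite limits in $\textnormal{Shv}_\kappa(\C)$ are computed in presheaves, so $G$ is left exact. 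The main obstacle I anticipate is precisely this identification $\overline{G}\cong G\circ L_\kappa$, i.e.\ checking that $\overline{G}$ inverts the generating maps $\coprod_i h_{U_i}\rightarrow h_{\coprod U_i}$; this is exactly the statement that $f$ preserves $\kappa$-small coproducts, so it follows from \ref{Univ property of Presentable Localizations}. Putting the two directions together gives the claimed equivalence onto $\textnormal{Fun}(\C,\cY)^\prime$.
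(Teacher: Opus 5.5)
Your proof is correct, but it is organized differently from the paper's.

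\textbf{The paper's route.} The paper takes HTT 6.2.3.20 as its backbone. Geometric morphisms $\textnormal{Shv}_\kappa(\C)\to\cY$ correspond to left exact $F:\C\to\cY$ that send each covering $\{U_i\to\coprod_j U_j\}$ to an effective epimorphism $\coprod_i F(U_i)\to F(\coprod_j U_j)$. Functors in $\textnormal{Fun}(\C,\cY)'$ plainly satisfy this. For the reverse inclusion, the paper invokes \ref{Universal Property of the Disjoint Product Topology} to see that $f^*\circ j$ actually preserves $\kappa$-small coproducts, i.e.\ that \emph{effective epimorphism} can be upgraded to \emph{equivalence}.

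\textbf{Your route.} You take \ref{Universal Property of the Disjoint Product Topology} itself as the backbone equivalence on colimit-preserving functors. You then match left exactness on both sides using only the presheaf-level statement HTT 6.1.5.2, plus the fact that finite limits in $\textnormal{Shv}_\kappa(\C)$ are computed in presheaves.

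\textbf{Comparison.} Your route avoids the sieve and effective-epimorphism analysis behind 6.2.3.20, and it makes explicit that left exactness is detected at the presheaf level. The paper's route is shorter once 6.2.3.20 is available. It is also the template that adapts to arbitrary topologies, where coproduct preservation is replaced by an effective-epimorphism condition.

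\textbf{Two small remarks.}
\begin{itemize}
\item The step you flag as the main obstacle is formal. $G\circ L_\kappa$ and $\overline{G}$ are both colimit-preserving out of $\textnormal{Fun}(\C^{\textnormal{opp}},\mS)$. They agree on representables because $L_\kappa h_X\simeq h_X$. So they agree by the universal property of presheaves, with no appeal to \ref{Univ property of Presentable Localizations}.
\item Your phrase \emph{write $G=\overline{G}\circ L_\kappa$ restricted} should read $G\simeq\overline{G}|_{\textnormal{Shv}_\kappa(\C)}$.
\end{itemize}
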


\begin{proof}

    Applying \citeq{highertopostheory} 6.2.3.20, we see that pulling-back along $\C\rightarrow \textnormal{Shv}_{\kappa}(\C)$ yields a fully faithful $(\infty,1)$-functor $\textnormal{Fun}^*(\textnormal{Shv}_{\kappa}(\C),\cY)\rightarrow \textnormal{Fun}(\C,\cY)$, the essential image of which is spanned by the maps $G:\C\rightarrow \cY$ which are left exact, and satisfy the property that for all $\kappa$-small collections $\{U_i\}_{i\in I}$ of objects in $\C$, the arrow $\coprod_{i\in I}G(U_i)\rightarrow G(\coprod_{j\in I}U_i)$ is an effective epimorphism in $\cY$. Since $\textnormal{Fun}(\C,\cY)^{\prime}\subseteq \textnormal{Fun}(\C,\cY)$ is contained in the essential image of $\textnormal{Fun}^*(\textnormal{Shv}_{\kappa}(\C),\cY)\rightarrow \textnormal{Fun}(\C,\cY)$, it suffices to prove that for left exact left adjoints $(\infty,1)$-topoi $f^*:\textnormal{Shv}_{\kappa}(\C)\rightarrow \cY$, the composition $\C\rightarrow \textnormal{Shv}_{\kappa}(\C)\xrightarrow{f^*}\cY$ preserves $\kappa$-small coproducts. Our claim now follows from \ref{Universal Property of the Disjoint Product Topology}.

\end{proof}

\noindent The geometry of the $(\infty,1)$-topos $\textnormal{Shv}_\kappa(\C)$ is in part captured by the following proposition:

\begin{proposition}{}

    Fix a small regular cardinal $\kappa$, and let $\C$ be a small $\kappa$-geometric $(\infty,1)$-category. Then: 

    \begin{itemize}

        \item [$(1)$] For each $n\in \ZZ$, the map $\tau_{\leq n}:\textnormal{Fun}(\C^{\textnormal{opp}},\mS)\rightarrow \textnormal{Fun}(\C^{\textnormal{opp}},\tau_{\leq n}\mS)$ restricts to the map $\tau_{\leq n}:\textnormal{Shv}_\kappa(\C)\rightarrow \tau_{\leq n}\textnormal{Shv}_\kappa(\C)$.
    
        \item [$(1)$] An arrow $f:X\rightarrow Y$ in $\textnormal{Shv}_\kappa(\C)$ is an effective epimorphism if and only if the image of $f$ under the inclusion $\textnormal{Shv}_\kappa(\C)\hookrightarrow \textnormal{Fun}(\C^{\textnormal{opp}},\mS)$ is an effective epimorphism. 

        \item [$(2)$] The $(\infty,1)$-topos $\textnormal{Shv}_\kappa(\C)$ is hypercomplete.

    \end{itemize}

\end{proposition}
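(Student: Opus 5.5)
The whole argument rests on \ref{Sheaves for the kappa Product Topology}: $\textnormal{Shv}_\kappa(\C)$ is exactly $\textnormal{Fun}^{\times,\kappa}(\C^{\textnormal{opp}},\mS)$, the full subcategory of presheaves preserving $\kappa$-small products. I will use this description for all three claims. The common principle is that $\kappa$-small products in $\mS$ commute with the relevant constructions: truncation, effective-epimorphism conditions and Postnikov towers.

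\emph{Truncation claim.} Let $F$ preserve $\kappa$-small products. The pointwise truncation $\tau_{\leq n}F$ again preserves $\kappa$-small products, because $\tau_{\leq n}:\mS\rightarrow\mS$ commutes with arbitrary products: homotopy groups of a product are products of homotopy groups. So the pointwise truncation stays inside $\textnormal{Shv}_\kappa(\C)$ and is $n$-truncated there. Moreover, $F\rightarrow\tau_{\leq n}F$ is initial among maps from $F$ to $n$-truncated objects of the full subcategory, since it already is in presheaves. Hence it computes the truncation in $\textnormal{Shv}_\kappa(\C)$.

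\emph{Effective epimorphisms.} A map $f:X\rightarrow Y$ is an effective epimorphism in an $(\infty,1)$-topos iff $\tau_{\leq -1}$ of $f$ (viewed in the slice) is an equivalence, equivalently iff $|\check{C}(f)|\rightarrow Y$ is an equivalence. In $\textnormal{Shv}_\kappa(\C)$ this geometric realization is $L_\kappa$ of the presheaf realization, and $L_\kappa$ is left exact by \ref{Left Exact Localization for kappa Topology}. The \v{C}ech nerve computed in presheaves therefore agrees with the one computed in sheaves. One direction is then immediate: if $f$ is an effective epimorphism in presheaves, applying $L_\kappa$ preserves effective epimorphisms.

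For the converse, I would instead show directly that the presheaf colimit $|\check{C}(f)|$, computed pointwise, already preserves $\kappa$-small products. Pointwise, $X(\prod U)=\prod X(U_i)$ and likewise for $Y$, so the \v{C}ech nerve at $\coprod U_i$ is the product of the \v{C}ech nerves at the $U_i$. The key input is that geometric realization of \v{C}ech nerves commutes with products in $\mS$, i.e.\ taking the image, or $\pi_0$-surjective part, commutes with products. Concretely, $|\check{C}(f_U)|$ is the union of the components of $Y(U)$ hit by $f_U$, and a component of $\prod Y(U_i)$ is hit iff each coordinate component is hit; this uses the axiom of choice for the $\kappa$-small index set. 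So the pointwise image is already a sheaf, and therefore equals the sheaf-theoretic image. It follows that $f$ is an effective epimorphism in sheaves iff this image is all of $Y$, iff $f$ is an effective epimorphism in presheaves. I expect this to be the main obstacle: identifying images with unions of hit components and checking that this commutes with infinite products.

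\emph{Hypercompleteness.} I would show that every $\infty$-connective morphism in $\textnormal{Shv}_\kappa(\C)$ is an equivalence. By the effective-epimorphism claim and the truncation claim, the homotopy sheaves and the truncations of a map of sheaves are computed pointwise in presheaves. Hence a morphism that is $n$-connective in $\textnormal{Shv}_\kappa(\C)$ is $n$-connective in $\textnormal{Fun}(\C^{\textnormal{opp}},\mS)$, for instance by characterizing $n$-connectivity via diagonals being effective epimorphisms, with diagonals computed pointwise since $\textnormal{Shv}_\kappa(\C)$ is closed under limits. Presheaf $(\infty,1)$-topoi are hypercomplete, because Whitehead's theorem applies objectwise. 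Thus an $\infty$-connective map of sheaves is an equivalence of presheaves, hence of sheaves.
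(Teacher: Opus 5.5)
Your proposal is correct. For the truncation claim it is the paper's argument; for the other two claims you take a different route.

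\emph{Effective epimorphisms.} The paper proves the nontrivial direction by passing to $0$-truncations. It uses Cnossen's criterion that $f$ is an effective epimorphism iff $\tau_{\leq 0}f$ is, and identifies $\tau_{\leq 0}\textnormal{Shv}_\kappa(\C)$ with $\kappa$-product-preserving set-valued presheaves on $\textnormal{h}\C$. The classical local-surjectivity description of effective epimorphisms there lifts $u\in G(X)$ on the pieces of a decomposition $X\simeq\coprod_i X_i$, and the lifts are glued using $F(X)\simeq\prod_i F(X_i)$. You instead show that the pointwise image of a map of $\kappa$-product-preserving presheaves is again $\kappa$-product-preserving, since images of maps of spaces commute with $\kappa$-small products by choice. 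Hence the \v{C}ech realization in $\textnormal{Shv}_\kappa(\C)$ already coincides with the one in presheaves. This gives both directions at once and avoids the detour through $\tau_{\leq 0}$ and classical topos theory. It also proves the slightly stronger statement that the inclusion preserves images.

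\emph{Hypercompleteness.} The paper shows that hypercoverings in $\textnormal{Shv}_\kappa(\C)_{/H}$ are hypercoverings in presheaves, hence effective, and then invokes HTT 6.5.3.12. You detect $n$-connectivity in presheaves by inducting on iterated diagonals (HTT 6.5.1.18), using that the inclusion preserves limits and effective epimorphisms, and then apply Whitehead objectwise. The paper's version matches its later hypercovering machinery; yours checks the definition of hypercompleteness directly.

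Two small slips. First, $X(\prod U)$ should read $X(\coprod_i U_i)\simeq\prod_i X(U_i)$. Second, the \v{C}ech nerves agree because the inclusion preserves limits, not because $L_\kappa$ is left exact.
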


\begin{proof}

Claim $(1)$ follows from the fact that $\tau_{\leq n}:\mS\rightarrow \tau_{\leq n}\mS$ preserves ($\kappa$-)small products. One direction of $(2)$ is obvious. To prove the converse, suppose that $\eta:F\rightarrow G$ is an effective epimorphism in $\textnormal{Shv}_\kappa(\C)$. We wish to show that for each $X\in \C$, the map $\eta_X:F(X)\rightarrow G(X)$ is an effective epimorphism of spaces. It follows from \citeq{cnossen2026httnotes} 3.12 that $\tau_{\leq 0}F\rightarrow \tau_{\leq 0}G$ is an effective epimorphism in $\tau_{\leq 0}\textnormal{Shv}_\kappa(\C)$. Moreover, $(1)$ implies that $\tau_{\leq 0}\textnormal{Shv}_\kappa(\C)\cong \textnormal{Shv}^\kappa_{\textnormal{Sets}}(\textnormal{h}\C)$, and that the canonical map $\textnormal{Shv}_\kappa(\C)\rightarrow \textnormal{Shv}^\kappa_{\textnormal{Sets}}(\textnormal{h}\C)$ carries each $A$ to the arrow $(\textnormal{h}\C)^\textnormal{opp}\rightarrow \textnormal{Sets}$ induced by the composition $\C^{\textnormal{opp}}\xrightarrow{A}\mS\xrightarrow{\pi_0(\cdot)}\textnormal{Sets}$. In particular, these facts together imply that for all $X\in \C$ and $u\in G(X)$ there exists some $\kappa$-small set of maps $\{f_i:X_i\rightarrow X\}_{i\in I}$ in $\C$ such that the arrow $\coprod_{i\in I}X_i\rightarrow X$ is an equivalence, and for each $i\in I$ the image of $u$ in $G(X_i)$ is in the image of $\eta_{X_i}:F(X_i)\rightarrow G(X_i)$. Since $F(X)\rightarrow \prod_{i\in I}F(X_i)$ and $G(X)\rightarrow \prod_{i\in I}G(X_i)$ are equivalences, it immediately follows that $u\in G(X)$ is in the image of $F(X)\rightarrow G(X)$, completing the proof of $(2)$. To see $(3)$, we observe that $(2)$ implies that for each $H\in \textnormal{Shv}_\kappa(\C)$ a simplicial object $U_\bullet:\Delta^{\textnormal{opp}}\rightarrow \textnormal{Shv}_\kappa(\C)_{/H}$ is a hypercovering of $\textnormal{Shv}_\kappa(\C)_{/H}$ (in the sense of \citeq{highertopostheory} 6.5.3.2) if and only if the composition $\Delta^{\textnormal{opp}}\xrightarrow{U_\bullet}\textnormal{Shv}_\kappa(\C)_{/X}\hookrightarrow \textnormal{Fun}(\C^{\textnormal{opp}},\mS)_{/H}$ is a hypercovering of $\textnormal{Fun}(\C^{\textnormal{opp}},\mS)_{/H}$. In particular, the colimit of said composition is final in $\textnormal{Fun}(\C^{\textnormal{opp}},\mS)_{/H}$, implying that the colimit of $U_\bullet$ is final in $\textnormal{Shv}_\kappa(\C)_{/H}$, so $U_\bullet$ is effective (in the sense of \citeq{highertopostheory} 6.5.3.2). Invoking \citeq{highertopostheory} 6.5.3.12 now proves our claim. 
    
\end{proof}

\noindent We now turn our attention to studying an especially well-behaved class of topologies on $\kappa$-geometric $(\infty,1)$-categories.

\begin{definition}{}\label{kappa geometric sites}

    Let $\C$ be a $\kappa$-geometric $(\infty,1)$-category, and let $\tau$ be a Grothendieck pretopology on $\C$. We call a pair $(\C,\tau)$ a \emph{$\kappa$-geometric site} if all coverings in $\tau$ are $\kappa$-small, and if $\tau$ is finer than the $\kappa$-pretopology. When $\kappa=\aleph_0$ we will call an $\aleph_0$-site $(\C,\tau)$ a \emph{finitary site}.
    
\end{definition}

\begin{remark}{}
    A more detailed study of finitary sites $(\C,\tau)$ may be found in section A of the appendix of \citeq{lurie2018sag}.
\end{remark}

\begin{example}{}\label{Geom Sites Examples}

    The following are all examples of $\kappa$-geometric sites:

    \begin{itemize}
    
        \item [$(1)$] The pair $(\cX,\C an(\cX))$ is a geometric site, where $\cX$ is an $(n,1)$-topos for $n\in \ZZ_{\geq 1}\cup \{\infty\}$, and $\C an(\cX)$ is the effective epimorphism pretopology on $\cX$. We will sometimes refer to $\C an(\cX)$ as the \emph{canonical topology} on $\cX$: see \ref{The Canonical Topology on a Topos} for a justification of this name.

        \item [$(2)$] The pair $(\cX,\C an(\cX))$ is generated by a finitary site, where $\cX$ is a coherent $(\infty,1)$-topos.

        \item [$(3)$] The pair $(\textnormal{CAlg}^{\textnormal{cn},\textnormal{opp}},\textnormal{fpqc})$ is a finitary site. So too are each of the other standard pretopologies on affine spectral Deligne-Mumford stacks. Moreover, the analogous statement holds when considering affine nonconnective spectral Deligne-Mumford stacks.

        \item [$(4)$] The pair $(\infty\cT op,\textnormal{\'{e}t})$ is a geometric site, where $\textnormal{\'{e}t}$ denotes the \'{e}tale pretopology on $\infty\cT op$. 

        \item [$(5)$] The \'{e}tale topology on $\textnormal{SpDM}$, $\textnormal{SpDM}^{\textnormal{nc}}$, $\textnormal{SpSch}$, and $\textnormal{SpSch}^{\textnormal{nc}}$ are each generated by a geometric site. The analogous statements hold for the Zariski, Nisnevich, and fpqc topologies.

        \item [$(6)$] Let $(\C,\tau)$ be a $\kappa$-geometric site, where $\C$ is a locally small $(\infty,1)$-category. Let $F:\C^{\textnormal{opp}}\rightarrow \mS$ be a $\kappa$-sheaf on $\C$ classified by the right fibration $\C\times_{\textnormal{Fun}(\C^{\textnormal{opp}},\mS)}\textnormal{Fun}(\C^{\textnormal{opp}},\mS)_{/F}=:\C_{/F}\xrightarrow{p} \C$, and write $\tau_{/F}$ for the pretopology on $\C_{/F}$ where $\{f_i:U_i\rightarrow X\}_{i\in I}\in \tau_{/F}$ precisely when $\{p(f_i):p(U_i)\rightarrow p(X)\}_{i\in I}\in \tau$. Then, $(\C_{/F},\tau_{/F})$ is a $\kappa$-geometric site. 
        
    \end{itemize}

\end{example}

\noindent The following lemma (which we will make important use of in \ref{Global Sheaves Section}) may be used to verify \ref{Geom Sites Examples} (6):

\begin{lemma}{}\label{Colimits in Right Fibrations}

    Let $\C$ be a locally small $(\infty,1)$-category and let $p:\cE\rightarrow \C$ be a right fibration with essentially small fibres classifying a presheaf $F:\C^{\textnormal{opp}}\rightarrow \mS$. Let $K$ be a small simplicial set, let $U:K\rightarrow \C$ be a diagram admitting a colimit $X\in \C$, and let $\overline{U}:K^\triangleright\rightarrow \C$ be a diagram exhibiting $X\in \C$ as said colimit of $F$. Then, the following are equivalent: 

    \begin{itemize}
    
        \item [$(1)$] Every lift of $U$ along $p$ to an arrow $K\rightarrow \cE$ admits an extension to diagram $K^\triangleright \rightarrow \cE$ over $\overline{U}$, and every such extension is a colimit diagram.

        \item [$(2)$] Every lift of $U$ along $p$ to an arrow $\widetilde{U}:K\rightarrow \cE$ may be extended to a colimit diagram $K^\triangleright\rightarrow \cE$, and an arbitrary extension $Z:K^\triangleright\rightarrow \cE$ of $\widetilde{U}$ is a colimit diagram if and only if the composition $K^\triangleright\xrightarrow{Z} \cE\xrightarrow{p}\C$ is a colimit diagram. 

        \item [$(3)$] The composition $(K^{\textnormal{opp}})^\triangleleft\xrightarrow{\overline{U}^{\textnormal{opp}}}\C^{\textnormal{opp}}\xrightarrow{F}\mS$ is a limit diagram. 
        
    \end{itemize}

    \noindent Suppose now that $\C$ admits $K$-indexed colimits. Then, the following are equivalent: 

    \begin{itemize}
    
        \item [$(1^\prime)$] $\cE$ admits $K$-indexed colimits, and a diagram $\overline{V}:K^\triangleright\rightarrow \cE$ is a colimit diagram if and only if the composition $K^\triangleright\xrightarrow{\overline{V}} \cE\xrightarrow{p}\C$ is a colimit diagram. 

        \item [$(2^\prime)$] $F$ preserves $K^{\textnormal{opp}}$-indexed limits. 
        
    \end{itemize}

\end{lemma}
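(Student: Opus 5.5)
The approach is to translate lifts of diagrams along the right fibration $p:\cE\rightarrow \C$ into points of limits of $F$. By straightening I identify $\cE$ with $\C_{/F}=\C\times_{\textnormal{Fun}(\C^{\textnormal{opp}},\mS)}\textnormal{Fun}(\C^{\textnormal{opp}},\mS)_{/F}$ (up to equivalence over $\C$). Each part of the statement is invariant under such an equivalence, since being a colimit diagram and admitting extensions over a given diagram both transfer along an equivalence of right fibrations, using that $p$ is an isofibration.

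Under this identification, a lift $\widetilde{U}:K\rightarrow \cE$ of $U$ is the same as a point of $\lim_{K^{\textnormal{opp}}}F\circ U^{\textnormal{opp}}$, that is, a cone $\eta:h\circ U\Rightarrow \underline{F}$ in presheaves. Extensions of $\widetilde{U}$ over $\overline{U}$ correspond to points of the homotopy fibre of the restriction map $r:F(X)\rightarrow \lim_{K^{\textnormal{opp}}}F\circ U^{\textnormal{opp}}$ over $\eta$. This is the standard fact that $\cE_{/\widetilde U}\rightarrow \C_{/U}$ is a right fibration whose fibre over the cone $\overline{U}$ is computed by $F$.

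\textbf{Main step: colimits in $\cE$.} Because $p$ is a right fibration, I will use the relative colimit criterion (Kerodon/HTT 4.3.1): an extension $Z:K^\triangleright\rightarrow\cE$ of $\widetilde U$ with $pZ$ a colimit diagram is a colimit diagram in $\cE$ if and only if it is a $p$-colimit. For right fibrations I expect $p$-colimits to coincide with arbitrary colimits lying over colimits. The cleanest route is to compute mapping spaces in $\cE\simeq \C_{/F}$: for $(Y,y)$ with $y\in F(Y)$, one has
$\textnormal{Hom}_\cE((X,x),(Y,y))\simeq \textnormal{Hom}_\C(X,Y)\times_{F(X)}\{x\}$,
where the map $\textnormal{Hom}_\C(X,Y)\rightarrow F(X)$ is pullback of $y$.

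With this formula, $Z$ is a colimit diagram if and only if the map on fibres over $x$ and over $\eta$ is an equivalence for every $(Y,y)$. Since $\overline U$ is a colimit diagram, this amounts to checking that the induced map $\{x\}\rightarrow r^{-1}(\eta)$ behaves correctly after taking homotopy fibres. I expect this fibrewise comparison to be the main obstacle, since it is where the right-fibration structure is genuinely used.

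\textbf{Equivalences of (1), (2), (3).} The three conditions then reduce to statements about $r$:
\begin{itemize}
\item Condition (3) says that $r$ is an equivalence.
\item Condition (1) says that every fibre of $r$ is nonempty and every point of each fibre yields a colimit diagram. By the mapping-space computation, this forces the fibres to be contractible, since two extensions that are both colimits are uniquely equivalent over $\overline U$ via a map lying over $\textnormal{id}_X$. Conversely, when $r$ is an equivalence, each extension is a colimit diagram because the fibre comparison is an equivalence.
\item For (2), given any extension $Z$ with $pZ$ a colimit, the cone point $pZ(\infty)$ is equivalent to $X$. Transporting along this equivalence reduces (2) to (1). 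The converse direction of (2) follows because colimit cones over diagrams in $\C$ are unique up to equivalence.
\end{itemize}

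\textbf{Equivalence of (1′) and (2′).} This follows by applying the first part to every $U:K\rightarrow\C$ together with a chosen colimit diagram $\overline U$. Condition (2′) says exactly that (3) holds for all such $U$. Given (2′), part (2) provides the existence of colimits in $\cE$ and the detection criterion, which is (1′). Conversely, (1′) implies (2) for each $U$, and hence (3) for each $U$, which is (2′).
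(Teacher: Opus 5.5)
Your proposal is correct, but it takes a different route from the paper.

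\textbf{The paper's route.} The paper never computes mapping spaces in $\cE$.
For (1)$\Rightarrow$(3), it observes that every lift of $\overline{U}$ is $p$-left Kan extended from $K$ (Kerodon 02KR, 02ZA). Hence the restriction map $\phi:\textnormal{Fun}_{/\C}(K^\triangleright,\cE)\rightarrow\textnormal{Fun}_{/\C}(K,\cE)$ on section spaces is a trivial Kan fibration (Kerodon 030R). It then identifies these section spaces with $F(X)$ and $\lim F\circ U^{\textnormal{opp}}$ (HTT 3.3.3.3, Kerodon 04CE).
For (3)$\Rightarrow$(1), it embeds into $\textnormal{Fun}(\C^{\textnormal{opp}},\mS)_{/F}$ and passes to the objects local for $\textnormal{colim}(h\circ U)\rightarrow h_X$. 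There $h\circ\overline{U}$ becomes a colimit diagram, and $F$ lies there by (3). It then uses that slice projections detect colimits (Kerodon 02KC).

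\textbf{Your route and the comparison.} You reduce all three conditions to the fibres of the single map $r:F(X)\rightarrow L:=\lim F\circ U^{\textnormal{opp}}$, via $\textnormal{Hom}_{\cE}((X,x),(Y,y))\simeq\textnormal{Hom}_{\C}(X,Y)\times_{F(X)}\{x\}$. This is more elementary and makes the controlling role of $r$ transparent. The paper instead stays inside the relative Kan extension formalism, where strict lifts over $\overline{U}$ are handled directly by the fibration $\phi$.

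\textbf{The step you flag as the main obstacle.} It is just pullback pasting. Set $Q=r^{-1}(\eta)$ and $P=\textnormal{Hom}_{\C}(X,Y)\times_L\{\eta\}\simeq\textnormal{Hom}_{\C}(X,Y)\times_{F(X)}Q$. Then $\textnormal{Hom}_{\C}(X,Y)\times_{F(X)}\{x\}\simeq P\times_Q\{x\}$. So the comparison map is a base change of $\{x\}\rightarrow Q$, and it is an equivalence once $Q\simeq *$.

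\textbf{Two points to tighten.}
First, in (1)$\Rightarrow$(3), the uniqueness you need is the following. The path space of $Q$ between two extensions $Z,Z'$ is the fibre over $\textnormal{id}_{\overline{U}}$ of $\textnormal{Hom}_{\cE_{\widetilde{U}/}}(Z,Z')\rightarrow\textnormal{Hom}_{\C_{U/}}(\overline{U},\overline{U})$, a map between contractible spaces. It is not the space of maps $(X,x)\rightarrow(X,x')$ over $\textnormal{id}_X$, which is a path space of $F(X)$ and need not be contractible.
Second, in (1)$\Rightarrow$(2), the implication ``$Z$ colimit $\Rightarrow$ $p\circ Z$ colimit'' needs uniqueness of colimit cones under $\widetilde{U}$ in $\cE$ (compare $Z$ with an extension over $\overline{U}$), not uniqueness of colimit cones in $\C$.
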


\noindent We will prove \ref{Colimits in Right Fibrations} at the end of this section.

\begin{remark}{}\label{Induced Smaller Geometric Sites}

    Let $\kappa$ and $\kappa^\prime$ be regular cardinals, and assume that $\kappa^\prime\leq \kappa$. Then, for any $\kappa$-geometric site $(\C,\tau)$, we have that $(\C,\tau^\prime)$ is a $\kappa^\prime$-geometric site, where $\tau^\prime$ is the pretopology on $\C$ whose coverings are the subset of $\tau$-coverings $\{U_i\rightarrow X\}_{i\in I}$ where $I$ is $\kappa^\prime$-small. 
    
\end{remark}

Under the notation of \ref{Geom Sites Examples}, one can use the following to classify sheaves on $\kappa$-geometric sites of the form $(\C_{/F},\tau_{/F})$, whenever $F$ is a $\tau$-sheaf:

\begin{lemma}{}\label{Sheaves on Slices of Sites}

    Let $\C$ be a small $(\infty,1)$-category with pullbacks, and let $\tau$ be a pretopology on $\C$. Let $F:\C^{\textnormal{opp}}\rightarrow \mS$ be a $\tau$-sheaf, and write $p:\C_{/F}\rightarrow \C$ for the right fibration classifying $F$. Write $\tau_{/F}$ for the pretopology on $\C_{/F}$ whose coverings are those families of maps $\{f_i:U_i\rightarrow U\}_{i\in I}$ in $\C_{/F}$ satisfying the property that $\{p(f_i):p(U_i)\rightarrow p(U)\}_{i\in I}$ is a $\tau$-covering. Then, the equivalence of $(\infty,1)$-categories $\textnormal{Fun}(\C^{\textnormal{opp}},\mS)_{/F}\xrightarrow{\sim}\textnormal{Fun}((\C_{/F})^{\textnormal{opp}},\mS)$ of \ref{Equivalence of slice topoi explicit} restricts to an equivalence $\textnormal{Shv}_\tau(\C)_{/F}\xrightarrow{\sim}\textnormal{Shv}_{\tau_{/F}}(\C_{/F})$.
    
\end{lemma}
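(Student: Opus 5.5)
The plan is to reduce the statement to \ref{Etale Slices of Sheaf Topoi} (2). The key step is to show that the Grothendieck topology $J_{\tau_{/F}}$ on $\C_{/F}$ generated by the pretopology $\tau_{/F}$ coincides with the topology $(J_\tau)_{/F}$ of \ref{Etale Slices of Sheaf Topoi} (1), built from $J=J_\tau$. Granting this, the equivalence $\textnormal{Fun}((\C_{/F})^{\textnormal{opp}},\mS)\xrightarrow{\sim}\textnormal{Fun}(\C^{\textnormal{opp}},\mS)_{/F}$ restricts to $\textnormal{Shv}_{\tau_{/F}}(\C_{/F})\xrightarrow{\sim}\textnormal{Shv}_\tau(\C)_{/F}$, since $F$ is a $J_\tau$-sheaf. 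By \ref{Equivalence of slice topoi explicit}, the map $R$ is homotopy inverse to this equivalence, so $R$ restricts to the asserted equivalence $\textnormal{Shv}_\tau(\C)_{/F}\xrightarrow{\sim}\textnormal{Shv}_{\tau_{/F}}(\C_{/F})$. Beforehand I would check that $\tau_{/F}$ is indeed a pretopology. The required pullbacks exist in $\C_{/F}$ and are computed in $\C$ by the first half of \ref{Colimits in Right Fibrations}, applied to $\C^{\textnormal{opp}}$ and the corresponding statement for limits in a right fibration, which requires no hypothesis on $F$. Conditions (1)–(3) of \ref{higher sites definition} then transfer directly along $p$.

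To compare the topologies, fix $U\in\C_{/F}$ and recall that the trivial Kan fibration $(\C_{/F})_{/U}\rightarrow\C_{/p(U)}$ induces, via \ref{Sieves and Trivial Kan Fibrations}, a bijection between sieves on $U$ and sieves on $p(U)$, given by $\C^0_{/p(U)}\mapsto\C^0_{/p(U)}\times_{\C_{/p(U)}}(\C_{/F})_{/U}$. By definition, a sieve on $U$ lies in $(J_\tau)_{/F}$ exactly when its corresponding sieve on $p(U)$ contains a $\tau$-covering. It lies in $J_{\tau_{/F}}$ exactly when it contains a family $\{f_i:U_i\rightarrow U\}$ with $\{p(f_i)\}\in\tau$.

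For one inclusion, given such a family, the sieve on $p(U)$ it corresponds to contains each $p(f_i)$, so it is $J_\tau$-covering. For the converse, given a $\tau$-covering $\{g_i:V_i\rightarrow p(U)\}$ inside $\C^0_{/p(U)}$, lift each $g_i$ through the trivial Kan fibration to a map $f_i:U_i\rightarrow U$ in $\C_{/F}$ with $p(f_i)=g_i$. This uses that $p$ is a right fibration, since morphisms with target $U$ lift. The family $\{f_i\}$ then lies in $\tau_{/F}$ and in the pulled-back sieve. Hence the two topologies agree.

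The main obstacle I expect is bookkeeping rather than substance. The sieve correspondence of \ref{Sieves and Trivial Kan Fibrations} is stated for replete full subcategories, so I must check that the sieve generated by $\{f_i\}$ corresponds to the sieve generated by $\{p(f_i)\}$. This follows because a map in $\C_{/F}$ over $U$ factors through some $f_i$ if and only if its image under $p$ factors through $p(f_i)$, which holds since $(\C_{/F})_{/U}\rightarrow\C_{/p(U)}$ is an equivalence. I also need the size hypotheses of \ref{Etale Slices of Sheaf Topoi}: $\C_{/F}$ is essentially small, so we may replace it by a small model as in that proof.
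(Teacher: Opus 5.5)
Your proposal is correct and follows the paper's route: the paper's proof is the one-liner ``combine \ref{Etale Slices of Sheaf Topoi} with \ref{Equivalence of slice topoi explicit}''. Your argument that $\tau_{/F}$ generates exactly the topology $(J_\tau)_{/F}$, by lifting coverings through the trivial Kan fibration $(\C_{/F})_{/U}\rightarrow\C_{/p(U)}$, is precisely the step the paper leaves implicit. The only slip is a citation: pullbacks in $\C_{/F}$ are not an instance of \ref{Colimits in Right Fibrations}, whose hypotheses do involve $F$. They exist and are computed in $\C$ for a different reason: a cospan with target $U$ lies in $(\C_{/F})_{/U}\simeq\C_{/p(U)}$, where its pullback becomes the fibre product over $p(U)$.
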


\begin{proof}

    Combine \ref{Etale Slices of Sheaf Topoi} with \ref{Equivalence of slice topoi explicit}.
    
\end{proof}

\begin{corollary}{}\label{Slices of sheaves of eff. epis.}

    Let $n\in \ZZ_{\geq 1}\cup \{\infty\}$, let $\cX$ be an $(n,1)$-topos, and let $Z\in \cX$. Then, the canonical equivalence $\textnormal{Fun}((\cX_{/Z})^{\textnormal{opp}},\widehat{\mS})\xrightarrow{\sim}\textnormal{Fun}(\cX^{\textnormal{opp}},\widehat{\mS})_{/h_Z}$ restricts to an equivalence $\widehat{\textnormal{Shv}}_{\C an(\cX_{/Z})}(\cX_{/Z})\xrightarrow{\sim}\widehat{\textnormal{Shv}}_{\C an(\cX)}(\cX)_{/h^\cX_Z}$.
    
\end{corollary}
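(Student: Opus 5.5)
The plan is to deduce the statement from \ref{Sheaves on Slices of Sites}, applied in a larger universe, with $\C=\cX$, $\tau=\C an(\cX)$ the effective epimorphism pretopology and $F=h^\cX_Z$. Concretely, we enlarge the universe so that $\cX$ is small and $\widehat{\mS}$ plays the role of $\mS$. Then \ref{Sheaves on Slices of Sites} (equivalently \ref{Etale Slices of Sheaf Topoi} (2)) provides an equivalence
\[
\widehat{\textnormal{Shv}}_{\C an(\cX)}(\cX)_{/h^\cX_Z}\simeq \widehat{\textnormal{Shv}}_{\C an(\cX)_{/h_Z}}(\cX_{/h_Z}),
\]
compatible with the canonical equivalence $\textnormal{Fun}((\cX_{/h_Z})^{\textnormal{opp}},\widehat{\mS})\simeq \textnormal{Fun}(\cX^{\textnormal{opp}},\widehat{\mS})_{/h_Z}$. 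Two inputs are needed for this. First, $h^\cX_Z$ must be a $\C an(\cX)$-sheaf. This holds because effective epimorphisms $\coprod_i U_i\to X$ in $\cX$ are colimits of their \v{C}ech nerves and coproducts in $\cX$ are universal, so $\textnormal{Hom}(-,Z)$ converts the \v{C}ech diagram of \ref{Diagram Sheaf condition for sites} into a limit diagram. Second, $\cX$ must have pullbacks, which is immediate.

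Next we identify the right fibration $\cX_{/h_Z}\to\cX$ with the ordinary slice $\cX_{/Z}\to\cX$. By Yoneda, the projection $\cX\times_{\textnormal{Fun}(\cX^{\textnormal{opp}},\widehat{\mS})}\textnormal{Fun}(\cX^{\textnormal{opp}},\widehat{\mS})_{/h_Z}\to\cX$ is equivalent over $\cX$ to $\cX_{/Z}\to\cX$, since both are right fibrations classifying $h_Z$ (\citeq{lurie2024kerodon} 03LX). This equivalence intertwines the canonical equivalences of functor categories in the statement. Transporting the pretopology $\C an(\cX)_{/h_Z}$ along it gives the pretopology on $\cX_{/Z}$ whose coverings are the families $\{U_i\to U\}$ over $Z$ whose images in $\cX$ are effective epimorphism families.

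It then remains to check that this transported pretopology generates the same topology as $\C an(\cX_{/Z})$. I expect this to be the main (though mild) step. We use that the forgetful functor $\cX_{/Z}\to\cX$ preserves and reflects colimits, in particular coproducts and geometric realizations, and preserves pullbacks, while the \v{C}ech nerve of a map in $\cX_{/Z}$ is computed in $\cX$. From this it follows that a map in $\cX_{/Z}$ is an effective epimorphism if and only if its image in $\cX$ is. Consequently the two pretopologies coincide on the nose, and so do the sheaf categories. By \ref{Sieves and Trivial Kan Fibrations} the sieves correspond under the equivalence of slices, so no information is lost in the transport.

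For finite $n$, where $\cX$ is an $(n,1)$-topos, we run the same argument with effective epimorphisms taken in the $(n,1)$-topos. The same argument goes through because slices of $(n,1)$-topoi are $(n,1)$-topoi, and the forgetful functor again creates colimits and \v{C}ech nerves.
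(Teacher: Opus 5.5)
Your proposal is correct and follows the paper's route. The paper gives no separate proof: it states this corollary as an immediate consequence of \ref{Sheaves on Slices of Sites}, applied after enlarging the universe to $(\cX,\C an(\cX))$ with $F=h^\cX_Z$, which is a sheaf because $\C an(\cX)$ is subcanonical. The identifications $\cX_{/h_Z}\simeq\cX_{/Z}$ and $\C an(\cX)_{/h_Z}=\C an(\cX_{/Z})$ that you verify (using that the forgetful functor $\cX_{/Z}\rightarrow\cX$ creates colimits and \v{C}ech nerves) are exactly the details the paper leaves implicit.
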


\begin{corollary}{}\label{Slices of kappa sheaf topoi}

    Fix a small regular cardinal $\kappa$, let $\C$ be a small $\kappa$-geometric $(\infty,1)$-category, and let $F:\C^{\textnormal{opp}}\rightarrow \mS$ be a map which preserves $\kappa$-small products. Then, the canonical categorical equivalence $\textnormal{Fun}(\C^{\textnormal{opp}},\mS)_{/F}\xrightarrow{\sim}\textnormal{Fun}((\C_{/F})^{\textnormal{opp}},\mS)$ restricts to a categorical equivalence $\textnormal{Shv}_{\kappa}(\C)_{/F}\xrightarrow{\sim}\textnormal{Shv}_{\kappa}(\C_{/F})$.
    
\end{corollary}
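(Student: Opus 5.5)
The plan is to reduce the claim to \ref{Sheaves on Slices of Sites}, applied to the $\kappa$-topology. The only real work is to show that the slice pretopology it produces is the $\kappa$-topology on $\C_{/F}$.

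\textbf{Step 1: $F$ is a sheaf.} Since $\C$ is $\kappa$-geometric and $F$ preserves $\kappa$-small products, \ref{Sheaves for the kappa Product Topology} (with $\cD=\mS$) shows that $F$ is a $\kappa$-sheaf. So $F$ is a $\tau$-sheaf, where $\tau$ denotes the $\kappa$-pretopology on $\C$.

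\textbf{Step 2: apply the slice lemma.} \ref{Sheaves on Slices of Sites} now says that the canonical equivalence $\textnormal{Fun}(\C^{\textnormal{opp}},\mS)_{/F}\xrightarrow{\sim}\textnormal{Fun}((\C_{/F})^{\textnormal{opp}},\mS)$ of \ref{Equivalence of slice topoi explicit} restricts to an equivalence $\textnormal{Shv}_\kappa(\C)_{/F}\xrightarrow{\sim}\textnormal{Shv}_{\tau_{/F}}(\C_{/F})$. Here $\tau_{/F}$ is the pretopology whose coverings are the families $\{U_i\rightarrow U\}$ in $\C_{/F}$ whose image under $p$ is a $\tau$-covering. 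It remains to show that $J_{\tau_{/F}}$ equals the Grothendieck topology generated by the $\kappa$-pretopology on $\C_{/F}$.

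\textbf{Step 3: $\C_{/F}$ is $\kappa$-geometric.} I will invoke \ref{Colimits in Right Fibrations}, in the form $(2^\prime)\Rightarrow(1^\prime)$, with $K$ ranging over $\kappa$-small discrete sets. Since $F$ preserves $\kappa$-small products, $\C_{/F}$ admits $\kappa$-small coproducts, and $p$ both preserves and detects them: a cocone in $\C_{/F}$ is a coproduct precisely when its image in $\C$ is. The category $\C_{/F}$ also has pullbacks, preserved by $p$, because $p$ is a right fibration and the maps $(\C_{/F})_{/U}\rightarrow\C_{/p(U)}$ are trivial Kan fibrations. Disjointness and universality of coproducts in $\C_{/F}$ can therefore be checked after applying $p$, where they hold because $\C$ is $\kappa$-geometric. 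This recovers \ref{Geom Sites Examples}~(6), so the $\kappa$-topology on $\C_{/F}$ makes sense.

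\textbf{Step 4: compare the topologies.} Take a $\tau_{/F}$-covering $\{U_i\rightarrow U\}_{i\in I}$. Its image $\{p(U_i)\rightarrow p(U)\}$ is, up to isomorphism, a family of coproduct inclusions exhibiting $p(U)\simeq\coprod_i p(U_i)$ with $I$ $\kappa$-small. By the detection statement in Step 3, the induced map $\coprod_i U_i\rightarrow U$ in $\C_{/F}$ is an equivalence. Hence the family is isomorphic to a $\kappa$-covering of $\C_{/F}$. Conversely, $p$ preserves coproducts, so $p$ carries $\kappa$-coverings of $\C_{/F}$ to $\kappa$-coverings of $\C$. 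The two pretopologies therefore agree up to isomorphism of families, and so they generate the same sieves. This gives $\textnormal{Shv}_{\tau_{/F}}(\C_{/F})=\textnormal{Shv}_\kappa(\C_{/F})$.

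I expect Step 4 to be the main obstacle. It needs care on two points: the pretopology consists of literal coproduct-inclusion families rather than all families isomorphic to them, and one must confirm that $p$ genuinely detects coproducts and does not merely admit lifts of them. Both points are handled by passing to generated sieves, which are insensitive to isomorphic replacements, together with clause $(1^\prime)$ of \ref{Colimits in Right Fibrations}.
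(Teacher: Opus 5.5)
Your proposal is correct and is essentially the argument the paper has in mind. The paper states this corollary without proof, as an immediate consequence of Lemma \ref{Sheaves on Slices of Sites}: Proposition \ref{Sheaves for the kappa Product Topology} makes $F$ a $\kappa$-sheaf, and Lemma \ref{Colimits in Right Fibrations} (the tool the paper cites for Example \ref{Geom Sites Examples}~(6)) shows that $\C_{/F}$ is $\kappa$-geometric and that $p$ both preserves and detects $\kappa$-small coproducts, including the empty one, which is exactly what your Steps 3 and 4 use to identify the slice pretopology with the $\kappa$-topology on $\C_{/F}$.
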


\begin{proof}

    (\ref{Colimits in Right Fibrations}.) To begin, we show that $(1)$ is equivalent to $(2)$. Supposing that $(1)$ holds, choose some lift $\widetilde{U}:K\rightarrow \cE$ of $U$ along $p$ (noting if no such $\widetilde{U}$ exists, then $(2)$ is vacuously true). It follows immediately that $\widetilde{U}$ may be extended to a colimit diagram $Z^\prime:K^\triangleright\rightarrow \cE$. Next, suppose we are given an arbitrary extension $Z:K^\triangleright\rightarrow \cE$ of $\widetilde{U}$. If the composition $K^\triangleright\xrightarrow{Z} \cE\xrightarrow{p}\C$ is \emph{not} a colimit diagram, then $Z$ is not isomorphic to $Z^\prime$, and as such, is not a colimit diagram. Suppose now that the composition $K^\triangleright\xrightarrow{Z} \cE\xrightarrow{p}\C$ is a colimit diagram, and consider the right fibration $\textnormal{Fun}(K^\triangleright,\cE)\rightarrow \textnormal{Fun}(K^\triangleright,\C)$. There exists an isomorphism $\eta:p\circ Z\xrightarrow{\sim}\overline{U}$, which yields an isomorphism in $\textnormal{Fun}(K^\triangleright,\cE)$ from $Z$ to some arrow $K^\triangleright\rightarrow \cE$ over $\overline{U}$ (since right fibrations are conservative, see \citeq{lurie2024kerodon} 019K). Next, suppose that $(2)$ holds, and choose some lift $\widetilde{U}:K\rightarrow \cE$ of $U$ along $p$. In order to prove that $(1)$ holds, it suffices to show that there exists an extension of $\widetilde{U}$ over $\overline{U}:K^\triangleright\rightarrow \C$. However, extending $\widetilde{U}:K\rightarrow \cE$ to a colimit diagram $Z:K^\triangleright\rightarrow \cE$, we have an isomorphism of diagram $K^\triangleright\rightarrow \cE$ from $\overline{U}$ to $p\circ Z$, so the fact that $\textnormal{Fun}(K^\triangleright,\cE)\rightarrow \textnormal{Fun}(K^\triangleright,\C)$ is a right fibration proves our claim.

    We now show that $(1)$ implies $(3)$. Suppose that $(1)$ holds. Combining \citeq{lurie2024kerodon} 02KR and \citeq{lurie2024kerodon} 02ZA, we deduce that every lift of $\overline{U}$ along $p$ to an arrow $K^\triangleright\rightarrow \cE$ is $p$-left Kan extended from $K\subset K^\triangleright$. Consider now the map $\textnormal{Fun}(K^\triangleright,\cE)\rightarrow \textnormal{Fun}(K,\cE)\times_{\textnormal{Fun}(K,\C)}\textnormal{Fun}(K^\triangleright,\C)$, and pull-back along the inclusion $\textnormal{Fun}_{/\C}(K,\cE)=\textnormal{Fun}(K,\cE)\times_{\textnormal{Fun}(K,\C)}\{\overline{U}\}\hookrightarrow \textnormal{Fun}(K,\cE)\times_{\textnormal{Fun}(K,\C)}\textnormal{Fun}(K^\triangleright,\C)$ to yield the restriction map $\phi:\textnormal{Fun}_{/\C}(K^\triangleright,\cE)\rightarrow \textnormal{Fun}_{/\C}(K,\cE)$. It now follows from  \citeq{lurie2024kerodon} 030R that $\phi$ is a trivial Kan fibration, so $(3)$ follows from combining \citeq{highertopostheory} 3.3.3.3 with \citeq{lurie2024kerodon} 04CE. Next we wish to show that $(3)$ implies $(1)$, so suppose that $(3)$ holds. Combining \citeq{highertopostheory} 3.3.3.3 with \citeq{lurie2024kerodon} 04CE implies that $\phi:\textnormal{Fun}_{/\C}(K^\triangleright,\cE)\rightarrow \textnormal{Fun}_{/\C}(K,\cE)$ is a trivial Kan fibration. If $\textnormal{Fun}_{/\C}(K,\cE)$ is empty then $(1)$ holds vacuously, so suppose that $\textnormal{Fun}_{/\C}(K,\cE)$ is nonempty. Choosing now some map $G:K\rightarrow \cE$ over $U:K \rightarrow \C$, which we extend to a map $\overline{G}:K^\triangleright\rightarrow \cE$ over $\overline{U}$ (noting such a map $\overline{G}$ exists since $\phi$ is a trivial Kan fibration), it remains to show that $\overline{G}:K^\triangleright\rightarrow \cE$ is a colimit diagram. Since it now suffices to prove that any diagram isomorphic to $\overline{G}:K^\triangleright\rightarrow \cE$ is a colimit diagram, it suffices to prove that every lift of $h^\C_\bullet\circ \overline{U}:K^\triangleright \rightarrow \textnormal{Fun}^{\textnormal{rep}}(\C^{\textnormal{opp}},\mS)$ along the right fibration $\textnormal{Fun}^{\textnormal{rep}}(\C^{\textnormal{opp}},\mS)_{/F}:=\textnormal{Fun}^{\textnormal{rep}}(\C^{\textnormal{opp}},\mS)\times_{\textnormal{Fun}(\C^{\textnormal{opp}},\mS)}\textnormal{Fun}(\C^{\textnormal{opp}},\mS)_{/F}\rightarrow \textnormal{Fun}^{\textnormal{rep}}(\C^{\textnormal{opp}},\mS)$ is a colimit diagram. Let $H\in \textnormal{Fun}(\C^{\textnormal{opp}},\mS)$ be the colimit of $h^\C_\bullet\circ U$, which yields a canonical map $f:H\rightarrow h^\C_X$. Writing $\textnormal{Fun}(\C^{\textnormal{opp}},\mS)^\prime$ for the full subcategory of $\textnormal{Fun}(\C^{\textnormal{opp}},\mS)$ which are $f$-local, we deduce that the map $h^\C_\bullet\circ \overline{U}:K^\triangleright\rightarrow \textnormal{Fun}(\C^{\textnormal{opp}},\mS)^\prime$ is a colimit diagram. Noting also that $F\in \textnormal{Fun}(\C^{\textnormal{opp}},\mS)^\prime$, it follows from \citeq{lurie2024kerodon} 02KC that every lift of $h^\C_\bullet\circ \overline{U}:K^\triangleright\rightarrow \textnormal{Fun}(\C^{\textnormal{opp}},\mS)^\prime$ along $\textnormal{Fun}(\C^{\textnormal{opp}},\mS)^\prime_{/F}\rightarrow \textnormal{Fun}(\C^{\textnormal{opp}},\mS)^\prime$ is a colimit diagram. Our claim now follows from the fact that the map $h^\C_\bullet\circ \overline{U}:K^\triangleright\rightarrow \textnormal{Fun}(\C^{\textnormal{opp}},\mS)^\prime$ takes values in $\textnormal{Fun}^{\textnormal{rep}}(\C^{\textnormal{opp}},\mS)\subseteq \textnormal{Fun}(\C^{\textnormal{opp}},\mS)^\prime$.

    The equivalence of $(1^\prime)$ and $(2^\prime)$ now follows from the equivalence of $(2)$ and $(3)$, where we allow $U$ to vary over $\textnormal{Fun}(K,\C)$. 
    
\end{proof}

\subsection{Application: External vs. Internal Group Actions}\label{App. Ext vs Int}

Whenever considering an $(\infty,1)$-category $\C$, there are at least two possible interpretations of a ``group action on an object $X\in \C$'':

\begin{itemize}

    \item [$(1)$] An \emph{external} action of a discrete group $G$ on $X\in \C$ via an $(\infty,1)$-functor $\rho:\textnormal{B}G\rightarrow \C$ which carries the lone vertex of $\textnormal{B}G$ to $X\in \C$. 

    \item [$(2)$] An \emph{internal} action of a group object $G:\Delta^{\textnormal{opp}}\rightarrow \C$ on $X\in \C$ via some diagram $\rho:\Delta^{\textnormal{opp}}\times \Delta^1\rightarrow \C$ in $\textnormal{LMon}_G(\C)$, where the underlying $\C$-object of $\rho\in \textnormal{LMon}_G(\C)$ may be identified as $X$. 

\end{itemize}

\noindent In many classical geometric settings, $(2)$ is a generalization of $(1)$:

\begin{example}{}\label{Discrete Group Actions in Ord. Geometric Categories}

\noindent For a discrete group $G$, we have the following:

    \begin{itemize}
    
        \item [\textnormal{(i)}] The category of (Borel) $G$-equivariant topological spaces is canonically equivalent to the category of topological spaces equipped with a left $G^{top}$-action, where $G^{top}$ denotes the discrete topological group associated to $G$. 

        \item [\textnormal{(ii)}] For a commutative ring $R$, the category of schemes over $R$ equipped with a left $G$-action is canonically equivalent to the the category of $R$-schemes equipped with a left $G_{R}$-action, where $G_{R}$ denotes the discrete group scheme over $R$ associated to $G$.
        
    \end{itemize} 

\noindent Informally, \textnormal{(i)} identifies a Borel equivariant $G$-space $X$ with the left $G^{top}$-space $a:G^{top}\times X\rightarrow X$, where said action map $a:G^{top}\times X\rightarrow X$ is identified as the composition $G^{top}\times X\xrightarrow{\sim}\coprod_{g\in G}X\rightarrow X$, where for each $g\in G$, the corresponding map $X\rightarrow X$ is induced by the left action of $g\in G$ on $X$. The identification given in \textnormal{(ii)} is similar. 

\end{example}

As an application of \ref{Sheaves for the kappa Product Topology}, one can show that the relationships of \ref{Discrete Group Actions in Ord. Geometric Categories} carry over to the $(\infty,1)$-categorical setting:

\begin{proposition}{}\label{Internal Vs External Group Objects Theorem}

    Fix a (possibly large) regular cardinal $\kappa$, and let $\C$ be a $\kappa$-geometric $(\infty,1)$-category with final object $\bm{1}_\C\in \C$. Let $G$ be a $\kappa$-small discrete group, and write $G_\C$ for the group object in $\C$ given by the canonical group object structure on $\coprod_{g\in G}\bm{1}_\C\in \C$. Then, there exists a categorical equivalence $\Phi:\textnormal{Fun}(\textnormal{B}G,\C)\xrightarrow{\sim} \textnormal{LMon}_{G_\C}(\C)$ satisfying the property that the composition $\textnormal{Fun}(\textnormal{B}G,\C)\xrightarrow{\Phi} \textnormal{LMon}_{G_\C}(\C)\xrightarrow{F}\C$ is naturally isomorphic to $\textnormal{ev}_*:\textnormal{Fun}(\textnormal{B}G,\C)\rightarrow \C$, where $F:\textnormal{LMon}_{G_\C}(\C)\rightarrow \C$ is the canonical forgetful functor and $*\in \textnormal{B}G$ is the lone vertex.

\end{proposition}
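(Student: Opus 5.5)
We will argue via the Yoneda embedding, reducing the statement to one about presheaves that preserve $\kappa$-small products. Changing universe if necessary, we take $\C$ small and write $\cX:=\textnormal{Fun}^{\times,\kappa}(\C^{\textnormal{opp}},\mS)$. By \ref{Sheaves for the kappa Product Topology}, $\cX=\textnormal{Shv}_\kappa(\C)$, and by \ref{Left Exact Localization for kappa Topology} the localization $L_\kappa$ is left exact, so $\cX$ is an $(\infty,1)$-topos. The Yoneda embedding $j:\C\hookrightarrow \cX$ preserves finite limits and, by construction, $\kappa$-small coproducts. In particular
$$j(G_\C)\simeq \coprod_{g\in G} \bm{1}_\cX \simeq \underline{G}_\cX,$$
the constant sheaf of groups on $G$. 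Since $j$ is fully faithful and left exact, it induces fully faithful functors
$$\textnormal{Fun}(\textnormal{B}G,\C)\hookrightarrow\textnormal{Fun}(\textnormal{B}G,\cX)\qquad\text{and}\qquad \textnormal{LMon}_{G_\C}(\C)\hookrightarrow \textnormal{LMon}_{\underline{G}_\cX}(\cX).$$
Each image consists of the objects whose underlying object lies in $\C$. For the second, this uses that a left module over a group object is determined by the levelwise products $G^n\times X$, and $j$ preserves these products.

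It therefore suffices to prove the statement for the topos $\cX$, compatibly with the forgetful functors to $\cX$, and then restrict. In $\cX$ we use the standard identification of $\textnormal{LMon}_{\underline{G}}(\cX)$ with $\cX_{/\textnormal{B}\underline{G}}$, where $\textnormal{B}\underline{G}$ is the geometric realization of the bar construction; under it, the forgetful functor is pullback along $\bm{1}\to \textnormal{B}\underline{G}$. Next we use that $\textnormal{B}\underline{G}$ is the constant sheaf $c(\textnormal{B}G)\simeq \textnormal{colim}_{\textnormal{B}G}\bm{1}_\cX$, because the constant-sheaf functor $c:\mS\to\cX$ is a left exact left adjoint and therefore commutes with the bar construction. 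Since colimits in a topos are universal (HTT 6.1.3.9),
$$\cX_{/\textnormal{colim}_{\textnormal{B}G}\bm{1}}\simeq \lim_{\textnormal{B}G}\cX_{/\bm{1}}\simeq \textnormal{Fun}(\textnormal{B}G,\cX).$$
Under this equivalence, pullback to the basepoint corresponds to $\textnormal{ev}_*$. Composing gives $\Phi_\cX$ together with the required compatibility with the forgetful functors.

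The step we expect to be the main obstacle is verifying that $\Phi_\cX$ carries $\textnormal{Fun}(\textnormal{B}G,\C)$ onto exactly $\textnormal{LMon}_{G_\C}(\C)$. One direction is immediate, since both conditions are tested on the underlying object, which is preserved by the compatibility with forgetful functors. For the other direction, an object of $\textnormal{LMon}_{\underline G}(\cX)$ whose underlying object lies in $\C$ has every term $G^n\times X\simeq\coprod_{G^n}X$ of its bar construction in $\C$. Hence it lies in the essential image of $\textnormal{LMon}_{G_\C}(\C)$, by fully faithfulness of $j$ on the relevant diagram categories.

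If the identification $\textnormal{LMon}_{\underline G}(\cX)\simeq\cX_{/\textnormal{B}\underline G}$ is inconvenient to cite, we would instead compare both sides directly. A left $G_\C$-module is a simplicial object $\Delta^{\textnormal{opp}}\to\C$ with $n$-th term $\coprod_{G^n}X$ satisfying Segal-type conditions. By disjointness and universality of coproducts (\ref{Sheaves for the kappa Product Topology}), such a diagram is equivalently a functor out of the nerve of the action groupoid, which contracts to $\textnormal{B}G$.
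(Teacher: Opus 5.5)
Your argument is correct, but it takes a different route from the paper.

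\textbf{The paper's route.} The paper does the comparison in the presheaf topos $\textnormal{Fun}(\C^{\textnormal{opp}},\mS)$:
\begin{enumerate}
\item It embeds $\textnormal{Fun}(\textnormal{B}G,\C)$ into $\textnormal{Fun}(\C^{\textnormal{opp}},\mS)_{/\underline{\textnormal{B}G}}$.
\item It identifies that slice with $\textnormal{LMon}_{\underline{G}}(\textnormal{Fun}(\C^{\textnormal{opp}},\mS))$ via HA 5.2.6.28.
\item In presheaves the constant group $\underline{G}$ is not $h^\C_{G_\C}$. So the paper must push everything through $L_\kappa$, using the appendix result \ref{Localization of Modules} and the fact that $\underline{G}\to h^\C_{G_\C}$ is an $L_\kappa$-equivalence. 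Only then can it recognize the image as $h^\C_{G_\C}$-modules with representable underlying object.
\end{enumerate}

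\textbf{Your route.} You run the same slice/descent argument directly in $\cX=\textnormal{Shv}_\kappa(\C)$. There the Yoneda image of $G_\C$ already is the constant group $c(G)$. The only remaining step is restriction along the fully faithful, finite-product-preserving $j$, with both essential images characterized by the underlying object. This bypasses \ref{Localization of Modules} entirely and is more self-contained.

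The price is that you need $\textnormal{Shv}_\kappa(\C)$ to be a genuine $(\infty,1)$-topos, for descent and for the module/slice identification. That is supplied by \ref{Left Exact Localization for kappa Topology}, which is the same input the paper uses to make $L_\kappa$ compatible with the cartesian monoidal structures. The paper's route instead packages the passage from presheaves to $\kappa$-sheaves into a general statement about reflective monoidal localizations of module categories, which it presents as being of independent interest.

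\textbf{Two small points of precision.}
\begin{itemize}
\item What you need from HTT 6.1.3.9 is descent: $X\mapsto \cX_{/X}$ carries colimits to limits. Universality of colimits alone does not give the equivalence $\cX_{/\textnormal{colim}_{\textnormal{B}G}\bm{1}}\simeq \textnormal{Fun}(\textnormal{B}G,\cX)$.
\item The identification $j(G_\C)\simeq \underline{G}_\cX$ should be made as group objects, not just as objects. This holds because $S\mapsto j(\coprod_S\bm{1}_\C)$ and $S\mapsto c(S)$ are naturally equivalent finite-product-preserving functors on $\kappa$-small sets. Here you use that $j$ preserves $\kappa$-small coproducts and finite limits, and that coproducts in $\C$ are universal.
\end{itemize}
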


\begin{proof} Just as in the proof of \ref{Sheaves for the kappa Product Topology}, we may reduce to the case that $\C$ and $\kappa$ are both small. Consider the fully faithful $(\infty,1)$-functor $\phi:\textnormal{Fun}(\textnormal{B}G,\C)\rightarrow \textnormal{Fun}(\C^{\textnormal{opp}},\mS)_{/\underline{\textnormal{B}G}_{\C^{\textnormal{opp}}}}$ given by the composition $\textnormal{Fun}(\textnormal{B}G,\C)\rightarrow\textnormal{Fun}(\C^{\textnormal{opp}},\textnormal{Fun}(\textnormal{B}G,\mS))\xrightarrow{\sim}\textnormal{Fun}(\C^{\textnormal{opp}},\mS)_{/\underline{\textnormal{B}G}_{\C^{\textnormal{opp}}}}$. Since $\underline{\textnormal{B}G}_{\C^{\textnormal{opp}}}$ along with its canonical base point $\underline{\Delta}_{\C^{\textnormal{opp}}}\rightarrow \underline{\textnormal{B}G}_{\C^{\textnormal{opp}}}$ may be identified as an object of $\textnormal{Fun}(\C^{\textnormal{opp}},\mS)^{\leq 1}_*$, one can use \citeq{lurie2017higheralgebra} 5.2.6.28 to induce an equivalence $\theta:\textnormal{Fun}(\C^{\textnormal{opp}},\mS)_{/\underline{\textnormal{B}G}_{\C^{\textnormal{opp}}}}\xrightarrow{\sim}\textnormal{LMon}_{\underline{G}_{_{\C^{\textnormal{opp}}}}}(\textnormal{Fun}(\C^{\textnormal{opp}},\mS))$. Moreover, said equivalence $\theta:\textnormal{Fun}(\C^{\textnormal{opp}},\mS)_{/\underline{\textnormal{B}G}_{\C^{\textnormal{opp}}}}\xrightarrow{\sim}\textnormal{LMon}_{\underline{G}_{_{\C^{\textnormal{opp}}}}}(\textnormal{Fun}(\C^{\textnormal{opp}},\mS))$ is over $\textnormal{Fun}(\C^{\textnormal{opp}},\mS)$, where $\textnormal{Fun}(\C^{\textnormal{opp}},\mS)_{/\underline{\textnormal{B}G}_{\C^{\textnormal{opp}}}}\rightarrow \textnormal{Fun}(\C^{\textnormal{opp}},\mS)$ is induced by basechange along $\underline{\Delta^0}_{{\C^{\textnormal{opp}}}}\rightarrow \underline{\textnormal{B}G}_{\C^{\textnormal{opp}}}$ (see \citeq{lurie2017higheralgebra} 5.2.6.29). We next wish to verify that the map $\textnormal{Fun}(\C^{\textnormal{opp}},\textnormal{Fun}(\textnormal{B}G,\mS))\xrightarrow{\sim}\textnormal{Fun}(\C^{\textnormal{opp}},\mS)_{/\underline{\textnormal{B}G}_{\C^{\textnormal{opp}}}}$, which we will denote by $\gamma$, is over $\textnormal{Fun}(\C^{\textnormal{opp}},\mS)$. It suffices to show that the composition $\textnormal{Fun}(\textnormal{B}G,\mS)\xrightarrow{\sim}\mS_{/\textnormal{B}G}\xrightarrow{\Delta^0\times_{\textnormal{B}G}(\cdot)}\mS_{/\Delta^0}\xrightarrow{\sim}\mS$ is corepresented by the lone vertex $*\in \textnormal{B}G$. It follows from \citeq{highertopostheory} 5.5.2.9 that said composition is a right adjoint, implying the existence of a left adjoint $L:\mS\rightarrow \textnormal{Fun}(\textnormal{B}G,\mS)$. Combining the fact that the map left adjoint to the composition $\mS_{/\textnormal{B}G}\xrightarrow{\Delta^0\times_{\textnormal{B}G}(\cdot)}\mS_{/\Delta^0}\xrightarrow{\sim}\mS$ carries $\Delta^0$ to an object (isomorphic to) $\Delta^0\rightarrow \textnormal{B}G$, along with the observation that the categorical equivalence $\mS_{/\textnormal{B}G}\xrightarrow{\sim}\textnormal{LFib}(\textnormal{B}G)$ carries $*\rightarrow \textnormal{B}G$ to a left fibration $U_*\rightarrow \textnormal{B}G$ corepresentable by $*\in \textnormal{B}G$ (see \citeq{lurie2024kerodon} 043M), we have that $L$ carries $\Delta^0$ to a functor corepresented by $*$, proving this part of our claim. Thus, the composition 

$$\textnormal{Fun}(\textnormal{B}G,\textnormal{Fun}(\C^{\textnormal{opp}},\mS))\xrightarrow{\gamma}\textnormal{Fun}(\C^{\textnormal{opp}},\mS)_{/\underline{\textnormal{B}G}_{\C^{\textnormal{opp}}}}\xrightarrow{\theta} \textnormal{LMon}_{\underline{G}_{_{\C^{\textnormal{opp}}}}}(\textnormal{Fun}(\C^{\textnormal{opp}},\mS))$$

\noindent is over $\textnormal{Fun}(\C^{\textnormal{opp}},\mS)$. Moreover, $\theta\circ \phi:\textnormal{Fun}(\textnormal{B}G,\C)\rightarrow \textnormal{LMon}_{\underline{G}_{_{\C^{\textnormal{opp}}}}}(\textnormal{Fun}(\C^{\textnormal{opp}},\mS))$ is a fully faithful embedding, whose essential image is precisely $\textnormal{LMon}_{\underline{G}_{_{\C^{\textnormal{opp}}}}}(\textnormal{Fun}^{\textnormal{rep}}(\C^{\textnormal{opp}},\mS))\subseteq \textnormal{LMon}_{\underline{G}_{_{\C^{\textnormal{opp}}}}}(\textnormal{Fun}(\C^{\textnormal{opp}},\mS))$.

Next, since the sheafification functor $L_\kappa:\textnormal{Fun}(\C^{\textnormal{opp}},\mS)\rightarrow \textnormal{Shv}_{\kappa}(\C)$ is symmetric monoidal with respect to the relevant cartesian symmetric monoidal structures, the map $\Psi:\textnormal{Fun}(\textnormal{B}G,\C)\rightarrow \textnormal{LMon}_{L_\kappa(\underline{G}_{_{\C^{\textnormal{opp}}}})}(\textnormal{Shv}_\kappa(\C))$ given by the composition 

$$\textnormal{Fun}(\textnormal{B}G,\C)\xrightarrow{\theta\circ \phi}\textnormal{LMon}_{\underline{G}_{_{\C^{\textnormal{opp}}}}}(\textnormal{Fun}(\C^{\textnormal{opp}},\mS))\xrightarrow{L_\kappa^\times}\textnormal{LMon}_{L_\kappa(\underline{G}_{_{\C^{\textnormal{opp}}}})}(\textnormal{Shv}_\kappa(\C))$$

\noindent induces an $(\infty,1)$-functor $\textnormal{Fun}(\textnormal{B}G,\C)\rightarrow \textnormal{LMon}_{h^\C_{G_\C}}(\textnormal{Shv}_\kappa(\C))$ (here we have used the fact that the canonical map of group objects $\underline{G}_{_{\C^{\textnormal{opp}}}}\rightarrow h^\C_{G_\C}$ is an $L_\kappa$-equivalence). Applying \ref{Localization of Modules}, we deduce that $\Psi$ is fully faithful with essential image $\textnormal{LMon}_{h^\C_{G_\C}}(\textnormal{Shv}^{\textnormal{rep}}_\kappa(\C))$, where $\textnormal{Shv}^{\textnormal{rep}}_\kappa(\C)$ denotes the full subcategory of $\textnormal{Shv}_\kappa(\C)$ spanned by the representable functors. Furthermore, the composition $\textnormal{Fun}(\textnormal{B}G,\C)\xrightarrow{\Phi}\textnormal{LMon}_{h^\C_{G_\C}}(\textnormal{Shv}^{\textnormal{rep}}_\kappa(\C))\rightarrow \textnormal{Shv}^{\textnormal{rep}}_\kappa(\C)$ may be identified as the composition $\textnormal{Fun}(\textnormal{B}G,\C)\xrightarrow{\textnormal{ev}_*}\C\xrightarrow{h_\bullet^\C}\textnormal{Shv}^{\textnormal{rep}}_\kappa(\C)$. Writing $\Theta:\textnormal{LMon}_{h^\C_{G_\C}}(\textnormal{Shv}^{\textnormal{rep}}_\kappa(\C))\xrightarrow{\sim}\textnormal{LMon}_{G_\C}(\C)$ for the equivalence induced by the map $\textnormal{Shv}^{\textnormal{rep}}_\kappa(\C)\xrightarrow{\sim}\C$ inverse to the Yoneda embedding, we may choose $\Phi$ to be the arrow $\Theta\circ \Psi:\textnormal{Fun}(\textnormal{B}G,\C)\rightarrow \textnormal{LMon}_{G_\C}(\C)$.

\end{proof}

\begin{remark}{}\label{Generalization For Groups Internal to N Topoi}

    In the case that $\C=\cX$ is an $(\infty,1)$-topos, the contents of \ref{Internal Vs External Group Objects Theorem} can be generalized. In particular, let $H$ be a small connected space, and let $L:\textnormal{Fun}(\cX^{\textnormal{opp}},\mS)\rightarrow \cX$ denote the (left exact) left adjoint to the Yoneda embedding. Then, by a similar argument to the one presented in the proof of \ref{Internal Vs External Group Objects Theorem}, there exists an equivalence $\textnormal{Fun}(H,\cX)\xrightarrow{\sim}\textnormal{LMon}_{L(\underline{\Omega H}_{\cX^\textnormal{opp}})}(\cX)$ over $\cX$. In the case that $\C=\cY$ is an $(n,1)$-topos for $n\in \ZZ_{\geq 1}$, we instead have a canonical equivalence $\textnormal{Fun}(H,\cY)\xrightarrow{\sim}\textnormal{LMon}_{L^\prime(\underline{\Omega (\tau_{\leq n}H)})}(\cY)$ over $\cY$, where $L^\prime:\textnormal{Fun}(\cY^{\textnormal{opp}},\tau_{\leq n-1}\mS)\rightarrow \cY$ is the (left exact) left adjoint to the inclusion (\ref{Sheafification For n Topoi}).
    
\end{remark}

\subsection{\v{C}ech (Hyper)descent}\label{Cech Descent}

The goal of this section is to prove the following theorem:

\begin{theorem}{}\label{Cech Descent Theorem}

    Fix a (possibly large) regular cardinal $\kappa$, and let $(\C,\tau)$ be a $\kappa$-geometric site. Then, for all $(\infty,1)$-categories $\cD$, a map $F:\C^{\textnormal{opp}}\rightarrow \cD$ is a $\tau$-sheaf if and only if $F$ satisfies the following two conditions:

    \begin{itemize}
    
        \item [$(1)$] $F$ preserves $\kappa$-small products. 

        \item [$(2)$] For all $\tau$-coverings $\{U_i\rightarrow X\}_{i\in I}$, the composition $\Delta_{\textnormal{inj}}^\triangleleft\xrightarrow{U}\C^{\textnormal{opp}}\xrightarrow{F}\cD$ is a limit diagram, where $U$ is the (opposite of the) \v{C}ech nerve of the induced map $\coprod_{i\in I}U_i\rightarrow X$ in $\C$. 

    \end{itemize}

    \noindent Moreover, $F$ is a $\tau$-hypersheaf precisely when $F$ satisfies (1) along with the following condition:

    \begin{itemize}
    
        \item [$(2^\prime)$] For all $\tau$-hypercoverings $X_\bullet:(\Delta^{\textnormal{opp}}_{\textnormal{inj}})^\triangleright\rightarrow \C$ (as defined in \ref{Hypercoverings Definition}), the composition $\Delta_{\textnormal{inj}}^\triangleleft\xrightarrow{X_\bullet^{\textnormal{opp}}}\C^{\textnormal{opp}}\xrightarrow{F}\cD$ is a limit diagram.

    \end{itemize}

\end{theorem}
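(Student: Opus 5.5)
The reduction comes first. As in the proof of \ref{Sheaves for the kappa Product Topology}, after changing universe we may assume $\C$ and $\kappa$ are small and $\cD$ is locally small. Composing with the corepresentables $\textnormal{Hom}_\cD(D,-)$ then reduces everything to the case $\cD=\mS$. All three conditions, as well as the sheaf and hypersheaf conditions, are detected after composing with these functors, because the Yoneda embedding of $\cD$ preserves and reflects limits.

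For the \v{C}ech statement, first note that $\tau$ is finer than the $\kappa$-pretopology. So every $\tau$-sheaf is a $\kappa$-sheaf, and by \ref{Sheaves for the kappa Product Topology} it preserves $\kappa$-small products. Hence both directions may assume (1), and the claim becomes: for $F$ preserving $\kappa$-small products, $F$ is a $\tau$-sheaf if and only if (2) holds.

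By \ref{Prelim. Cech descent} and \ref{Diagram Sheaf condition for sites}, $F$ is a $\tau$-sheaf exactly when, for every covering $\{U_i\rightarrow X\}_{i\in I}$, the cosimplicial diagram $F(X)\rightarrow \prod_i F(U_i)\rightrightarrows \prod_{i_0,i_1}F(U_{i_0}\times_X U_{i_1})\cdots$ is a limit diagram. This is the \v{C}ech nerve of $\coprod_i h_{U_i}\rightarrow h_X$ in presheaves. The key point is to compare it with the \v{C}ech nerve of $\coprod_i U_i\rightarrow X$ formed in $\C$. Since $\kappa$-small coproducts in $\C$ are universal, we get $(\coprod_i U_i)^{\times_X (n+1)}\simeq \coprod_{i_0,\dots,i_n}U_{i_0}\times_X\cdots\times_X U_{i_n}$, and this indexing set is $\kappa$-small because $I$ is and $\kappa$ is regular. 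Applying $F$ and using (1) identifies the two cosimplicial diagrams levelwise and compatibly. Said differently, the map from the presheaf \v{C}ech nerve to the Yoneda image of the $\C$-\v{C}ech nerve is levelwise an $L_\kappa$-equivalence, where $L_\kappa$ is left exact by \ref{Left Exact Localization for kappa Topology}. Thus the two limit conditions agree for $F$ satisfying (1). Finally, the left cofinality of $\Delta_{\textnormal{inj}}\hookrightarrow\Delta$ lets us pass between $\Delta$ and $\Delta_{\textnormal{inj}}$.

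For the hyperdescent statement I would use the same strategy. Work in the $(\infty,1)$-topos $\textnormal{Shv}_\tau(\C)$, which is a left exact localization of $\textnormal{Shv}_\kappa(\C)$. There $F$ is a hypersheaf exactly when it is local with respect to $\infty$-connective maps, equivalently (HTT 6.5.3.12 and 6.5.4) when $F$ satisfies descent for all hypercoverings by representables in the presheaf sense, i.e.\ semisimplicial objects whose matching maps are $\tau$-local effective epimorphisms. One must show that such presheaf hypercoverings can be refined by $\C$-internal $\tau$-hypercoverings in the sense of \ref{Hypercoverings Definition}. Inductively, the $n$th matching object $M_n$ of a $\C$-valued semisimplicial object, formed in $\C$, maps to the presheaf matching object by an $L_\kappa$-equivalence, using left exactness of $L_\kappa$ and universality of coproducts. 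One then refines each covering of $M_n$ into a single map $\coprod_k V_k\rightarrow M_n$ from a $\tau$-covering, which is possible since coverings are $\kappa$-small. This produces, from any sheaf-theoretic hypercovering, a $\tau$-hypercovering in $\C$ with an $L_\kappa$-equivalent image. The converse direction checks that every $\tau$-hypercovering in $\C$ becomes, after applying $h$ and $L_\tau$, a hypercovering of the representable in $\textnormal{Shv}_\tau(\C)$, so its colimit there maps $\infty$-connectively to it.

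I expect the main obstacle to be this hypercovering comparison. It requires passing between $\Delta_{\textnormal{inj}}$-indexed hypercoverings in $\C$ and HTT's simplicial hypercoverings in a topos, for which I would invoke the semisimplicial version as in Lurie's SAG A.5.7. It also requires verifying that an arbitrary hypercover can be refined, up to a cofinal/\v{C}ech-equivalent comparison, by one built from representables and $\kappa$-small coproducts while keeping (1) usable at each level.
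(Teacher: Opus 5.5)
Your argument for the \v{C}ech statement is the paper's. You reduce to $\cD=\mS$, obtain (1) from the $\kappa$-topology, and compare the \v{C}ech nerve of $\coprod_i h_{U_i}\rightarrow h_X$ with the Yoneda image of the \v{C}ech nerve in $\C$, using that the comparison is levelwise an $L_\kappa$-equivalence and $L_\kappa$ is left exact. Your direction ``hypersheaf $\Rightarrow(2^\prime)$'' also matches the paper: $L_\tau\circ h$ is left exact and preserves $\kappa$-small coproducts, so it carries a $\tau$-hypercovering to a hypercovering in $\textnormal{Shv}_\tau(\C)$, whose realization is $\infty$-connective.

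The gap is in the converse, $(1)+(2^\prime)\Rightarrow$ hypercomplete. First, refinement does not produce ``a $\tau$-hypercovering in $\C$ with an $L_\kappa$-equivalent image''. Suppose you replace a family that only generates a covering sieve of $M_n$ (or a coproduct that is not $\kappa$-small) by a genuine $\kappa$-small $\tau$-covering. Then the $n$th term changes, and every higher matching object must be rebuilt over the new term. All you obtain is a map of hypercoverings $h\circ V_\bullet\rightarrow U_\bullet$ over $h_X$.

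Second, and more seriously, descent for $V_\bullet$ does not transfer to $U_\bullet$. The factorization $|h\circ V_\bullet|\rightarrow|U_\bullet|\rightarrow h_X$ only shows that $\textnormal{Hom}(h_X,F)$ is a retract of $\textnormal{Hom}(|U_\bullet|,F)$. To conclude, you would need $F$ to be local with respect to the $\infty$-connective map $|h\circ V_\bullet|\rightarrow|U_\bullet|$, which is exactly what you are trying to prove. Relatedly, characterizing hypercomplete objects by descent for presheaf hypercoverings by representables is a Dugger--Hollander--Isaksen type statement, not HTT 6.5.3.12, and its proof needs the same missing step. So the ``cofinal comparison'' you flag is the heart of the proof, not a technicality, and the proposal does not supply it.

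The missing idea, which is how the paper proceeds, is a lifting argument rather than a transfer of descent.
\begin{itemize}
\item[(a)] It suffices to show that every $\infty$-connective map $\eta:F\rightarrow G$ between $\tau$-sheaves satisfying $(2^\prime)$ is an equivalence. Apply this to $F\rightarrow L^{\textnormal{hyp}}F$, whose target satisfies $(2^\prime)$ by the forward direction.
\item[(b)] Condition $(2^\prime)$ is closed under limits, and $F\rightarrow F\times_GF$ is again $\infty$-connective. So induction on connectivity reduces everything to surjectivity of $\pi_0\eta_X$.
\item[(c)] For $u\in G(X)$, construct inductively a $\tau$-hypercovering $X_\bullet$ of $X$ together with a lift $h\circ X_\bullet\rightarrow\underline{F}$ of $u$. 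At each stage, HTT 6.5.3.5 shows that the matching map of the constant diagram on $\eta$ in $\textnormal{Shv}_\tau(\C)_{/G}$ is an effective epimorphism. You then refine to a $\kappa$-small $\tau$-covering $\{V_j\rightarrow M_{n+1}(X)\}$, and use the $\kappa$-sheaf property to factor through $h_{\coprod_jV_j}$.
\item[(d)] Refinement is harmless here, because only the existence of some hypercovering with a lift is needed. Condition $(2^\prime)$ for $F$ and $G$ identifies $\eta_X$ with $\textnormal{Hom}(h\circ X_\bullet,\underline{F})\rightarrow\textnormal{Hom}(h\circ X_\bullet,\underline{G})$, so the lift gives a preimage of $u$.
\end{itemize}
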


\noindent We first make precise what is meant by the terms \emph{$\tau$-hypersheaf} and \emph{$\tau$-hypercovering} in \ref{Cech Descent Theorem}.

\begin{definition}{}\label{Hypersheaf Definition}

    Let $\C$ be an $(\infty,1)$-category equipped with a Grothendieck topology $J$, and let $\cD$ be any other $(\infty,1)$-category. Choose some strongly inaccessible cardinal $\Omega^\prime$ such that $\C$ is $\Omega^\prime$-small, and $\cD$ is $\Omega^\prime$-locally small. We say that a map $F:\C^{\textnormal{opp}}\rightarrow \cD$ is a \emph{$J$-hypersheaf} if for all $D\in \cD$, the composition $\C^{\textnormal{opp}}\xrightarrow{F}\cD\xrightarrow{\textnormal{Hom}_{\cD}(D,-)}\mS_{<\Omega^\prime}$ is a hypercomplete object of the $\Omega^\prime$-$(\infty,1)$-topos $\textnormal{Shv}^J_{<\Omega^\prime}(\C)$.

\end{definition}

\begin{remark}{}\label{Hypersheaf independent of universe}

    Under the notation of \ref{Hypersheaf Definition}, the property that a map $F:\C^{\textnormal{opp}}\rightarrow \cD$ is a $J$-hypersheaf is independent of choice of $\Omega^\prime$ (see \ref{small hypersheaves are large hypersheaves}).
    
\end{remark}

\noindent In order to define a \emph{$\tau$-hypercovering}, we will need the following lemma:

\begin{lemma}{}\label{Constructing Semisimplicial Objects}

    Let $\C$ be an $(\infty,1)$-category with pullbacks, and choose some $n\in \ZZ_{\geq -1}$. Write $\Delta_{\textnormal{inj}}$ for the subcategory of $\Delta$ spanned by the injective morphisms, and write $\Delta_{\textnormal{inj},\leq n}$ for the full subcategory of $\Delta_{\textnormal{inj}}$ spanned by those $[m]$ where $m\leq n$. Let $X_{\leq n,\bullet}:(\Delta^{\textnormal{opp}}_{\textnormal{inj},\leq n})^\triangleright\rightarrow \C$ be a diagram. Then: 

    \begin{itemize}
    
        \item [$(1)$] The right Kan extension of $X_{\leq n,\bullet}$ along $(\Delta^{\textnormal{opp}}_{\textnormal{inj},\leq n})^\triangleright\hookrightarrow (\Delta^{\textnormal{opp}}_{\textnormal{inj}})^\triangleright$ exists. 

        \item [$(2)$] For all $m\in \ZZ_{\geq n}$, the right Kan extension of $X_{\leq n,\bullet}$ along $(\Delta^{\textnormal{opp}}_{\textnormal{inj},\leq n})^\triangleright\hookrightarrow (\Delta^{\textnormal{opp}}_{\textnormal{inj},\leq m})^\triangleright$ exists, and is given by the restriction of the map $(\Delta^{\textnormal{opp}}_{\textnormal{inj}})^\triangleright\rightarrow \C$ induced by (1).

        \item [$(3)$] Let $X^{R,n}_{\leq n+1,\bullet}:(\Delta^{\textnormal{opp}}_{\textnormal{inj},\leq n+1})^\triangleright\rightarrow \C$ be the map induced by $(2)$, and define $M_{n+1}(X):=X^{R,n}_{\leq n+1,n+1}\in \C$. Then, there exists a canonical equivalence of $(\infty,1)$-categories 

        $$\textnormal{Fun}((\Delta^{\textnormal{opp}}_{\textnormal{inj},\leq n+1})^\triangleright,\C)\times_{\textnormal{Fun}((\Delta^{\textnormal{opp}}_{\textnormal{inj},\leq n})^\triangleright,\C)}\{X_{\leq n,\bullet}\}\xrightarrow{\sim}\C_{/M_{n+1}(X)}.$$

        \item [$(4)$] Choose an arrow $f:U\rightarrow M_{n+1}(X)$ in $\C$, and let $U_{\leq n+1,\bullet}:(\Delta^{\textnormal{opp}}_{\textnormal{inj},\leq n+1})^\triangleright\rightarrow \C$ be the map induced by (3). Let $\mu:U_{\leq n+1,\bullet}\rightarrow X^{R,n}_{\leq n+1,\bullet}$ be the natural transformation induced by (2), which we identify as an edge of $\textnormal{Fun}((\Delta^{\textnormal{opp}}_{\textnormal{inj},\leq n+1})^\triangleright,\C)\times_{\textnormal{Fun}((\Delta^{\textnormal{opp}}_{\textnormal{inj},\leq n})^\triangleright,\C)}\{X_{\leq n,\bullet}\}$. Then, the edge $\mu_{n+1}:U_{\leq n+1,n+1}\rightarrow M_{n+1}(X)$ in $\C$ is isomorphic to the edge $f:U\rightarrow M_{n+1}(X)$. 

    \end{itemize}

\end{lemma}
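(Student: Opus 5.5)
The plan is to reduce everything to the existence of certain finite limits in $\C$, using the pointwise formula for right Kan extensions. For $[m]\in \Delta_{\textnormal{inj}}$ with $m>n$, write $\mathcal{I}_m$ for the comma category $(\Delta^{\textnormal{opp}}_{\textnormal{inj},\leq n})^\triangleright\times_{(\Delta^{\textnormal{opp}}_{\textnormal{inj}})^\triangleright}((\Delta^{\textnormal{opp}}_{\textnormal{inj}})^\triangleright)_{[m]/}$. Its objects are the cone point $[-1]$ together with injections $[k]\hookrightarrow[m]$ with $k\leq n$. This is a finite category. I would first show that it admits a right cofinal finite subdiagram built from the cone point and the nondegenerate faces. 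Limits over it can then be computed by iterated pullbacks over the terminal vertex $X_{\leq n,-1}$. The cleanest way to see this is to note that $\mathcal{I}_m$ is (the opposite of) the poset of nonempty faces of $\Delta^m$ of dimension $\leq n$, with a cone point adjoined. Limits over the partially ordered set of faces of a simplicial complex are built inductively by pullbacks: glue in one face at a time, in order of increasing dimension, each time pulling back along the boundary. All of this happens in $\C_{/X_{\leq n,-1}}$, which has finite limits because $\C$ has pullbacks. So the pointwise right Kan extension exists at every $[m]$, and by HTT 4.3.2.13 (or Kerodon's pointwise criterion) it assembles to a functor. This gives (1).

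For (2), I would use transitivity of right Kan extensions along $(\Delta^{\textnormal{opp}}_{\textnormal{inj},\leq n})^\triangleright\hookrightarrow(\Delta^{\textnormal{opp}}_{\textnormal{inj},\leq m})^\triangleright\hookrightarrow(\Delta^{\textnormal{opp}}_{\textnormal{inj}})^\triangleright$. The restriction of a right Kan extension to an intermediate full subcategory containing the source is again a right Kan extension, because the pointwise limits only see the relevant comma categories, and these do not change.

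For (3), I would use the standard relative Kan extension argument. This is the step I expect to be the main obstacle, as it is where the bookkeeping lives. The inclusion $(\Delta^{\textnormal{opp}}_{\textnormal{inj},\leq n})^\triangleright\hookrightarrow(\Delta^{\textnormal{opp}}_{\textnormal{inj},\leq n+1})^\triangleright$ adds the single object $[n+1]$. The relevant overcategory, formed with the object $[n+1]$ removed, is exactly $\mathcal{I}_{n+1}$. Hence extending $X_{\leq n,\bullet}$ to $[n+1]$ is the same as choosing a cone over the diagram $\mathcal{I}_{n+1}\to\C$. Concretely, I would invoke Kerodon's description of extensions along the addition of a single object (HTT 4.3.2.15 style): the fibre of restriction is equivalent to $\C_{/X_{\leq n,\bullet}|\mathcal{I}_{n+1}}$, the $\infty$-category of cones. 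Since a limit of that diagram exists and is by definition $M_{n+1}(X)$, the category of cones is equivalent to $\C_{/M_{n+1}(X)}$ (Kerodon's characterization of limits as terminal cones, via HTT 1.2.13.8). Composing gives the equivalence.

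For (4), I would trace the equivalence in (3). Under it, the right Kan extension $X^{R,n}_{\leq n+1,\bullet}$ corresponds to the terminal object $\textnormal{id}_{M_{n+1}(X)}$. By the universal property of the right Kan extension, the unit map $\mu$ is the unique map to this terminal object. Its $[n+1]$-component is therefore the image of $U\to M_{n+1}(X)$ under the forgetful functor $\C_{/M_{n+1}(X)}\to\C$ applied to the morphism to the terminal object. That component is $f$ itself, up to the chosen equivalence.
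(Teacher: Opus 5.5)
Your proposal is correct and follows essentially the paper's route. For (1) you take the right Kan extension over the finite matching diagrams, computed in the slice $\C_{/X_{-1}}$, which has finite limits. For (2) you use transitivity of Kan extensions. For (3) you identify the fibre of restriction with cones over $\mathcal{I}_{n+1}$, hence with $\C_{/M_{n+1}(X)}$. For (4) you trace the terminal object $\textnormal{id}_{M_{n+1}(X)}$.

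The paper obtains the cone description in (3) from a Reedy structure on $(\Delta^{\textnormal{opp}}_{\textnormal{inj}})^\triangleright$ together with HTT A.2.9.14, not from the HTT 4.3.2.15-type statement you point to. In your version you should state explicitly why extending to $[n+1]$ amounts to choosing a cone. The reason is that $[n+1]$ receives no morphisms from the other objects of $(\Delta^{\textnormal{opp}}_{\textnormal{inj},\leq n+1})^\triangleright$ and has only the identity endomorphism. Consequently this category is literally the pushout $(\Delta^{\textnormal{opp}}_{\textnormal{inj},\leq n})^\triangleright\amalg_{\mathcal{I}_{n+1}}\mathcal{I}_{n+1}^\triangleleft$.
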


\begin{definition}{}\label{Hypercoverings Definition}

    Let $\C$ be an $(\infty,1)$-category which admits pullbacks, and choose some augmented semisimplicial object $X_\bullet:(\Delta^{\textnormal{opp}}_{\textnormal{inj}})^\triangleright\rightarrow \C$. For each $n\in \ZZ_{\geq 0}$, write $\textnormal{cosk}_{n-1}X_\bullet:(\Delta^{\textnormal{opp}}_{\textnormal{inj}})^\triangleright\rightarrow \C$ for the right Kan extension of $X_\bullet|_{(\Delta^{\textnormal{opp}}_{\textnormal{inj},\leq n-1})^\triangleright}$ along $(\Delta^{\textnormal{opp}}_{\textnormal{inj},\leq n-1})^\triangleright\hookrightarrow (\Delta^{\textnormal{opp}}_{\textnormal{inj}})^\triangleright$, and consider the canonical map $\mu:X_\bullet\rightarrow \textnormal{cosk}_{n-1}X_\bullet$. We will refer to the edge $\mu_{n}:X_{n}\rightarrow M_n(X):=\textnormal{cosk}_{n-1}X_{n}$ of $\C$ as the \emph{$n$th covering map} of $X_\bullet$. 

    Next, choose a regular cardinal $\kappa$, and suppose that $(\C,\tau)$ is a $\kappa$-geometric site. We will call $X_\bullet:(\Delta^{\textnormal{opp}}_{\textnormal{inj}})^\triangleright\rightarrow \C$ a \emph{$\tau$-hypercovering}, or a \emph{hypercovering for the $\tau$-topology on $\C$} if for all $n\in \ZZ_{\geq 0}$, the $n$th covering map $\mu_{n}:X_{n}\rightarrow M_n(X_\bullet)$ is isomorphic to an arrow of the form $\coprod_{i\in I}U_i\rightarrow M_n(X)$, where $\{U_i\rightarrow M_n(X)\}_{i\in I}$ is some ($\kappa$-small) $\tau$-covering.

\end{definition}

\begin{example}{}

    Let $\C$ be an $(\infty,1)$-category which admits pullbacks, and let $X_\bullet:(\Delta^{\textnormal{opp}}_{\textnormal{inj}})^\triangleright\rightarrow \C$ be an augmented simplicial object of $\C$. Then, the $0$th covering map $X_0\rightarrow M_0(X)$ may be identified as the arrow $X_0\rightarrow X_{-1}$.
    
\end{example}

\begin{example}{}\label{Cech nerves Hypercovering Example}

    Fix a regular cardinal $\kappa$, and let $(\C,\tau)$ be a $\kappa$-geometric site. Let $f:X\rightarrow Y$ be a map in $\C$. Then, the \v{C}ech nerve $(\Delta^{\textnormal{opp}}_{\textnormal{inj}})^\triangleright\rightarrow \C$ of $f$ is a $\tau$-hypercovering if and only if $f:X\rightarrow Y$ is isomorphic to an arrow of the form $\coprod_{i\in I}U_i\rightarrow Y$, where $\{U_i\rightarrow Y\}_{i\in I}$ is some ($\kappa$-small) $\tau$-covering.

\end{example}

\begin{example}{}\label{Comparison of Hypercoverings to SAG}

    Let $\cX$ be an $(\infty,1)$-topos, and let $U_\bullet:(\Delta^{\textnormal{opp}}_{\textnormal{inj}})^\triangleright\rightarrow \cX$ be a diagram, which we identify as a map $U^\prime_\bullet:\Delta^{\textnormal{opp}}_{\textnormal{inj}}\rightarrow \cX_{/U_{-1}}$. Then, $U_\bullet$ is a hypercovering for the effective epimorphism topology on $\cX$ precisely when $U^\prime_\bullet$ is a hypercovering of $\cX_{/U_{-1}}$, in the sense of \citeq{lurie2018sag} A.5.2.3.

\end{example}

\begin{proof} (\ref{Constructing Semisimplicial Objects}.) We first prove $(1)$. Identify $X_{\leq n,\bullet}:(\Delta^{\textnormal{opp}}_{\textnormal{inj},\leq n})^\triangleright\rightarrow \C$ with a map $X_{\leq n,\bullet}^\prime:\Delta^{\textnormal{opp}}_{\textnormal{inj},\leq n}\rightarrow \C_{/X_{-1}}$, where $X_{-1}$ denotes the value of $X_{\bullet}$ at the cone point of $(\Delta^{\textnormal{opp}}_{\textnormal{inj},\leq n})^\triangleright$. Since $\C$ has pullbacks, $\C_{/X_{\leq n,-1}}$ has finite limits. In particular, the map $X_{\leq n,\bullet}^\prime:\Delta^{\textnormal{opp}}_{\textnormal{inj},\leq n}\rightarrow \C_{/X_{\leq n,-1}}$ admits a right Kan extension to an arrow $X_{\bullet}^{R,n,\prime}:\Delta^{\textnormal{opp}}_{\textnormal{inj}}\rightarrow \C_{/X_{\leq n,-1}}$ (see the proof of \citeq{lurie2024kerodon} 05H7), which we identify as a map $X_{\bullet}^{R,n,\prime}:(\Delta^{\textnormal{opp}}_{\textnormal{inj}})^\triangleright\rightarrow \C$. Invoking \citeq{lurie2024kerodon} 04K2, we have that $X_{\bullet}^{R,n,\prime}$ is right Kan extended from $(\Delta^{\textnormal{opp}}_{\textnormal{inj},\leq n})^\triangleright$, proving $(1)$. Claim $(2)$ now follows from \citeq{lurie2024kerodon} 0312.

    We now prove $(3)$. To begin, observe that the factorization system $(L,R)$ endows $(\Delta^{\textnormal{opp}}_{\textnormal{inj}})^\triangleright$ with the structure of a Reedy category (in the sense of \citeq{highertopostheory} A.2.9.1), where $L$ is the collection of all morphisms in $(\Delta^{\textnormal{opp}}_{\textnormal{inj}})^\triangleright$, and $R$ is the collection of isomorphisms in $(\Delta^{\textnormal{opp}}_{\textnormal{inj}})^\triangleright$. Applying \citeq{highertopostheory} A.2.9.14, we have that for all $(\infty,1)$-categories $\cD$, the canonical map of $(\infty,1)$-categories

    $$\textnormal{Fun}((\Delta^{\textnormal{opp}}_{\textnormal{inj},\leq n+1})^\triangleright,\cD)\rightarrow \textnormal{Fun}((((\Delta^{\textnormal{opp}}_{\textnormal{inj},\leq n})^\triangleright)_{[n+1]/})^\triangleleft,\cD)\times_{\textnormal{Fun}(((\Delta^{\textnormal{opp}}_{\textnormal{inj},\leq n})^\triangleright)_{[n+1]/},\cD)}\textnormal{Fun}(\Delta^{\textnormal{opp}}_{\textnormal{inj},\leq n},\cD)$$

    \noindent is a categorical equivalence. In particular, for any diagram $F:\Delta^{\textnormal{opp}}_{\textnormal{inj},\leq n}\rightarrow \cD$, the arrow 

    $$\{F\}\times_{\textnormal{Fun}((\Delta^{\textnormal{opp}}_{\textnormal{inj},\leq n})^\triangleright,\cD)}\textnormal{Fun}((\Delta^{\textnormal{opp}}_{\textnormal{inj},\leq n+1})^\triangleright,\cD)\rightarrow \textnormal{Fun}((((\Delta^{\textnormal{opp}}_{\textnormal{inj},\leq n})^\triangleright)_{[n+1]/})^\triangleleft,\cD)\times_{\textnormal{Fun}(((\Delta^{\textnormal{opp}}_{\textnormal{inj},\leq n})^\triangleright)_{[n+1]/},\cD)}\{F^\prime\}$$

    \noindent is an equivalence, where $F^\prime$ denotes the composition $((\Delta^{\textnormal{opp}}_{\textnormal{inj},\leq n})^\triangleright)_{[n+1]/}\rightarrow (\Delta^{\textnormal{opp}}_{\textnormal{inj},\leq n})^\triangleright\xrightarrow{F}\cD$. \citeq{lurie2024kerodon} 04J4 supplies an equivalence $\textnormal{Fun}((((\Delta^{\textnormal{opp}}_{\textnormal{inj},\leq n})^\triangleright)_{[n+1]/})^\triangleleft,\cD)\times_{\textnormal{Fun}(((\Delta^{\textnormal{opp}}_{\textnormal{inj},\leq n})^\triangleright)_{[n+1]/},\cD)}\{F^\prime\}\xrightarrow{\sim}\cD_{/F^\prime}$, yielding an equivalence $\{F\}\times_{\textnormal{Fun}((\Delta^{\textnormal{opp}}_{\textnormal{inj},\leq n})^\triangleright,\cD)}\textnormal{Fun}((\Delta^{\textnormal{opp}}_{\textnormal{inj},\leq n+1})^\triangleright,\cD)\xrightarrow{\sim}\cD_{/F^\prime}$. Suppose now that $F$ admits a right Kan extension along $(\Delta^{\textnormal{opp}}_{\textnormal{inj},\leq n})^\triangleright\hookrightarrow (\Delta^{\textnormal{opp}}_{\textnormal{inj},\leq n+1})^\triangleright$. Then, $\cD_{/F^\prime}\rightarrow \cD$ is representable, so $(3)$ immediately follows. Claim $(4)$ follows from the proof of $(3)$.

\end{proof}

\noindent We will prove \ref{Cech Descent Theorem} in pieces. To begin, we consider when an arrow $F:\C^{\textnormal{opp}}\rightarrow \mS$ is a $\tau$-sheaf:

\begin{proposition}{}[\v{C}ech Descent]\label{Sheaf condition for topos-like categories}

    Fix a (possibly large) regular cardinal $\kappa$, and let $\C$ be a $\kappa$-geometric $(\infty,1)$-category. Let $\tau$ be a Grothendieck pretopology on $\C$ such that the pair $(\C,\tau)$ is a $\kappa$-geometric site. Then, for all $(\infty,1)$-categories $\cD$, a map $F:\C^{\textnormal{opp}}\rightarrow \cD$ is a $\tau$-sheaf if and only if $F$ satisfies the following two conditions:

    \begin{itemize}
    
        \item [$(1)$] $F$ preserves $\kappa$-small products. 

        \item [$(2)$] For all $\tau$-coverings $\{U_i\rightarrow X\}_{i\in I}$, the composition $\Delta^\triangleleft\xrightarrow{U}\C^{\textnormal{opp}}\xrightarrow{F}\cD$ is a limit diagram, where $U$ is the (opposite of the) \v{C}ech nerve of the induced map $\coprod_{i\in I}U_i\rightarrow X$ in $\C$. 
    
    \end{itemize}

\end{proposition}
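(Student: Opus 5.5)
First reduce to $\cD=\mS$, as in the proof of \ref{Sheaves for the kappa Product Topology}. Change universe so that $\C$ is small and $\cD$ is locally small. Then use that $F$ is a $\tau$-sheaf, preserves $\kappa$-small products, or sends \v{C}ech nerves to limits exactly when each composite $\textnormal{Hom}_\cD(D,-)\circ F$ does. By \ref{Prelim. Cech descent}(3), $F$ is a $\tau$-sheaf iff for every $\tau$-covering $\{U_i\rightarrow X\}_{i\in I}$, $F_*$ carries the (opposite) \v{C}ech nerve of the presheaf map $\coprod_{i}h^\C_{U_i}\rightarrow h^\C_X$ to a limit diagram. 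The task is to compare this presheaf \v{C}ech nerve with the image under Yoneda of the \v{C}ech nerve of $\coprod_i U_i\rightarrow X$ formed in $\C$.

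For the forward direction, let $F$ be a $\tau$-sheaf. Since $\tau$ is finer than the $\kappa$-pretopology, $F$ is a $\kappa$-sheaf, so \ref{Sheaves for the kappa Product Topology} gives (1). For (2), note that $\{\coprod_i U_i\rightarrow X\}$ is itself a $\tau$-covering: compose the given covering with the coverings $\{U_i\rightarrow \coprod_j U_j\}$ and use axiom (3) of \ref{higher sites definition}. More directly, the sieve on $X$ generated by the single map $\coprod_i U_i\rightarrow X$ contains the sieve generated by the $U_i$, so it is a covering sieve. Applying \ref{Prelim. Cech descent}(3) to the singleton covering $\{V:=\coprod_iU_i\rightarrow X\}$ shows that $F_*$ sends the \v{C}ech nerve of $h^\C_V\rightarrow h^\C_X$ to a limit. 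That \v{C}ech nerve is levelwise $h^\C_{V\times_X\cdots\times_XV}$, since Yoneda preserves pullbacks, so it is the Yoneda image of the \v{C}ech nerve in $\C$, and $F_*$ restricted to representables is $F$. This gives (2).

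For the converse, assume (1) and (2). By (1) and \ref{Sheaves for the kappa Product Topology}, $F$ is local for the $L_\kappa$-equivalences $\coprod_j h^\C_{W_j}\rightarrow h^\C_{\coprod_jW_j}$. By \ref{Left Exact Localization for kappa Topology}, $L_\kappa$ is left exact, so $L_\kappa$ carries the presheaf \v{C}ech nerve of $\coprod_i h^\C_{U_i}\rightarrow h^\C_X$ to the \v{C}ech nerve of $L_\kappa(\coprod_ih_{U_i})\cong h^\C_V\rightarrow h^\C_X$ in $\textnormal{Fun}^{\times,\kappa}(\C^{\textnormal{opp}},\mS)$. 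Representables preserve products, hence lie there, and their pullbacks agree with pullbacks in presheaves. The natural map from the presheaf \v{C}ech nerve to the representable \v{C}ech nerve is therefore levelwise an $L_\kappa$-equivalence. Since $F$ is $L_\kappa$-local, $\textnormal{Hom}(-,F)$ identifies the two cosimplicial spaces, and (2) then yields the criterion of \ref{Prelim. Cech descent}(3). By \ref{Diagram Sheaf condition for sites}, $\Delta$ and $\Delta_{\textnormal{inj}}$ may be interchanged throughout by cofinality.

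The main obstacle is making the levelwise identification precise. One must check that left exactness of $L_\kappa$ really carries \v{C}ech nerves to \v{C}ech nerves computed in the local subcategory, and that these agree levelwise with $h^\C$ of the iterated fibre products $V\times_X\cdots\times_XV$. This uses that the reflective subcategory is closed under limits and that Yoneda preserves the pullbacks in $\C$. It also requires some care with the universe change when $\kappa$ is large.
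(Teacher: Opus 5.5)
Your proposal is correct. For the converse it follows the paper exactly: by (1), $F$ is $L_\kappa$-local, and $L_\kappa$ is left exact by \ref{Left Exact Localization for kappa Topology}. Hence $L_\kappa$ identifies the \v{C}ech nerve of $\coprod_i h^\C_{U_i}\rightarrow h^\C_X$ with the Yoneda image of the \v{C}ech nerve of $\coprod_iU_i\rightarrow X$, and $\textnormal{Hom}(-,F)$ cannot distinguish the two.

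For (2) in the forward direction you take a genuinely different route. The paper reuses the same comparison trick, with $\tau$-sheafification $L_\tau$ in place of $L_\kappa$. Since $L_\tau$ is left exact and inverts $\coprod_i h^\C_{U_i}\rightarrow h^\C_{\coprod_iU_i}$, the two \v{C}ech nerves agree after applying $L_\tau$. The presheaf one becomes a colimit diagram in $\textnormal{Shv}_\tau(\C)$ by \ref{Prelim. Cech descent}, and mapping into $F$ gives (2).

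You instead observe that the single map $\coprod_iU_i\rightarrow X$ generates a $J_\tau$-covering sieve, because that sieve contains every $U_i\rightarrow X$. So a $\tau$-sheaf satisfies sieve descent for it. The sieve-to-\v{C}ech translation for this one-element family, together with the fact that the Yoneda embedding preserves pullbacks, then yields (2) directly. This is more elementary: it uses neither left exactness of $L_\tau$, nor condition (1), nor disjointness or universality of coproducts. It therefore shows that (2) holds for sheaves on any site admitting the relevant coproducts and pullbacks. The paper's route buys symmetry between the two directions.

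One correction. Your first justification, that $\{\coprod_iU_i\rightarrow X\}$ is a $\tau$-covering by axiom (3) of \ref{higher sites definition}, does not work. That axiom composes coverings; it cannot show that a coarser family is a covering, and a pretopology need not contain this singleton. Your ``more direct'' sieve argument is the right one. But you should then invoke the \emph{proof} of (2)$\Leftrightarrow$(3) in \ref{Prelim. Cech descent} rather than its statement. That proof only uses the epi-mono factorization of $\coprod_i h^\C_{U_i}\rightarrow h^\C_X$ and never that the family is a $\tau$-covering.
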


\begin{proof}

     Just as in the proof of \ref{Sheaves for the kappa Product Topology}, we may reduce to the case that $\C$ is small, and we may suppose that $\cD=\mS$. We begin by proving that all $\tau$-sheaves $F:\C^{\textnormal{opp}}\rightarrow \mS$ satisfy $(1)$ and $(2)$. Since $\tau$ is a finer pretopology than the $\kappa$-(pre)topology, condition $(1)$ is automatically satisfied by $F$. We now show $(2)$. Let $\{f_i:U_i\rightarrow X\}_{i\in I}$ be a $\tau$-covering, and let $A:(\Delta^{\textnormal{opp}})^\triangleright\rightarrow \C$ be the \v{C}ech nerve of $\coprod_{i\in I}U_i\rightarrow X$, and let $B:(\Delta^{\textnormal{opp}})^\triangleright\rightarrow \textnormal{Fun}(\C^{\textnormal{opp}},\mS)$ be the \v{C}ech nerve of $\coprod_{i\in I}h^\C_{U_i}\rightarrow h^\C_X$. Writing $L_\tau:\textnormal{Fun}(\C^{\textnormal{opp}},\mS)\rightarrow \textnormal{Shv}_\tau(\C)$ for the $\tau$-sheafification functor, it follows from $(1)$ that the compositions $(\Delta^{\textnormal{opp}})^\triangleright\xrightarrow{A} \C\rightarrow \textnormal{Fun}(\C^{\textnormal{opp}},\mS)\xrightarrow{L_\tau}\textnormal{Shv}_\tau(\C)$ and $(\Delta^{\textnormal{opp}})^\triangleright\xrightarrow{B} \textnormal{Fun}(\C^{\textnormal{opp}},\mS)\xrightarrow{L_\tau}\textnormal{Shv}_\tau(\C)$ agree. Since \ref{Prelim. Cech descent} implies that this latter composition is a colimit diagram, we deduce that the composition $(\Delta^{\textnormal{opp}})^\triangleright\xrightarrow{A} \C\rightarrow \textnormal{Fun}(\C^{\textnormal{opp}},\mS)\xrightarrow{L_\tau}\textnormal{Shv}_\tau(\C)$ is also a colimit diagram, proving $(2)$.

     Next, suppose that $F:\C^{\textnormal{opp}}\rightarrow \mS$ is a map satisfying $(1)$ and $(2)$. Applying \ref{Prelim. Cech descent}, it suffices to show that, for all $\tau$-coverings $\{f_i:U_i\rightarrow X\}_{i\in I}$, the composition $\Delta^\triangleleft \xrightarrow{B^{\textnormal{opp}}}\textnormal{Fun}(\C^{\textnormal{opp}},\mS)^{\textnormal{opp}}\xrightarrow{\textnormal{Hom}_{\textnormal{Fun}(\C^{\textnormal{opp}},\mS)}(-,F)}\mS$ is a limit diagram, where $B$ denotes the \v{C}ech nerve of  $\coprod_{i\in I}h^\C_{U_i}\rightarrow h^\C_X$. Applying $(2)$, we deduce that the composition $\Delta^\triangleleft \xrightarrow{A^{\textnormal{opp}}}\C^{\textnormal{opp}}\rightarrow \textnormal{Fun}(\C^{\textnormal{opp}},\mS)^{\textnormal{opp}}\xrightarrow{\textnormal{Hom}_{\textnormal{Fun}(\C^{\textnormal{opp}},\mS)}(-,F)}\mS$ is a limit diagram, where $A$ denotes the \v{C}ech nerve of $\coprod_{i\in I}U_i\rightarrow X$. Thus, it suffices to show that the compositions $\Delta^\triangleleft \xrightarrow{A^{\textnormal{opp}}}\C^{\textnormal{opp}}\rightarrow \textnormal{Fun}(\C^{\textnormal{opp}},\mS)^{\textnormal{opp}}\xrightarrow{\textnormal{Hom}_{\textnormal{Fun}(\C^{\textnormal{opp}},\mS)}(-,F)}\mS$ and $\Delta^\triangleleft \xrightarrow{B^{\textnormal{opp}}}\textnormal{Fun}(\C^{\textnormal{opp}},\mS)^{\textnormal{opp}}\xrightarrow{\textnormal{Hom}_{\textnormal{Fun}(\C^{\textnormal{opp}},\mS)}(-,F)}\mS$ coincide. Writing $L_\kappa:\textnormal{Fun}(\C^{\textnormal{opp}},\mS)\rightarrow \textnormal{Shv}_{\kappa}(\C)$ for the sheafification functor, we observe that $(1)$ implies that $F\in \textnormal{Shv}_{\kappa}(\C)$. Thus, $\textnormal{Hom}_{\textnormal{Fun}(\C^{\textnormal{opp}},\mS)}(-,F):\textnormal{Fun}(\C^{\textnormal{opp}},\mS)^{\textnormal{opp}}\rightarrow \mS$ factors as $\textnormal{Fun}(\C^{\textnormal{opp}},\mS)^{\textnormal{opp}}\xrightarrow{L_\kappa^{\textnormal{opp}}}\textnormal{Shv}_\kappa(\C)^{\textnormal{opp}}\xrightarrow{\textnormal{Hom}_{\textnormal{Shv}_\kappa(\C)}(-,F)}\mS$. Moreover, since $L_\kappa$ is left exact, the compositions 

     $$\Delta^\triangleleft \xrightarrow{A^{\textnormal{opp}}}\C^{\textnormal{opp}}\rightarrow \textnormal{Fun}(\C^{\textnormal{opp}},\mS)^{\textnormal{opp}}\xrightarrow{L_\kappa^{\textnormal{opp}}}\textnormal{Shv}_\kappa(\C)^{\textnormal{opp}}\xrightarrow{\textnormal{Hom}_{\textnormal{Shv}_\kappa(\C)}(-,F)}\mS$$

     \noindent and 

     $$\Delta^\triangleleft \xrightarrow{B^{\textnormal{opp}}}\textnormal{Fun}(\C^{\textnormal{opp}},\mS)^{\textnormal{opp}}\xrightarrow{L_\kappa^{\textnormal{opp}}}\textnormal{Shv}_\kappa(\C)^{\textnormal{opp}}\xrightarrow{\textnormal{Hom}_{\textnormal{Shv}_\kappa(\C)}(-,F)}\mS$$

     \noindent coincide, completing the proof of our claim.

\end{proof}

\begin{remark}{}

    Combining the left cofinality of $\Delta_{\textnormal{inj}}\hookrightarrow \Delta$ with the fact that all \v{C}ech nerves $(\Delta^{\textnormal{opp}}_{\textnormal{inj}})^\triangleright\rightarrow \C$ may be promoted to a \v{C}ech nerve $(\Delta^{\textnormal{opp}})^\triangleright\rightarrow \C$ (up to isomorphism), \ref{Sheaf condition for topos-like categories} is equivalent to the sheaf condition stated in \ref{Cech Descent Theorem}. 
    
\end{remark}

\begin{example}{}\label{preservation of trunactions 1}

    Let $\cX$ be an $(n,1)$-topos for $n\in \ZZ_{\geq 1}\cup \{\infty\}$. It follows from \ref{Sheaf condition for topos-like categories} that for any $(\infty,1)$-category $\cD$, a map $F:\cX^{\textnormal{opp}}\rightarrow \cD$ is a sheaf for the canonical topology on $\cX$ if and only if the following two conditions hold:

    \begin{itemize}
        \item [$(1)$] $F$ preserves small products.
        
        \item [$(2)$] Let $G:(\Delta_{\textnormal{inj}}^{\textnormal{opp}})^\triangleright\rightarrow \cX$ be the \v{C}ech nerve of an effective epimorphism. Then, the composition $\Delta_{\textnormal{inj}}^\triangleleft\xrightarrow{G^{\textnormal{opp}}} \cX^{\textnormal{opp}}\xrightarrow{F}\cD$ is a limit diagram in $\cD$. 
        
    \end{itemize}

\end{example}

\begin{corollary}{}\label{Subcanonical Topologies for topos-like categories}

    Under the assumptions of \ref{Sheaf condition for topos-like categories}, suppose also that $\C$ is a locally small $(\infty,1)$-category. Then, the following are equivalent:

    \begin{itemize}
        \item [$(1)$] For each $\tau$-covering $\{U_i\rightarrow X\}_{i\in I}$, the induced map $\coprod_{i\in I}U_i\rightarrow X$ is an effective epimorphism in $\C$. 
    
        \item [$(2)$] The topology on $\C$ generated by $\tau$ is subcanonical. 
    \end{itemize}

\end{corollary}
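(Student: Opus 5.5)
The plan is to derive the equivalence directly from \ref{Sheaf condition for topos-like categories}, applied to the representable functors $h^\C_Y:\C^{\textnormal{opp}}\rightarrow \mS$. These take values in small spaces because $\C$ is locally small. By definition, the topology $J_\tau$ generated by $\tau$ is subcanonical exactly when $h^\C_Y$ is a $\tau$-sheaf for every $Y\in \C$. So we need to understand, for a fixed $Y$, when $h^\C_Y$ satisfies conditions $(1)$ and $(2)$ of \ref{Sheaf condition for topos-like categories}.

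Condition $(1)$ always holds for representables. Since $\C$ is $\kappa$-geometric, it admits $\kappa$-small coproducts. The universal property of the coproduct then says that $\textnormal{Hom}_\C(\coprod_{i\in I}U_i,Y)\rightarrow \prod_{i\in I}\textnormal{Hom}_\C(U_i,Y)$ is a homotopy equivalence. In other words, $h^\C_Y$ carries $\kappa$-small coproducts in $\C$ to products in $\mS$, i.e. it preserves $\kappa$-small products as a functor on $\C^{\textnormal{opp}}$. Consequently, $J_\tau$ is subcanonical if and only if condition $(2)$ holds for every $Y$. Explicitly, for every $\tau$-covering $\{U_i\rightarrow X\}_{i\in I}$ with \v{C}ech nerve $A:(\Delta^{\textnormal{opp}})^\triangleright\rightarrow \C$ of $\coprod_{i\in I}U_i\rightarrow X$, we need the composite $h^\C_Y\circ A^{\textnormal{opp}}:\Delta^\triangleleft\rightarrow \mS$ to be a limit diagram for every $Y\in\C$.

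Next we identify this last condition with $A$ being a colimit diagram in $\C$. A diagram $\overline{A}:K^\triangleright\rightarrow \C$ in a locally small $(\infty,1)$-category is a colimit diagram precisely when, for every $Y\in\C$, the composite $\textnormal{Hom}_\C(\overline{A}(-),Y):(K^{\textnormal{opp}})^\triangleleft\rightarrow \mS$ is a limit diagram. This is the standard corepresentable characterization of colimits (equivalently, the Yoneda embedding $\C^{\textnormal{opp}}\rightarrow \textnormal{Fun}(\C,\mS)$ preserves and jointly detects limits), and it is exactly where local smallness is used. By definition, a map in $\C$ is an effective epimorphism when its \v{C}ech nerve is a colimit diagram in $\C$ (in the sense of \citeq{lurie2024kerodon}). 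So for each $\tau$-covering, condition $(2)$ over all $Y$ holds if and only if $\coprod_{i\in I}U_i\rightarrow X$ is an effective epimorphism. Quantifying over all $\tau$-coverings gives $(1)\Leftrightarrow(2)$ of the corollary.

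The only step I expect to need care is bookkeeping between $\Delta$ and $\Delta_{\textnormal{inj}}$, and between augmented simplicial and semisimplicial \v{C}ech nerves. I would handle this through the left cofinality of $\Delta_{\textnormal{inj}}\hookrightarrow\Delta$, as in the remark following \ref{Sheaf condition for topos-like categories}. That remark lets us state both the sheaf condition and the effective-epimorphism condition using the full simplicial \v{C}ech nerve, so the two formulations match verbatim.
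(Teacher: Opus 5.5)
Your proposal is correct and matches the argument the paper intends. The paper states this corollary without a separate proof, as an immediate consequence of \ref{Sheaf condition for topos-like categories}: representables automatically satisfy the $\kappa$-small product condition, and the \v{C}ech condition holding for every $h^\C_Y$ simultaneously says exactly that the \v{C}ech nerve of $\coprod_{i\in I}U_i\rightarrow X$ is a colimit diagram in $\C$, i.e.\ that this map is an effective epimorphism.
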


\begin{remark}{}\label{counterexample of sheaf for subcanonical topology not preserving limits}

    Let $\cX$ be an $(\infty,1)$-topos. Example \ref{preservation of trunactions 1} provides a plethora of counterexamples to the often made, yet incorrect claim that ``a map $F:\cX^{\textnormal{opp}}\rightarrow \mS$ preserves small limits if and only if $F$ is a sheaf for the effective epimorphism topology on $\cX$''. We were first made aware of this family of counterexamples via an online comment of Marc Hoyois: https://chat.stackexchange.com/transcript/message/52499804\#52499804. Indeed, let $\C$ be a small $(\infty,1)$-category equipped with a Grothendieck topology $J$ such that $(1)$ $\textnormal{Shv}_J(\C)$ is \emph{not} hypercomplete, and $(2)$ every representable functor $\C^{\textnormal{opp}}\rightarrow \mS$ is a hypercomplete $J$-sheaf. It follows from \citeq{highertopostheory} 6.5.2.16 that a map $f:X\rightarrow Y$ in $\textnormal{Shv}_J(\C)^{\textnormal{hyp}}\subset \textnormal{Shv}_J(\C)$ is an effective epimorphism in $\textnormal{Shv}_J(\C)^{\textnormal{hyp}}$ if and only if $f:X\rightarrow Y$ is an effective epimorphism in $\textnormal{Shv}_J(\C)$, implying that the inclusion $i:\textnormal{Shv}_J(\C)^{\textnormal{hyp}}\hookrightarrow \textnormal{Shv}_J(\C)$ preserves effective epimorphisms. Since $i$ preserves pullbacks and small coproducts (\citeq{lurie2018sag} D.6.7.3), we deduce that for all $J$-sheaves $H:\C^{\textnormal{opp}}\rightarrow \mS$ the composition $\textnormal{Shv}_J(\C)^{\textnormal{hyp},\textnormal{opp}}\xrightarrow{i^{\textnormal{opp}}} \textnormal{Shv}_J(\C)^{\textnormal{opp}}\xrightarrow{h_H}\mS$ is a sheaf for the effective epimorphism topology on $\textnormal{Shv}_J(\C)$ (\ref{preservation of trunactions 1}). However, since every representable functor $\C^{\textnormal{opp}}\rightarrow \mS$ is a $J$-hypersheaf, it follows (from a very similar logic as that used in the proof of \ref{Kan extensions of Sheaves}) that said composition $\textnormal{Shv}_J(\C)^{\textnormal{hyp},\textnormal{opp}}\xrightarrow{i^{\textnormal{opp}}}\textnormal{Shv}_J(\C)^{\textnormal{opp}}\xrightarrow{h_H}\mS$ preserves small limits if and only if $H$ is a $J$-hypersheaf. Choosing some $H\in \textnormal{Shv}_J(\C)$ which is \emph{not} hypercomplete now provides our desired counterexample.

    Applying \ref{The Canonical Topology on a Topos}, we further observe that if there exists a Grothendieck topology $J$ on $\cX$ such that $J$-sheaves $F:\cX^{\textnormal{opp}}\rightarrow \mS$ are precisely the representable maps, then $J$ is the effective epimorphism topology on $\cX$. Combining this fact with the above family of counterexamples, we recover \citeq{lurie2018sag} 1.3.1.5: there exists $(\infty,1)$-topoi $\cY$ such that there \emph{does not} exist a Grothendieck topology $J^\prime$ on $\cY$ satisfying the property that $J^\prime$-sheaves $G:\cX^{\textnormal{opp}}\rightarrow \mS$ are precisely the representable functors. 
    
\end{remark}

We now prove the remainder of \ref{Cech Descent Theorem}. The main arguments used are either from the proof of \citeq{lurie2018sag} A.5.7.2, or as suggested by Thorger Gei\sss:

\noindent

\begin{corollary}{}[\v{C}ech Hyperdescent]\label{Chech Hyperdescent} Fix a (possibly large) regular cardinal $\kappa$, and let $\C$ be a $\kappa$-geometric $(\infty,1)$-category. Let $\tau$ be a Grothendieck pretopology on $\C$ such that the pair $(\C,\tau)$ is a $\kappa$-geometric site. Then, for all $(\infty,1)$-categories $\cD$, a map $F:\C^{\textnormal{opp}}\rightarrow \cD$ is a $\tau$-hypersheaf (in the sense of \ref{Hypersheaf Definition}) precisely when the following two conditions hold:

    \begin{itemize}

        \item [$(1)$] $F$ preserves $\kappa$-small products. 

        \item [$(2)$] For all $\tau$-hypercoverings $X_\bullet:(\Delta^{\textnormal{opp}}_{\textnormal{inj}})^\triangleright\rightarrow \C$ (as defined in \ref{Hypercoverings Definition}), the composition $\Delta_{\textnormal{inj}}^\triangleleft\xrightarrow{X_\bullet^{\textnormal{opp}}}\C^{\textnormal{opp}}\xrightarrow{F}\cD$ is a limit diagram.

    \end{itemize}
    
\end{corollary}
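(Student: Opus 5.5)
Following the reduction in the proof of \ref{Sheaves for the kappa Product Topology}, I would first change universe so that $\C$ is small and $\cD$ is locally small, and then reduce to $\cD=\mS$ by composing with corepresentables $\textnormal{Hom}_\cD(D,-)$. This reduction is compatible with \ref{Hypersheaf Definition}, which is phrased objectwise, and with \ref{Hypersheaf independent of universe}. The statement then becomes: $F\in\textnormal{Fun}(\C^{\textnormal{opp}},\mS)$ is a hypercomplete object of $\textnormal{Shv}_\tau(\C)$ if and only if $F$ preserves $\kappa$-small products and satisfies descent for every $\tau$-hypercovering in $\C$.

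The key device is to compare $\tau$-hypercoverings in $\C$ with hypercoverings in the $(\infty,1)$-topos $\textnormal{Shv}_\tau(\C)$. Let $X_\bullet$ be a $\tau$-hypercovering and write $L_\tau h_\bullet$ for the composite of the Yoneda embedding with $\tau$-sheafification. I would show that $L_\tau h_{X_\bullet}$ is a hypercovering of $\textnormal{Shv}_\tau(\C)_{/L_\tau h_{X_{-1}}}$ in the sense of \citeq{lurie2018sag} A.5.2.3 (compare \ref{Comparison of Hypercoverings to SAG}). The argument has two parts.
\begin{itemize}
\item $L_\tau h_\bullet$ is left exact, so it preserves the matching objects $M_n(X)$. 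These are finite limits built from the lower terms, by \ref{Constructing Semisimplicial Objects}.
\item The $n$th covering map has the form $\coprod_i U_i\to M_n(X)$ for a $\tau$-covering $\{U_i\to M_n(X)\}_{i\in I}$. Because $L_\tau$ inverts $\coprod_i h_{U_i}\to h_{\coprod_i U_i}$ (the $\kappa$-topology is coarser than $\tau$), its image is $\coprod_i L_\tau h_{U_i}\to L_\tau h_{M_n(X)}$. This map is an effective epimorphism by \ref{Prelim. Cech descent}.
\end{itemize}
Given this, the forward direction is straightforward. If $F$ is a hypersheaf, then $F$ is a sheaf and so preserves $\kappa$-small products by \ref{Sheaf condition for topos-like categories}. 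Moreover $\textnormal{Hom}(L_\tau h_{X_\bullet},F)$ is a limit diagram, because hypercoverings in a topos are effective after hypercompletion (HTT 6.5.3.12). As in the proof of \ref{Sheaf condition for topos-like categories}, this mapping space agrees with $F(X_\bullet)$.

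For the converse, suppose $F$ satisfies (1) and (2). Since \v{C}ech nerves are $\tau$-hypercoverings (\ref{Cech nerves Hypercovering Example}), \ref{Sheaf condition for topos-like categories} shows that $F$ is a $\tau$-sheaf. It remains to show $F$ is local with respect to every $\infty$-connective morphism of $\textnormal{Shv}_\tau(\C)$. I would use the characterization of hypercompleteness in SAG A.5.7.2: hypercomplete objects are exactly those with descent for hypercoverings built from representables, i.e.\ hypercoverings $V_\bullet$ in $\textnormal{Fun}(\C^{\textnormal{opp}},\mS)$ whose levels are coproducts of representables and whose covering maps become effective epimorphisms after sheafification. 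The main step is to replace any such $V_\bullet$ by a $\tau$-hypercovering $X_\bullet$ of $\C$, together with a map of hypercoverings $L_\tau h_{X_\bullet}\to L_\tau V_\bullet$ over the same base. Then by the forward analysis and cofinality, descent for $X_\bullet$, and for hypercoverings refining it, forces descent for $V_\bullet$.

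I would build $X_\bullet$ by induction on $n$, using \ref{Constructing Semisimplicial Objects}(3),(4) to prescribe the $n$th covering map freely. Given $X_{\le n-1}$, pull back the effective epimorphism $L_\tau V_n\to M_n(L_\tau V)$ along $L_\tau h_{M_n(X)}\to M_n(L_\tau V)$. Then refine by a $\tau$-covering of $M_n(X)$, which exists because $\tau$-sieves are exactly the sieves generated by effective epimorphisms onto representables (HTT 6.2.3.18). Collapse the covering to a single object using the $\kappa$-small coproducts of $\C$ and condition (1), which lets $F$ turn coproducts into products.

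I expect the main obstacle to be this refinement step, together with the final argument that descent for refining hypercoverings implies descent for $V_\bullet$. That requires a cofinality or "hypercoverings form a filtered system up to homotopy" argument in the style of SAG A.5.7.2, adapted to semisimplicial rather than simplicial objects. The point where I would rely on Gei\ss's suggested approach is to avoid this entirely: show directly that the class of maps $G\to H$ of $\tau$-sheaves such that $\textnormal{Hom}(H,F)\to\textnormal{Hom}(G,F)$ is an equivalence contains every $\infty$-connective morphism. To do so, use \ref{Strongly Saturated Classes of Maps Tested By Rep Functors} to reduce to $\infty$-connective maps with representable target. Then write each such map as the geometric realization of a $\tau$-hypercovering of that representable, built inductively as above.
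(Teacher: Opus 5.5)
Your reduction to $\C$ small and $\cD=\mS$ matches the paper, as does your forward direction (sheafified $\tau$-hypercoverings are hypercoverings of $\textnormal{Shv}_\tau(\C)_{/L_\tau h_{X_{-1}}}$ with $\infty$-connective realization). So does your use of \ref{Cech nerves Hypercovering Example} to see that $F$ is a $\tau$-sheaf. The gap is in the rest of the converse, which neither of your routes completes. The first route defers the whole difficulty to a cofinality statement that you do not prove.

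The fallback route rests on a false step: an $\infty$-connective map $g\colon G\to L_\tau h_X$ is not in general the geometric realization of a $\tau$-hypercovering of $X$ in $\C$. In the topos it is the realization of the constant hypercovering $\underline{G}$. However, the levels of that hypercovering do not lie in $\C$, and replacing it by one in $\C$ is exactly the refinement problem of your first route. What your inductive construction actually produces is a $\tau$-hypercovering $X_\bullet$ of $X$ together with a factorization $|L_\tau h_{X_\bullet}|\to G\xrightarrow{g}L_\tau h_X$ of the canonical map. Condition (2) then makes the composite $\textnormal{Hom}(L_\tau h_X,F)\to\textnormal{Hom}(G,F)\to\textnormal{Hom}(|L_\tau h_{X_\bullet}|,F)$ an equivalence, so $g^*$ only acquires a left inverse. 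That does not give surjectivity. Split injections of spaces are also not stable under the limits you would need to pass back from representable targets. The root problem is that only the test object $F$ satisfies descent, while $G$ is arbitrary.

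The idea you are missing is to impose descent on both ends of the $\infty$-connective map. Since hypersheaves satisfy (2), it suffices to show the following: an $\infty$-connective map $\eta\colon F\to G$ between $\tau$-sheaves that both satisfy (2) is an isomorphism. Then apply this to $F\to L^{\textnormal{hyp}}F$.

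One proves by induction on $n$ that each $\eta_X$ is $n$-connective. The inductive step passes to the diagonal $F\to F\times_GF$. This map is again $\infty$-connective, and its source and target still satisfy (2) because (2) is a limit condition. The base case is $\pi_0$-surjectivity. Given $u\in G(X)$, the inductive construction you already describe produces a $\tau$-hypercovering $X_\bullet$ of $X$ and a lift $\nu\colon h_{X_\bullet}\to\underline{F}$ of $u$. That construction uses HTT 6.5.3.5 to see that the coskeletal maps of the constant diagram $\underline{\eta}$ are effective epimorphisms, then refines by $\tau$-coverings and collapses them into coproducts. Condition (2) gives $F(X)\simeq\textnormal{Hom}(h_{X_\bullet},\underline{F})$ and $G(X)\simeq\textnormal{Hom}(h_{X_\bullet},\underline{G})$ compatibly with $\eta$, so the lift $\nu$ is already a preimage of $u$. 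One lift per point suffices, and no cofinality argument is needed.
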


\begin{proof}

Just as in the proofs of \ref{Sheaves for the kappa Product Topology} and \ref{Sheaf condition for topos-like categories}, it suffices to prove our claim in the case that $\C$ is small and $\cD=\mS$ (see also \ref{Hypersheaf independent of universe}). To begin, suppose that $F:\C^{\textnormal{opp}}\rightarrow \mS$ is a $\tau$-hypersheaf, noting that \ref{Sheaf condition for topos-like categories} implies that that $F$ satisfies condition $(1)$. Next, let $X_\bullet:(\Delta^{\textnormal{opp}}_{\textnormal{inj}})^\triangleright\rightarrow \C$ be a $\tau$-hypercovering, which we identify as an arrow $X_\bullet^\prime:\Delta^{\textnormal{opp}}_{\textnormal{inj}}\rightarrow \C_{/X_{-1}}$. Writing $L:\textnormal{Fun}(\C^{\textnormal{opp}},\mS)\rightarrow \textnormal{Shv}_\tau(\C)$ for the sheafification functor, it follows from the fact that $L$ is left exact that the induced map $\Delta^{\textnormal{opp}}_{\textnormal{inj}}\rightarrow \textnormal{Shv}_\tau(\C)_{/L(h^\C_{X_{-1}})}$ is a hypercovering, in the sense of \citeq{lurie2018sag} A.5.2.3 (see \ref{Comparison of Hypercoverings to SAG}). Since the colimit of $\Delta^{\textnormal{opp}}_{\textnormal{inj}}\rightarrow \textnormal{Shv}_\tau(\C)_{/L(h^\C_{X_{-1}})}$ is $\infty$-connective (see \citeq{lurie2018sag} A.5.3.3), it follows immediately that for all $\tau$-hypersheaves $F$, the composition 

   $$\Delta^\triangleleft_{\textnormal{inj}}\xrightarrow{X_\bullet^{\textnormal{opp}}}\C\xrightarrow{(h^\C_\bullet)^{\textnormal{opp}}}\textnormal{Fun}(\C^{\textnormal{opp}},\mS)^{\textnormal{opp}}\xrightarrow{L^{\textnormal{opp}}}\textnormal{Shv}_\tau(\C)^{\textnormal{opp}}\xrightarrow{h_F}\mS$$

   \noindent is a limit diagram. Since this agrees (up to natural isomorphism) with the composition $\Delta_{\textnormal{inj}}^\triangleleft\xrightarrow{X_\bullet^{\textnormal{opp}}}\C^{\textnormal{opp}}\xrightarrow{F}\mS$, this proves (2).

   We now prove the converse. To begin, suppose that $F:\C^{\textnormal{opp}}\rightarrow \mS$ satisfies conditions $(1)$ and $(2)$. Combining \ref{Cech nerves Hypercovering Example} with \ref{Sheaf condition for topos-like categories}, we deduce that $F$ is a $\tau$-sheaf. Since we have shown that all $\tau$-hypersheaves satisfy condition (2), in order to show that $F$ is hypercomplete it suffices to prove the following: 

   \begin{itemize}
   
       \item [$(*)$] Let $F$ and $G$ be $\tau$-sheaves on $\C$ which satisfy condition (2). Then, any $\infty$-connective map $\eta:F\rightarrow G$ in $\textnormal{Shv}_\tau(\C)$ is an isomorphism. 
       
   \end{itemize}

   In order to prove $(*)$, we will inductively verify that for all such $\eta:F\rightarrow F$ and $X\in \C$, the map $\eta_X:F(X)\rightarrow G(X)$ is $n$-connective in $\mS$ for all $n\in \ZZ_{\geq 0}$. Temporarily delaying the proof of the base case $n=0$, let $n\in \ZZ_{\geq 1}$ and suppose that for each $X\in \C$ we have that $\eta_X:F(X)\rightarrow G(X)$ is $n$-connective. In order to show that $\eta_X:F(X)\rightarrow G(X)$ is $(n+1)$-connective, it suffices to show that each $F(X)\rightarrow F(X)\times_{G(X)}F(X)$ is $n$-connective (\citeq{lurie2024kerodon} 04HA (b)). However, noting that the collection of $\tau$-sheaves in $\textnormal{Shv}_\tau(\C)$ satisfying $(2)$ is closed under all existing limits in $\textnormal{Shv}_\tau(\C)$, the fact that $F(X)\rightarrow F(X)\times_{G(X)}F(X)$ is $n$-connective follows from applying our inductive assumption to the arrow $F\rightarrow F\times_GF$ (which is also an $\infty$-connective map: see \citeq{highertopostheory} 6.5.1.18). So, in order to complete the proof of our claim, it suffices to prove the following:

   \begin{itemize}

       \item [$(*^\prime)$] Let $\eta:F\rightarrow G$ be an $\infty$-connective map of $\tau$-sheaves satisfying condition $(2)$. Then, for all $X\in \C$, the map $\eta_X:F(X)\rightarrow G(X)$ is a $0$-connective map of spaces.

   \end{itemize}

   \noindent For the remainder of the proof, we fix such a map $\eta:F\rightarrow G$. Let $X\in \C$, and let $Z_\bullet:(\Delta^{\textnormal{opp}}_{\textnormal{inj}})^\triangleright\rightarrow \C$ be a $\tau$-hypercovering of $X\in \C$. Since $F$ and $G$ satisfy $(2)$, we have canonical equivalences $F(X)\cong \textnormal{Hom}(h^\C_\bullet\circ Z^s_\bullet,\underline{F})$ and $G(X)\cong \textnormal{Hom}(h^\C_\bullet\circ Z^s_\bullet,\underline{G})$ (where these are mapping spaces in $\textnormal{Fun}(\Delta_{\textnormal{inj}}^{\textnormal{opp}},\textnormal{Fun}(\C^{\textnormal{opp}},\mS))$), where $Z^s_\bullet$ denotes the underlying semisimplicial object of $Z_\bullet$. Under these identifications, the map $\eta_X:F(X)\rightarrow G(X)$ may be identified as the arrow $\textnormal{Hom}(h^\C_\bullet\circ Z^s_\bullet,\underline{F})\rightarrow \textnormal{Hom}(h^\C_\bullet\circ Z^s_\bullet,\underline{G})$ given by post-composition with $\underline{\eta}:\underline{F}\rightarrow \underline{G}$. Thus, in order to prove $(*^\prime)$, it suffices to show that for all $u:h^\C_X\rightarrow G(X)$ there exists a $\tau$-hypercovering $X_\bullet:(\Delta^{\textnormal{opp}}_{\textnormal{inj}})^\triangleright\rightarrow \C$ of $X\in \C$ and a natural transformation $\nu: h^\C_\bullet\circ X^s_\bullet\rightarrow \underline{F}$ in $\textnormal{Fun}(\Delta_{\textnormal{inj}}^{\textnormal{opp}},\textnormal{Fun}(\C^{\textnormal{opp}},\mS))$ such that the composition $h^\C_\bullet\circ X^s_\bullet\xrightarrow{\nu} \underline{F}\xrightarrow{\underline{\eta}}\underline{G}$ coincides with the composition $h^\C_\bullet\circ X^s_\bullet\rightarrow \underline{h^\C_X}\xrightarrow{\underline{u}}\underline{G}$, where $h^\C_\bullet\circ X^s_\bullet\rightarrow \underline{h^\C_X}$ is the canonical map. Fixing some $u:h^\C_X\rightarrow G$, we will now apply \ref{Constructing Semisimplicial Objects} to inductively construct a collection of maps $U_{\leq n,\bullet}:(\Delta^{\textnormal{opp}}_{\textnormal{inj},\leq n})^\triangleright\rightarrow \textnormal{Fun}(\Delta^1,\textnormal{Fun}(\C^{\textnormal{opp}},\mS))$ (as $n$ varies over $\ZZ_{\geq 0}$) satisfying the following properties:

   \begin{itemize}
   
       \item [\textnormal{(i)}] $U_{\leq n,\bullet}$ is equal (as a map of simplicial sets) to the restriction of $U_{\leq n+1,\bullet}$ to $(\Delta^{\textnormal{opp}}_{\textnormal{inj},\leq n})^\triangleright$. 

       \item [\textnormal{(ii)}] $U_{\leq n,-1}:\{-1\}\rightarrow \textnormal{Fun}(\Delta^1,\textnormal{Fun}(\C^{\textnormal{opp}},\mS))$ is the edge $u:h^\C_X\rightarrow G$. 
       
       \item [\textnormal{(iii)}] Every vertex in the image of the composition $(\Delta^{\textnormal{opp}}_{\textnormal{inj},\leq n})^\triangleright\xrightarrow{U_{\leq n,\bullet}} \textnormal{Fun}(\Delta^1,\textnormal{Fun}(\C^{\textnormal{opp}},\mS))\xrightarrow{\textnormal{ev}_0}\textnormal{Fun}(\C^{\textnormal{opp}},\mS)$ is representable, and when identified as an arrow $(\Delta^{\textnormal{opp}}_{\textnormal{inj},\leq n})^\triangleright\rightarrow \C$, is a hypercovering of $X\in \C$ (up to isomorphism).

       \item [(iv)] When identified as an arrow $\Delta^{\textnormal{opp}}_{\textnormal{inj},\leq n}\rightarrow \textnormal{Shv}_\tau(\C)_{/G}$, we have that the composition $(\Delta^{\textnormal{opp}}_{\textnormal{inj},\leq n})^\triangleright\xrightarrow{U_{\leq n,\bullet}} \textnormal{Fun}(\Delta^1,\textnormal{Fun}(\C^{\textnormal{opp}},\mS))\xrightarrow{\textnormal{ev}_1}\textnormal{Fun}(\C^{\textnormal{opp}},\mS)$ is naturally isomorphic to the constant map $\underline{\eta}:\Delta^{\textnormal{opp}}_{\textnormal{inj},\leq n}\rightarrow\textnormal{Shv}_\tau(\C)_{/G}$.

   \end{itemize}

   \noindent Supposing that this $\ZZ_{\geq 0}$-indexed family of maps $U_{\leq n,\bullet}:(\Delta^{\textnormal{opp}}_{\textnormal{inj},\leq n})^\triangleright\rightarrow \textnormal{Fun}(\Delta^1,\textnormal{Fun}(\C^{\textnormal{opp}},\mS))$ has been constructed, we will define $X^\prime_\bullet:(\Delta^{\textnormal{opp}}_{\textnormal{inj}})^\triangleright\rightarrow \textnormal{Fun}(\Delta^1,\textnormal{Fun}(\C^{\textnormal{opp}},\mS))$ to be the map of simplicial sets induced by $(1)$ (noting that $(\Delta^{\textnormal{opp}}_{\textnormal{inj}})^\triangleright$ is the colimit of the $\{(\Delta^{\textnormal{opp}}_{\textnormal{inj},\leq n})^\triangleright\}$ in the ordinary category of simplicial sets), which we identify as a natural transformation $\Delta^1\times (\Delta^{\textnormal{opp}}_{\textnormal{inj}})^\triangleright\rightarrow \textnormal{Fun}(\C^{\textnormal{opp}},\mS)$. Pulling-back along the map $\Delta^1\times \Delta^1\times \Delta^{\textnormal{opp}}_{\textnormal{inj}}\rightarrow \Delta^1\times (\Delta^{\textnormal{opp}}_{\textnormal{inj}})^\triangleright$ induced by the canonical arrow $\Delta^1\times \Delta^{\textnormal{opp}}_{\textnormal{inj}}\rightarrow (\Delta^{\textnormal{opp}}_{\textnormal{inj}})^\triangleright$, which we further identify as a square $A:\Delta^1\times \Delta^1\rightarrow \textnormal{Fun}(\Delta_{\textnormal{inj}}^{\textnormal{opp}},\textnormal{Fun}(\C^{\textnormal{opp}},\mS))$. Conditions (i)-(iv) now guarantee that there exists a $\tau$-hypercovering $X_\bullet$ of $X\in \C$ such that $h_\bullet^\C\circ X_\bullet\cong X^\prime_\bullet$ and such that (up to isomorphism) $A$ is a diagram in $\textnormal{Fun}(\Delta_{\textnormal{inj}}^{\textnormal{opp}},\textnormal{Fun}(\C^{\textnormal{opp}},\mS))$ of our desired form 

   $$
            \begin{tikzpicture}[node distance=1.8cm, auto]
                \node (A) {$ h^\C_\bullet\circ X^s_\bullet $};
                % \node (A1) [right of=A] {$ $};
                \node (B) [right of=A] {$ \underline{h^\C_X} $};
                \node (C) [below of=A] {$ \underline{F} $};
                \node (D) [below of=B] {$ \underline{G} $};
                \draw[->] (A) to node {$  $} (B);
                \draw[->] (A) to node [swap] {$ $} (C);
                \draw[->] (C) to node [swap] {$ \underline{\eta} $} (D);
                \draw[->] (B) to node {$ \underline{u} $} (D);
            \end{tikzpicture}
        $$

    \noindent The remainder of our proof will now go to constructing said $U_{\leq n,\bullet}:(\Delta^{\textnormal{opp}}_{\textnormal{inj},\leq n})^\triangleright\rightarrow \textnormal{Fun}(\Delta^1,\textnormal{Fun}(\C^{\textnormal{opp}},\mS))$. 

    For $n=0$, we note that since $\eta:F\rightarrow G$ is an effective epimorphism in $\textnormal{Shv}_\tau(\C)$, it follows that the map $L(F\times_Gh^\C_X)\rightarrow L(h^\C_X)$ is an effective epimorphism in $\textnormal{Shv}_\tau(\C)$. Next, choose some small set $I$ and an $I$-indexed coproduct of representables $\coprod_{i\in I}h^\C_{U_i}$ along with an effective epimorphism $\coprod_{i\in I}h^\C_{U_i}\rightarrow F\times_Gh^\C_X$ in $\textnormal{Fun}(\C^{\textnormal{opp}},\mS)$. Let $\coprod_{i\in I}h^\C_{U_i}\xrightarrow{e} S\xrightarrow{m} h^\C_X$ be the epi-mono factorization in $\textnormal{Fun}(\C^{\textnormal{opp}},\mS)$ of $\coprod_{i\in I}h^\C_{U_i}\rightarrow h^\C_X$. Since $L(\coprod_{i\in I}h^\C_{U_i})\rightarrow L(h^\C_X)$ is an effective epimorphism in $\textnormal{Shv}_\tau(\C)$, $m$ is an $L$-equivalence, so the maps $U_i\rightarrow X$ generate a $\tau$-covering sieve $\C^0_{/X}\subseteq \C_{/X}$. In particular, we may choose a $\kappa$-small set $K$ and a $\tau$-covering $\{W_k\rightarrow X\}_{k\in K}$ such that, for each $k\in K$, we may choose $i_k\in I$ and a map $W_k\rightarrow U_{i_k}$ where the edge $W_k\rightarrow X$ factors as $W_k\rightarrow U_{i_k}\rightarrow X$. This yields a morphism $\coprod_{k\in K}h^\C_{W_k}\rightarrow \coprod_{i\in I}h^\C_{U_i}$, which in turn yields a map $\coprod_{k\in K}h^\C_{W_k}\rightarrow F\times_Gh^\C_X$. Since $\tau$ is finer than the $\kappa$-topology and $h^\C_X$ is a $\kappa$-sheaf, we have that $F\times_Gh^\C_X$ is also a $\kappa$-sheaf: thus, the map $\coprod_{k\in K}h^\C_{W_k}\rightarrow F\times_Gh^\C_X$ factors as $\coprod_{k\in K}h^\C_{W_k}\rightarrow h^\C_{\coprod_{k\in K}W_k}\rightarrow F\times_Gh^\C_X$. The existence of the induced square

   $$
            \begin{tikzpicture}[node distance=1.8cm, auto]
                \node (A) {$ h^\C_{\coprod_{k\in K}W_k} $};
                % \node (A1) [right of=A] {$ $};
                \node (B) [right of=A] {$ h^\C_X $};
                \node (C) [below of=A] {$ F $};
                \node (D) [below of=B] {$ G $};
                \draw[->] (A) to node {$  $} (B);
                \draw[->] (A) to node [swap] {$  $} (C);
                \draw[->] (C) to node [swap] {$ \underline{\eta} $} (D);
                \draw[->] (B) to node {$ u $} (D);
            \end{tikzpicture}
        $$
   
   \noindent in $\textnormal{Fun}(\C^{\textnormal{opp}},\mS)$ completes our construction of $U_{\leq 0,\bullet}$.

   Next, let $n\in \ZZ_{\geq 0}$, and suppose that we have constructed each of the relevant maps $U_{\leq m,\bullet}:(\Delta^{\textnormal{opp}}_{\textnormal{inj},\leq m})^\triangleright\rightarrow \textnormal{Fun}(\Delta^1,\textnormal{Fun}(\C^{\textnormal{opp}},\mS))$ for all $m\in \ZZ_{\geq 0,\leq n}$. Write $U^0_{\leq n,\bullet}:(\Delta^{\textnormal{opp}}_{\textnormal{inj},\leq n})^\triangleright\rightarrow \textnormal{Fun}(\C^{\textnormal{opp}},\mS)$ for the composition $(\Delta^{\textnormal{opp}}_{\textnormal{inj},\leq n})^\triangleright\xrightarrow{U_{\leq n,\bullet}}\textnormal{Fun}(\Delta^1,\textnormal{Fun}(\C^{\textnormal{opp}},\mS))\xrightarrow{\textnormal{ev}_0}\textnormal{Fun}(\C^{\textnormal{opp}},\mS)$, and similarly define $U^1_{\leq n,\bullet}:(\Delta^{\textnormal{opp}}_{\textnormal{inj},\leq n})^\triangleright\rightarrow \textnormal{Fun}(\C^{\textnormal{opp}},\mS)$. Next, right Kan extend $U_{\leq n,\bullet}$ along $(\Delta^{\textnormal{opp}}_{\textnormal{inj},\leq n})^\triangleright\hookrightarrow (\Delta^{\textnormal{opp}}_{\textnormal{inj},\leq n+1})^\triangleright$ to yield an arrow $U^{n,R}_{\leq n+1,\bullet}:(\Delta^{\textnormal{opp}}_{\textnormal{inj},\leq n+1})^\triangleright\rightarrow \textnormal{Fun}(\Delta^1,\textnormal{Fun}(\C^{\textnormal{opp}},\mS))$. Write $U^{n,R,0}_{\leq n+1,\bullet}:(\Delta^{\textnormal{opp}}_{\textnormal{inj},\leq n+1})^\triangleright\rightarrow \textnormal{Fun}(\C^{\textnormal{opp}},\mS)$ for the composition $(\Delta^{\textnormal{opp}}_{\textnormal{inj},\leq n+1})^\triangleright\xrightarrow{U^{n,R}_{\leq n+1,\bullet}}\textnormal{Fun}(\Delta^1,\textnormal{Fun}(\C^{\textnormal{opp}},\mS))\xrightarrow{\textnormal{ev}_0}\textnormal{Fun}(\C^{\textnormal{opp}},\mS)$, and similarly define $U^{n,R,1}_{\leq n+1,\bullet}:(\Delta^{\textnormal{opp}}_{\textnormal{inj},\leq n+1})^\triangleright\rightarrow \textnormal{Fun}(\C^{\textnormal{opp}},\mS)$. Choosing some $X_{\leq n,\bullet}:(\Delta^{\textnormal{opp}}_{\textnormal{inj},\leq n})^\triangleright\rightarrow \C$ such that $h^\C_\bullet\circ X_{\leq n,\bullet}\cong U^0_{\leq n,\bullet}$, we see that $h^\C_\bullet\circ X^{n,R}_{\leq n+1,\bullet}\cong U^{n,R,0}_{\leq n+1,\bullet}$ (where $X^{n,R}_{\leq n+1,\bullet}$ is the right Kan extension of $X_{\leq n,\bullet}$ along $(\Delta^{\textnormal{opp}}_{\textnormal{inj},\leq n})^\triangleright\hookrightarrow (\Delta^{\textnormal{opp}}_{\textnormal{inj},\leq n+1})^\triangleright$), so we have the identification $U^{n,R,0}_{\leq n+1,\bullet}\cong h^\C_{M_{n+1}(X)}$. Identifying $U^{n,R,1}_{\leq n+1,\bullet}$ as an arrow $U^{n,R,1,\prime}_{\leq n+1,\bullet}:\Delta^{\textnormal{opp}}_{\textnormal{inj},\leq n+1}:\Delta^{\textnormal{opp}}_{\textnormal{inj},\leq n+1}\rightarrow \textnormal{Shv}_\tau(\C)_{/G}$, we have that $U^{n,R,1,\prime}_{\leq n+1,\bullet}$ may be identified as the right Kan extension of the constant diagram $\underline{\eta}:\Delta^{\textnormal{opp}}_{\textnormal{inj},\leq n}\rightarrow \textnormal{Shv}_\tau(\C)_{/G}$ along $\Delta^{\textnormal{opp}}_{\textnormal{inj},\leq n}\hookrightarrow \Delta^{\textnormal{opp}}_{\textnormal{inj},\leq n+1}$. Noting that $\eta$ is $\infty$-connective, it follows from \citeq{highertopostheory} 6.5.3.5 that the canonical map $\gamma^\prime:\underline{\eta}\rightarrow U^{n,R,1,\prime}_{\leq n+1,\bullet}$ of diagrams $\Delta^{\textnormal{opp}}_{\textnormal{inj},\leq n+1}\rightarrow \textnormal{Shv}_\tau(\C)_{/G}$ exhibiting $U^{n,R,1,\prime}_{\leq n+1,\bullet}$ as the $n$-coskeleton of $\underline{\eta}$ satsifies the property that the edge $\gamma^\prime_{n+1}:\eta\rightarrow U^{n,R,1,\prime}_{\leq n+1,n+1}$ is an effective epimorphism in $\textnormal{Shv}_\tau(\C)_{/G}$. 
   
   Applying \ref{Hypercoverings Definition}, we deduce that extending $U_{\leq n,\bullet}:(\Delta^{\textnormal{opp}}_{\textnormal{inj},\leq n})^\triangleright\rightarrow \textnormal{Fun}(\Delta^1,\textnormal{Fun}(\C^{\textnormal{opp}},\mS))$ to a map $(\Delta^{\textnormal{opp}}_{\textnormal{inj},\leq n+1})^\triangleright\rightarrow \textnormal{Fun}(\Delta^1,\textnormal{Fun}(\C^{\textnormal{opp}},\mS))$ is equivalent to giving a commutative square 

   $$
            \begin{tikzpicture}[node distance=1.5cm, auto]
                \node (A) {$ A $};
                % \node (A1) [right of=A] {$ $};
                \node (B) [right of=A] {$ h^\C_{M_{n+1}(X)} $};
                \node (C) [below of=A] {$ B $};
                \node (D) [below of=B] {$ F^\prime_{n+1} $};
                \draw[->] (A) to node {$  $} (B);
                \draw[->] (A) to node [swap] {$  $} (C);
                \draw[->] (C) to node [swap] {$  $} (D);
                \draw[->] (B) to node {$  $} (D);
            \end{tikzpicture}
        $$
        
    \noindent in $\textnormal{Fun}(\C^{\textnormal{opp}},\mS)$, where $F^\prime_{n+1}:=U^{n,R,1}_{\leq n+1,n+1}$, and the right vertical map is induced by the image of $[n]\in (\Delta^{\textnormal{opp}}_{\textnormal{inj},\leq n})^\triangleright$ under $U_{\leq n,\bullet}$. Writing $\gamma_{n+1}:F\rightarrow F^\prime_{n+1}$ for the image of $\gamma^\prime_{n+1}$ under the projection $\textnormal{Shv}_\tau(\C)_{/G}\rightarrow \textnormal{Shv}_\tau(\C)$, we have a cospan in $\textnormal{Fun}(\C^{\textnormal{opp}},\mS)$ of the form $F\xrightarrow{\gamma^\prime_{n+1}}F^\prime_{n+1}\leftarrow h^\C_{M_{n+1}(X)}$, where $\gamma^\prime_{n+1}$ is an effective epimorphism in $\textnormal{Shv}_\tau(\C)$. Arguing as in the base case, there exists some ($\kappa$-small) $\tau$-covering $\{V_j\rightarrow M_{n+1}(X)\}_{j\in J}$ such that the cospan $F\xrightarrow{\gamma^\prime_{n+1}}F^\prime_{n+1}\leftarrow h^\C_{M_{n+1}(X)}$ may be completed to a commutative square

    $$
            \begin{tikzpicture}[node distance=2.0cm, auto]
                \node (A) {$ h^\C_{\coprod_{j\in J}V_j} $};
                % \node (A1) [right of=A] {$ $};
                \node (B) [right of=A] {$ h^\C_{M_{n+1}(X)} $};
                \node (C) [below of=A] {$ F $};
                \node (D) [below of=B] {$ F^\prime_{n+1} $};
                \draw[->] (A) to node {$  $} (B);
                \draw[->] (A) to node [swap] {$  $} (C);
                \draw[->] (C) to node [swap] {$ \gamma_{n+1} $} (D);
                \draw[->] (B) to node {$  $} (D);
            \end{tikzpicture}
        $$
        
    \noindent in $\textnormal{Fun}(\C^{\textnormal{opp}},\mS)$. Choosing $U_{\leq n+1,\bullet}:(\Delta^{\textnormal{opp}}_{\textnormal{inj},\leq n+1})^\triangleright\rightarrow \textnormal{Fun}(\Delta^1,\textnormal{Fun}(\C^{\textnormal{opp}},\mS))$ to be the map induced by said square (in the sense of \ref{Constructing Semisimplicial Objects}) completes our inductive step.

\end{proof}

\section{Sheaves on Topoi}\label{Sheaves on Topoi}

\subsection{Truncated Sheaves on Topoi}\label{Truncated Objects}

\noindent The main result of this section is the following:

\begin{proposition}{}\label{n-truncated preserves small limits: Appendix}

    Let $n\in \ZZ_{\geq 1}\cup\{\infty\}$, and let $\cX$ be an $(n,1)$-topos. Let $m\in \ZZ_{\geq 1}$ be an integer such that $m \leq n$ if $n\in \ZZ_{\geq 1}$, let $\cD$ be an $(m,1)$-category, and let $F:\cX^{\textnormal{opp}}\rightarrow \cD$ be a map. Then: 

    \begin{itemize}
    
        \item [$(1)$] $F$ is a sheaf for the effective epimorphism topology on $\cX$ if and only if $F$ preserves small limits.

        \item [$(2)$] $F$ is a sheaf for the effective epimorphism topology on $\cX$ if and only if $F$ factors as $\cX^{\textnormal{opp}}\xrightarrow{(\tau_{\leq m-1})^{\textnormal{opp}}}(\tau_{\leq m-1}\cX)^{\textnormal{opp}}\xrightarrow{F_{\leq m-1}}\cD$, where $F_{\leq m-1}$ is a sheaf for the effective epimorphism topology on $\tau_{\leq m-1}\cX$. 
        
    \end{itemize}

\end{proposition}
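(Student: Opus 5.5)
We reduce both claims to the criterion of \ref{preservation of trunactions 1}: since $(\cX,\C an(\cX))$ is a geometric site, $F$ is a sheaf for the effective epimorphism topology exactly when $F$ preserves small products and carries (opposites of) \v{C}ech nerves $\check{C}(f)_\bullet$ of effective epimorphisms $f:U\rightarrow X$ to limit diagrams. As in the proof of \ref{Sheaves for the kappa Product Topology}, we may change universe and compose with corepresentables $\textnormal{Hom}_\cD(D,-)$. Since $\cD$ is an $(m,1)$-category, each such composite lands in $\tau_{\leq m-1}\mS$, so it suffices to treat $\cD=\tau_{\leq m-1}\mS$.

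For (1), one direction is immediate: if $F$ preserves small limits, then it preserves products, and in an $(n,1)$-topos every effective epimorphism $U\rightarrow X$ is the colimit of its \v{C}ech nerve (the definition of effective epimorphism, together with left cofinality of $\Delta_{\textnormal{inj}}\hookrightarrow\Delta$). For the converse, suppose $F$ is a sheaf. Small limits in $\cX^{\textnormal{opp}}$ are generated by small coproducts and coequalizers, or equivalently by coproducts and geometric realizations, in $\cX$. Products are already handled, so the key step is to show that $F$ carries an arbitrary colimit to a limit. The plan is to write a general colimit $X=\textnormal{colim}\,U_\alpha$ via the effective epimorphism $p:\coprod_\alpha U_\alpha\rightarrow X$, whose \v{C}ech nerve $\check{C}(p)_k=\coprod_{\alpha_0,\ldots,\alpha_k} U_{\alpha_0}\times_X\cdots\times_X U_{\alpha_k}$ is preserved by $F$. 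It then remains to compare $\lim_\Delta F(\check{C}(p)_\bullet)$ with $\lim_\alpha F(U_\alpha)$.

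This comparison is the main obstacle. In the $\infty$-topos case this comparison fails in general (cf.\ \ref{counterexample of sheaf for subcanonical topology not preserving limits}), so truncatedness must enter. The argument should go as follows. Because the values of $F$ are $(m-1)$-truncated, the totalization $\lim_\Delta$ of $F(\check{C}(p)_\bullet)$ only depends on the $m$-skeleton, that is, on $\lim_{\Delta_{\leq m}}$. Moreover, both $F$ and the relevant fibre products only see $\tau_{\leq m-1}$-truncations of objects. To make this precise, first prove (2): show that a sheaf $F$ inverts every $(m-1)$-connective morphism $g:Y\rightarrow Y'$, i.e.\ that $F(g)$ is an equivalence. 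The idea is induction on connectivity. An effective epimorphism $g$ gives $F(Y')\simeq\lim_\Delta F(\check{C}(g))$, and if $g$ is $k$-connective then the diagonals $Y\rightarrow Y\times_{Y'}Y$ are $(k-1)$-connective, so the induction (as in the proof of \ref{Chech Hyperdescent}) reduces to the statement that a $\tau_{\leq m-1}$-valued sheaf sends sufficiently connective maps to $(m-1)$-truncation-equivalences. Once this is known, $F$ factors through the localization $\tau_{\leq m-1}:\cX\rightarrow\tau_{\leq m-1}\cX$ (by \ref{Univ property of Presentable Localizations}, or the localization universal property, since $\tau_{\leq m-1}$ is the localization at the $(m-1)$-connective maps). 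Furthermore, $F_{\leq m-1}$ is a sheaf because $\tau_{\leq m-1}$ preserves products, effective epimorphisms, and \v{C}ech nerves up to truncation; this uses that $\tau_{\leq m-1}\cX$ is an $(m,1)$-topos in which truncation commutes with finite limits of the relevant shape. The converse direction of (2) is formal.

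Finally, we deduce (1) from (2). Here $\tau_{\leq m-1}\cX$ is an $(m,1)$-topos, hence presentable and equivalent to $\tau_{\leq m-1}\textnormal{Shv}_J(\C)$. Since $\tau_{\leq m-1}$ preserves colimits, it suffices to show that a $\tau_{\leq m-1}\mS$-valued sheaf on an $(m,1)$-topos preserves limits. In an $(m,1)$-topos, \v{C}ech nerves of effective epimorphisms are $m$-coskeletal, and every colimit is computed as a geometric realization of a \v{C}ech nerve of $\coprod U_\alpha\rightarrow X$ that agrees, after $m$-truncation of the indexing, with the bar construction of the diagram. Since limits of $(m-1)$-truncated spaces over $\Delta$ agree with limits over $\Delta_{\leq m}$, we obtain that the two totalizations coincide. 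We expect the connectivity induction in (2) to be the delicate step.
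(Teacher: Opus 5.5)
Your outer reduction is sound and essentially the paper's. You pass to $(m-1)$-truncated space values, prove (2) first, and then, since $\tau_{\leq m-1}:\cX\rightarrow\tau_{\leq m-1}\cX$ preserves colimits, reduce (1) to the claim that an $(m-1)$-truncated $\C an$-sheaf on the $(m,1)$-topos $\tau_{\leq m-1}\cX$ preserves small limits. That claim is the whole content of (1), and your argument for it fails.

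The \v{C}ech nerve of $p:\coprod_\alpha U_\alpha\rightarrow X$ does not agree with the bar construction $B_\bullet$ of the diagram on any finite skeleton. Its terms $U_{\alpha_0}\times_X\cdots\times_X U_{\alpha_k}$ encode the groupoid \emph{generated} by the diagram. Already for $m=1$ and a coequalizer $U_1\rightrightarrows U_0\rightarrow X$ in an ordinary topos, $U_0\times_X U_0$ is the equivalence relation generated by the image of $U_1$. This is a countable union of iterated relational composites and is not a term of $B_\bullet$.

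What is true is that there is a comparison map $B_\bullet\rightarrow\check{C}(p)_\bullet$ over $X$. Because $F$ preserves products, the totalization of $F(B_\bullet)$ is $\lim_\alpha F(U_\alpha)$. So for this colimit, (1) amounts exactly to $F$ turning the comparison into an equivalence of totalizations. Nothing in your sketch proves that, since $F$ is not yet known to preserve the fibre products involved. The fact that $(m-1)$-truncated totalizations only see $\Delta_{\leq m}$ limits which terms matter. It says nothing about how $F(U_{\alpha_0}\times_X U_{\alpha_1})$ is controlled by the diagram.

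The paper supplies the missing input from outside $\cX$:
\begin{itemize}
\item Choose a subcanonical site $(\C,J)$ with $\tau_{\leq m-1}\cX\simeq\textnormal{Shv}^J_{m-1}(\C)$.
\item An $(m-1)$-truncated $\C an$-sheaf is a truncated, hence hypercomplete, object of $\widehat{\textnormal{Shv}}_{\C an}$.
\item $(\C,J)$ is a basis for the canonical topology (\ref{Basis for Eff Epi n Topoi}), so by Aoki's theorem (\ref{Hypersheaf Criteria}) $F$ is right Kan extended from a $J$-sheaf on $\C$.
\item Such right Kan extensions preserve small limits (\ref{univ prperty of n-1 sheaves}).
\end{itemize}
A purely internal proof would need a genuine Giraud-type analysis of the generated groupoid, which you have not given.

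Part (2), which you flag as the delicate step, is actually the comparatively formal half, though it needs repairs:
\begin{itemize}
\item $\tau_{\leq m-1}$ inverts $m$-connective maps, not $(m-1)$-connective ones. For $m=1$ your claim would force a representable $h_Z$ with $Z$ discrete to invert every effective epimorphism.
\item Your connectivity induction is circular as phrased. It works if run inside $\widehat{\textnormal{Shv}}_{\C an(\cX)}(\cX)$: the Yoneda embedding preserves pullbacks and effective epimorphisms, hence $k$-connective maps for every $k$, and $(m-1)$-truncated objects are local for $m$-connective maps. The paper gets this from \citeq{lurie2018sag} A.6.7.1.
\item $F_{\leq m-1}$ is a sheaf because it is the restriction of $F$ along the inclusion $\tau_{\leq m-1}\cX\hookrightarrow\cX$, which preserves limits and effective epimorphisms. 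It is not because truncation preserves \v{C}ech nerves; truncation does not preserve pullbacks.
\end{itemize}
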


\begin{remark}{}

    In the case that $n\in \ZZ_{\geq 2}$, claim (1) of \ref{n-truncated preserves small limits: Appendix} is mathematical folklore, and the case that $n=1$ is a classical result (\citeq{Johnstone2002-JOHSOA-7} C.2.2.7). In addition, if $\cD$ is locally small, (1) implies (2) (see \ref{localizations lemma}).

\end{remark}

\noindent Our proof of \ref{n-truncated preserves small limits: Appendix} will require several preliminaries.

\begin{definition}{}[HTT 6.4.1.5]\label{n Topoi definition}

    Let $n\in \ZZ_{\geq 1}$ be an integer. We call an $(n,1)$-category $\cX$ an \emph{$(n,1)$-topos} if any of the following equivalent definitions are met:

    \begin{itemize}
        \item [(1)] There exists a small finite complete $(n,1)$-category $\C$ equipped with a Grothendieck topology $J$, and an equivalence $\textnormal{Shv}^{J}_{n-1}(\C)\cong \cX$, where $\textnormal{Shv}^{J}_{n-1}(\C)\subset \textnormal{Shv}_J(\C)$ is the full subcategory consisting of those $J$-sheaves $F:\C^{\textnormal{opp}}\rightarrow \mS$ which take values in $\tau_{\leq n-1}\mS$.

        \item [$(1^\prime)$] There exists a small finite complete $(n,1)$-category $\C$ equipped with a subcanonical Grothendieck topology $J$, and an equivalence $\textnormal{Shv}^{J}_{n-1}(\C)\cong \cX$, where $\textnormal{Shv}^{J}_{n-1}(\C)\subset \textnormal{Shv}_J(\C)$ is the full subcategory consisting of those $J$-sheaves $F:\C^{\textnormal{opp}}\rightarrow \mS$ which take values in $\tau_{\leq n-1}\mS$.

        \item [(2)] There exists an $(\infty,1)$-topos $\cY$ and an equivalence $\cX\cong \tau_{\leq n-1}\cY$. 

        \item [(3)] $\cX$ satisfies the following:

        \begin{itemize}
            \item [(a)] $\cX$ is a presentable $(n,1)$-category
            
            \item [(b)] (Small) colimits in $\cX$ are universal 
            
            \item [(c)] Pairwise coproducts in $\cX$ are disjoint. 

            \item [(d)] For every groupoid object $U:\Delta^{\textnormal{opp}}\rightarrow \cX$ satisfying the property that the canonical map $U(d_0,d_1):U_1\rightarrow U_0\times U_0$ is $(n-2)$-truncated, $U$ is effective. 
            
        \end{itemize}
    \end{itemize}
    
\end{definition}

\noindent We first wish to relate effective epimorphisms in an $(n,1)$-topos $\cX$ for $n\in \ZZ_{\geq 1}$ to effective epimorphisms in an $(\infty,1)$-topos $\cY$ satisfying $\tau_{\leq n-1}\cY\cong \cX$ (see \ref{preservation of effective epimorphisms in truncations}). To do so, we first verify that every $(n,1)$-topos is a semitopos, as alluded to in the introduction of \citeq{highertopostheory} 6.2.3:

\begin{lemma}{}\label{n-topoi are semitopoi}
    Let $\cX$ be an $(n,1)$-topos for $n\in \ZZ_{\geq 1}\cup \{\infty\}$. Then, $\cX$ is a semitopos (in the sense of \citeq{highertopostheory} 6.2.3.1).
\end{lemma}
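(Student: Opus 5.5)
The case $n=\infty$ is HTT 6.2.3.5 (every $(\infty,1)$-topos is a semitopos), so I would only treat finite $n\in \ZZ_{\geq 1}$. Recall that a semitopos (HTT 6.2.3.1) is an $(\infty,1)$-category $\cX$ with small colimits and finite limits in which small colimits are universal, coproducts are disjoint, and every groupoid object of $\cX$ which is a \v{C}ech nerve (equivalently, every groupoid object of the form $\check{C}(f)$) is effective. The plan is to use Definition \ref{n Topoi definition} (2): choose an $(\infty,1)$-topos $\cY$ and an equivalence $\cX\cong \tau_{\leq n-1}\cY$. I will then transport each axiom from $\cY$ to $\tau_{\leq n-1}\cY$ through the inclusion $i:\tau_{\leq n-1}\cY\hookrightarrow \cY$ and its left adjoint $\tau_{\leq n-1}$.

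First, existence of limits and colimits. The subcategory $\tau_{\leq n-1}\cY$ is closed under all limits in $\cY$, since it is a reflective localization whose local objects are detected by mapping spaces. It admits small colimits, computed as $\tau_{\leq n-1}$ of the colimit in $\cY$.

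Second, universality of colimits. Given a map $f:X\to Z$ in $\tau_{\leq n-1}\cY$ and a colimit $Z\cong\tau_{\leq n-1}\operatorname{colim}Z_\alpha$, I would use that $\tau_{\leq n-1}$ is left exact on $\cY$ when restricted to pullbacks along maps between $(n-1)$-truncated objects. More precisely, by HTT 6.5.1.2 truncation in a topos preserves finite products, and fibre products over truncated bases. It follows that
$$X\times_Z \tau_{\leq n-1}(\operatorname{colim} Z_\alpha)\cong \tau_{\leq n-1}\big(i(X)\times_{i(Z)}\operatorname{colim}Z_\alpha\big),$$
and universality in $\cY$ finishes the argument. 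I would rather cite HTT 6.4.1.5's proof, where Lurie verifies that $\tau_{\leq n-1}\cY$ satisfies the $(n,1)$-Giraud axioms (3)(b),(c), to get universality of colimits and disjointness of pairwise coproducts directly. For disjointness of arbitrary small coproducts, I would note that coproducts of $(n-1)$-truncated objects in $\cY$ are already $(n-1)$-truncated when $n\geq 1$: for $n-1\geq 0$ this is HTT 5.5.6 together with the disjointness of coproducts in $\cY$. Hence the coproduct in $\cX$ agrees with the one in $\cY$, and disjointness is inherited because the initial object of $\cY$ is truncated.

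Third, the main obstacle: effectivity of \v{C}ech groupoids. Let $f:U\to X$ be a morphism in $\cX$ and $\check{C}(f)$ its \v{C}ech nerve, which is computed in $\cY$ since limits agree. Its level-one map $U\times_X U\to U\times U$ is the base change of the diagonal $X\to X\times X$, which is $(n-2)$-truncated because $X$ is $(n-1)$-truncated. So Definition \ref{n Topoi definition} (3)(d) applies directly and $\check{C}(f)$ is effective in $\cX$. To be safe without relying on (3), I would also argue in $\cY$: the colimit there is the image $V\subseteq X$ of $f$ (HTT 6.2.3.4), a monomorphism into an $(n-1)$-truncated object. Hence $V$ is $(n-1)$-truncated, so the geometric realization in $\cY$ already lies in $\cX$, and is therefore the colimit in $\cX$, with \v{C}ech nerve $\check{C}(f)$. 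This second argument seems the cleanest, and I would present it, keeping the Giraud-type citation as a backup for universality.
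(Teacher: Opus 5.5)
Your proposal is correct, and your backup argument for effectivity is the paper's proof. The paper reads presentability and universality of colimits off characterization (3)(a),(b) of \ref{n Topoi definition}. It then shows that $U\times_X U\to U\times U$ is $(n-2)$-truncated, so that (3)(d) applies. It checks this on mapping spaces: for each $A\in\cX$, the fibres of the induced map are path spaces in the $(n-1)$-truncated space $\textnormal{Hom}_\cX(A,X)$. Your observation that this map is a base change of the diagonal of $X$ gives the same conclusion in one line from HTT 5.5.6.15.

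Your preferred route is genuinely different. Working through characterization (2), you get universality from the compatibility of $\tau_{\leq n-1}$ with fibre products over an $(n-1)$-truncated base. Note that this is HTT 6.5.1.2 applied in the topos $\cY_{/Z}$, where relative and absolute $(n-1)$-truncation agree, rather than 6.5.1.2 as stated. You get effectivity from the fact that the realization in $\cY$ of $\check{C}(f)$ is the image $V\hookrightarrow X$, which is already $(n-1)$-truncated.

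This route bypasses the Giraud-type axiom (3)(d), whose equivalence with (2) is the substantive part of HTT 6.4.1.5. It also proves more than the lemma. Since $|\check{C}(f)|$ is the same object $V$ in $\cX$ and in $\cY$, a morphism of $\tau_{\leq n-1}\cY$ is an effective epimorphism there if and only if it is one in $\cY$. That would reduce the second half of the proof of \ref{epis in n topoi} directly to the case $n=\infty$; there the paper instead uses the semitopos structure together with HTT 6.2.3.10 and subobject lattices. The paper's route is shorter if characterization (3) is taken as given.

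Minor points:
\begin{itemize}
\item You should also record that $\tau_{\leq n-1}\cY$ is presentable (HTT 5.5.6.18, or directly (3)(a)), which HTT's definition asks for.
\item Your check of disjointness of coproducts is correct, though the paper does not carry it out separately.
\item The paper cites HTT 6.2.3.2, not 6.2.3.5, for the case $n=\infty$.
\end{itemize}
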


\begin{proof}
    The case for $n=\infty$ is given by \citeq{highertopostheory} 6.2.3.2, so we instead treat the case that $n\in \ZZ_{\geq 1}$. It suffices to verify that for any map $f:X\rightarrow Y$ in $\cX$, the underlying simplicial object of the \v{C}ech nerve of $f$ is effective. Applying property (d) of characterization (3) of \ref{n Topoi definition}, it suffices to show that for all maps $f:X\rightarrow Y$ in $\cX$, the map $X\times_YX\xrightarrow{\textnormal{pr}_1,\textnormal{pr}_2} X\times X$ is $(n-2)$-truncated. That is, we wish to show that for all $A\in \cX$ the map $\textnormal{Hom}_\cX(A,X)\times_{\textnormal{Hom}_\cX(A,Y)}\textnormal{Hom}_\cX(A,X)\rightarrow \textnormal{Hom}_\cX(A,X)\times \textnormal{Hom}_\cX(A,X)$ is an $(n-2)$-truncated map of spaces. Noting that $\textnormal{Hom}_\cX(A,Y)$ is $(n-1)$-truncated (since $\cX$ is an $(n,1)$-category), it suffices to show that whenever $g:U\rightarrow V$ is a map of spaces with $V$ $(n-1)$-truncated, the map $U\times_VU\rightarrow U\times U$ is $(n-2)$-truncated. Applying \citeq{lurie2024kerodon} 056B, we deduce that this is equivalent to asking that for all pairs $a,b\in U$, the homotopy fibre of $U\times_VU\rightarrow U\times U$ over $(a,b)$ is $(n-2)$-truncated. Our claim now follows from the fact that said homotopy fibre may be identified as the $V$-mapping space $\textnormal{Hom}_V(g(a),g(b))$.

\end{proof}

\begin{corollary}{}\label{epis in n topoi}

     Let $n\in \ZZ_{\geq 1}\cup \infty$, and let $\cX$ be an $(n,1)$-topos. Then, a map $f:X\rightarrow Y$ in $\cX$ is an effective epimorphism if and only if the map $\tau_{\leq 0}f:\tau_{\leq 0}X\rightarrow \tau_{\leq 0}Y$ is an effective epimorphism in the ordinary topos $\tau_{\leq 0}\cX$.
    
\end{corollary}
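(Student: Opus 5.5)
The plan is to reduce to the case $n=\infty$, where the statement is \citeq{highertopostheory} 7.2.1.14: in an $(\infty,1)$-topos $\cY$, a map $f$ is an effective epimorphism if and only if $\tau_{\leq 0}f$ is an effective epimorphism in $\tau_{\leq 0}\cY$. For $n=\infty$ there is therefore nothing to prove. For $n\in \ZZ_{\geq 1}$ we use characterization (2) of \ref{n Topoi definition} to fix an $(\infty,1)$-topos $\cY$ and an equivalence $\cX\cong \tau_{\leq n-1}\cY$. Under this equivalence $\tau_{\leq 0}\cX$ is identified with $\tau_{\leq 0}\cY$, and the truncation functor of $\cX$ is the restriction of the truncation functor of $\cY$. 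The corollary then follows from 7.2.1.14 once we prove the following claim.

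\emph{Claim.} For each $k\in \ZZ_{\geq 0}$, a map $f:X\rightarrow Y$ in $\tau_{\leq k}\cY$ is an effective epimorphism in $\tau_{\leq k}\cY$ if and only if it is an effective epimorphism in $\cY$.

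We will apply the claim twice: with $k=n-1$ to pass between $\cX$ and $\cY$ for $f$ itself, and with $k=0$ to pass between $\tau_{\leq 0}\cX=\tau_{\leq 0}\cY$ and $\cY$ for $\tau_{\leq 0}f$.

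To prove the claim, first note that $\tau_{\leq k}\cY\subseteq \cY$ is closed under limits. Hence the \v{C}ech nerve $\check{C}(f)_\bullet$ of $f$ is the same whether it is formed in $\tau_{\leq k}\cY$ or in $\cY$. The geometric realization in $\tau_{\leq k}\cY$ is $\tau_{\leq k}$ applied to the geometric realization $|\check{C}(f)_\bullet|_\cY$ taken in $\cY$, since $\tau_{\leq k}$ is a left adjoint. Now, $\cY$ is a semitopos (\ref{n-topoi are semitopoi}), so the canonical map $m:|\check{C}(f)_\bullet|_\cY\rightarrow Y$ is the monomorphism in the epi-mono factorization of $f$ (\citeq{highertopostheory} 6.2.3). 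Because $m$ is $(-1)$-truncated and $Y$ is $k$-truncated, the source $|\check{C}(f)_\bullet|_\cY$ is also $k$-truncated. It follows that $\tau_{\leq k}|\check{C}(f)_\bullet|_\cY\cong |\check{C}(f)_\bullet|_\cY$, and the comparison map to $Y$ in $\tau_{\leq k}\cY$ is $m$ itself. So $f$ is an effective epimorphism in $\tau_{\leq k}\cY$ exactly when $m$ is an equivalence, which is exactly when $f$ is an effective epimorphism in $\cY$.

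Combining: $f$ is an effective epimorphism in $\cX$ if and only if it is one in $\cY$ (claim with $k=n-1$). By 7.2.1.14 this holds if and only if $\tau_{\leq 0}f$ is an effective epimorphism in $\tau_{\leq 0}\cY$. Since $\tau_{\leq 0}f$ lies in $\tau_{\leq 0}\cX=\tau_{\leq 0}\cY$, where effective epimorphisms are intrinsic to that category, this is the desired condition in $\tau_{\leq 0}\cX$. The main point needing care is the identification in the claim of the colimit in $\tau_{\leq k}\cY$ with the truncated colimit, and the observation that the image of $f$ is already $k$-truncated. Alternatively, one could argue entirely inside $\cX$ using that $\cX$ is a semitopos, via epi-mono factorizations and a comparison of subobject lattices $\textnormal{Sub}(Y)\cong \textnormal{Sub}(\tau_{\leq 0}Y)$, but the reduction to $\cY$ seems more direct.
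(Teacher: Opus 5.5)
Your proof is correct, but you handle the finite-$n$ case differently from the paper. For $n=\infty$ you both simply cite the known result: HTT 7.2.1.14 for you, Cnossen's notes 3.12 in the paper.

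The paper uses the ambient $(\infty,1)$-topos $\cY$ only for the easy direction. If $\tau_{\leq 0}f$ is an effective epimorphism, then $f$ is one in $\cY$. Since $\cX\hookrightarrow\cY$ is fully faithful and preserves \v{C}ech nerves, $f$ is then one in $\cX$ as well. For the converse the paper stays inside $\cX$, using that $\cX$ and $\tau_{\leq 0}\cX$ are semitopoi. By HTT 6.2.3.10 it suffices to show that $\textnormal{Sub}(\tau_{\leq 0}Y)\rightarrow\textnormal{Sub}(\tau_{\leq 0}X)$ is injective. This holds because $\coprod_m\tau_{\leq 0}\check{C}(f)_m\rightarrow\tau_{\leq 0}Y$ is an effective epimorphism (HTT 6.2.3.13), and each restriction map $\textnormal{Sub}(\tau_{\leq 0}Y)\rightarrow\textnormal{Sub}(\tau_{\leq 0}\check{C}(f)_m)$ factors through $\textnormal{Sub}(\tau_{\leq 0}X)$.

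You instead obtain both directions at once. Since $|\check{C}(f)_\bullet|_\cY\rightarrow Y$ is a monomorphism into a $k$-truncated object, the realization is already $k$-truncated. Hence geometric realizations of \v{C}ech nerves computed in $\tau_{\leq k}\cY$ and in $\cY$ agree. This is shorter and more uniform. Your Claim is exactly the paper's next result, Corollary \ref{preservation of effective epimorphisms in truncations}, which the paper deduces from the present corollary rather than the other way around. The price is that you rely on the effective-epimorphism/monomorphism factorization in the ambient $\infty$-topos $\cY$, whereas the paper's converse needs only the semitopos structure of $\cX$ itself.

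One small remark: you announce a second application of your Claim with $k=0$, but it is never used. HTT 7.2.1.14 is already phrased in terms of $\tau_{\leq 0}\cY$.
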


\begin{proof}

    The case for $n=\infty$ is treated in \citeq{cnossen2026httnotes} 3.12, so it suffices to prove our claim for $n\in \ZZ_{\geq 1}$. Choose an $(\infty,1)$-topos $\cY$, and an equivalence $i:\cX\xrightarrow{\sim}\tau_{\leq n-1}\cY\subset \cY$. Let $f:U\rightarrow V$ be a map in $\cX$ such that $\tau_{\leq 0}f$ is an effective epimorphism in $\cX^\heartsuit$. Since the map $i$ induces an equivalence $\cX^\heartsuit\xrightarrow{\sim}\cY^\heartsuit$, this implies that $i(f)$ is an effective epimorphism in $\cY$. Since $\cX\rightarrow \cY$ preserves \v{C}ech nerves and is fully faithful, this immediately implies that $f$ is an effective epimorphism in $\cX$. 

    To show the converse, we first recall that every $(n,1)$-topos is a semitopos, in the sense of \citeq{highertopostheory} 6.2.3.1 (\ref{n-topoi are semitopoi}). Invoking \citeq{highertopostheory} 6.2.3.10, it now suffices to show that, whenever a map $f:U\rightarrow V$ in $\cX$ is an effective epimorphism in $\cX$, the induced map $\tau_{\leq 0}f^*:\textnormal{Sub}(\tau_{\leq 0}V)\rightarrow \textnormal{Sub}(\tau_{\leq 0}U)$ is injective. Let $F_\bullet:(\Delta^{\textnormal{opp}})^\triangleright\rightarrow \cX$ be the \v{C}ech nerve of $f$, noting that the composition $(\Delta^{\textnormal{opp}})^\triangleright\rightarrow \cX\rightarrow \cX^\heartsuit$ is a colimit diagram. Combining \ref{n-topoi are semitopoi} with \citeq{highertopostheory} 6.2.3.13, we have that $\coprod_{n\in \ZZ_{\geq 0}}\tau_{\leq 0}F_n\rightarrow \tau_{\leq 0}V$ is an effective epimorphism in $\cX^\heartsuit$. In particular, this implies that the map $\textnormal{Sub}(\tau_{\leq 0}V)\rightarrow \textnormal{Sub}(\coprod_{n\in \ZZ_{\geq 0}}\tau_{\leq 0}F_n)\cong \prod_{n\in \ZZ_{\geq 0}}\textnormal{Sub}(\tau_{\leq 0}F_n)$ is injective (see \citeq{highertopostheory} 6.2.3.9). Since each of the induced maps $\textnormal{Sub}(\tau_{\leq 0}V)\rightarrow \textnormal{Sub}(\tau_{\leq 0}F_n)$ factors through $\textnormal{Sub}(\tau_{\leq 0}V)\rightarrow \textnormal{Sub}(\tau_{\leq 0}U)=\textnormal{Sub}(\tau_{\leq 0}F_0)$, it follows that $\textnormal{Sub}(\tau_{\leq 0}V)\rightarrow \textnormal{Sub}(\tau_{\leq 0}U)$ is injective, proving our claim. 

\end{proof}

\begin{corollary}{}\label{preservation of effective epimorphisms in truncations}
    Let $n\in \ZZ_{\geq 1}$, and let $m\in \ZZ_{\geq n}\cup \{\infty\}$. Let $\cX$ be an $(m,1)$-topos. Then, a map $f:X\rightarrow Y$ in $\cX$ is an effective epimorphism if and only if $\tau_{\leq n-1}f$ is an effective epimorphism in the $(n,1)$-topos $\tau_{\leq n-1}\cX$.
    
\end{corollary}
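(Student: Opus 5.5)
The plan is to reduce both sides of the equivalence to a statement about $0$-truncations, using \ref{epis in n topoi} twice. That corollary says that in any $(k,1)$-topos $\cY$ (with $k\in \ZZ_{\geq 1}\cup\{\infty\}$), a map is an effective epimorphism if and only if its image under $\tau_{\leq 0}$ is an effective epimorphism in the ordinary topos $\tau_{\leq 0}\cY$. We will apply it once to $\cX$ and once to $\tau_{\leq n-1}\cX$.

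First we record that $\tau_{\leq n-1}\cX$ is indeed an $(n,1)$-topos. If $m=\infty$ this is characterization (2) of \ref{n Topoi definition}. If $m$ is finite, write $\cX\cong\tau_{\leq m-1}\cY$ for some $(\infty,1)$-topos $\cY$. Since $n\leq m$, we have $\tau_{\leq n-1}\cX\cong\tau_{\leq n-1}\cY$, and (2) applies again.

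Next we identify the hearts. The truncation functor $\tau_{\leq n-1}:\cX\rightarrow\tau_{\leq n-1}\cX$ is left adjoint to the inclusion, and since $n-1\geq 0$ we have $\tau_{\leq 0}\circ\tau_{\leq n-1}\cong\tau_{\leq 0}$. Moreover, the discrete objects of $\tau_{\leq n-1}\cX$ are exactly the discrete objects of $\cX$, so $(\tau_{\leq n-1}\cX)^\heartsuit=\cX^\heartsuit$ as full subcategories of $\cX$. Under this identification, for $f:X\rightarrow Y$ in $\cX$ the map $\tau_{\leq 0}(\tau_{\leq n-1}f)$ computed in $\tau_{\leq n-1}\cX$ is isomorphic to $\tau_{\leq 0}f$ computed in $\cX$. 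Here we use that $0$-truncation in $\tau_{\leq n-1}\cX$ is the restriction of $0$-truncation in $\cX$, since both are left adjoint to the same inclusion of discrete objects.

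Combining these, $f$ is an effective epimorphism in $\cX$ if and only if $\tau_{\leq 0}f$ is an effective epimorphism in $\cX^\heartsuit$. The same condition holds if and only if $\tau_{\leq 0}(\tau_{\leq n-1}f)$ is an effective epimorphism in $(\tau_{\leq n-1}\cX)^\heartsuit$, which in turn is equivalent to $\tau_{\leq n-1}f$ being an effective epimorphism in $\tau_{\leq n-1}\cX$.

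The only point needing care is the compatibility of truncations in the third paragraph. It is formal: the left adjoints of a composite of inclusions compose, so it poses no real obstacle.
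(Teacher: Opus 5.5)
Your proposal is correct and follows the paper's intended route. The paper gives no separate proof and states this corollary directly after \ref{epis in n topoi}, as an immediate consequence of applying that result to both $\cX$ and the $(n,1)$-topos $\tau_{\leq n-1}\cX$, using that the two hearts coincide and the two $0$-truncations of $f$ agree. That is exactly your argument.
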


\noindent We may now use \ref{preservation of effective epimorphisms in truncations} to deduce the following component of \ref{n-truncated preserves small limits: Appendix} (2): 

\begin{corollary}{}\label{Partial. Truncated Objects of Shv Can X}

    Let $\cX$ be an $(\infty,1)$-topos, let $n\in \ZZ_{\geq 0}$, and let $F:\cX^{\textnormal{opp}}\rightarrow \tau_{\leq n}\widehat{\mS}$ be a $\C an(\cX)$-sheaf. Then, $F_{n}:=F|_{(\tau_{\leq n}\cX)^{\textnormal{opp}}}$ is a sheaf for the effective epimorphism topology on $\tau_{\leq n}\cX$, and $F$ factors as $\cX^{\textnormal{opp}}\xrightarrow{(\tau_{\leq n})^{\textnormal{opp}}} (\tau_{\leq n}\cX)^{\textnormal{opp}}\xrightarrow{F_{n}} \tau_{\leq n}\widehat{\mS}$.

\end{corollary}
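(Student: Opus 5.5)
The plan is to reduce everything to the \v{C}ech descent characterization \ref{preservation of trunactions 1}. It says that $F$ is a $\C an(\cX)$-sheaf exactly when (a) $F$ preserves small products and (b) $F$ carries \v{C}ech nerves of effective epimorphisms to limit diagrams. The key input is that $F$ takes values in the $(n+1,1)$-category $\tau_{\leq n}\widehat{\mS}$. Limits of $\Delta_{\textnormal{inj}}$-indexed diagrams in $\tau_{\leq n}\widehat{\mS}$ only depend on a finite truncation of the diagram: they agree with limits over $\Delta_{\textnormal{inj},\leq n+1}$.

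First I will show that $F$ inverts every map $X\rightarrow \tau_{\leq n}X$. Write $f:X\rightarrow\tau_{\leq n}X$, which is $(n+1)$-connective, hence an effective epimorphism. Consider its \v{C}ech nerve $X_\bullet$, with $X_k=X\times_{\tau_{\leq n}X}\cdots\times_{\tau_{\leq n}X}X$. Since $f$ is $(n+1)$-connective, each face map of $X_\bullet$ is $(n+1)$-connective (HTT 6.5.1.16), and so is the diagonal $X\rightarrow X\times_{\tau_{\leq n}X}X$ (HTT 6.5.1.18).

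Now I need $F$ to invert $(n+1)$-connective maps, which I will get by induction on truncation level. For an $(n+1)$-connective map $g:A\rightarrow B$, the map $F(g)$ is a retraction of $F(d)$ for the diagonal $d:A\rightarrow A\times_BA$. Sheaf descent along $g$ gives $F(B)\simeq\lim_{\Delta_{\textnormal{inj}}}F(A_\bullet)$. Because the target category is $(n+1,1)$-truncated, this limit is computed on the $(n+1)$-skeleton, and there the degeneracy-type maps $F(d)$ are equivalences up to the required level. I expect this step to be the main obstacle.

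I would try a cleaner route first: use the sheaf condition to reduce to the $\kappa$-topology together with the map $\coprod_{x}\ast\rightarrow$ (point-covers). Concretely, I would argue via mapping spaces. Since $F$ is $\tau_{\leq n}\widehat{\mS}$-valued, it suffices to show $\pi_0$ and all $\pi_k$ with $k\leq n$ of $F(B)\rightarrow F(A)$ are isomorphisms. The descent limit gives a Bousfield--Kan type description: $\pi_k$ of a totalization of $n$-truncated spaces is determined by $\pi_j$ of the first $k+2$ terms. The first $n+2$ terms of the \v{C}ech nerve of an $(n+1)$-connective map are all "the same up to $(n+1)$-connective maps", so an induction on $n$ (starting with $n=0$, where the $\tau_{\leq 0}$ case is the classical fact that sets-valued sheaves invert $1$-connective maps, using \ref{epis in n topoi}) closes the argument.

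Given that $F$ inverts all units $X\rightarrow\tau_{\leq n}X$, the universal property of the localization $\tau_{\leq n}:\cX\rightarrow\tau_{\leq n}\cX$ (a localization at $(n+1)$-connective units, \ref{Univ property of Presentable Localizations}) yields the factorization $F\simeq F_n\circ\tau_{\leq n}^{\textnormal{opp}}$, with $F_n=F|_{(\tau_{\leq n}\cX)^{\textnormal{opp}}}$.

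It then remains to check that $F_n$ is a $\C an(\tau_{\leq n}\cX)$-sheaf, using the same criterion \ref{preservation of trunactions 1} for the $(n+1,1)$-topos $\tau_{\leq n}\cX$. For products: a product in $\tau_{\leq n}\cX$ is computed as a product in $\cX$, so $F_n$ preserves small products because $F$ does. For \v{C}ech nerves: let $g$ be an effective epimorphism in $\tau_{\leq n}\cX$. By \ref{preservation of effective epimorphisms in truncations}, $g$ is also an effective epimorphism in $\cX$. Its \v{C}ech nerve in $\tau_{\leq n}\cX$ agrees with its \v{C}ech nerve in $\cX$, since $\tau_{\leq n}\cX$ is closed under limits. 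Hence $F_n$ carries it to a limit diagram because $F$ does.
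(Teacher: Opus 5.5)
Your outer structure is sound and agrees with the paper. Once $F$ inverts every unit $X\rightarrow\tau_{\leq n}X$, the unit itself gives $F\simeq F_n\circ(\tau_{\leq n})^{\textnormal{opp}}$. Your check that $F_n$ is a $\C an(\tau_{\leq n}\cX)$-sheaf via \ref{preservation of trunactions 1} is essentially the paper's first step. What you need there is that $F_n$ turns coproducts in $\tau_{\leq n}\cX$ into products; these coproducts are computed in $\cX$ because $n\geq 0$.

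The gap is the central step: that $F$ inverts $(n+1)$-connective maps. You flag it as the obstacle but never prove it, and your sketches are circular. For $\lim F(A_\bullet)$, even restricted to the $(n+1)$-skeleton, to collapse to $F(A)$, you need $F$ to invert the face maps $A_{k+1}\rightarrow A_k$ (or the diagonal). But those face maps are again $(n+1)$-connective, so this is an instance of the very claim being proved.

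Your assertion about the diagonal is also false. By HTT 6.5.1.18 the diagonal of an $(n+1)$-connective map is only $n$-connective; already for $X\rightarrow\pi_0 X$ in $\mS$ its fibres are loop spaces. An $n$-truncated sheaf need not invert $n$-connective maps. In addition, ``point-covers'' do not exist in a general $\infty$-topos. Finally, the induction on $n$ is given no mechanism for passing from $\tau_{\leq n}$-valued sheaves to $\tau_{\leq n-1}$-valued ones, since objectwise truncation of a sheaf is not a sheaf.

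The missing idea is to regard $F$ as an $n$-truncated object of the $\infty$-topos $\widehat{\textnormal{Shv}}_{\C an(\cX)}(\cX)$ (in the larger universe). By \ref{preservation of trunactions 1}, the Yoneda embedding $\cX\rightarrow\widehat{\textnormal{Shv}}_{\C an(\cX)}(\cX)$ is defined and left exact. It also preserves small coproducts and effective epimorphisms: the \v{C}ech condition says exactly that $h_B$ is the geometric realization of $h_{A_\bullet}$ among sheaves. So it is a map of $\infty$-pretopoi, and by SAG A.6.7.1 it commutes with truncation. Hence $h_X\rightarrow h_{\tau_{\leq n}X}$ exhibits the $n$-truncation of $h_X$; alternatively, iterate HTT 6.5.1.18 to see that it preserves $(n+1)$-connective maps. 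An $n$-truncated object is local for this map, which says precisely $F(\tau_{\leq n}X)\simeq F(X)$. This is the paper's argument.

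If you want to keep an elementary induction, it can be repaired by inducting on $n$ over all $\infty$-topoi. Apply the hypothesis to the $\tau_{\leq n-1}$-valued path sheaves $(c:C\rightarrow B)\mapsto\{c^*x\}\times_{F(C)}\{c^*y\}$ on slices $\cX_{/B}$, for $x,y\in F(B)$; these are sheaves by \ref{Slices of sheaves of eff. epis.}. This shows $F(g)$ is a monomorphism for $(n+1)$-connective $g$. It also shows $F(g)$ is $\pi_0$-surjective when $g$ has a section $s$, because $s$ is then $n$-connective. Every face map of a \v{C}ech nerve has a section, so descent gives $F(B)\simeq F(A)$. None of this is in your proposal.
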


\begin{proof}

    Let $F:\cX^{\textnormal{opp}}\rightarrow \tau_{\leq n}\widehat{\mS}$ be a $\C an(\cX)$-sheaf. It follows from \ref{epis in n topoi} that $\tau_{\leq n}\cX\hookrightarrow \cX$ preserves effective epimorphisms: since this is also left exact, we observe that whenever the composition $(\tau_{\leq n}\cX)^{\textnormal{opp}}\hookrightarrow \cX^{\textnormal{opp}}\xrightarrow{F} \tau_{\leq n}\widehat{\mS}$ preserves small products, said composition is a sheaf for the effective epimorphism topology on $\tau_{\leq n}\cX$ (see \ref{preservation of trunactions 1}). Again applying \ref{preservation of trunactions 1}, we see that the left exact map $\cX\rightarrow \widehat{\textnormal{Shv}}_{\C an(\cX)}(\cX)$ preserves finite limits, effective epimorphisms, and small coproducts. In particular, this is a map of $\infty$-pretopoi, in the sense of \citeq{lurie2018sag} A.6.4.1. It now follows from \citeq{lurie2018sag} A.6.7.1 that for all $X\in \cX$ and $n\in \ZZ_{\geq 1}$, the map $h^\cX_X\rightarrow h^{\cX}_{\tau_{\leq n}X}$ (in $\widehat{\textnormal{Shv}}_{\C an(\cX)}(\cX)$) exhibits $h^{\cX}_{\tau_{\leq n}X}$ as the $n$-truncation of $h^\cX_X$. In particular, the $n$-truncated $\C an(\cX)$-sheaf $F:\cX^{\textnormal{opp}}\rightarrow \tau_{\leq n}\widehat{\mS}$ factors as $\cX^{\textnormal{opp}}\rightarrow (\tau_{\leq n}\cX)^{\textnormal{opp}}\xrightarrow{F_{n}}\tau_{\leq n}\widehat{\mS}$. Since $\cX^{\textnormal{opp}}\rightarrow (\tau_{\leq n}\cX)^{\textnormal{opp}}$ is essentially surjective and preserves small products, it follows that $F_{n}$ also preserves small products, as desired.

\end{proof}

Our proof of the remainder of \ref{n-truncated preserves small limits: Appendix} will rely on establishing the following two facts:

\begin{itemize}

    \item [$(1)$] Let $n\in \ZZ_{\geq 1}$, and let $\C$ be an $(n,1)$-category equipped with a subcanonical Grothendieck topology $J$. Then, for all complete $(n,1)$-categories $\cD$, a map $F:\textnormal{Shv}^J_{n-1}(\C)^{\textnormal{opp}}\rightarrow \cD$ preserves small limits if and only if $F$ is right Kan extended from a $J$-sheaf $\C^{\textnormal{opp}}\rightarrow \cD$ (\ref{univ prperty of n-1 sheaves}).

    \item [$(2)$] Under the assumptions of $(1)$, a map $F:\textnormal{Shv}^J_{n-1}(\C)^{\textnormal{opp}}\rightarrow \cD$ is a sheaf for the effective epimorphism topology on $\textnormal{Shv}^J_{n-1}(\C)$ if and only if $F$ is right Kan extended from a $J$-sheaf $\C^{\textnormal{opp}}\rightarrow \cD$ (this may be deduced from \ref{Basis for Eff Epi n Topoi}).
    
\end{itemize}

\noindent We now prove these claims in order. 

\begin{lemma}{}\label{Kan extensions of Sheaves}

    Let $\C$ be a small $(\infty,1)$-category equipped with a subcanonical topology $J$, and let $\cD$ be a complete $(\infty,1)$-category. Let $F:\C^{\textnormal{opp}}\rightarrow \cD$ be a map, write $F_*:\textnormal{Shv}_J(\C)^{\textnormal{opp}}\rightarrow \cD$ for the right Kan extension of $F$ along $\C^{\textnormal{opp}}\rightarrow \textnormal{Shv}_J(\C)^{\textnormal{opp}}$, and write $F^\prime_*:\textnormal{Fun}(\C^{\textnormal{opp}},\mS)^{\textnormal{opp}}\rightarrow \cD$ for the right Kan extension of $F$ along $\C^{\textnormal{opp}}\rightarrow \textnormal{Fun}(\C^{\textnormal{opp}},\mS)^{\textnormal{opp}}$. Then, the following are equivalent:

    \begin{itemize}
    
        \item [$(1)$] $F:\C^{\textnormal{opp}}\rightarrow \cD$ is a $J$-sheaf, in the sense of \ref{Sheaf Definition}.

        \item [$(2)$] For all $X\in \C$ and $J$-sieves $\C^0_{/X}\subseteq \C_{/X}$, the monomorphism $m:S\rightarrow h^\C_X$ corresponding to $\C^0_{/X}\subseteq \C_{/X}$ under the equivalence of \citeq{highertopostheory} 6.2.2.5 is carried to an isomorphism by $(F_*^\prime)^{\textnormal{opp}}$.

        \item [$(3)$] $F^\prime_*$ factors as $\textnormal{Fun}(\C^{\textnormal{opp}},\mS)^{\textnormal{opp}}\xrightarrow{L^{\textnormal{opp}}}\textnormal{Shv}_J(\C)^{\textnormal{opp}}\xrightarrow{F_*}\cD$, where $L:\textnormal{Fun}(\C^{\textnormal{opp}},\mS)\rightarrow \textnormal{Shv}_J(\C)$ is left adjoint to the inclusion.

        \item [$(4)$] $F_*$ preserves small limits.

    \end{itemize}
    
\end{lemma}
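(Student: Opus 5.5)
The plan is to reduce to the case $\cD=\mS$ and then argue through a chain of implications. Since $\cD$ is complete, $F^\prime_*$ and $F_*$ exist. Moreover, every condition can be tested after composing with $\textnormal{Hom}_\cD(D,-)$ for each $D\in\cD$: corepresentables jointly detect limits and commute with right Kan extensions, because limits in $\cD$ are computed pointwise in the Yoneda image. After enlarging the universe so that $\cD$ is locally small, it therefore suffices to treat $\cD=\widehat{\mS}$ (or $\mS$). In that case, $F^\prime_*$ is the functor $P\mapsto \textnormal{Hom}_{\textnormal{Fun}(\C^{\textnormal{opp}},\mS)}(P,F)$, extended to large-valued $F$ if necessary. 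I will show $(1)\Leftrightarrow(2)\Rightarrow(3)\Rightarrow(4)\Rightarrow(1)$.

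For $(1)\Leftrightarrow(2)$, I use that the colimit of the composite $\C^0_{/X}\to\C\to\textnormal{Fun}(\C^{\textnormal{opp}},\mS)$ is the subobject $S\subseteq h^\C_X$ classified by the sieve (HTT 6.2.2.5 together with the colimit description of sieves). The functor $F^\prime_*$ carries colimits to limits, since it is right Kan extended along the Yoneda embedding, which freely adjoins colimits. Hence $(F^\prime_*)^{\textnormal{opp}}(m)$ is the comparison map $F(X)\to\lim_{(\C^0_{/X})^{\textnormal{opp}}}F$, and this map is an isomorphism for every covering sieve exactly when $F$ is a $J$-sheaf in the sense of \ref{Sheaf Definition}.

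For $(2)\Rightarrow(3)$: by (2) the presheaf $F$ is local with respect to the generating class of monomorphisms. Hence $F^\prime_*$, which sends colimits to limits, inverts the whole strongly saturated class generated by them, and that class is exactly the class of $L$-equivalences. Since $L$ is a localization at this class (\ref{Univ property of Presentable Localizations}, applied in the opposite/limit form), $F^\prime_*$ factors as $G\circ L^{\textnormal{opp}}$ with $G\cong F^\prime_*|_{\textnormal{Shv}_J(\C)^{\textnormal{opp}}}$. It remains to identify $G$ with $F_*$. Because $J$ is subcanonical, $\C\to\textnormal{Shv}_J(\C)$ is fully faithful and factors through the inclusion. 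The restriction of a right Kan extension along the larger category is again right Kan extended along the smaller one: a sheaf $E$ is the colimit of $\C_{/E}\to\textnormal{Shv}_J(\C)$ both in sheaves and, since $L$ fixes $E$, after applying $L$ to the presheaf colimit. Comparing the limit formulas for $F_*(E)$ and $F^\prime_*(E)$ over $\C_{/E}$ then gives $G\cong F_*$.

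For $(3)\Rightarrow(4)$: a colimit in $\textnormal{Shv}_J(\C)$ is $L$ applied to the presheaf colimit, and $F^\prime_*$ sends presheaf colimits to limits. The factorization in (3) therefore shows that $F_*$ sends colimits in $\textnormal{Shv}_J(\C)$ to limits. For $(4)\Rightarrow(1)$: subcanonicity gives $F_*|_{\C^{\textnormal{opp}}}\cong F$. For a covering sieve, $h^\C_X$ is the colimit in $\textnormal{Shv}_J(\C)$ of $\C^0_{/X}\to\textnormal{Shv}_J(\C)$, because $L(S)\to L(h^\C_X)=h^\C_X$ is an isomorphism and $L$ preserves the colimit presenting $S$. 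Applying the limit-preserving $F_*$ gives the sheaf condition. I expect the main obstacle to be the identification $G\cong F_*$ in $(2)\Rightarrow(3)$, namely checking carefully that restricting $F^\prime_*$ to sheaves agrees with the Kan extension computed within sheaves. My first attempt will be the comparison of the two pointwise limit formulas over $\C_{/E}$ described above.
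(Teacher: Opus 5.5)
Your proposal is correct and follows essentially the same route as the paper: $(1)\Leftrightarrow(2)$ via the colimit presentation of $S$, $(2)\Rightarrow(3)$ via the universal property of the localization $L$ (\ref{Univ property of Presentable Localizations}), and $(3)\Rightarrow(4)$ via essential surjectivity and limit preservation of $L^{\textnormal{opp}}$. There are two minor differences: you close the cycle directly with $(4)\Rightarrow(1)$, whereas the paper proves $(4)\Rightarrow(3)$ by recognizing $F_*\circ L^{\textnormal{opp}}$ as the right Kan extension of its restriction $F$; and your reduction to $\cD=\mS$ is harmless but unnecessary, since each step works verbatim for complete $\cD$. The identification $F^\prime_*|_{\textnormal{Shv}_J(\C)^{\textnormal{opp}}}\cong F_*$ that you flag as the main obstacle, and which the paper simply asserts, is immediate from the pointwise formula: subcanonicity together with full faithfulness of $\textnormal{Shv}_J(\C)\subseteq\textnormal{Fun}(\C^{\textnormal{opp}},\mS)$ makes the indexing categories $\C\times_{\textnormal{Shv}_J(\C)}\textnormal{Shv}_J(\C)_{/E}$ and $\C\times_{\textnormal{Fun}(\C^{\textnormal{opp}},\mS)}\textnormal{Fun}(\C^{\textnormal{opp}},\mS)_{/E}$ coincide.
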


\begin{proof}

Consider the map $F^\prime_*:\textnormal{Fun}(\C^{\textnormal{opp}},\mS)^{\textnormal{opp}}\rightarrow \cD$. Let $X\in \C$, let $\C^0_{/X}\subseteq \C_{/X}$ be a $J$-sieve on $X$, and consider the corresponding monomorphism $m:S\rightarrow h^\C_X$. Noting $S$ is the colimit of the composition $\C^0_{/X}\rightarrow \C\rightarrow \textnormal{Fun}(\C^{\textnormal{opp}},\mS)$, the map $m:S\rightarrow h^\C_X$ is induced by the composition $(\C^0_{/X})^\triangleright\hookrightarrow (\C_{/X})^\triangleright\rightarrow \C\rightarrow  \textnormal{Fun}(\C^{\textnormal{opp}},\mS)$. Thus, the composition $(\C^0_{/X})^\triangleright\hookrightarrow (\C_{/X})^\triangleright\rightarrow \C\rightarrow  \textnormal{Fun}(\C^{\textnormal{opp}},\mS)\xrightarrow{(F_*)^{\textnormal{opp}}}\cD^{\textnormal{opp}}$ is a colimit diagram if and only if $(F_*^\prime)^{\textnormal{opp}}$ carries $m:S\rightarrow h^\C_X$ to an isomorphism. Since $(\C^0_{/X})^\triangleright\hookrightarrow (\C_{/X})^\triangleright\rightarrow \C\rightarrow  \textnormal{Fun}(\C^{\textnormal{opp}},\mS)\xrightarrow{(F_*)^{\textnormal{opp}}}\cD^{\textnormal{opp}}$ agrees with the composition $(\C^0_{/X})^\triangleright\rightarrow (\C_{/X})^\triangleright\rightarrow \C\xrightarrow{F^{\textnormal{opp}}}\cD^{\textnormal{opp}}$, we see that (1) and (2) are equivalent. 

Next, assume that (2) holds, and let $\mu:\Delta^1\times \textnormal{Fun}(\C^{\textnormal{opp}},\mS)\rightarrow \textnormal{Fun}(\C^{\textnormal{opp}},\mS)$ be the unit map exhibiting $L$ as left adjoint to the inclusion. Since $F_*$ preserves small limits, it follows from \ref{Univ property of Presentable Localizations} that $F_*$ carries every $L^{\textnormal{opp}}$-equivalence to an isomorphism in $\cD$. Noting also that $F_*^\prime$ restricts to $F_*$ (up to isomorphism), the composition $\Delta^1\times \textnormal{Fun}(\C^{\textnormal{opp}},\mS)\rightarrow \textnormal{Fun}(\C^{\textnormal{opp}},\mS)\xrightarrow{(F^\prime_*)^{\textnormal{opp}}}\cD^{\textnormal{opp}}$ induces a natural isomorphism from the composition $\textnormal{Fun}(\C^{\textnormal{opp}},\mS)^{\textnormal{opp}}\xrightarrow{L^{\textnormal{opp}}}\textnormal{Shv}_J(\C)^{\textnormal{opp}}\xrightarrow{F_*}\cD$ to $F^\prime_*$, implying (3). Since (3) automatically implies (2), we have that (2) and (3) are equivalent. 

Finally, suppose that (3) is satisfied. Since $F_*$ preserves small limits, and $L^{\textnormal{opp}}$ is essentially surjective and preserves all existing limits in $\textnormal{Fun}(\C^{\textnormal{opp}},\mS)^{\textnormal{opp}}$, (4) immediately follows. If (4) holds, the composition $\textnormal{Fun}(\C^{\textnormal{opp}},\mS)^{\textnormal{opp}}\xrightarrow{L^{\textnormal{opp}}}\textnormal{Shv}_J(\C)^{\textnormal{opp}}\xrightarrow{F_*}\cD$ preserves small limits, and as such, is the right Kan extension of the map $F_0:\C^{\textnormal{opp}}\rightarrow \cD$ given by the composition $\C^{\textnormal{opp}}\rightarrow \textnormal{Fun}(\C^{\textnormal{opp}},\mS)^{\textnormal{opp}}\xrightarrow{L^{\textnormal{opp}}}\textnormal{Shv}_J(\C)^{\textnormal{opp}}\xrightarrow{F_*}\cD$ along $\C^{\textnormal{opp}}\rightarrow \textnormal{Fun}(\C^{\textnormal{opp}},\mS)^{\textnormal{opp}}$. Since $J$ is subcanonical we may identify $F_0$ as $F$, proving (3).

\end{proof}

\begin{lemma}{}\label{localizations lemma}

    Let $\cD$ be a locally small $(\infty,1)$-category, and let $L:\cE\rightarrow \cE^\prime$ be a localization at a (possibly large) set of arrows $W$ in $\cE$. Then, a map $F:\cE\rightarrow \cD$ factors as $\cE\xrightarrow{L}\cE^\prime\rightarrow \cD$ if and only if, for each $X\in \cD$, the composition $\cE\xrightarrow{F}\cD\xrightarrow{\textnormal{Hom}_\cD(X,-)}\mS$ factors as $\cE\xrightarrow{L}\cE^\prime\rightarrow \mS$. 

\end{lemma}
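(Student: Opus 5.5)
The plan is to reduce both directions to the Yoneda embedding of $\cD$ and to the universal property of $L$ as a localization at $W$ (in the sense of \ref{Regular localizations definition}). Throughout, write $h^\bullet:\cD^{\textnormal{opp}}\rightarrow \textnormal{Fun}(\cD,\mS)$ for the corepresentable embedding $X\mapsto \textnormal{Hom}_\cD(X,-)$. Since $\cD$ is locally small, $h^\bullet$ is fully faithful. Equivalently, the Yoneda map $y:\cD\rightarrow \textnormal{Fun}(\cD^{\textnormal{opp}},\mS)$ is fully faithful, and so it is in particular conservative.

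The forward direction is immediate: if $F\cong G\circ L$, then $\textnormal{Hom}_\cD(X,-)\circ F\cong (\textnormal{Hom}_\cD(X,-)\circ G)\circ L$ for each $X$.

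For the converse, I would use the description of the essential image of $L^*$. A functor $\cE\rightarrow \cD$ factors through $L$ (up to isomorphism) exactly when it carries every $w\in W$ to an isomorphism, and the same holds for functors $\cE\rightarrow \mS$. The hypothesis therefore says that for every $X\in \cD$ and every $w:A\rightarrow B$ in $W$, the map $\textnormal{Hom}_\cD(X,F(A))\rightarrow \textnormal{Hom}_\cD(X,F(B))$ is a homotopy equivalence. In other words, $y(F(w))$ is an isomorphism in $\textnormal{Fun}(\cD^{\textnormal{opp}},\mS)$, since isomorphisms of presheaves are detected pointwise. Because $y$ is fully faithful, it reflects isomorphisms, so $F(w)$ is an isomorphism in $\cD$. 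Applying the universal property of $L$ once more, $F$ lies in the essential image of $L^*:\textnormal{Fun}(\cE^\prime,\cD)\rightarrow \textnormal{Fun}(\cE,\cD)$, which is the desired factorization.

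The only point needing care is the use of the universal property with target $\mS$. Here I would read the phrase "factors as $\cE\xrightarrow{L}\cE^\prime\rightarrow \mS$" as meaning "up to natural isomorphism", which is the natural reading in this setting. A strict factorization of simplicial sets can be upgraded to an isomorphism-invariant condition via the isofibration property of $L^*$, or simply ignored. I would also note the size issue: if $\cE$ is large, one changes universe so that $\mS$ and $\cD$ are understood in the larger universe. Definition \ref{Regular localizations definition} quantifies over all $(\infty,1)$-categories $\cE$, so this causes no problem.

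I expect no serious obstacle here. The essential step is the observation that $W$-inversion is a pointwise condition on morphisms, and such a condition is detected by the jointly conservative family $\{\textnormal{Hom}_\cD(X,-)\}_{X\in\cD}$.
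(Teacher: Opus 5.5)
Your proof is correct and is the paper's argument: the paper's proof is the single observation that the functors $\textnormal{Hom}_\cD(X,-)$ are jointly conservative, combined implicitly with the description of the essential image of $L^*$ as the $W$-inverting functors, and you have spelled out those details. One minor point: your aside that a strict factorization could be arranged via an isofibration property of $L^*$ does not hold for an arbitrary model of $L$, since restriction along a non-monomorphism need not even be an inner fibration; this is harmless, because the lemma is meant up to natural isomorphism and nothing in your argument uses it.
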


\begin{proof}

    This follows from the fact that the $\textnormal{Hom}_\cD(X,-)$ are jointly conservative.  
    
\end{proof}

\begin{corollary}{}\label{univ prperty of n-1 sheaves}

    Let $n\in \ZZ_{\geq -1}$, let $\C$ be a small $(n,1)$-category equipped with a subcanonical topology $J$, and let $\cD$ be a complete $(n,1)$-category. Then, a map $F:\textnormal{Shv}^J_{n-1}(\C)^{\textnormal{opp}}\rightarrow \cD$ preserves small limits if and only if $F:\textnormal{Shv}^J_{n-1}(\C)^{\textnormal{opp}}\rightarrow \cD$ is the right Kan extension of a $J$-sheaf $\C^{\textnormal{opp}}\rightarrow \cD$ along $\C^{\textnormal{opp}}\rightarrow \textnormal{Shv}^J_{n-1}(\C)^{\textnormal{opp}}$.
    
\end{corollary}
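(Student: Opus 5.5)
We reduce to \ref{Kan extensions of Sheaves} by identifying $\textnormal{Shv}^J_{n-1}(\C)$ with a localization of $\textnormal{Fun}(\C^{\textnormal{opp}},\mS)$, and then using the truncation of the target. Write $L:\textnormal{Fun}(\C^{\textnormal{opp}},\mS)\rightarrow \textnormal{Shv}_J(\C)$ for sheafification. Since $\C$ is an $(n,1)$-category and $J$ is subcanonical, every representable $h^\C_X$ is an $(n-1)$-truncated sheaf, so the Yoneda embedding factors through $\textnormal{Shv}^J_{n-1}(\C)$. The composite $\tau_{\leq n-1}\circ L:\textnormal{Fun}(\C^{\textnormal{opp}},\mS)\rightarrow \textnormal{Shv}^J_{n-1}(\C)$ is a presentable localization in the sense of \ref{presentable localizations}, with local objects the $(n-1)$-truncated $J$-sheaves. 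Its generating set $S$ consists of the sieve inclusions $m:S\rightarrow h^\C_X$ together with the maps $\partial\Delta^{n+1}\times h^\C_X\rightarrow h^\C_X$, for $X\in \C$.

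For the direction ``right Kan extended from a sheaf $\Rightarrow$ preserves limits'', let $F_0:\C^{\textnormal{opp}}\rightarrow \cD$ be a $J$-sheaf. Take its right Kan extension $F^\prime_*$ to $\textnormal{Fun}(\C^{\textnormal{opp}},\mS)^{\textnormal{opp}}$; this exists because $\cD$ is complete and $\C$ is small, and it preserves small limits. By \ref{Kan extensions of Sheaves}, $(F^\prime_*)^{\textnormal{opp}}$ inverts the sieve inclusions. Because $\cD$ is an $(n,1)$-category, $(F^\prime_*)^{\textnormal{opp}}$ also inverts $\partial\Delta^{n+1}\times h^\C_X\rightarrow h^\C_X$. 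To see this, pass through \ref{localizations lemma}: it suffices to check that each $\textnormal{Hom}_\cD(D,F^\prime_*(-))$ turns this map into an isomorphism. That functor sends $\partial\Delta^{n+1}\times h^\C_X$ to the cotensor $\textnormal{Hom}_\cD(D,F_0(X))^{\partial\Delta^{n+1}}$, and since $\textnormal{Hom}_\cD(D,F_0(X))$ is $(n-1)$-truncated, this cotensor is equivalent to $\textnormal{Hom}_\cD(D,F_0(X))$ itself. Hence $(F^\prime_*)^{\textnormal{opp}}$ inverts all of $S$. By \ref{Univ property of Presentable Localizations}, applied to $\cD^{\textnormal{opp}}$, it therefore factors through $(\tau_{\leq n-1}L)^{\textnormal{opp}}$ via a functor $G:\textnormal{Shv}^J_{n-1}(\C)^{\textnormal{opp}}\rightarrow \cD$ that preserves small limits. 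Since the localization is essentially surjective, the argument in the last paragraph of the proof of \ref{Kan extensions of Sheaves} applies verbatim and shows that $G$ is the right Kan extension of $G|_{\C^{\textnormal{opp}}}\cong F_0$.

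For the converse, suppose $F$ preserves small limits. Then $F\circ(\tau_{\leq n-1}L)^{\textnormal{opp}}$ preserves small limits and is therefore the right Kan extension of its restriction $F_0:=F|_{\C^{\textnormal{opp}}}$. This restriction is well defined because representables are already $(n-1)$-truncated sheaves. The functor $F\circ(\tau_{\leq n-1}L)^{\textnormal{opp}}$ inverts the sieve inclusions, so \ref{Kan extensions of Sheaves} $(2)\Rightarrow(1)$ shows that $F_0$ is a $J$-sheaf. Fully faithfulness of the localization then identifies $F$ with the right Kan extension of $F_0$ along $\C^{\textnormal{opp}}\rightarrow \textnormal{Shv}^J_{n-1}(\C)^{\textnormal{opp}}$.

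The main obstacle is the truncation step, namely that $F^\prime_*$ inverts the maps $\partial\Delta^{n+1}\times h^\C_X\rightarrow h^\C_X$, together with the size bookkeeping it requires. Applying \ref{localizations lemma} needs $\cD$ to be locally small, so in general we first change universe to arrange this, as is done in the proof of \ref{Sheaves for the kappa Product Topology}. The low-dimensional cases $n\in\{-1,0\}$ need separate care, since there $\C$ is a poset or a point. We expect the same argument to go through after interpreting $\tau_{\leq -2}$ as the constant functor at the terminal object, but we have not checked this in detail.
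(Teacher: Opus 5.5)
Your proof is correct, but it takes a different route from the paper's.

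The paper first uses the Yoneda embedding of $\cD$ and \ref{localizations lemma} to reduce to the case $\cD=\tau_{\leq n-1}\widehat{\mS}$. In that case a $J$-sheaf $F$ is an object of $\widehat{\textnormal{Shv}}_J(\C)$, and its right Kan extension is the representable functor $h_F$ (\ref{Kan extensions of Sheaves}). Limit preservation on $(\tau_{\leq n-1}\textnormal{Shv}_J(\C))^{\textnormal{opp}}$ is then taken from the appendix. The inclusion $\textnormal{Shv}_J(\C)\hookrightarrow \widehat{\textnormal{Shv}}_J(\C)$ preserves small colimits (\ref{Small Sheaves are stable under small limits}) and commutes with $(n-1)$-truncation (\ref{HyperSheafification Ind of Choice of Universe} (2)).

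You instead present $\textnormal{Shv}^J_{n-1}(\C)$ directly as the localization of $\textnormal{Fun}(\C^{\textnormal{opp}},\mS)$ at a small set: the sieve inclusions together with the sphere maps $\partial\Delta^{n+1}\times h^{\C}_X\rightarrow h^{\C}_X$. The forward direction then follows from \ref{Univ property of Presentable Localizations} applied to $\cD^{\textnormal{opp}}$, which is the version allowing non-locally-small targets. The only extra input is that $(n-1)$-truncated mapping spaces force the sphere maps to be inverted.

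What your route buys is a self-contained argument for an arbitrary complete $(n,1)$-category $\cD$. It needs no reduction to the universal target, and the only size issue is testing isomorphisms against corepresentables after enlarging the universe. The paper's route is more conceptual, treating sheaves as representable functors, and it reuses appendix machinery the paper needs elsewhere anyway. Your converse is essentially the paper's.

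Two small points:
\begin{enumerate}
\item The identification of $G$ as a right Kan extension is better justified directly than by appeal to the last paragraph of the proof of \ref{Kan extensions of Sheaves}. Since $\tau_{\leq n-1}L$ is naturally isomorphic to the identity on $\textnormal{Shv}^J_{n-1}(\C)$, you have $G\cong F^\prime_*|_{\textnormal{Shv}^J_{n-1}(\C)^{\textnormal{opp}}}$. The pointwise right Kan extension along $\C^{\textnormal{opp}}\rightarrow \textnormal{Fun}(\C^{\textnormal{opp}},\mS)^{\textnormal{opp}}$, restricted to this intermediate full subcategory, is the pointwise right Kan extension along $\C^{\textnormal{opp}}\rightarrow \textnormal{Shv}^J_{n-1}(\C)^{\textnormal{opp}}$, because the relevant comma categories agree.
\item The cases $n\in\{-1,0\}$ need no separate treatment. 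Since $\partial\Delta^{0}=\varnothing$ and $\partial\Delta^{1}$ is two points, locality with respect to your sphere maps means exactly contractible, respectively $(-1)$-truncated, values. Your cotensor computation then goes through unchanged.
\end{enumerate}
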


\begin{proof}

Changing universe if necessary, without loss of generality we may assume that $\cD$ admits a Yoneda embedding $h^\cD_\bullet:\cD\rightarrow \textnormal{Fun}(\cD^{\textnormal{opp}},\tau_{\leq n-1}\widehat{\mS})$. Combining \ref{Kan extensions of Sheaves} with \ref{localizations lemma}, it now suffices to prove our claim in the case that $\cD=\tau_{\leq n-1}\widehat{\mS}$.

To begin, let $F:\C^{\textnormal{opp}}\rightarrow \tau_{\leq n-1}\widehat{\mS}$ be a $J$-sheaf, noting that $h_F:\widehat{\textnormal{Shv}}_{J}(\C)^{\textnormal{opp}}\rightarrow \tau_{\leq n-1}\widehat{\mS}$ may be identified as the right Kan extension of $F$ along $\C^{\textnormal{opp}}\rightarrow \widehat{\textnormal{Shv}}_{J}(\C)^{\textnormal{opp}}$ (\ref{Kan extensions of Sheaves}). Noting that $h_F:\widehat{\textnormal{Shv}}_{J}(\C)^{\textnormal{opp}}\rightarrow \tau_{\leq n-1}\widehat{\mS}$ factors as $\widehat{\textnormal{Shv}}_{J}(\C)^{\textnormal{opp}}\rightarrow(\tau_{\leq n-1}\widehat{\textnormal{Shv}}_{J}(\C))^{\textnormal{opp}}\xrightarrow{h_F}\tau_{\leq n-1}\widehat{\mS}$, it follows from \ref{Small Sheaves are stable under small limits} that the composition $\textnormal{Shv}_J(\C)^{\textnormal{opp}}\hookrightarrow \widehat{\textnormal{Shv}}_J(\C)^{\textnormal{opp}}\rightarrow (\tau_{\leq n-1}\widehat{\textnormal{Shv}}_{J}(\C))^{\textnormal{opp}}\xrightarrow{h_F}\tau_{\leq n-1}\widehat{\mS}$ preserves small limits. It now follows from \ref{HyperSheafification Ind of Choice of Universe} (2) implies that the composition $\textnormal{Shv}_J(\C)^{\textnormal{opp}}\rightarrow (\tau_{\leq n-1}\textnormal{Shv}_J(\C))^{\textnormal{opp}}\hookrightarrow \tau_{\leq n-1}\widehat{\textnormal{Shv}_J(\C)}^{\textnormal{opp}}\xrightarrow{h_F}\tau_{\leq n-1}\widehat{\mS}$ preserves small limits, which in turn implies that the restriction $h_F|_{(\tau_{\leq n-1}\textnormal{Shv}_J(\C))^{\textnormal{opp}}}$ preserves small limits. This direction of our claim now follows from the fact that $h_F|_{(\tau_{\leq n-1}\textnormal{Shv}_J(\C))^{\textnormal{opp}}}$ may be identified as the right Kan extension of $F:\C^{\textnormal{opp}}\rightarrow \widehat{\mS}$ along $\C^{\textnormal{opp}}\rightarrow (\tau_{\leq n-1}\textnormal{Shv}_J(\C))^{\textnormal{opp}}$.
  
 For the converse, let $G:\C^{\textnormal{opp}}\rightarrow \tau_{\leq n-1}\widehat{\mS}$ be a map, which we right Kan extend along $\C^{\textnormal{opp}}\rightarrow (\tau_{\leq n-1}\textnormal{Shv}_J(\C))^{\textnormal{opp}}$ to an arrow $G^\prime:(\tau_{\leq n-1}\textnormal{Shv}_J(\C))^{\textnormal{opp}}\rightarrow \tau_{\leq n-1}\widehat{\mS}$. It follows that the composition $\textnormal{Fun}(\C^{\textnormal{opp}},\mS)^{\textnormal{opp}}\rightarrow (\tau_{\leq n-1}\textnormal{Shv}_J(\C))^{\textnormal{opp}}\xrightarrow{G^\prime} \tau_{\leq n-1}\widehat{\mS}$, so the fact that $G$ is a $J$-sheaf now follows from \ref{Kan extensions of Sheaves}. 
 
\end{proof}

\noindent We will not need the following, though it is an immediate consequence of \ref{univ prperty of n-1 sheaves}:

\begin{corollary}{}\label{Univ prop of n-1 presheaves}
    Let $n\in \ZZ_{\geq -1}$, and let $\cD$ be a small $(n,1)$-category. Then, $\textnormal{Fun}(\cD^{\textnormal{opp}},\tau_{\leq n-1}\mS)$ satisfies the following universal property: for all cocomplete $(n,1)$-categories $\cE$, pulling-back along $\cD\rightarrow \textnormal{Fun}(\cD^{\textnormal{opp}},\tau_{\leq n-1}\mS)$ induces an equivalence of $(n,1)$-categories $\textnormal{Fun}^{co \ell im}(\textnormal{Fun}(\cD^{\textnormal{opp}},\tau_{\leq n-1}\mS),\cE)\rightarrow \textnormal{Fun}(\cD,\cE)$. Moreover, a map $G:\textnormal{Fun}(\cD^{\textnormal{opp}},\tau_{\leq n-1}\mS)\rightarrow \cE$ preserves small colimits if and only if $G$ is left Kan extended from $\cD$.
\end{corollary}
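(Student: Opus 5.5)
The plan is to deduce the statement from \ref{univ prperty of n-1 sheaves}, applied to a presheaf $(n,1)$-topos with its trivial topology, combined with the universal property of Yoneda extension for cocomplete targets. Write $\cP_n(\cD):=\textnormal{Fun}(\cD^{\textnormal{opp}},\tau_{\leq n-1}\mS)$. Taking $J$ to be the trivial topology on $\cD$, in which only maximal sieves cover, every presheaf is a $J$-sheaf. This topology is subcanonical, and $\textnormal{Shv}^J_{n-1}(\cD)=\cP_n(\cD)$.

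First I dualize. A functor $G:\cP_n(\cD)\rightarrow \cE$ preserves small colimits exactly when $G^{\textnormal{opp}}:\cP_n(\cD)^{\textnormal{opp}}\rightarrow \cE^{\textnormal{opp}}$ preserves small limits. Moreover, $\cE^{\textnormal{opp}}$ is a complete $(n,1)$-category. Applying \ref{univ prperty of n-1 sheaves} with $\C=\cD$, the trivial topology, and target $\cE^{\textnormal{opp}}$, I conclude that $G^{\textnormal{opp}}$ preserves small limits if and only if it is right Kan extended from its restriction to $\cD^{\textnormal{opp}}$. Here every $\cD^{\textnormal{opp}}\rightarrow \cE^{\textnormal{opp}}$ counts as a sheaf. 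Dualizing once more, $G$ preserves small colimits if and only if $G$ is left Kan extended from $\cD$, which is the second assertion.

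For the equivalence $\textnormal{Fun}^{co\ell im}(\cP_n(\cD),\cE)\rightarrow\textnormal{Fun}(\cD,\cE)$, I use the fact that left Kan extensions along the fully faithful embedding $\cD\hookrightarrow\cP_n(\cD)$ exist for every $f:\cD\rightarrow\cE$. The relevant indexing diagrams $\cD_{/X}$ are small and $\cE$ is cocomplete, so these extensions exist. By the general theory of Kan extensions along fully faithful functors (\citeq{lurie2024kerodon}), restriction then gives an equivalence from the full subcategory of $\textnormal{Fun}(\cP_n(\cD),\cE)$ spanned by left Kan extended functors onto $\textnormal{Fun}(\cD,\cE)$. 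By the previous paragraph, that full subcategory is exactly $\textnormal{Fun}^{co\ell im}(\cP_n(\cD),\cE)$.

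The main point to check is that the hypotheses of \ref{univ prperty of n-1 sheaves} apply as stated. The trivial topology must genuinely be subcanonical, which holds because representables satisfy descent for maximal sieves. The Yoneda embedding must also land in $\cP_n(\cD)$, which holds because $\cD$ is an $(n,1)$-category and so has $(n-1)$-truncated mapping spaces. Finally, the identification of right Kan extension along $\cD^{\textnormal{opp}}\rightarrow\cP_n(\cD)^{\textnormal{opp}}$ with the opposite of left Kan extension along $\cD\rightarrow\cP_n(\cD)$ is formal. The degenerate cases $n=-1$ and $n=0$ need only a brief sanity check, since there $\tau_{\leq n-1}\mS$ is trivial or consists of truth values.
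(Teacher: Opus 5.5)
Your proposal is correct and follows the paper's route. The paper records this corollary as an immediate consequence of \ref{univ prperty of n-1 sheaves}, and your argument supplies the details of that deduction: specialize to the trivial (subcanonical) topology on $\cD$ with target $\cE^{\textnormal{opp}}$, dualize, and use that restriction along the fully faithful Yoneda embedding identifies left-Kan-extended functors with $\textnormal{Fun}(\cD,\cE)$.
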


\noindent A key part of our proof of \ref{n-truncated preserves small limits: Appendix} relies on the following theorem (\citeq{Aoki_2023} A.6):

\begin{theorem}{}[\textbf{[Aoki23]} A.6]\label{Hypersheaf Criteria}
    Let $\C$ be a small $(\infty,1)$-category equipped with a topology $J$, and let $F:\cB\rightarrow \C$ be a fully faithful $(\infty,1)$-functor. Suppose that, for each $X\in \C$, there exists a (small) set of morphisms $\{F(B_i)\rightarrow X\}_{i\in I}$ in $\C$ generating a $J$-sieve on $X$, where each $B_i$ is in $\cB$. Equip $\cB$ with the topology $J_\cB$, where for each $U\in \cB$, the collection of $J_\cB$-sieves on $U$ is defined as those sieves $\cB^0_{/U}\hookrightarrow \cB_{/U}$ on $U$ where the fully faithful map $\cB^0_{/U}\rightarrow \C_{/F(U)}$ generates a $J$-sieve on $U\in \C$. Then, a map $H:\C^{\textnormal{opp}}\rightarrow \mS$ is a $J$-hypersheaf if and only if $H$ is the right Kan extension of a $J_\cB$-hypersheaf on $\cB$ along $F$. 

\end{theorem}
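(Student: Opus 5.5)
Write $F^*:\textnormal{Fun}(\C^{\textnormal{opp}},\mS)\rightarrow \textnormal{Fun}(\cB^{\textnormal{opp}},\mS)$ for restriction along $F$ and $F_*$ for its right adjoint, which is right Kan extension along $F^{\textnormal{opp}}$. The plan is to show that $F^*$ and $F_*$ restrict to mutually inverse equivalences between $\textnormal{Shv}_J(\C)^{\textnormal{hyp}}$ and $\textnormal{Shv}_{J_\cB}(\cB)^{\textnormal{hyp}}$. The statement follows from this: a $J$-hypersheaf $H$ satisfies $H\cong F_*F^*H$ with $F^*H$ a $J_\cB$-hypersheaf, and conversely $F_*$ of a $J_\cB$-hypersheaf is a $J$-hypersheaf.

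\emph{Step 1: $J_\cB$ is a Grothendieck topology.} Stability under pullback and the local character both reduce to the corresponding axioms for $J$. The input is the standing hypothesis that every object of $\C$ admits a $J$-covering by objects of $\cB$, together with full faithfulness of $F$ (compare \ref{Sieves and Trivial Kan Fibrations}). The key point is that if a sieve on $F(U)$ becomes $J$-covering after pulling back along a $J$-covering family of maps $F(B_i)\rightarrow F(U)$, then it is $J$-covering. This is pure sieve manipulation, and I expect it to be routine.

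\emph{Step 2: compatibility with sheaves.} First, $F^*$ carries $J$-sheaves to $J_\cB$-sheaves. Given a $J_\cB$-sieve $\cB^0_{/U}$, let $\C^1_{/F(U)}$ be the $J$-sieve it generates. The diagram on $\C^1_{/F(U)}$ is left Kan extended from the full subcategory spanned by objects of $\cB$, because every object of $\C^1_{/F(U)}$ is $J$-covered by objects of $\cB$ and $H$ is a sheaf. This follows the pattern of the proof of \ref{Prelim. Cech descent}. Second, $F_*$ carries $J_\cB$-sheaves to $J$-sheaves. By adjunction it suffices that $F^*$ sends each monomorphism $S\rightarrow h^\C_X$ attached to a $J$-sieve to an $L_{J_\cB}$-equivalence. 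Using \ref{Strongly Saturated Classes of Maps Tested By Rep Functors Cor}, one may test this after pulling back to representables $h^\cB_B$, where it becomes the definition of $J_\cB$. Finally, the counit $F^*F_*\rightarrow \textnormal{id}$ is an equivalence because $F$ is fully faithful.

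\emph{Step 3: the unit on hypercomplete objects; this is the main obstacle.} It remains to show that for a $J$-hypersheaf $H$ the unit $H\rightarrow F_*F^*H$ is an equivalence, and that $F_*$ preserves hypercompleteness. For the unit, I would show that it is $\infty$-connective in $\textnormal{Shv}_J(\C)$; hypercompleteness of both sides then forces an equivalence. The approach is to observe that $F^*$ of the unit is an equivalence, since $F^*F_*\cong\textnormal{id}$. One then shows that $F^*$ detects $n$-connectivity of maps of $J$-sheaves for every $n$. For effective epimorphisms this holds because every $X\in\C$ is $J$-covered by objects $F(B_i)$. The higher cases follow by induction applied to diagonals, as in the proof of \ref{Chech Hyperdescent}, using that $F^*$ preserves finite limits and commutes with sheafification by Step 2. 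For preservation of hypercompleteness, let $\alpha$ be an $\infty$-connective map of $J$-sheaves. Then $F^*\alpha$ is $\infty$-connective, so $\textnormal{Hom}(\alpha,F_*G)\cong \textnormal{Hom}(F^*\alpha,G)$ is an equivalence for every $J_\cB$-hypersheaf $G$.

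The delicate point is commuting $F^*$ with sheafification, i.e.\ identifying $F^*L_J\cong L_{J_\cB}F^*$; this is what lets $F^*$ preserve effective epimorphisms and truncations. I would prove it by checking that $F^*$ preserves colimits of sheaves. If this fails, the fallback is to build $J$-hypercoverings of each $X\in\C$ whose terms are coproducts of objects of $\cB$, via \ref{Constructing Semisimplicial Objects}, and then compare values directly through descent along such hypercoverings.
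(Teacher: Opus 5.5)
The paper gives no proof of this statement; it quotes it from Aoki, so your outline is judged on its own terms. Step 1, the second half of Step 2 ($F_*$ preserves sheaves and $F^*F_*\simeq\textnormal{id}$), and the connectivity-detection idea in Step 3 are fine.

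\textbf{The gap in Step 2.} The genuine gap is the first claim of Step 2, that $F^*$ carries $J$-sheaves to $J_\cB$-sheaves.

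\begin{itemize}
\item[$\bullet$] Take an object $g:V\to F(U)$ of the generated sieve $\C^1_{/F(U)}$. At $g$, the left Kan extension property you assert is exactly $H(V)\simeq\lim H|_{\cB_{/V}}$, where $\cB_{/V}=\cB\times_{\C}\C_{/V}$.
\item[$\bullet$] The sheaf condition only gives $H(V)\simeq\lim H|_{\overline{\cB_{/V}}}$, the limit over the sieve that $\cB_{/V}$ generates. Comparing these two limits is the original problem again, now at $V$, so the argument is an infinite regress.
\item[$\bullet$] The pattern of \ref{Prelim. Cech descent} does not transfer. There the indexing category at $g$ was a pulled-back covering sieve of $\C$; here it is only a generating family.
\item[$\bullet$] Applied to maximal sieves, your claim would give $H(V)\simeq (F_*F^*H)(V)$ for every $J$-sheaf $H$ and every $V$ mapping to some $F(U)$. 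That is essentially the comparison lemma for non-hypercomplete sheaves. It is not available without extra hypotheses (e.g.\ $\cB$ closed under fibre products), which is why the statement concerns hypersheaves.
\end{itemize}

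Consequently, the identification $F^*L_J\simeq L_{J_\cB}F^*$ that Step 3 leans on has no foundation.

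\textbf{The gap in Step 3.} Independently of the above, your unit argument needs $F_*F^*H$ to be hypercomplete. That requires $F^*H$ to be a $J_\cB$-\emph{hyper}sheaf, and you never address this. So the forward direction presupposes its own content.

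\textbf{How to repair it.} Never ask whether $F^*H$ is a sheaf; work with local notions instead.

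\begin{itemize}
\item[$\bullet$] For a map $\phi$ of presheaves on $\C$, $L_J\phi$ is an effective epimorphism iff $\phi$ is a $J$-local epimorphism.
\item[$\bullet$] The covering hypothesis and local character show that $F^*$ both preserves and reflects local epimorphisms.
\item[$\bullet$] Since $F^*$, $L_J$ and $L_{J_\cB}$ are left exact, induction on diagonals gives: $L_J\phi$ is $\infty$-connective iff $L_{J_\cB}F^*\phi$ is.
\item[$\bullet$] Your adjunction argument, with $L_{J_\cB}F^*\alpha$ in place of $F^*\alpha$, then shows that $F_*$ preserves hypercompleteness. This gives the converse direction.
\end{itemize}

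For the forward direction, let $G$ be the $J_\cB$-hypersheafification of $F^*H$, so that $F_*G$ is a $J$-hypersheaf. Consider the composite $\psi:H\to F_*F^*H\to F_*G$.

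\begin{itemize}
\item[$\bullet$] Since $F^*F_*\simeq\textnormal{id}$, $F^*\psi$ is identified with the map $F^*H\to G$.
\item[$\bullet$] That map becomes $\infty$-connective after $J_\cB$-sheafification, so by the detection result $\psi$ is $\infty$-connective.
\item[$\bullet$] Thus $\psi$ is an $\infty$-connective map between hypercomplete $J$-sheaves, hence an equivalence.
\end{itemize}

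This yields $H\simeq F_*G$ and $F^*H\simeq G$ at once. You need neither commutation of $F^*$ with sheafification nor your hypercovering fallback. The fallback would in any case use coproducts in $\C$, which are not assumed.
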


\begin{definition}{}\label{Basis Definition}
    Under the notation of \ref{Hypersheaf Criteria}, we will say that the pair $(\cB,J_\cB)$ is a \emph{basis} for the pair $(\C,J)$.
\end{definition}

\begin{proposition}{}\label{Basis for Eff Epi n Topoi}

    Let $n\in \ZZ_{\geq 1}$, and let $\C$ be an $(n,1)$-category equipped with a subcanonical topology $J$. Then, $(\C,J)$ is a basis for $(\textnormal{Shv}^J_{n-1}(\C),\C an(\textnormal{Shv}^J_{n-1}(\C)))$.
    
\end{proposition}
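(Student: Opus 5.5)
The plan is to check directly the three conditions in the definition of a basis (\ref{Basis Definition}), applied to the fully faithful functor $F:\C\rightarrow \textnormal{Shv}^J_{n-1}(\C)$ given by the Yoneda embedding. First I check that $F$ is well defined and fully faithful. Since $J$ is subcanonical, each $h^\C_U$ is a $J$-sheaf. Since $\C$ is an $(n,1)$-category, each $h^\C_U$ takes values in $\tau_{\leq n-1}\mS$. So $h^\C_\bullet$ factors through $\textnormal{Shv}^J_{n-1}(\C)\subseteq \textnormal{Fun}(\C^{\textnormal{opp}},\mS)$, and it is fully faithful by Yoneda. Throughout, write $L:\textnormal{Fun}(\C^{\textnormal{opp}},\mS)\rightarrow \textnormal{Shv}_J(\C)$ for sheafification. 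Write $\cX:=\textnormal{Shv}^J_{n-1}(\C)=\tau_{\leq n-1}\textnormal{Shv}_J(\C)$, an $(n,1)$-topos. The basic tool is \ref{preservation of effective epimorphisms in truncations}: a map $f$ in $\textnormal{Shv}_J(\C)$ is an effective epimorphism if and only if $\tau_{\leq n-1}f$ is an effective epimorphism in $\cX$. I also use that coproducts in $\cX$ are computed as $\tau_{\leq n-1}$ of coproducts in $\textnormal{Shv}_J(\C)$, which are in turn $L$ of presheaf coproducts.

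Next I verify the covering condition. Let $X\in\cX$. Then $X$ is the colimit in $\textnormal{Fun}(\C^{\textnormal{opp}},\mS)$ of the representables over it, so the canonical map $\coprod_{h^\C_B\rightarrow X}h^\C_B\rightarrow X$ is an effective epimorphism of presheaves. Since $L$ is left exact and preserves colimits, it preserves effective epimorphisms. The object $X$ is already a sheaf, so $L(\coprod h^\C_B)\rightarrow X$ is an effective epimorphism in $\textnormal{Shv}_J(\C)$. Applying $\tau_{\leq n-1}$ and \ref{preservation of effective epimorphisms in truncations} shows that $\coprod^{\cX}h^\C_B\rightarrow X$ is an effective epimorphism in $\cX$. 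Hence the family $\{h^\C_B\rightarrow X\}$ generates a $\C an(\cX)$-covering sieve. The size condition can be handled in the usual way, using a small generating set of representables or the essential smallness of $\C_{/X}$.

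The main step is to show that the induced topology $J_\C$ on $\C$ agrees with $J$. In other words, for $U\in\C$ and a sieve $\C^0_{/U}$ with associated monomorphism $m:S\rightarrow h^\C_U$, I must show that $\C^0_{/U}$ is a $J$-sieve if and only if $\coprod^{\cX}_{(B\rightarrow U)\in\C^0_{/U}}h^\C_B\rightarrow h^\C_U$ is an effective epimorphism in $\cX$. I first argue at the presheaf level. The map $\coprod h^\C_B\rightarrow h^\C_U$ factors as $\coprod h^\C_B\xrightarrow{e}S\xrightarrow{m}h^\C_U$, with $e$ an effective epimorphism (\citeq{highertopostheory} 6.2.3.18). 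Applying $L$ preserves this epi--mono factorization. So $L(\coprod h^\C_B)\rightarrow h^\C_U$ is an effective epimorphism in $\textnormal{Shv}_J(\C)$ exactly when the monomorphism $L(m)$ is an isomorphism. This holds precisely when $m$ is an $L$-equivalence, which by \citeq{highertopostheory} 6.2.2.16 means $\C^0_{/U}$ is $J$-covering. Finally, \ref{preservation of effective epimorphisms in truncations} transfers the effective-epimorphism condition between $\textnormal{Shv}_J(\C)$ and $\cX$. This uses that $\tau_{\leq n-1}h^\C_U=h^\C_U$ and that $\tau_{\leq n-1}$ of the sheaf coproduct is the coproduct in $\cX$.

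I expect the main obstacle to be this bookkeeping between the three ambient categories: presheaves, $\textnormal{Shv}_J(\C)$, and the truncation $\cX$. In particular, I must make sure that ``generates a covering sieve'' in $\C an(\cX)$ really means that the coproduct map in $\cX$ (not in the larger topos) is an effective epimorphism. I plan to settle this using \ref{epis in n topoi}, by reducing both sides to a statement about $\tau_{\leq 0}$, where the two coproducts agree.
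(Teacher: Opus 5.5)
Your proposal is correct and follows essentially the paper's route. You cover each object by representables, factor $\coprod h^\C_B\rightarrow h^\C_U$ as an effective epimorphism followed by the monomorphism $m$, observe that sheafification preserves this factorization, and conclude via HTT 6.2.2.16 that the coproduct map is an effective epimorphism exactly when the sieve is $J$-covering.

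The only difference is bookkeeping. You sheafify in $\textnormal{Fun}(\C^{\textnormal{opp}},\mS)$ and transfer to $\textnormal{Shv}^J_{n-1}(\C)$ via \ref{preservation of effective epimorphisms in truncations}, whereas the paper sheafifies directly inside $\textnormal{Fun}(\C^{\textnormal{opp}},\tau_{\leq n-1}\mS)$. Since $n\geq 1$, $L(\coprod h^\C_B)$ is already $(n-1)$-truncated, so the coproduct bookkeeping you flag as the main obstacle is harmless.
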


\begin{proof}

    Since for all $F\in \textnormal{Shv}^J_{n-1}(\C)$ we may choose a small set $I$ of objects $\{X_i\}_{i\in I}$ in $\C$ such that there exists an effective epimorphism $\coprod_{i\in I}h^\C_{X_i}\rightarrow F$, it only remains to show that the topology on $\C$ induced by the canonical topology on $\textnormal{Shv}^J_{n-1}(\C)$ (in the sense of \ref{Hypersheaf Criteria}) is equal to $J$.  

    Let $X\in \C$, and let $\{f_i:U_i\rightarrow X\}_{i\in I}$ be a (small) set of arrows in $\C$. Write $\coprod_{i\in I}h^\C_{U_i}\xrightarrow{e} S\xrightarrow{m} h^\C_X$ for the epi-mono factorization in $\textnormal{Fun}(\C^{\textnormal{opp}},\tau_{\leq n-1}\mS)$ of the induced map $\coprod_{i\in I}h^\C_{U_i}\rightarrow h^\C_X$, and denote with $L:\textnormal{Fun}(\C^{\textnormal{opp}},\tau_{\leq n-1}\mS)\rightarrow \textnormal{Shv}_{n-1}^J(\C)$ the $J$-sheafification functor. Since $L$ preserves both effective epimorphisms and monomorphisms, we deduce there exists an epi-mono factorization $\coprod_{i\in I}h^\C_{U_i}\xrightarrow{e^\prime} L(S)\xrightarrow{m^\prime} h^\C_X$ of $\coprod_{i\in I}h^\C_{U_i} \rightarrow h^\C_X$ in $\textnormal{Shv}_{n-1}^J(\C)$, where $e^\prime\cong L(e)$ and $m^\prime\cong L(m)$ in the arrow category of $\textnormal{Shv}_{n-1}^J(\C)$. This implies that $\coprod_{i\in I}h^\C_{U_i} \rightarrow h^\C_X$ is an effective epimorphism in $\textnormal{Shv}_{n-1}^J(\C)$ precisely when $m:S\rightarrow h^\C_X$ is an $L$-equivalence. It now follows from \citeq{highertopostheory} 6.2.2.16 that the map $\coprod_{i\in I}h^\C_{U_i}\rightarrow  h^\C_X$ is an effective epimorphism in $\textnormal{Shv}_{n-1}^J(\C)$ if and only if the set of arrows $\{f_i:U_i\rightarrow X\}_{i\in I}$ generates a $J$-covering sieve on $X\in \C$, completing the proof of our claim.

\end{proof}

\begin{remark}{}\label{Generalization of Basis for Eff Epi n Topoi}

    The argument given in \ref{Basis for Eff Epi n Topoi} can be easily adapted to show the following:

    \begin{itemize}
        \item [\textnormal{(i)}] For all $m\in \ZZ_{\geq n}$, $(\C,J)$ is a basis for $(\textnormal{Shv}_{m-1}^J(\C),\C an(\textnormal{Shv}_{m-1}^J(\C)))$. 

        \item [\textnormal{(ii)}] For any small $(\infty,1)$-category $\cD$ equipped with a topology $K$, the pair $(\cD,K)$ is a basis for $(\textnormal{Shv}_K(\cD),\C an(\textnormal{Shv}_K(\cD)))$.

    \end{itemize}
    
\end{remark}

\noindent We may now prove \ref{n-truncated preserves small limits: Appendix} (1):

\begin{corollary}{}\label{n topoi sheaves preserve small limits}

    Let $n\in \ZZ_{\geq 1}$, and let $\cX$ be an $(n,1)$-topos. Let $\cD$ be an $(n,1)$-category. Then, a map $F:\cX^{\textnormal{opp}}\rightarrow \cD$ preserves small limits if and only if $F$ is a sheaf for the effective epimorphism topology on $\cX$. 
    
\end{corollary}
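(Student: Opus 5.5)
The plan is to present $\cX$ as $(n-1)$-truncated sheaves on a small site, and then combine the two facts announced just before \ref{Kan extensions of Sheaves}. Using characterization $(1^\prime)$ of \ref{n Topoi definition}, I choose a small finitely complete $(n,1)$-category $\C$ with a subcanonical topology $J$ and an equivalence $\cX\cong\textnormal{Shv}^J_{n-1}(\C)$. Both conditions in the statement, preserving small limits and being a $\C an(\cX)$-sheaf, are invariant under this equivalence. So it suffices to treat $\cX=\textnormal{Shv}^J_{n-1}(\C)$.

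First I reduce to $\cD=\tau_{\leq n-1}\widehat{\mS}$. After enlarging the universe, $\cD$ is locally small with $(n-1)$-truncated mapping spaces. The functors $\textnormal{Hom}_\cD(D,-)$ jointly preserve and detect limits, so $F$ preserves small limits exactly when every $\textnormal{Hom}_\cD(D,F(-))$ does. Likewise, $F$ is a $\C an(\cX)$-sheaf exactly when every such composite is one, since the sheaf condition is a limit condition (\ref{Sheaf Definition}). Both properties are therefore detected after composing with the $\textnormal{Hom}_\cD(D,-)$, which take values in $\tau_{\leq n-1}\widehat{\mS}$. This target is a complete $(n,1)$-category, so \ref{univ prperty of n-1 sheaves} applies to it.

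Now let $H:\cX^{\textnormal{opp}}\rightarrow\tau_{\leq n-1}\widehat{\mS}$. By \ref{univ prperty of n-1 sheaves}, $H$ preserves small limits if and only if $H$ is the right Kan extension of a $J$-sheaf on $\C$. For the sheaf side I use \ref{Basis for Eff Epi n Topoi}: $(\C,J)$ is a basis for $(\cX,\C an(\cX))$. I then apply \ref{Hypersheaf Criteria}, after passing to a larger universe so that $\cX$ is small. It says $H$ is a $\C an(\cX)$-hypersheaf if and only if $H$ is the right Kan extension of a $J$-hypersheaf on $\C$.

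Two comparisons remain, and I expect them to be the main obstacle. The first is that for $(n-1)$-truncated values, sheaf and hypersheaf agree on both sides. On $\C$ this holds because truncated objects of an $(\infty,1)$-topos are hypercomplete, so a $J$-sheaf valued in $\tau_{\leq n-1}\widehat{\mS}$ is a $J$-hypersheaf. On $\cX$, the truncated $\C an(\cX)$-sheaves are likewise hypercomplete objects of $\widehat{\textnormal{Shv}}_{\C an(\cX)}(\cX)$. The second is that \ref{Hypersheaf Criteria} is stated for $\mS$-valued presheaves, and I must check that the universe changes are harmless. For this I invoke \ref{Hypersheaf independent of universe} and \ref{HyperSheafification Ind of Choice of Universe}~(2), which say truncation commutes with the universe enlargement. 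With these in place, the two characterizations coincide, and the equivalence follows.
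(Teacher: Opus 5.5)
Your proposal is correct and follows essentially the paper's argument. Both reduce to $\tau_{\leq n-1}\widehat{\mS}$-valued functors, present $\cX\simeq\textnormal{Shv}^J_{n-1}(\C)$, use that truncated sheaves are hypercomplete, and combine \ref{Hypersheaf Criteria} and \ref{Basis for Eff Epi n Topoi} with \ref{univ prperty of n-1 sheaves}. The paper gets the direction ``limit-preserving $\Rightarrow$ sheaf'' more cheaply from the \v{C}ech criterion \ref{preservation of trunactions 1} rather than running \ref{Hypersheaf Criteria} backwards. Your explicit choice of a subcanonical $J$ via $(1^\prime)$ makes precise a hypothesis the paper's proof uses silently. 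The universe-comparison results you cite are harmless but not needed if you work in one enlarged universe throughout.
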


\begin{proof}

    Just as in the proof of \ref{univ prperty of n-1 sheaves}, it now suffices to prove that a map $F:\cX^{\textnormal{opp}}\rightarrow \tau_{\leq n-1}\widehat{\mS}$ is a sheaf for the effective epimorphism topology on $\cX$ precisely when $F$ preserves small limits. Invoking \ref{preservation of trunactions 1}, we deduce that whenever $F:\cX^{\textnormal{opp}}\rightarrow \tau_{\leq n-1}\widehat{\mS}$ preserves small limits, $F$ is a $\C an(\cX)$-sheaf, proving one direction of our claim. 
    
    We now prove the converse. Choose some small $(n,1)$-category $\C$, a Grothendieck topology $J$ on $\C$, and an equivalence $\textnormal{Shv}^J_{n-1}(\C)\xrightarrow{\sim}\cX$. Since every sheaf $F:\textnormal{Shv}^J_{n-1}(\C)^{\textnormal{opp}}\rightarrow \tau_{\leq n-1}\widehat{\mS}$ for the effective epimorphism topology on $\textnormal{Shv}^J_{n-1}(\C)$ is an $(n-1)$-truncated object of $\widehat{\textnormal{Shv}}_{\C an (\textnormal{Shv}^J_{n-1}(\C))}(\textnormal{Shv}^J_{n-1}(\C))$, each is hypercomplete (see \citeq{highertopostheory} 6.5.2.9). Combining \ref{Hypersheaf Criteria} with \ref{Basis for Eff Epi n Topoi}, it follows that $F$ is the right Kan extension of a $J$-sheaf along the map $\C^{\textnormal{opp}}\rightarrow \textnormal{Shv}^J_{n-1}(\C)^{\textnormal{opp}}$, so our result now follows from \ref{univ prperty of n-1 sheaves}.

\end{proof}

\noindent We conclude this section by proving \ref{n-truncated preserves small limits: Appendix} (2):

\begin{corollary}{}\label{Factorization of m sheaves for the eff epi topology}

    Let $n\in \ZZ_{\geq 1}\cup\{\infty\}$, and let $\cX$ be an $(n,1)$-topos. Let $m\in \ZZ_{\geq 0}$ be an integer such that $m \leq n$ if $n\in \ZZ_{\geq 1}$. Then, for all $(m,1)$-categories $\cD$, a map $F:\cX^{\textnormal{opp}}\rightarrow \cD$ is a sheaf for the effective epimorphism topology on $\cX$ if and only if $F$ factors as 

    $$\cX^{\textnormal{opp}}\xrightarrow{(\tau_{\leq m-1})^{\textnormal{opp}}}\tau_{\leq m-1}\cX^{\textnormal{opp}}\xrightarrow{F_{m-1}}\cD,$$

    \noindent where $F_{m-1}$ is a sheaf for the effective epimorphism topology on $\tau_{\leq m-1}\cX$.
    
\end{corollary}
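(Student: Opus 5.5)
We reduce to the case $\cD=\tau_{\leq m-1}\widehat{\mS}$, as in the proof of \ref{univ prperty of n-1 sheaves}. After enlarging the universe, $\cD$ is locally small. A map $F$ is a $\C an(\cX)$-sheaf if and only if each composite $\textnormal{Hom}_\cD(D,-)\circ F$ is one, by \ref{Diagram Sheaf condition for sites}. Likewise, $F$ factors through $(\tau_{\leq m-1})^{\textnormal{opp}}$ if and only if each such composite does, by \ref{localizations lemma}, since $\cX\rightarrow\tau_{\leq m-1}\cX$ is a localization. For the sheaf condition on $F_{m-1}$ we use the same corepresentable test. Each $\textnormal{Hom}_\cD(D,-)$ lands in $\tau_{\leq m-1}\widehat{\mS}$ because $\cD$ is an $(m,1)$-category. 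The degenerate case $m=0$ is a posetal/$(-1)$-truncated case and should be handled the same way, with $\tau_{\leq -1}$.

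For the forward direction, let $F:\cX^{\textnormal{opp}}\rightarrow\tau_{\leq m-1}\widehat{\mS}$ be a $\C an(\cX)$-sheaf. When $n=\infty$, this is exactly \ref{Partial. Truncated Objects of Shv Can X} applied with $m-1$ in place of $n$: it supplies both the factorization and the sheaf property of $F_{m-1}=F|_{(\tau_{\leq m-1}\cX)^{\textnormal{opp}}}$. When $n$ is finite, we run the same argument. Represent $\cX\simeq\tau_{\leq n-1}\cY$ for an $(\infty,1)$-topos $\cY$ (\ref{n Topoi definition}). The Yoneda map $\cX\rightarrow\widehat{\textnormal{Shv}}_{\C an(\cX)}(\cX)$ is left exact and preserves small coproducts and effective epimorphisms (\ref{preservation of trunactions 1}), so it is a map of pretopoi. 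Then \citeq{lurie2018sag} A.6.7.1 identifies $h_X\rightarrow h_{\tau_{\leq m-1}X}$ as the $(m-1)$-truncation, and mapping into the $(m-1)$-truncated $F$ gives the factorization. Finally, $F_{m-1}$ preserves small products because $\tau_{\leq m-1}$ is essentially surjective and product preserving. The inclusion $\tau_{\leq m-1}\cX\hookrightarrow\cX$ is left exact, and it preserves effective epimorphisms by \ref{preservation of effective epimorphisms in truncations}. Hence \ref{preservation of trunactions 1} shows that $F_{m-1}$ satisfies \v{C}ech descent for effective epimorphisms in $\tau_{\leq m-1}\cX$.

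For the converse, suppose $F=F_{m-1}\circ\tau_{\leq m-1}^{\textnormal{opp}}$ with $F_{m-1}$ a $\C an(\tau_{\leq m-1}\cX)$-sheaf. Since $\tau_{\leq m-1}\cX$ is an $(m,1)$-topos, \ref{n topoi sheaves preserve small limits} (or \ref{n-truncated preserves small limits: Appendix} (1)) shows that $F_{m-1}$ preserves small limits. The functor $\tau_{\leq m-1}:\cX\rightarrow\tau_{\leq m-1}\cX$ is a left adjoint and so preserves colimits. Therefore $F$ carries colimits in $\cX$ to limits and preserves small products. For \v{C}ech descent, take an effective epimorphism $U\rightarrow X$ in $\cX$. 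Its \v{C}ech nerve is a colimit diagram in $\cX$. Applying $\tau_{\leq m-1}$ gives a colimit diagram, and $F_{m-1}$ turns it into a limit diagram. Thus $F$ satisfies condition (2) of \ref{preservation of trunactions 1} and is a sheaf. This direction is routine; note that it does not need $\tau_{\leq m-1}$ to preserve \v{C}ech nerves, because only the colimit property is used.

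The main obstacle is the forward direction for finite $n$. There one must check that the citation of \citeq{lurie2018sag} A.6.7.1, which concerns $\infty$-pretopoi, applies to the $(n,1)$-topos $\cX$ and the truncation levels $m-1\leq n-1$. If this causes difficulty, the fallback is to use \ref{Basis for Eff Epi n Topoi} together with \ref{Hypersheaf Criteria}. By \citeq{highertopostheory} 6.5.2.9, $(m-1)$-truncated sheaves are hypercomplete, so $F$ is right Kan extended from a presentation site of $\cX$. It then preserves small limits by \ref{univ prperty of n-1 sheaves}, and the factorization follows because limit-preserving maps into $(m-1)$-truncated targets invert $\tau_{\leq m-1}$-equivalences.
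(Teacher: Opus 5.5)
Your reduction to corepresentables, your $n=\infty$ forward direction via \ref{Partial. Truncated Objects of Shv Can X}, and your converse via \ref{n topoi sheaves preserve small limits} are the paper's argument. Your check that $F_{m-1}$ is again a sheaf is also fine.

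There is a gap in the forward direction for finite $n$, which is where you suspected it. \citeq{lurie2018sag} A.6.7.1 concerns morphisms of $\infty$-pretopoi. An $(n,1)$-topos with $n$ finite is not an $\infty$-pretopos: by \ref{n Topoi definition} (3)(d), only groupoid objects with $U_1\rightarrow U_0\times U_0$ $(n-2)$-truncated need be effective. So the citation does not show that $h_X\rightarrow h_{\tau_{\leq m-1}X}$ is an $(m-1)$-truncation in $\widehat{\textnormal{Shv}}_{\C an(\cX)}(\cX)$. The statement is true, but your proposal does not establish it. Your fallback correctly obtains that $F$ preserves small limits; this is just \ref{n topoi sheaves preserve small limits}, since an $(m,1)$-category with $m\leq n$ is an $(n,1)$-category. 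Its last step, that a small-limit-preserving functor $\cX^{\textnormal{opp}}\rightarrow\tau_{\leq m-1}\widehat{\mS}$ inverts $X\rightarrow\tau_{\leq m-1}X$, is only asserted. It is not formal: the values are large spaces, so representability (\citeq{highertopostheory} 5.5.2.2) is unavailable.

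The paper avoids this by reducing to the case already proved.
\begin{itemize}
\item[(a)] Write $\cX\simeq\tau_{\leq n-1}\cY$ with $\cY$ an $(\infty,1)$-topos.
\item[(b)] $F$ preserves small limits and $\tau_{\leq n-1}:\cY\rightarrow\cX$ preserves small colimits, so the composite $\cY^{\textnormal{opp}}\rightarrow\cX^{\textnormal{opp}}\xrightarrow{F}\cD$ preserves small limits. Hence it is a $\C an(\cY)$-sheaf by \ref{preservation of trunactions 1}.
\item[(c)] The $n=\infty$ case applies here, since A.6.7.1 legitimately applies to $\cY\rightarrow\widehat{\textnormal{Shv}}_{\C an(\cY)}(\cY)$. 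It factors the composite through $\tau_{\leq m-1}\cY\simeq\tau_{\leq m-1}\cX$ via a sheaf for the effective epimorphism topology.
\item[(d)] Since $m-1\leq n-1$, the truncation $\cY\rightarrow\tau_{\leq m-1}\cY$ factors through $\cY\rightarrow\cX$. Precomposition with this reflective localization is fully faithful, so $F$ itself factors as required.
\end{itemize}

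If you prefer to keep your fallback, you can close it directly. Take $\cX\simeq\textnormal{Shv}^J_{n-1}(\C)$ and $F$ right Kan extended from $F_0:=F|_{\C^{\textnormal{opp}}}$. Then $F(X)\simeq\textnormal{Hom}_{\widehat{\textnormal{Shv}}_J(\C)}(X,F_0)$, with $F_0$ an $(m-1)$-truncated object. By \ref{HyperSheafification Ind of Choice of Universe} (2), $X\rightarrow\tau_{\leq m-1}X$ remains an $(m-1)$-truncation in $\widehat{\textnormal{Shv}}_J(\C)$, which gives the factorization.

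Finally, your one-line treatment of $m=0$ is not supported by the results you cite, which all need $m\geq 1$. The paper's argument has the same limitation.
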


\begin{proof} We first prove our claim in the case that $n=\infty$. To begin, let $F:\cX^{\textnormal{opp}}\rightarrow \cD$ be a sheaf for the effective epimorphism topology on $\cX$. Combining \ref{Partial. Truncated Objects of Shv Can X} with \ref{localizations lemma}, we deduce that $F$ factors as $\cX^{\textnormal{opp}}\xrightarrow{(\tau_{\leq m-1})^{\textnormal{opp}}}\tau_{\leq m-1}\cX^{\textnormal{opp}}\xrightarrow{F_{m-1}}\cD,$ where $F_{m-1}$ is a sheaf for the effective epimorphism topology on $\cX$. The converse follows immediately from combining \ref{n topoi sheaves preserve small limits} with \ref{Cech Descent Theorem}. 

We now prove our claim in the case that $n\in \ZZ_{\geq 1}$. Suppose again that $F:\cX^{\textnormal{opp}}\rightarrow \cD$ is a sheaf for the effective epimorphism topology on $\cX$. Choose some $(\infty,1)$-topos $\cY$, and an equivalence $\cX\cong \tau_{\leq n-1}\cY$. Since the composition $\cY^{\textnormal{opp}}\rightarrow \cX^{\textnormal{opp}}\xrightarrow{F}\cD$ is a sheaf for the effective epimorphism topology on $\cY$, it follows from the above that the composition $\cY^{\textnormal{opp}}\rightarrow \cX^{\textnormal{opp}}\xrightarrow{F}\cD$ factors as $\cY^{\textnormal{opp}}\rightarrow (\tau_{\leq m-1}\cY)^{\textnormal{opp}}\xrightarrow{F^\prime}\cD$ where $F^\prime$ is a sheaf for the effective epimorphism topology on $\tau_{\leq m-1}\cY$. It now follows that $F$ factors as $\cX^{\textnormal{opp}}\rightarrow (\tau_{\leq m-1}\cX)^{\textnormal{opp}}\xrightarrow{F^{\prime\prime}}\cD$, where $F^{\prime\prime}$ is identified with $F^\prime$ under the equivalence $\tau_{\leq m-1}\cY\cong \tau_{\leq m-1}\cX$. The converse follows immediately from \ref{n topoi sheaves preserve small limits}.

\end{proof}

\subsection{Sheaves on Topoi}\label{Sheaves on Topoi Subsection}

The main result of this section is the following theorem:

\begin{theorem}{}\label{Sheaves on Topoi Result}

     Let $\cX$ be an $(\infty,1)$-topos, and write $\C an(\cX)$ for the effective epimorphism topology on $\cX$. Then:

     \begin{itemize}

         \item [$(1)$] The inclusion $\textnormal{Shv}_{\widehat{\mS}}(\cX)\hookrightarrow \textnormal{Fun}(\cX^{\textnormal{opp}},\widehat{\mS})$ factors through $\widehat{\textnormal{Shv}}_{\C an(\cX)}(\cX)\subset \textnormal{Fun}(\cX^{\textnormal{opp}},\widehat{\mS})$. Moreover, the left adjoint $C:\widehat{\textnormal{Shv}}_{\C an(\cX)}(\cX)\rightarrow \textnormal{Shv}_{\widehat{\mS}}(\cX)$ induced by HTT 6.3.5.28 is a cotopological localization (in our larger universe).

         \item [$(2)$] For all $n\in \ZZ_{\geq 0}$, pulling-back along the localization $\cX\rightarrow \tau_{\leq n}\cX$ yields a categorical equivalence $\widehat{\textnormal{Shv}}_{\tau_{\leq n}\widehat{\mS}}(\tau_{\leq n}\cX)\xrightarrow{\sim} \tau_{\leq n-1}\widehat{\textnormal{Shv}}_{\C an(\cX)}(\cX)$, where $\widehat{\textnormal{Shv}}_{\tau_{\leq n}\widehat{\mS}}(\tau_{\leq n}\cX)$ is the full subcategory of $\textnormal{Fun}(\tau_{\leq n}\cX^{\textnormal{opp}},\tau_{\leq n}\widehat{\mS})$ spanned by those maps $F:\tau_{\leq n}\cX^{\textnormal{opp}}\rightarrow \tau_{\leq n}\widehat{\mS}$ which preserve small limits.

         \item [$(3)$] Pulling-back along $\cX\rightarrow \cX^{\textnormal{hyp}}$ yields a categorical equivalence $\textnormal{Shv}_{\widehat{\mS}}(\cX^{\textnormal{hyp}})\xrightarrow{\sim} \widehat{\textnormal{Shv}}_{\C an(\cX)}(\cX)^{\textnormal{hyp}}$.
         
     \end{itemize}
    
\end{theorem}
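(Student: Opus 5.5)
The plan is to prove (1), (2) and (3) in order. Each part reduces to the \v{C}ech descent characterization of $\C an(\cX)$-sheaves in \ref{preservation of trunactions 1} together with Lurie's HTT 6.3.5.28. Throughout we enlarge the universe so that $\cX$ is small, and write $\widehat{\textnormal{Shv}}:=\widehat{\textnormal{Shv}}_{\C an(\cX)}(\cX)$.

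For (1), the factorization of the inclusion is immediate. A functor $F:\cX^{\textnormal{opp}}\rightarrow \widehat{\mS}$ preserving small limits preserves small products and sends \v{C}ech nerves of effective epimorphisms to limit diagrams, since in $\cX$ every effective epimorphism is the colimit of its \v{C}ech nerve. By \ref{preservation of trunactions 1}, such an $F$ is therefore a $\C an(\cX)$-sheaf.

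For the cotopological statement, we use HTT 6.3.5.28 (as recalled in \ref{Sheaves on Topoi From HTT}). It gives a topology $J$ with $L_\cX=C\circ T$, where $T$ is $J$-sheafification and $C$ is cotopological. We will show $J=\C an(\cX)$. First, every $J$-covering sieve is generated by maps whose induced map from the coproduct is an effective epimorphism, because $J$ must be subcanonical: representables are limit-preserving, hence $J$-local. Conversely, $\C an(\cX)$-coverings are $J$-coverings because, in Lurie's construction, $J$ is exactly the topology whose covering sieves become equivalences under $L_\cX$ on the monomorphism level. Concretely, a sieve $S\hookrightarrow h_X$ is $J$-covering iff $L_\cX(S)\rightarrow L_\cX(h_X)$ is an equivalence. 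When $S$ is generated by an effective epimorphic family, $L_\cX(S)\rightarrow h_X$ is the image of the effective epimorphism $\coprod U_i\rightarrow X$ in $\textnormal{Shv}_{\widehat{\mS}}(\cX)\simeq$ (a topos containing $\cX$ via Yoneda, left exact and colimit-preserving on representables), hence an equivalence.

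The main obstacle is this identification of $J$. It requires matching Lurie's characterization of covering sieves via monomorphisms (HTT 6.2.2.16) with effective epimorphic families, and we will use \ref{Sheaf condition for topos-like categories} to control products.

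For (2), the $(n-1)$-truncated objects of $\widehat{\textnormal{Shv}}$ are the $\C an(\cX)$-sheaves valued in $\tau_{\leq n-1}\widehat{\mS}$. By \ref{Factorization of m sheaves for the eff epi topology}, with $m=n$ and $\cD=\tau_{\leq n-1}\widehat{\mS}$, these correspond exactly to sheaves on $\tau_{\leq n-1}\cX$ for the effective epimorphism topology. By \ref{n topoi sheaves preserve small limits}, these are in turn the limit-preserving functors on that $n$-topos. The indexing in the statement is shifted by one, and we expect to absorb this via \ref{Partial. Truncated Objects of Shv Can X}. Fully faithfulness of pulling back along $\cX\rightarrow\tau_{\leq n}\cX$ follows because it is a localization, so $(\tau_{\leq n})^*$ is fully faithful on functor categories.

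For (3), $C$ is cotopological by (1), so $\textnormal{Shv}_{\widehat{\mS}}(\cX)$ and $\widehat{\textnormal{Shv}}$ have equivalent hypercompletions (HTT 6.5.2.19). We then need that the hypercomplete objects of $\textnormal{Shv}_{\widehat{\mS}}(\cX)$ are those limit-preserving functors factoring through $\cX\rightarrow\cX^{\textnormal{hyp}}$. For this we apply \ref{Chech Hyperdescent}: a $\C an(\cX)$-hypersheaf sends $\infty$-connective maps of $\cX$ to equivalences, since such maps are hypercovering colimits in the sense of SAG A.5.3.3. It therefore factors through $\cX^{\textnormal{hyp}}$ and preserves limits there. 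Conversely, a limit-preserving functor on $\cX^{\textnormal{hyp}}$ pulled back to $\cX$ satisfies hyperdescent, because $\cX\rightarrow\cX^{\textnormal{hyp}}$ carries hypercoverings to effective hypercoverings (HTT 6.5.3.12).
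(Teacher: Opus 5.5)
For (2) and (3) your argument is the paper's. (2) is read off from \ref{n-truncated preserves small limits: Appendix}, i.e.\ \ref{Factorization of m sheaves for the eff epi topology} together with \ref{n topoi sheaves preserve small limits}. (3) combines \ref{Cech Descent Theorem} with two facts: an $\infty$-connective map yields a constant hypercovering, and colimits of hypercoverings are $\infty$-connective.

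For the cotopological half of (1) you take a genuinely different route. The paper does not characterize the topology $J$ of HTT 6.5.2.19 directly. It shows that $C$ is the localization at the comparison maps $j:\textnormal{colim}_k\, h_{F(k)}\rightarrow h_{\textnormal{colim} F}$ in $\widehat{\textnormal{Shv}}_{\C an(\cX)}(\cX)$. It then proves each $j$ is $\infty$-connective: it tests against $n$-truncated objects of the slice over $h_X$ (HTT 6.5.1.14), transports along \ref{Slices of sheaves of eff. epis.}, and uses that truncated $\C an$-sheaves preserve small limits (\ref{n-truncated preserves small limits: Appendix}, which rests on Aoki's basis theorem).

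You instead take $L_\cX\simeq C\circ T$ as given. Since $T$ preserves monomorphisms and a cotopological localization reflects equivalences between monomorphisms, a sieve $S\hookrightarrow h_X$ is $J$-covering iff $L_\cX(S)\rightarrow h_X$ is an equivalence. Because $\cX\rightarrow \textnormal{Shv}_{\widehat{\mS}}(\cX)$ is fully faithful, left exact and colimit-preserving, $L_\cX(S)$ is the image of $h_{\coprod U_i}\rightarrow h_X$. That image is all of $h_X$ iff $\coprod U_i\rightarrow X$ is an effective epimorphism.

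This is correct and much shorter, and it bypasses \S\ref{Truncated Objects} entirely for (1). What the paper's argument buys is an explicit family of $\infty$-connective generators for $C$. It also uses only left exactness of $L_\cX$, not the topological/cotopological factorization theorem. The truncated-sheaf input is needed for (2) anyway.

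Three small repairs are needed.

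\emph{The inclusion $J\subseteq \C an(\cX)$.} Saying ``$J$ is subcanonical'' does not by itself give this; in the paper that implication is \ref{The Canonical Topology on a Topos 1}, which is deduced from this very theorem. Your own criterion does give it. Suppose $L_\cX(S)\simeq h_X$. Pick a small subfamily $\{U_i\rightarrow X\}$ of $S$ whose images exhaust the union $V$ of the images of all maps in $S$; this is possible since $\textnormal{Sub}(X)$ is small. Then $S\subseteq h_V$, so the monomorphism $h_V\rightarrow h_X$ acquires a section after applying $L_\cX$. Hence $V\simeq X$, and $\coprod U_i\rightarrow X$ is an effective epimorphism.

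\emph{The indexing in (2).} The mismatch cannot be ``absorbed.'' The pullback of a $\tau_{\leq n}\widehat{\mS}$-valued functor is $n$-truncated, not $(n-1)$-truncated. For $\cX=\mS$ and $n=0$, the left side is equivalent to (possibly large) sets while the right side is a poset. So the statement contains a misprint. What you and the paper actually prove is the matched version, with $\tau_{\leq n}\widehat{\textnormal{Shv}}_{\C an(\cX)}(\cX)$ on the right. To get it, apply \ref{Factorization of m sheaves for the eff epi topology} with $m=n+1$ and $\cD=\tau_{\leq n}\widehat{\mS}$. This also avoids the case $n=0$, where your choice $m=n$ would need \ref{n topoi sheaves preserve small limits} for a $(0,1)$-topos, which it does not cover.

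\emph{A citation in (3).} The fact that an $\infty$-connective $f:X\rightarrow Y$ gives a $\C an(\cX)$-hypercovering of $Y$ (the constant semisimplicial object on $f$) is HTT 6.5.3.5. SAG A.5.3.3 is the converse statement, which you need in the other direction.
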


\begin{remark}{}

 In light of $(1)$, the respective classifications of the truncated and hypercomplete objects of $\widehat{\textnormal{Shv}}_{\C an(\cX)}(\cX)$ given by $(2)$ and $(3)$ also classify the analogous data for $\textnormal{Shv}_{\widehat{\mS}}(\cX)$.
    
\end{remark}

\noindent Before proving \ref{Sheaves on Topoi Result}, we discuss several consequences: 

\begin{corollary}{}

    Let $\cX$ be a hypercomplete $(\infty,1)$-topos. Then, a map $F:\cX^{\textnormal{opp}}\rightarrow \mS$ is a hypersheaf with respect to the effective epimorphism topology on $\cX$ if and only if $F$ is representable. 
    
\end{corollary}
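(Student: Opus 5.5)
The plan is to deduce the corollary from \ref{Sheaves on Topoi Result} (3), combined with the observation that hypercompleteness of $\cX$ forces $\cX\rightarrow\cX^{\textnormal{hyp}}$ to be an equivalence. Here $F$ takes values in small spaces, and the first task is to pass between $\mS$-valued and $\widehat{\mS}$-valued functors. By \ref{Hypersheaf independent of universe}, the property of being a $\C an(\cX)$-hypersheaf does not depend on the chosen universe. So $F$ is a hypersheaf if and only if its image under $\mS\hookrightarrow\widehat{\mS}$ lies in $\widehat{\textnormal{Shv}}_{\C an(\cX)}(\cX)^{\textnormal{hyp}}$.

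Since $\cX$ is hypercomplete, $\cX^{\textnormal{hyp}}=\cX$, and the localization $\cX\rightarrow\cX^{\textnormal{hyp}}$ is the identity. \ref{Sheaves on Topoi Result} (3) then identifies $\widehat{\textnormal{Shv}}_{\C an(\cX)}(\cX)^{\textnormal{hyp}}$ with $\textnormal{Shv}_{\widehat{\mS}}(\cX)$, the full subcategory of $\textnormal{Fun}(\cX^{\textnormal{opp}},\widehat{\mS})$ spanned by functors preserving small limits. Hence $F$ is a hypersheaf if and only if $F:\cX^{\textnormal{opp}}\rightarrow\mS$ preserves small limits. Here I use that $\mS\hookrightarrow\widehat{\mS}$ preserves and reflects small limits, so this condition may be checked in either universe.

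It remains to show that a functor $F:\cX^{\textnormal{opp}}\rightarrow\mS$ preserves small limits exactly when it is representable. Representable functors preserve limits, which gives one direction. For the converse, $\cX$ is presentable, so $F^{\textnormal{opp}}:\cX\rightarrow\mS^{\textnormal{opp}}$ preserves small colimits. By the adjoint functor theorem (HTT 5.5.2.2) it admits a right adjoint, and $F$ is then represented by the image of the point. This is the standard fact that limit-preserving presheaves on a presentable $(\infty,1)$-category are representable.

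The main obstacle is the bookkeeping between universes. Concretely, one must check that the identification in \ref{Sheaves on Topoi Result} (3) is compatible with restricting to small-valued functors. Since (3) is an equivalence over $\textnormal{Fun}(\cX^{\textnormal{opp}},\widehat{\mS})$ given by pullback along the identity, this restriction should be immediate.
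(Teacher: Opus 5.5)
Your proposal is correct and is essentially the deduction the paper intends; the paper lists this corollary as an unproved consequence of \ref{Sheaves on Topoi Result}. With $\cX^{\textnormal{hyp}}=\cX$, part (3) identifies $\C an(\cX)$-hypersheaves with small-limit-preserving functors, and HTT 5.5.2.2 identifies the $\mS$-valued ones with representables. The universe bookkeeping you flag is harmless: the paper's definition of hypersheaf is already phrased in the larger universe, and $\mS\hookrightarrow\widehat{\mS}$ preserves and reflects small limits.
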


\noindent This is the higher analogue of the following, which may be deduced from \ref{n-truncated preserves small limits: Appendix}:

\begin{corollary}{}\label{Sheaves for n topoi are representable}

    Let $n\in \ZZ_{\geq 1}$, and let $\cX$ be an $(n,1)$-topos. Then, a map $F:\cX^{\textnormal{opp}}\rightarrow \tau_{\leq n-1}\mS$ is a sheaf for the effective epimorphism topology on $\cX$ if and only if $F$ is representable. 
    
\end{corollary}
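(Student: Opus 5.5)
The statement to prove is \ref{Sheaves for n topoi are representable}. I would deduce it from \ref{n-truncated preserves small limits: Appendix} (1), applied with $m=n$ and $\cD=\tau_{\leq n-1}\mS$, which is an $(n,1)$-category. That result says a map $F:\cX^{\textnormal{opp}}\rightarrow \tau_{\leq n-1}\mS$ is a $\C an(\cX)$-sheaf exactly when $F$ preserves small limits. The claim therefore reduces to showing that a functor $\cX^{\textnormal{opp}}\rightarrow \tau_{\leq n-1}\mS$ preserves small limits if and only if it is representable.

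The direction ``representable implies sheaf'' is immediate. Since $\cX$ is an $(n,1)$-category, $h_Y=\textnormal{Hom}_\cX(-,Y)$ lands in $\tau_{\leq n-1}\mS$, and representable functors carry colimits in $\cX$ to limits of spaces.

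For the converse, I would use a presentation of $\cX$. By \ref{n Topoi definition}$(1^\prime)$, choose a small $(n,1)$-category $\C$ with a subcanonical topology $J$ and an equivalence $\cX\cong \textnormal{Shv}^J_{n-1}(\C)$. Let $F:\cX^{\textnormal{opp}}\rightarrow\tau_{\leq n-1}\mS$ preserve small limits. By \ref{univ prperty of n-1 sheaves}, $F$ is the right Kan extension of a $J$-sheaf $F_0:\C^{\textnormal{opp}}\rightarrow \tau_{\leq n-1}\mS$. Such an $F_0$ is an object of $\textnormal{Shv}^J_{n-1}(\C)\cong\cX$. The right Kan extension of $F_0$ along $\C^{\textnormal{opp}}\rightarrow \textnormal{Shv}^J_{n-1}(\C)^{\textnormal{opp}}$ is $h_{F_0}$: this holds by Yoneda, since every object is a colimit of representables and $h_{F_0}$ preserves limits. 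This is the same identification already used in the proof of \ref{univ prperty of n-1 sheaves}. Hence $F\cong h_{F_0}$ is representable.

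Alternatively, one could argue via presentability: $\cX$ is presentable and $F^{\textnormal{opp}}$ sends colimits to limits, so the representability criterion for presentable categories (HTT 5.5.2.2) applies directly.

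The only step I expect to need care with is size. The functor $F$ takes values in small spaces, and $F_0$ must be a small sheaf for $h_{F_0}$ to make sense inside $\cX$. Both follow from the hypothesis that $F$ lands in $\tau_{\leq n-1}\mS$.
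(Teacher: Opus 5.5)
Your proposal is correct and follows the paper's route. The paper deduces this corollary directly from \ref{n-truncated preserves small limits: Appendix} (1), which says $F$ is a sheaf $\Leftrightarrow$ $F$ preserves small limits; representability of limit-preserving functors $\cX^{\textnormal{opp}}\rightarrow \tau_{\leq n-1}\mS\subseteq \mS$ then follows from presentability of $\cX$, which is your HTT 5.5.2.2 remark. Your primary argument via a presentation $\cX\cong \textnormal{Shv}^J_{n-1}(\C)$ and \ref{univ prperty of n-1 sheaves} is also valid, but it is just a longer way of recovering that same representability step.
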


\noindent We again note that the case of \ref{Sheaves for n topoi are representable} for $n=1$ is a classical result (see \citeq{Johnstone2002-JOHSOA-7} C.2.2.7). In the case that $\cX$ is not hypercomplete, we have the following connection to \ref{Sheaves for n topoi are representable}:

\begin{corollary}{}\label{The Canonical Topology on a Topos 1}

    Let $n\in \ZZ_{\geq 1}\cup \{\infty\}$, and let $\cX$ be an $(n,1)$-topos. Let $Y\in \cX$, and let $\cX^0_{/Y}\subseteq \cX_{/Y}$ be a sieve on $Y$ such that every representable functor in $\textnormal{Fun}(\cX^{\textnormal{opp}},\widehat{\mS})$ is local with respect to the induced monomorphism $m:S\rightarrow h^\cX_Y$. Then, $\cX^0_{/Y}\subseteq \cX_{/Y}$ is a sieve for the effective epimorphism topology on $\cX$. 
    
\end{corollary}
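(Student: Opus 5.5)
The plan is to show directly that the family of arrows in $\cX^0_{/Y}$ jointly covers $Y$ effectively. Concretely, we show that the induced map $\coprod_{i\in I}U_i\rightarrow Y$ is an effective epimorphism in $\cX$, where $\{f_i:U_i\rightarrow Y\}_{i\in I}$ is a small generating set for the sieve. By \ref{Geom Sites Examples} (1) and the definition of $\C an(\cX)$, such a sieve is then a covering sieve.

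First, reduce to a small generating family. The sieve $\cX^0_{/Y}$ may be large, but $\cX$ is presentable, so we choose a small set $\{f_i:U_i\rightarrow Y\}_{i\in I}$ of arrows in $\cX^0_{/Y}$ such that every element of $\cX^0_{/Y}$ factors through some $f_i$. For instance, take the arrows from $\kappa$-compact objects for a suitable $\kappa$, using that every object is a colimit of $\kappa$-compact ones and that a sieve is closed under precomposition. Then $S$ is the image in $\textnormal{Fun}(\cX^{\textnormal{opp}},\widehat{\mS})$ of $\coprod_{i\in I}h^\cX_{U_i}\rightarrow h^\cX_Y$. Next, form the epi-mono factorization $\coprod_{i}U_i\rightarrow V\xrightarrow{j} Y$ in $\cX$, which exists because $\cX$ is a semitopos by \ref{n-topoi are semitopoi}. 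Each $f_i$ factors through the monomorphism $j$, and $j$ is a monomorphism, so every arrow of $\cX^0_{/Y}$ factors through $j$ essentially uniquely. This yields a map $S\rightarrow h^\cX_V$ over $h^\cX_Y$.

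Now apply the hypothesis to the representable $h^\cX_V$. Since $h^\cX_V$ is $m$-local, $\textnormal{Hom}(h^\cX_Y,h^\cX_V)\rightarrow \textnormal{Hom}(S,h^\cX_V)$ is an equivalence. Hence the map $S\rightarrow h^\cX_V$ extends to a map $h^\cX_Y\rightarrow h^\cX_V$, that is, to an arrow $s:Y\rightarrow V$. Locality applied to $h^\cX_Y$ shows that $j\circ s$ and $\textnormal{id}_Y$ agree after restriction along $m$, so $j\circ s\cong \textnormal{id}_Y$. Thus $j$ is a monomorphism admitting a section, hence an equivalence. Consequently $\coprod_i U_i\rightarrow Y$ is an effective epimorphism, and the sieve is a $\C an(\cX)$-covering sieve.

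The main obstacle is the bookkeeping in the first step. We must check that the hypothesis about $m$, which is phrased via the possibly large sieve, transfers correctly to the small family, and that the constructed map $S\rightarrow h^\cX_V$ is genuinely compatible over $h^\cX_Y$. For the latter, we would identify $S$ as the colimit over $\cX^0_{/Y}$ of representables, following the proof of \ref{Kan extensions of Sheaves}, and use that $\textnormal{Hom}(-,V)\rightarrow\textnormal{Hom}(-,Y)$ is a monomorphism of presheaves, so that factoring through $V$ is a property rather than extra data. For $n$ finite the same argument works, since \ref{n-topoi are semitopoi} supplies the image factorization.
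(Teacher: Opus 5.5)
There is a gap in your first step, exactly where you locate the main obstacle, though it is easily repaired. You claim you can choose a small family $\{f_i\}$ in $\cX^0_{/Y}$ through which every element of the sieve factors. This is false in general, and your suggested choice does not have this property. If $\cX^0_{/Y}$ is the maximal sieve and $Y$ is not $\kappa$-compact, then $\textnormal{id}_Y$ factors through no arrow $C\to Y$ with $C$ $\kappa$-compact, since $Y$ would then be a retract of $C$. No other choice fixes this, because sieves need not be small-generated even under your hypotheses. In the $(1,1)$-topos of directed graphs, take the sieve on the terminal graph consisting of all maps out of loopless graphs. It contains the epimorphism from the two-vertex graph with edges in both directions, so every representable is local. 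But any map from the complete loopless graph on $\lambda$ vertices to a loopless graph is injective on vertices, so no small set of loopless graphs receives maps from all of them. Hence your sentence ``every arrow of $\cX^0_{/Y}$ factors through $j$'' is unjustified as written. The identification of $S$ with the image of $\coprod_i h^\cX_{U_i}\to h^\cX_Y$ is also false in general, though you never use it. Your closing remark, that factoring through the monomorphism $j$ is a property, handles assembling $S\to h^\cX_V$ coherently. It does not show that each element of the sieve factors through $j$ in the first place.

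The repair keeps your family of all arrows in the sieve with $\kappa$-compact source. Given $f:U\to Y$ in $\cX^0_{/Y}$, write $U$ as the colimit of the $\kappa$-compact objects $C\to U$ over it. Each composite $C\to U\to Y$ lies in the sieve, so up to equivalence it is a component of $\coprod_i U_i\to Y$ and factors through $j$. The map $\textnormal{Hom}_\cX(-,V)\to\textnormal{Hom}_\cX(-,Y)$ is a monomorphism carrying colimits in $\cX$ to limits of spaces. So its fibre over $f$ is a limit of contractible spaces, and $f$ factors through $j$; equivalently, $U\times_Y V\to U$ is an equivalence by universality of colimits. Alternatively, use that $\textnormal{Sub}(Y)$ is small and pick one element of the sieve for each image that occurs. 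With this in place the rest of your argument is correct. That means the map $S\to h^\cX_V$ over $h^\cX_Y$, locality of $h^\cX_V$ to produce $s$, and locality of $h^\cX_Y$ to get $j\circ s\simeq\textnormal{id}_Y$. It also uses the fact that a monomorphism with a section is an equivalence. For finite $n$ the image factorization comes from \ref{n-topoi are semitopoi}, or from the image in an ambient $\infty$-topos, which is again $(n-1)$-truncated.

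Once repaired, your proof takes a genuinely different route from the paper's. The paper deduces the corollary from its structural theorem. For $n=\infty$ it first uses HTT 6.3.5.28 to see that $m$ is inverted by the localization $\textnormal{Fun}(\cX^{\textnormal{opp}},\widehat{\mS})\to\textnormal{Shv}_{\widehat{\mS}}(\cX)$. It then combines \ref{Sheaves on Topoi Result} (1), which says this localization is $\C an(\cX)$-sheafification followed by a cotopological localization, with HTT 6.5.2.16 and 6.2.2.16. This shows that $m$ is inverted by $\C an(\cX)$-sheafification. For finite $n$ it embeds $\cX$ as $\tau_{\leq n-1}\cY$ and uses \ref{Sheaves on Topoi Result} (2). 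Your argument is the classical Yoneda/image-factorization argument from $1$-topos theory. It is uniform in $n$, uses locality of only two representables, and needs neither \v{C}ech descent nor cotopological localizations, only image factorizations and a small dense subcategory. The paper's route instead shows the corollary is a consequence of identifying the topology of HTT 6.3.5.28 with $\C an(\cX)$, and the same computation yields \ref{Sheafification For n Topoi}.
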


\begin{proof}

    We first show this in the case that $n=\infty$. Write $L:\textnormal{Fun}(\cX^{\textnormal{opp}},\widehat{\mS})\rightarrow \textnormal{Shv}_{\widehat{\mS}}(\cX)$ be the map left adjoint to the inclusion. Applying \citeq{highertopostheory} 6.3.5.28, we deduce that whenever $F:\cX^{\textnormal{opp}}\rightarrow \widehat{\mS}$ factors as $\cX^{\textnormal{opp}}\rightarrow \mS\hookrightarrow \widehat{\mS}$, so too does $L(F):\cX^{\textnormal{opp}}\rightarrow \widehat{\mS}$. Next, let $m:S\rightarrow h^\cX_Y$ be a monomorphism in $\textnormal{Fun}(\cX^{\textnormal{opp}},\widehat{\mS})$ such that every representable functor is $m$-local. Since $m$ is a monomorphism we may identify $m$ as an edge $m:S\rightarrow h^\cX_Y$ in $\textnormal{Fun}(\cX^{\textnormal{opp}},\mS)$, and it immediately follows that $m$ is an $L^\prime$-equivalence. Since $L^\prime$ is the restriction of $L$ (in the appropriate sense), it follows that $m$ is an $L^\prime$-equivalence. Writing $T:\textnormal{Fun}(\cX^{\textnormal{opp}},\widehat{\mS})\rightarrow \widehat{\textnormal{Shv}}_{\C an(\cX)}(\cX)$ for the sheafification functor, combining \citeq{highertopostheory} 6.2.2.16, \citeq{highertopostheory} 6.5.2.16 and \ref{Sheaves on Topoi Result} implies that $m$ is also a $T$-equivalence, proving this part of our claim.

    We now prove our claim in the case that $n\in \ZZ_{\geq 1}$. Choose some $(\infty,1)$-topos $\cY$, and an equivalence $\tau_{\leq n-1}\cY\cong \cX$. Let $G:\textnormal{Fun}(\cY^{\textnormal{opp}},\widehat{\mS})\rightarrow \textnormal{Shv}_{\widehat{\mS}}(\cY)$ be left adjoint to the inclusion. Noting now that the arrow $\textnormal{Fun}(\cX^{\textnormal{opp}},\widehat{\mS})\rightarrow \textnormal{Fun}(\cY^{\textnormal{opp}},\widehat{\mS})$ is fully faithful and write $G^\prime:\textnormal{Fun}(\cX^{\textnormal{opp}},\widehat{\mS})\rightarrow \textnormal{Shv}_{\widehat{\mS}}(\cY)$ for the map induced by the restriction of $G$. Since $G$ is left exact, $G^\prime$ restricts to an arrow $G^{\prime\prime}:\textnormal{Fun}(\cX^{\textnormal{opp}},\tau_{\leq n-1}\widehat{\mS})\rightarrow \tau_{\leq n-1}\textnormal{Shv}_{\widehat{\mS}}(\cY)$. It follows from \ref{Sheaves on Topoi Result} (2) that $G^{\prime\prime}$ is isomorphic to the map $F:\textnormal{Fun}(\cX^{\textnormal{opp}},\tau_{\leq n-1}\widehat{\mS})\rightarrow \widehat{\textnormal{Shv}}\vphantom{}^{\C an(\cX)}_{n-1}(\cX)$ left adjoint to the inclusion, and it now follows from \citeq{highertopostheory} 6.3.5.28 that $F$ restricts (in the appropriate sense) to a map $\textnormal{Fun}(\cX^{\textnormal{opp}},\tau_{\leq n-1}\mS)\rightarrow \widehat{\textnormal{Shv}}\vphantom{}_{n-1}^{\C an(\cX)}(\cX)\cong \cX$ left adjoint to the Yoneda embedding. Our claim now follows from a similar logic to that used in the $n=\infty$ case above. 
    
\end{proof}

\noindent Corollary \ref{The Canonical Topology on a Topos 1} immediately implies the following:

\begin{corollary}{}\label{The Canonical Topology on a Topos}
    Let $n\in \ZZ_{\geq 1}\cup \{\infty\}$, and let $\cX$ be an $(n,1)$-topos. Then, there exists a unique finest Grothendieck topology $J$ on $\cX$ satisfying the property that every representable functor $\cX^{\textnormal{opp}}\rightarrow \mS$ is a $J$-sheaf. Moreover, this is the effective epimorphism topology on $\cX$. 
\end{corollary}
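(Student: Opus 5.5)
We have to show two things: $\C an(\cX)$ is subcanonical (with respect to $\mS$-valued representables), and any Grothendieck topology $J$ on $\cX$ for which every representable $h^\cX_Y:\cX^{\textnormal{opp}}\rightarrow \mS$ is a $J$-sheaf satisfies $J\subseteq \C an(\cX)$. Together these say that $\C an(\cX)$ is the unique finest topology with this property: any other such topology is contained in it, and a finest one is unique because two finest topologies contain each other.

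For subcanonicity, we use \ref{preservation of trunactions 1}. A representable functor $h^\cX_Y$ sends coproducts in $\cX$ to products. It also sends the augmented \v{C}ech nerve of an effective epimorphism to a limit diagram, because by definition such a \v{C}ech nerve is a colimit diagram in $\cX$ (and we may restrict to $\Delta_{\textnormal{inj}}$ by left cofinality). This argument works uniformly for every $n\in \ZZ_{\geq 1}\cup\{\infty\}$, since $(\cX,\C an(\cX))$ is a geometric site by \ref{Geom Sites Examples} (1). Alternatively, for finite $n$ one could cite \ref{Sheaves for n topoi are representable} directly.

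For maximality, let $J$ be such a topology and let $\cX^0_{/Y}\subseteq \cX_{/Y}$ be a $J$-covering sieve, with associated monomorphism $m:S\rightarrow h^\cX_Y$ (\citeq{highertopostheory} 6.2.2.5). The plan is to apply \ref{The Canonical Topology on a Topos 1}, so we must check that every representable functor in $\textnormal{Fun}(\cX^{\textnormal{opp}},\widehat{\mS})$ is $m$-local.
\begin{itemize}
\item[(a)] By the argument of \ref{Kan extensions of Sheaves}, (1)$\Leftrightarrow$(2), a functor $F$ is $J$-local for the sieve $\cX^0_{/Y}$ exactly when $F$ sends the colimit cone over $(\cX^0_{/Y})^\triangleright$ to a limit, that is, exactly when $F$ is $m$-local.
\item[(b)] That equivalence does not require $\cX$ to be small: it only uses that $S$ is the colimit of the representables indexed by the sieve.
\item[(c)] The values of $h^\cX_Z$ lie in $\mS\subseteq \widehat{\mS}$, and this inclusion preserves small limits. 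So the $J$-sheaf condition on $h^\cX_Z$ as an $\mS$-valued functor, which holds by assumption, implies the same condition as a $\widehat{\mS}$-valued functor. Hence $h^\cX_Z$ is $m$-local.
\end{itemize}

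\ref{The Canonical Topology on a Topos 1} then says that $\cX^0_{/Y}$ is a $\C an(\cX)$-covering sieve, so $J\subseteq \C an(\cX)$. The main point needing care is the size bookkeeping in step (b): making sure that the sieve/monomorphism correspondence and the locality statement are used in $\textnormal{Fun}(\cX^{\textnormal{opp}},\widehat{\mS})$, where $\cX$ is small relative to the larger universe. The remaining steps are formal.
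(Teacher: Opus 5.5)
Your proposal is correct and is essentially the paper's argument. The paper deduces this corollary immediately from \ref{The Canonical Topology on a Topos 1}, leaving implicit the easy subcanonicity of $\C an(\cX)$, which you correctly obtain from \ref{preservation of trunactions 1}.

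One small correction to step (c). Since $\cX$ is large, the sieve $\cX^0_{/Y}$ is generally a large diagram, so ``$\mS\hookrightarrow\widehat{\mS}$ preserves small limits'' is not quite enough. What you actually need is that this inclusion, being corepresented by the point, preserves every limit that exists in $\mS$.
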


\begin{corollary}{}\label{Sheafification For n Topoi}

Choose some integer $n\in \ZZ_{\geq 1}$, and let $\cX$ be an $(n,1)$-topos. Then, the sheafification functor $\textnormal{Fun}(\cX^{\textnormal{opp}},\tau_{\leq n-1}\widehat{\mS})\rightarrow \widehat{\textnormal{Shv}}\vphantom{}^{\C an(\cX)}_{n-1}(\cX)$ restricts to a left exact map $\textnormal{Fun}(\cX^{\textnormal{opp}},\tau_{\leq n-1}\mS)\rightarrow \widehat{\textnormal{Shv}}\vphantom{}_{n-1}^{\C an(\cX)}(\cX)\cong \cX$ left adjoint to the Yoneda embedding. 
  
\end{corollary}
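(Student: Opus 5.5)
The statement to prove is \ref{Sheafification For n Topoi}. The plan is to follow the second half of the proof of \ref{The Canonical Topology on a Topos 1}, where essentially this map already appears. Write $T:\textnormal{Fun}(\cX^{\textnormal{opp}},\tau_{\leq n-1}\widehat{\mS})\rightarrow \widehat{\textnormal{Shv}}\vphantom{}^{\C an(\cX)}_{n-1}(\cX)$ for the sheafification functor.

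\emph{Step 1: identify the target with $\cX$.} By \ref{Sheaves for n topoi are representable}, the $(n-1)$-truncated $\C an(\cX)$-sheaves are exactly the representable functors. Hence the Yoneda embedding identifies $\cX$ with $\widehat{\textnormal{Shv}}\vphantom{}^{\C an(\cX)}_{n-1}(\cX)$. Under this identification, the restriction of $T$ to small presheaves is, if it lands in the image of $\cX$, automatically left adjoint to $h^\cX_\bullet$, by restricting the adjunction $T\dashv \textnormal{incl}$.

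\emph{Step 2: left exactness.} Choose an $(\infty,1)$-topos $\cY$ with $\tau_{\leq n-1}\cY\cong\cX$. Let $G:\textnormal{Fun}(\cY^{\textnormal{opp}},\widehat{\mS})\rightarrow\textnormal{Shv}_{\widehat{\mS}}(\cY)$ be the left adjoint to the inclusion. This $G$ is left exact: it factors through $\widehat{\textnormal{Shv}}_{\C an(\cY)}(\cY)$ as a topological localization followed by the cotopological localization $C$ of \ref{Sheaves on Topoi Result} (1), and both of these are left exact. Precompose $G$ with the fully faithful, limit-preserving functor $\textnormal{Fun}(\cX^{\textnormal{opp}},\widehat{\mS})\rightarrow\textnormal{Fun}(\cY^{\textnormal{opp}},\widehat{\mS})$ given by pulling back along $\tau_{\leq n-1}$. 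This yields a left exact $G^{\prime\prime}:\textnormal{Fun}(\cX^{\textnormal{opp}},\tau_{\leq n-1}\widehat{\mS})\rightarrow\tau_{\leq n-1}\textnormal{Shv}_{\widehat{\mS}}(\cY)$, using that $\tau_{\leq n-1}$ is preserved since $G$ is left exact.

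The crux is to show $G^{\prime\prime}\cong T$. For this I use \ref{Sheaves on Topoi Result} (2) together with (1): these identify $\tau_{\leq n-1}\textnormal{Shv}_{\widehat{\mS}}(\cY)\cong\tau_{\leq n-1}\widehat{\textnormal{Shv}}_{\C an(\cY)}(\cY)$ with limit-preserving functors $\tau_{\leq n-1}\cY^{\textnormal{opp}}\rightarrow\tau_{\leq n-1}\widehat{\mS}$, and these are precisely $\widehat{\textnormal{Shv}}\vphantom{}^{\C an(\cX)}_{n-1}(\cX)$ by \ref{n topoi sheaves preserve small limits}. Both $G^{\prime\prime}$ and $T$ are then left adjoints to the same inclusion, so they are isomorphic. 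I expect checking this compatibility of adjunctions to be the main obstacle, specifically verifying that the restriction of $G$'s right adjoint really corresponds to the inclusion under these identifications. It follows that $T$ is left exact.

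\emph{Step 3: small presheaves land in the representables.} By HTT 6.3.5.28, $G$ carries presheaves valued in $\mS$ to sheaves valued in $\mS$, i.e.\ to representables of $\cY$. Truncating, $T$ carries $\textnormal{Fun}(\cX^{\textnormal{opp}},\tau_{\leq n-1}\mS)$ into representables of $\cX$. Left exactness of the restriction is inherited from Step 2, since the inclusion $\textnormal{Fun}(\cX^{\textnormal{opp}},\tau_{\leq n-1}\mS)\subseteq\textnormal{Fun}(\cX^{\textnormal{opp}},\tau_{\leq n-1}\widehat{\mS})$ preserves finite limits. Combining this with Step 1 gives the left adjoint to the Yoneda embedding.
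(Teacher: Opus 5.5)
Your proposal is correct and is essentially the paper's own argument (the $n\in\ZZ_{\geq 1}$ half of the proof of \ref{The Canonical Topology on a Topos 1}). As there, you pass to an $(\infty,1)$-topos $\cY$ with $\tau_{\leq n-1}\cY\cong\cX$, restrict the left exact sheafification $G$ along $\tau_{\leq n-1}$, identify the truncated result with $T$ via \ref{Sheaves on Topoi Result} (2), and use HTT 6.3.5.28 to see that small presheaves land in representables.

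One small caveat concerns Step 1. \ref{Sheaves for n topoi are representable} only identifies the \emph{small}-valued $(n-1)$-truncated sheaves with $\cX$; for $\cX=\textnormal{Set}$, the functor $X\mapsto K^X$ with $K$ a large set is a non-representable $\widehat{\mS}$-valued sheaf. So your Step 1 identification should be read only on the image of small presheaves, and that is exactly what your Step 3 supplies.
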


\begin{proof}

    See the proof of \ref{The Canonical Topology on a Topos 1}.
    
\end{proof}

\noindent We also have the following the characterization of geometric morphisms of $(n,1)$-topoi:

\begin{corollary}{}

    Let $n\in \ZZ_{\geq 1}\cup \{\infty\}$, and let $m\in \ZZ_{\geq 0}$ such that $m\leq n$ if $n\in \ZZ_{\geq 1}$. Let $\cX$ be an $(n,1)$-topos, and let $\cD$ be an $(m,1)$-category. Let $F:\cX\rightarrow \cD$ be an $(\infty,1)$-functor which preserves pullbacks and small coproducts. Then, $F$ preserves small colimits if and only if $F$ preserves effective epimorphisms.

\end{corollary}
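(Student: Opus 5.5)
The plan is to reduce to the representable-valued case and then invoke \ref{n-truncated preserves small limits: Appendix} (1). First, the easy direction. Suppose $F$ preserves small colimits, and let $f:X\rightarrow Y$ be an effective epimorphism in $\cX$. Its \v{C}ech nerve is built from iterated pullbacks, and $F$ preserves pullbacks, so $F$ carries the \v{C}ech nerve of $f$ to the \v{C}ech nerve of $F(f)$. Since $f$ is an effective epimorphism, the \v{C}ech nerve of $f$ is a colimit diagram, and $F$ preserves colimits. Hence the \v{C}ech nerve of $F(f)$ is a colimit diagram, i.e.\ $F(f)$ is an effective epimorphism in $\cD$.

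For the converse, change universe if necessary so that $\cD$ is locally small. We will use the Yoneda criterion: a diagram $\overline{K}:K^\triangleright\rightarrow\cD$ is a colimit diagram if and only if, for every $D\in\cD$, the composite $\textnormal{Hom}_\cD(\overline{K}(-),D)$ is a limit diagram in $\widehat{\mS}$. It therefore suffices to show that, for each $D\in\cD$, the functor
$$G_D:=\textnormal{Hom}_\cD(F(-),D):\cX^{\textnormal{opp}}\rightarrow\widehat{\mS}$$
preserves small limits. Since $\cD$ is an $(m,1)$-category, $G_D$ takes values in $\tau_{\leq m-1}\widehat{\mS}$, which is an $(m,1)$-category. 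If $m=0$, $\cD$ is in particular a $(1,1)$-category, so we may replace $m$ by $1$ throughout (this is allowed since $n\geq 1$). We may therefore assume $1\leq m\leq n$ whenever $n$ is finite.

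Next we check that $G_D$ is a $\C an(\cX)$-sheaf, using the characterization in \ref{preservation of trunactions 1}. First, $G_D$ preserves small products, because $F$ preserves small coproducts. Second, let $f$ be an effective epimorphism in $\cX$ with \v{C}ech nerve $U$. Because $F$ preserves pullbacks, $F\circ U$ is the \v{C}ech nerve of $F(f)$. By hypothesis $F(f)$ is an effective epimorphism, so $F\circ U$ is a colimit diagram in $\cD$. Applying $\textnormal{Hom}_\cD(-,D)$ then yields a limit diagram, and by left cofinality of $\Delta_{\textnormal{inj}}\hookrightarrow\Delta$ the same holds after restricting to $\Delta_{\textnormal{inj}}^\triangleleft$. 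By \ref{preservation of trunactions 1}, $G_D$ is therefore a sheaf for the effective epimorphism topology on $\cX$. Now \ref{n-truncated preserves small limits: Appendix} (1), applied with target the $(m,1)$-category $\tau_{\leq m-1}\widehat{\mS}$, shows that $G_D$ preserves small limits. By the Yoneda criterion above, $F$ preserves small colimits.

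The main obstacle is not in this argument itself, which is formal, but in the input: it relies on the truncation hypothesis on $\cD$, through $m$ being finite and at most $n$. Without it, $G_D$ would only be a $\C an(\cX)$-sheaf, which need not preserve limits, by \ref{counterexample of sheaf for subcanonical topology not preserving limits}. So the step to handle carefully is the bookkeeping that keeps $G_D$ inside $\tau_{\leq m-1}\widehat{\mS}$ with $m\leq n$, together with the $m=0$ edge case and the change of universe that makes $\cD$ locally small. The remaining steps are routine.
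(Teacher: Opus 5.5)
Your proposal is correct and matches the argument the paper intends: the paper states this corollary without a separate proof, and it follows by testing against the corepresentables $\textnormal{Hom}_\cD(F(-),D)$, which land in $\tau_{\leq m-1}\widehat{\mS}$, and then applying \ref{preservation of trunactions 1} and \ref{n-truncated preserves small limits: Appendix} (1), exactly as you do. Your handling of the $m=0$ case and of the change of universe is fine, and you are right that the hypothesis $m\leq n$ is the essential input.
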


\noindent We now prove \ref{Sheaves on Topoi Result}. Noting that the factorization of $\widehat{\textnormal{Shv}}_{\C an(\cX)}(\cX)\hookrightarrow \textnormal{Fun}(\cX^{\textnormal{opp}},\widehat{\mS})$ described in (1) is implied by \ref{Cech Descent Theorem}, we may deduce the remainder of (1) from \ref{n-truncated preserves small limits: Appendix}:

\begin{proof} (\ref{Sheaves on Topoi Result} (1).)  Write $i:\cX\rightarrow \widehat{\textnormal{Shv}}_{\C an(\cX)}(\cX)$ for the Yoneda embedding. Let $K$ be a small simplicial set, let $F:K\rightarrow \cX$ be a diagram, and let $\overline{F}:K^\triangleright\rightarrow \cX$ be a diagram which exhibits some $X\in \cX$ as the colimit of $F$. Writing $\overline{i\circ F}:K^\triangleright\rightarrow\widehat{\textnormal{Shv}}_{\C an(\cX)}(\cX)$ for a map which exhibits some $G\in \widehat{\textnormal{Shv}}_{\C an(\cX)}(\cX)$ as the colimit of $i\circ F$, we have a canonical morphism $j:G\rightarrow i(X)$. In order to prove our claim, it suffices to show that $j$ is $\infty$-connective. Applying \citeq{highertopostheory} 6.5.1.14, it suffices to show that for all $n\in \ZZ_{\geq 0}$ and $U\in \tau_{\leq n}(\widehat{\textnormal{Shv}}_{\C an(\cX)}(\cX)_{/i(X)})\subseteq \widehat{\textnormal{Shv}}_{\C an(\cX)}(\cX)_{/i(X)}$, $U$ is local with respect to the map $j^\prime:j\rightarrow \textnormal{id}_{i(X)}$, where $j^\prime$ denotes the right degenerate $2$-simplex of $\widehat{\textnormal{Shv}}_{\C an(\cX)}(\cX)$ corresponding to $j$. Fix such an $n\in \ZZ_{\geq 0}$ and such a $U\in \tau_{\leq n}(\widehat{\textnormal{Shv}}_{\C an(\cX)}(\cX)_{/i(X)})$. Next, identify $j:G\rightarrow i(X)$ as an edge $K\star \Delta^1\rightarrow \widehat{\textnormal{Shv}}_{\C an(\cX)}(\cX)$ of $\widehat{\textnormal{Shv}}_{\C an(\cX)}(\cX)_{(i\circ F)/}$. Noting that $\Delta^1$ is weakly contractible, \citeq{lurie2024kerodon} 0198 implies that $\Delta^1\hookrightarrow K\star \Delta^1$ is right anodyne. In particular, there exists an extension $J:K\star \Delta^1\rightarrow \widehat{\textnormal{Shv}}_{\C an(\cX)}(\cX)_{/i(X)}$ of $j^\prime:\Delta^1\rightarrow \widehat{\textnormal{Shv}}_{\C an(\cX)}(\cX)_{/i(\cX)}$ along $\Delta^1\hookrightarrow K\star \Delta^1$ such that the composition $K\star \Delta^1\xrightarrow{J}\widehat{\textnormal{Shv}}_{\C an(\cX)}(\cX)_{/i(\cX)}\rightarrow \widehat{\textnormal{Shv}}_{\C an(\cX)}(\cX)$ agrees with the edge of $\widehat{\textnormal{Shv}}_{\C an(\cX)}(\cX)_{(i\circ F)/}$ induced by $j:G\rightarrow i(X)$. In order to prove our claim, it now suffices to show that the composition 

$$\Delta^1\hookrightarrow \Delta^1\star (K^\textnormal{opp})=(\Delta^1)^\textnormal{opp}\star (K^\textnormal{opp})=(K\star \Delta^1)^{\textnormal{opp}}\xrightarrow{J^{\textnormal{opp}}}(\widehat{\textnormal{Shv}}_{\C an(\cX)}(\cX)_{/i(\cX)})^{\textnormal{opp}}\xrightarrow{h_U}\tau_{\leq n}\widehat{\mS}$$

\noindent is an isomorphism in $\tau_{\leq n}\widehat{\mS}$. Since the composition $K^\triangleright=K\star \{0\}\hookrightarrow K\star \Delta^1\xrightarrow{J}\widehat{\textnormal{Shv}}_{\C an(\cX)}(\cX)_{/i(\cX)}\rightarrow \widehat{\textnormal{Shv}}_{\C an(\cX)}(\cX)$ is a colimit diagram, it suffices to prove that the composition 

$$(K^{\textnormal{opp}})^\triangleleft=(K\star \{1\})^{\textnormal{opp}}\hookrightarrow (K\star \Delta^1)^{\textnormal{opp}}\xrightarrow{J^{\textnormal{opp}}}(\widehat{\textnormal{Shv}}_{\C an(\cX)}(\cX)_{/i(\cX)})^{\textnormal{opp}}\xrightarrow{h_U}\tau_{\leq n}\widehat{\mS},$$

\noindent which we will denote by $F^\prime:(K^{\textnormal{opp}})^\triangleleft\rightarrow \widehat{\mS}$, is a limit diagram. Writing $\phi:\widehat{\textnormal{Shv}}_{\C an(\cX)}(\cX)_{/i(\cX)}\xrightarrow{\sim}\widehat{\textnormal{Shv}}_{\C an(\cX_{/X})}(\cX_{/X})$ for the categorical equivalence of \ref{Slices of sheaves of eff. epis.}, it follows from \ref{Equivalence of slice topoi explicit Remark} that $F^\prime$ may be identified as the composition $(K^{\textnormal{opp}})^\triangleleft=(K^\triangleright)^{\textnormal{opp}}\xrightarrow{F_X^{\textnormal{opp}}}(\cX_{/X})^\textnormal{opp}\xrightarrow{\phi(U)}\tau_{\leq n}\widehat{\mS}$, where $F_X$ is a lifting of $F:K^\triangleright\rightarrow \cX$ along the cone point. Our claim now follows from \ref{n-truncated preserves small limits: Appendix}.

\end{proof}

\noindent Noting that \ref{Sheaves on Topoi Result} (2) is an immediate corollary of \ref{n-truncated preserves small limits: Appendix}, it only remains to prove \ref{Sheaves on Topoi Result} (3):

\begin{proof}

    Let $\cX$ be an $(\infty,1)$-topos, and let $F:\cX^{\textnormal{opp}}\rightarrow \widehat{\mS}$ be a map which preserves small limits. Invoking \ref{Cech Descent Theorem}, it suffices to prove the following: $F$ carries every $\infty$-connective morphism in $\cX$ to an isomorphism in $\widehat{\mS}$ if and only if $F$, for all $\C an(\cX)$-hypercoverings $X_\bullet:(\Delta^{\textnormal{opp}}_{\textnormal{inj}})^\triangleright\rightarrow \cX$, the composition $\Delta_{\textnormal{inj}}^\triangleleft\xrightarrow{X_\bullet^{\textnormal{opp}}}\cX^{\textnormal{opp}}\xrightarrow{F}\widehat{\mS}$ is a limit diagram. 

    To begin, suppose that $F$ carries every $\infty$-connective morphism in $\cX$ to an isomorphism in $\widehat{\mS}$, and let $X_\bullet:(\Delta^{\textnormal{opp}}_{\textnormal{inj}})^\triangleright\rightarrow \cX$ be a hypercovering of $X\in \cX$ for the effective epimorphism topology on $\cX$. Identify $X_\bullet$ as a map $X^\prime_\bullet:\Delta^{\textnormal{opp}}_{\textnormal{inj}}\rightarrow \cX_{/X}$, and extend $X^\prime_\bullet$ to a colimit diagram $\overline{X^\prime_\bullet}:(\Delta^{\textnormal{opp}}_{\textnormal{inj}})^\triangleright\rightarrow \cX_{/X}$, noting the cone point of of $\overline{X^\prime_\bullet}$ is $\infty$-connective (\citeq{lurie2018sag} A.5.6.3). In particular, $\overline{X^\prime_\bullet}$ is $\infty$-connective as an edge of $\cX$ (\citeq{highertopostheory} 6.5.1.19), so it immediately follows that the composition $\Delta_{\textnormal{inj}}^\triangleleft\xrightarrow{X_\bullet^{\textnormal{opp}}}\cX^{\textnormal{opp}}\xrightarrow{F}\widehat{\mS}$ is a limit diagram.

    We now prove the converse. Let $f:X\rightarrow Y$ be an $\infty$-connective morphism in $\cX$, which we identify as a vertex of $\cX_{/Y}$. Let $\underline{f}:\Delta^{\textnormal{opp}}_{\textnormal{inj}}\rightarrow \cX_{/Y}$ be the constant diagram with value $f$, which we identify as an arrow $\underline{f}^\prime:(\Delta^{\textnormal{opp}}_{\textnormal{inj}})^\triangleright\rightarrow \cX$ with conepoint $Y$. Applying \citeq{highertopostheory} 6.5.3.5 we deduce that $\underline{f}^\prime$ is a $\C an(\cX)$-hypercovering of $Y$, implying that the composition $\Delta_{\textnormal{inj}}^\triangleleft\xrightarrow{(\underline{f}^\prime)^{\textnormal{opp}}}\cX^{\textnormal{opp}}\xrightarrow{F}\widehat{\mS}$ is a limit diagram. Since every edge of the underlying map $\Delta_{\textnormal{inj}}\rightarrow \widehat{\mS}$ is an isomorphism and $\Delta_{\textnormal{inj}}$ is weakly contractible, it follows that the composition $\Delta_{\textnormal{inj}}^\triangleleft\xrightarrow{(\underline{f}^\prime)^{\textnormal{opp}}}\cX^{\textnormal{opp}}\xrightarrow{F}\widehat{\mS}$ carries every edge of $\Delta_{\textnormal{inj}}^\triangleleft$ to an isomorphism. Our claim now follows from the fact that for each $n\in \ZZ_{\geq 0}$, the arrow $\underline{f}^\prime:(\Delta^{\textnormal{opp}}_{\textnormal{inj}})^\triangleright\rightarrow \cX$ carries the $[n]\rightarrow [-1]$ edge of $(\Delta^{\textnormal{opp}}_{\textnormal{inj}})^\triangleright$ to the map $f:X\rightarrow Y$.

\end{proof}

\subsection{Global Sheaves on Topoi}\label{Global Sheaves Section}

As discussed in \ref{Geom Sites Examples}, the \'{e}tale topology on the $(\infty,1)$-category of $(\infty,1)$-topoi, $\infty\cT op$, is generated by a geometric site. Just as the effective epimorphism topology on an $(\infty,1)$-topos $\cX$ is a good starting point when working to identify the appropriate notion of descent on $\cX$, the \'{e}tale topology on $\infty\cT op$ is a good starting point when working to identify the appropriate notion of descent on $\infty\cT op$. However, just as the appropriate notion of a sheaf on an $(\infty,1)$-topos $\cX$ does not in general correspond to the effective epimorphism topology on $\cX$ (though \ref{Sheaves on Topoi Result} establishes the fact that these notions are intimately related), the additional structure of higher category theory allows one to instead ask for a stronger notion of descent on $\infty\cT op$ than any Grothendieck topology can provide:

\begin{definition}{}[HTT 6.3.5.19]\label{Global Sheaves on Topoi Defn}

    Let $\cD$ be an $(\infty,1)$-category, and let $F:\infty\cT op^{\textnormal{opp}}\rightarrow \cD$ be a map. We call $F$ a \emph{sheaf} on $\infty\cT op$ if $F$ satisfies either of the following equivalent conditions:

    \begin{itemize}
    
        \item [$(1)$] The restriction of $F$ to $\infty\cT op^{\textnormal{\'{e}t},\textnormal{opp}}\subset \infty\cT op^{\textnormal{opp}}$ preserves small limits.

        \item [$(2)$] For all $(\infty,1)$-topoi $\cX$, the composition $\cX^{\textnormal{opp}}\xrightarrow{\sim}(\infty\cT op^{\textnormal{\'{e}t}}_{/\cX})^{\textnormal{opp}}\rightarrow \infty\cT op^{\textnormal{\'{e}t},\textnormal{opp}}\rightarrow \infty\cT op^{\textnormal{opp}}\xrightarrow{F}\cD$ is a sheaf on $\cX$, where $\cX^{\textnormal{opp}}\xrightarrow{\sim}(\infty\cT op^{\textnormal{\'{e}t}}_{/\cX})^{\textnormal{opp}}$ is the canonical equivalence of \citeq{highertopostheory} 6.3.5.10. 
        
    \end{itemize}
    
\end{definition}

\noindent We will use the following relative variant of \ref{Global Sheaves on Topoi Defn}:

\begin{definition}{}\label{Relative Global Sheaves on Topoi Defn}

    Let $F:\infty\cT op^{\textnormal{opp}}\rightarrow \widehat{\mS}$ be a sheaf on $\infty\cT op$, and let $\cD$ be an $(\infty,1)$-category. Noting that $\infty\cT op^{\textnormal{\'{e}t}}_{/F}:=\infty\cT op^{\textnormal{\'{e}t}}\times_{\infty\cT op}\infty\cT op_{/F}$ admits small colimits which are preserved by $\infty\cT op^{\textnormal{\'{e}t}}_{/F}\hookrightarrow \infty\cT op_{/F}$ (see \ref{Colimits over Global Sheaves on Topoi}), we will call a map $G:(\infty\cT op_{/F})^{\textnormal{opp}}\rightarrow \cD$ a \emph{sheaf} on $\infty\cT op_{/F}$ if $G$ satisfies either of the following equivalent conditions:

    \begin{itemize}
    
        \item [$(1)$] The restriction of $G$ to $(\infty\cT op^{\textnormal{\'{e}t}}_{/F})^\textnormal{opp}$ preserves small limits.

        \item [$(2)$] For all $(\infty,1)$-topoi $\cX$ and maps $\eta:h_\cX\rightarrow F$, the composition $\cX^{\textnormal{opp}}\xrightarrow{\sim}((\infty\cT op^{\textnormal{\'{e}t}}_{/F})_{/(\cX,\eta)})^{\textnormal{opp}}\rightarrow (\infty\cT op_{/F})^{\textnormal{opp}}\xrightarrow{G}\cD$ is a sheaf on $\cX$.
        
    \end{itemize}
    
\end{definition}

\begin{remark}{}\label{Notation Conflict For Global Sheaves on Topoi}

    In an unfortunate instance of notational conflict, it is \emph{not} true that for an $(\infty,1)$-topos $\cX$ the two $(\infty,1)$-categories $\infty\cT op^{\textnormal{\'{e}t}}_{/h_\cX}$ and $\infty\cT op^{\textnormal{\'{e}t}}_{/\cX}$ coincide (unless $\cX$ is the inital $(\infty,1)$-topos). Instead, $\infty\cT op^{\textnormal{\'{e}t}}_{/h_\cX}$ may be identified as the subcategory of $\infty\cT op_{/\cX}$ spanned by the morphisms $f_*:\cU\rightarrow \mathcal{V}$ over $\cX$ which are themselves \'{e}tale, and $\infty\cT op^{\textnormal{\'{e}t}}_{/\cX}$ is the full subcategory of $\infty\cT op_{/\cX}$ spanned by the \'{e}tale morphisms $\cU\rightarrow \cX$ (noting $\infty\cT op^{\textnormal{\'{e}t}}_{/\cX}$ may then also be identified as a full subcategory of $\infty\cT op^{\textnormal{\'{e}t}}_{/h_\cX}$). 
    
\end{remark}

\begin{proposition}{}\label{Colimits over Global Sheaves on Topoi}

 We work under the notation of \ref{Relative Global Sheaves on Topoi Defn}. Let $K$ a small simplicial set, and let $\phi:K\rightarrow \infty\cT op^{\textnormal{\'{e}t}}_{/F}$ be a diagram. Then, $\phi$ admits a colimit in $\infty\cT op^{\textnormal{\'{e}t}}_{/F}$. Moreover, for any extension $\overline{\phi}:K^\triangleright\rightarrow \infty\cT op^{\textnormal{\'{e}t}}_{/F}$ of $\phi$, the following are equivalent:

 \begin{itemize}
     \item [$(1)$] $\overline{\phi}:K^\triangleright\rightarrow \infty\cT op^{\textnormal{\'{e}t}}_{/F}$ is a colimit diagram. 

     \item [$(2)$] The composition $K^\triangleright\xrightarrow{\overline{\phi}} \infty\cT op^{\textnormal{\'{e}t}}_{/F}\hookrightarrow \infty\cT op_{/F}$ is a colimit diagram. 

     \item [$(3)$]  The composition $K^\triangleright\xrightarrow{\overline{\phi}} \infty\cT op^{\textnormal{\'{e}t}}_{/F}\rightarrow\infty\cT op^{\textnormal{\'{e}t}}$ is a colimit diagram. 

     \item [$(4)$]  The composition $K^\triangleright\xrightarrow{\overline{\phi}} \infty\cT op^{\textnormal{\'{e}t}}_{/F}\rightarrow\infty\cT op$ is a colimit diagram. 
 \end{itemize}
    
\end{proposition}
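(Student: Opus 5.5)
The plan is to reduce everything to the behaviour of the right fibration $q:\infty\cT op^{\textnormal{\'{e}t}}_{/F}\rightarrow \infty\cT op^{\textnormal{\'{e}t}}$ and then invoke \ref{Colimits in Right Fibrations}. Since $\infty\cT op_{/F}:=\infty\cT op\times_{\textnormal{Fun}(\infty\cT op^{\textnormal{opp}},\widehat{\mS})}\textnormal{Fun}(\infty\cT op^{\textnormal{opp}},\widehat{\mS})_{/F}\rightarrow \infty\cT op$ is a right fibration classifying $F$, its pullback $q$ is a right fibration classifying the restriction $F|_{\infty\cT op^{\textnormal{\'{e}t},\textnormal{opp}}}$. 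By \ref{Global Sheaves on Topoi Defn} (1), this restriction preserves small limits. We know $\infty\cT op^{\textnormal{\'{e}t}}$ admits small colimits, and $\infty\cT op^{\textnormal{\'{e}t}}\hookrightarrow\infty\cT op$ preserves them; this was used in the proof of \ref{Coproducts of Inf.Topoi}, via HTT 6.3.5.13 and 6.5.3.8. Applying the equivalence $(1^\prime)\Leftrightarrow(2^\prime)$ of \ref{Colimits in Right Fibrations} for each small $K$, after enlarging the universe so that $q$ has essentially small fibres, we get two conclusions. First, $\infty\cT op^{\textnormal{\'{e}t}}_{/F}$ admits $K$-indexed colimits. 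Second, $\overline{\phi}$ is a colimit diagram if and only if $q\circ\overline{\phi}$ is. This gives existence and $(1)\Leftrightarrow(3)$, and $(3)\Leftrightarrow(4)$ will follow once we know the inclusion $\infty\cT op^{\textnormal{\'{e}t}}\hookrightarrow \infty\cT op$ preserves and detects these colimits.

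For $(3)\Rightarrow(4)$ we use preservation directly. For $(4)\Rightarrow(3)$, let $\overline{\phi}$ map to a colimit in $\infty\cT op$. The colimit $\overline{\psi}$ of $q\circ\phi$ in $\infty\cT op^{\textnormal{\'{e}t}}$ is also a colimit in $\infty\cT op$. So the comparison map from the cone point of $\overline{\psi}$ to that of $q\circ\overline\phi$ is an equivalence in $\infty\cT op$, hence in $\infty\cT op^{\textnormal{\'{e}t}}$, since equivalences are \'{e}tale and the subcategory contains all equivalences. Therefore $q\circ\overline{\phi}\cong\overline{\psi}$.

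The remaining equivalence, $(2)\Leftrightarrow(4)$, concerns the full right fibration $p:\infty\cT op_{/F}\rightarrow\infty\cT op$, and here I would apply the first half of \ref{Colimits in Right Fibrations}, the equivalence $(2)\Leftrightarrow(3)$, to the particular diagram $U=p\circ\phi$ with colimit $\overline{U}$ in $\infty\cT op$. This requires that $F$ carry $\overline{U}^{\textnormal{opp}}$ to a limit diagram. This is the main point: $\overline{U}$ is a colimit in $\infty\cT op$ of a diagram of topoi with \'{e}tale transition maps, and by the previous paragraph it may be taken to lie in $\infty\cT op^{\textnormal{\'{e}t}}$, where $F$ preserves the corresponding limit. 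Granting this, \ref{Colimits in Right Fibrations} says that an extension $Z$ of $\phi$ in $\infty\cT op_{/F}$ is a colimit diagram exactly when $p\circ Z$ is. Applying this to $Z$ equal to the composite in (2) gives $(2)\Leftrightarrow(4)$.

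The step I expect to be delicate is this last identification. One must check that the diagram $\overline U$, formed in $\infty\cT op$, genuinely agrees with the \'{e}tale colimit, with all cone legs \'{e}tale, so that the limit-preservation hypothesis on $F$ applies. One must also handle size issues in \ref{Colimits in Right Fibrations}, whose hypotheses ask for a locally small base and essentially small fibres. I would address the latter by passing to a larger universe in which $\infty\cT op$ is small and $F$ takes small values.
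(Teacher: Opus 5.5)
Your proposal is correct and matches the paper's proof, which just combines HTT 6.3.5.13 with \ref{Colimits in Right Fibrations}: you apply $(1^\prime)\Leftrightarrow(2^\prime)$ of that lemma to $q$, use that $\infty\cT op^{\textnormal{\'{e}t}}\hookrightarrow\infty\cT op$ preserves small colimits and is conservative to get $(3)\Leftrightarrow(4)$, and apply the first half of the lemma to $p$ to get $(2)\Leftrightarrow(4)$. The step you flag as delicate is already handled by HTT 6.3.5.13 as you use it: take $\overline{U}$ to be the image of the colimit formed in $\infty\cT op^{\textnormal{\'{e}t}}$, and then the limit-preservation of $F|_{\infty\cT op^{\textnormal{\'{e}t},\textnormal{opp}}}$ applies directly.
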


\begin{proof}

    Combine \citeq{highertopostheory} 6.3.5.13 with \ref{Colimits in Right Fibrations}. 
    
\end{proof}

\noindent The main result of this section is the following:

\begin{proposition}{}\label{Global Sheaves on Topoi Prop}

   Let $F:\infty\cT op^{\textnormal{opp}}\rightarrow \widehat{\mS}$ be a sheaf, and write $\widehat{\textnormal{Shv}}(\infty\cT op_{/F})$ for the full subcategory of $\textnormal{Fun}((\infty\cT op_{/F})^{\textnormal{opp}},\widehat{\mS})$ spanned by the sheaves $\infty\cT op^{\textnormal{opp}}\rightarrow \widehat{\mS}$. Writing $\widehat{\textnormal{Shv}}_{\textnormal{\'{e}t}_{/F}}(\infty\cT op_{/F})$ for the full subcategory spanned by the sheaves for the \'{e}tale topology on $\infty\cT op_{/F}$ (in the sense of \ref{Etale Slices of Sheaf Topoi} (1)), we then have the following:

    \begin{itemize}
    
        \item [$(1)$] The inclusion $\widehat{\textnormal{Shv}}(\infty\cT op_{/F})\hookrightarrow \textnormal{Fun}((\infty\cT op_{/F})^{\textnormal{opp}},\widehat{\mS})$ factors as the composition $\widehat{\textnormal{Shv}}(\infty\cT op_{/F})\hookrightarrow \widehat{\textnormal{Shv}}_{\textnormal{\'{e}t}_{/F}}(\infty\cT op_{/F})\hookrightarrow \textnormal{Fun}((\infty\cT op_{/F})^{\textnormal{opp}},\widehat{\mS})$. 

        \item [$(2)$] The inclusion $\widehat{\textnormal{Shv}}(\infty\cT op_{/F})\hookrightarrow \widehat{\textnormal{Shv}}_{\textnormal{\'{e}t}_{/F}}(\infty\cT op_{/F})$ induced by $(1)$ admits a cotopologoical (in our larger universe) left adjoint $C:\widehat{\textnormal{Shv}}_{\textnormal{\'{e}t}_{/F}}(\infty\cT op_{/F})\rightarrow \widehat{\textnormal{Shv}}(\infty\cT op_{/F})$. 
        
    \end{itemize}

    \noindent Moreover, the inclusion $\widehat{\textnormal{Shv}}(\infty\cT op_{/F})\hookrightarrow \textnormal{Fun}((\infty\cT op_{/F})^{\textnormal{opp}},\widehat{\mS})$ identifies $\widehat{\textnormal{Shv}}(\infty\cT op_{/F})^{\textnormal{hyp}}$ with the full subcategory of $\textnormal{Fun}((\infty\cT op_{/F})^{\textnormal{opp}},\widehat{\mS})$ spanned by those maps $G:(\infty\cT op_{/F})^{\textnormal{opp}}\rightarrow \widehat{\mS}$ which satisfy the following two properties:

    \begin{itemize}
        \item [$(\textnormal{i})$] $G$ preserves small products. 

        \item [$(\textnormal{ii})$] For all augmented semisimplicial objects $X_\bullet:(\Delta_{\textnormal{inj}}^{\textnormal{opp}})^\triangleright\rightarrow \infty\cT op_{/F}$ satisfying the property that for each $n\in \ZZ_{\geq 0}$ the $n$th covering map $X_n\rightarrow M_n(X)$ is both \'{e}tale and an effective epimorphism of $(\infty,1)$-topoi, the composition $\Delta_{\textnormal{inj}}^\triangleleft\xrightarrow{X^{\textnormal{opp}}_\bullet} (\infty\cT op_{/F})^{\textnormal{opp}}\xrightarrow{G} \widehat{\mS}$ is a limit diagram.

    \end{itemize}
    
\end{proposition}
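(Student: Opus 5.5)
The plan is to reduce everything to the local statement \ref{Sheaves on Topoi Result} by working one étale slice at a time. For each $(\infty,1)$-topos $\cX$ with a map $\eta:h_\cX\rightarrow F$, the equivalence $\cX\simeq (\infty\cT op^{\textnormal{\'{e}t}}_{/F})_{/(\cX,\eta)}$ gives a restriction functor $\rho_{(\cX,\eta)}:\textnormal{Fun}((\infty\cT op_{/F})^{\textnormal{opp}},\widehat{\mS})\rightarrow \textnormal{Fun}(\cX^{\textnormal{opp}},\widehat{\mS})$. By definition \ref{Relative Global Sheaves on Topoi Defn} (2), $G$ is a sheaf precisely when every $\rho_{(\cX,\eta)}G$ lies in $\textnormal{Shv}_{\widehat{\mS}}(\cX)$.

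The first step is to identify the étale topology on $\infty\cT op_{/F}$ locally. By \ref{Etale Slices of Sheaf Topoi} (1) and its remark on arbitrary right fibrations, $\textnormal{\'{e}t}_{/F}$ is the pullback of the étale topology along $p:\infty\cT op_{/F}\rightarrow\infty\cT op$. The étale topology is generated by a geometric site (\ref{Geom Sites Examples} (4)), and $F$ preserves small products by \ref{Coproducts of Inf.Topoi} together with \ref{Global Sheaves on Topoi Defn}. So \ref{Colimits in Right Fibrations} shows that $\infty\cT op_{/F}$ is geometric with coproducts created by $p$, and the site is again geometric (compare \ref{Geom Sites Examples} (6)).

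Next I would use \ref{Cech Descent Theorem} to show that $G$ is an $\textnormal{\'{e}t}_{/F}$-sheaf if and only if $G$ preserves small products and each $\rho_{(\cX,\eta)}G$ is a $\C an(\cX)$-sheaf. The forward direction holds because an étale covering $\{\cU_i\rightarrow\cX\}$ with $\coprod\cU_i\rightarrow\cX$ an effective epimorphism corresponds, under $\cX\simeq\infty\cT op^{\textnormal{\'{e}t}}_{/\cX}$, to an effective epimorphism in $\cX$. Its Čech nerve is computed the same way in both places, since pullbacks of étale maps are preserved (\ref{Coproducts of Inf.Topoi}). Conversely, every étale covering of an object $(\cX,\eta)$ lives in the slice over $\cX$.

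Part (1) then follows from \ref{Sheaves on Topoi Result} (1), which gives $\textnormal{Shv}_{\widehat{\mS}}(\cX)\subseteq\widehat{\textnormal{Shv}}_{\C an(\cX)}(\cX)$ for each $\cX$. One also needs that sheaves preserve products. This holds because product-preservation is tested on coproducts in $\infty\cT op^{\textnormal{\'{e}t}}_{/F}$, which are colimits preserved there by \ref{Colimits over Global Sheaves on Topoi}.

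For (2), I would take the class $W$ of morphisms of $\widehat{\textnormal{Shv}}_{\textnormal{\'{e}t}_{/F}}(\infty\cT op_{/F})$ generated by the images, under the left adjoints $(\rho_{(\cX,\eta)})_!$, of the maps $j:G'\rightarrow i(X)$ from the proof of \ref{Sheaves on Topoi Result} (1). These are comparison maps from sheaf-theoretic colimits of representables to representables of the colimit in $\cX$, and that proof shows they are $\infty$-connective. Sheaves are exactly the $W$-local objects. Pushing $\infty$-connective maps forward along the étale left adjoints preserves $\infty$-connectivity (\ref{Etale Slices of General Topoi} (1) and \citeq{highertopostheory} 6.5.1.16), so the localization is cotopological.

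The main obstacle is size and presentability. $\infty\cT op_{/F}$ is large, so I would pass to a larger universe in which it is small and check, via \ref{HyperSheafification Ind of Choice of Universe}, that sheafification and hypercompletion do not depend on the universe. A further issue is that $W$ is indexed by a large set of pairs $(\cX,\eta)$, so it must be shown to be of small generation in that universe.

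The hypercomplete statement follows by combining \ref{Cech Descent Theorem} $(2^\prime)$ with (2). Because $C$ is cotopological, $\widehat{\textnormal{Shv}}(\infty\cT op_{/F})^{\textnormal{hyp}}=\widehat{\textnormal{Shv}}_{\textnormal{\'{e}t}_{/F}}(\infty\cT op_{/F})^{\textnormal{hyp}}$. The $\textnormal{\'{e}t}_{/F}$-hypercoverings are exactly the semisimplicial objects in (ii): the $n$th covering maps are étale effective epimorphisms, and the matching objects are computed in $\infty\cT op_{/F}$ because $p$ creates pullbacks.
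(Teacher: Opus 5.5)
\textbf{Gap: left exactness of $C$.} In (2) you conclude that $C$ is cotopological from two facts: the sheaves are exactly the $W$-local \'{e}tale sheaves, and every map in $W$ is $\infty$-connective. Those facts only show that every $C$-equivalence is $\infty$-connective. A cotopological localization is by definition also left exact. By HTT 6.2.1.1 this means the strongly saturated class generated by $W$ must be stable under base change, and your proposal never addresses this.

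It does not follow formally from what you prove. $\infty$-connectivity of the generators says nothing about base change. Even base-change stability of $W$ itself would not suffice, because the two-out-of-three closure can destroy it: in $\mS$, $\tau_{\leq n}$ ($n\geq 0$) is the localization at the base-change-stable class of projections $S^{n+1}\times U\rightarrow U$, yet it is not left exact.

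The paper treats left exactness as a separate step. It proves it for the absolute localization $\widehat{\textnormal{Shv}}_{\textnormal{\'{e}t}}(\infty\cT op)\rightarrow\widehat{\textnormal{Shv}}(\infty\cT op)$, via the proof of HTT 6.2.3.20 together with HTT 6.3.5.21. It then transfers this to the slice over $F$ through the equivalence $\widehat{\textnormal{Shv}}(\infty\cT op_{/F})\simeq\widehat{\textnormal{Shv}}(\infty\cT op)_{/F}$. Your identification of the hypercomplete objects uses that $C$ is cotopological, so it inherits the same gap.

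\textbf{How to close it in your framework.} The following argument should work.
\begin{itemize}
\item Each $\rho_{(\cX,\eta)}$ preserves pullbacks, and the family $\{\rho_{(\cX,\eta)}\}$ is jointly conservative (evaluate at $1_\cX$).
\item Each $\rho_{(\cX,\eta)}$ commutes with sheafification, hence preserves colimits of \'{e}tale sheaves. The reason: for a map $g:(\cX_{/V},\eta)\rightarrow(\cY,\theta)$, an \'{e}tale covering of $\cY$ pulls back along $g$ to a $\C an(\cX)$-covering of $V$, since pullback of \'{e}tale maps is computed by $g^*$ (cf.\ the proof of \ref{Coproducts of Inf.Topoi}).
\item The same computation shows that, for $w\in W$, the base change of $\rho_{(\cX,\eta)}(w)$ to any representable of $\cX$ is a comparison map of the kind appearing in the proof of \ref{Sheaves on Topoi Result} (1).
\item Hence, by \ref{Strongly Saturated Classes of Maps Tested By Rep Functors}, $\rho_{(\cX,\eta)}$ carries $W$, and therefore every $C$-equivalence, to $C_\cX$-equivalences. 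Here $C_\cX:\widehat{\textnormal{Shv}}_{\C an(\cX)}(\cX)\rightarrow\textnormal{Shv}_{\widehat{\mS}}(\cX)$ is left exact by \ref{Sheaves on Topoi Result} (1).
\item Thus $\rho_{(\cX,\eta)}\circ C\simeq C_\cX\circ\rho_{(\cX,\eta)}$, and by the first point $C$ is left exact.
\end{itemize}

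\textbf{Comparison with the paper.} With this added, your route is a genuine alternative. The paper reduces to $F=*$ and re-proves $\infty$-connectivity by restricting $n$-truncated \'{e}tale sheaves to $\infty\cT op^{\textnormal{\'{e}t}}_{/\cX}\simeq\cX$. You instead transport the maps already shown $\infty$-connective in \ref{Sheaves on Topoi Result} (1).

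One correction there: the left adjoint of $\rho_{(\cX,\eta)}$ is not an \'{e}tale pushforward. The inclusion $\cX\simeq\infty\cT op^{\textnormal{\'{e}t}}_{/\cX}\hookrightarrow\infty\cT op_{/\cX}$ is not a right fibration, so \ref{Etale Slices of General Topoi} (1) does not apply. The left adjoint is the inverse image of the geometric morphism induced by this left exact, cover-preserving functor, followed by the \'{e}tale forgetful functor from the slice over $h_{(\cX,\eta)}$. It preserves $\infty$-connective maps for that reason.
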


\begin{proof}

Claim $(1)$ follows from \ref{Cech Descent Theorem}. Once we prove $(2)$,  the component of our claim identifying the hypercomplete objects of $\widehat{\textnormal{Shv}}(\infty\cT op)$ will also follow from \ref{Cech Descent Theorem} (see also \ref{Geom Sites Examples} (6)), so it only remains to $(2)$. To begin, we wish to show that the equivalence $\textnormal{Fun}((\infty\cT op)_{/F})^{\textnormal{opp}},\widehat{\mS})\xrightarrow{\sim} \textnormal{Fun}(\infty\cT op^{\textnormal{opp}},\widehat{\mS})_{/F}$ restricts to an equivalence $\widehat{\textnormal{Shv}}(\infty\cT op_{/F})\xrightarrow{\sim}\widehat{\textnormal{Shv}}(\infty\cT op)_{/F}$, where $\widehat{\textnormal{Shv}}(\infty\cT op)$ is the full subcategory of $\textnormal{Fun}(\infty\cT op^{\textnormal{opp}},\widehat{\mS})$ spanned by the sheaves. Applying \ref{Left Kan extensions of slice sheaves are sheaves}, it suffices to show that $E:((\infty\cT op)_{/F})^{\textnormal{opp}}\rightarrow \widehat{\mS}$ is a sheaf if and only if the right fibration given by the composition $(\infty\cT op_{/F})_{/E}\rightarrow \infty\cT op_{/F}\rightarrow \infty\cT op$ is classified by a sheaf $E^\prime:\infty\cT op^\textnormal{opp}\rightarrow \widehat{\mS}$. This fact may be deduced from combining \ref{Colimits over Global Sheaves on Topoi} with \ref{Colimits in Right Fibrations}.

In order to now prove our claim, it suffices to show that the map $L_F:\widehat{\textnormal{Shv}}_{\textnormal{\'{e}t}}(\infty\cT op)_{/F}\rightarrow \widehat{\textnormal{Shv}}(\infty\cT op)_{/F}$ left adjoint to the inclusion is a cotopological localization (in our larger universe). Noting that $F\in \widehat{\textnormal{Shv}}_{\textnormal{\'{e}t}}(\infty\cT op)$ is local with respect to the localization functor $L:\widehat{\textnormal{Shv}}_{\textnormal{\'{e}t}}(\infty\cT op)\rightarrow \widehat{\textnormal{Shv}}(\infty\cT op)$ induced by $(1)$, we may identify $L_F:\widehat{\textnormal{Shv}}_{\textnormal{\'{e}t}}(\infty\cT op)_{/F}\rightarrow \widehat{\textnormal{Shv}}(\infty\cT op)_{/F}$ as the map on slices induced by $L$. Note now the following:

\begin{itemize}

    \item [$(\textnormal{a})$] $L_F$ is left exact if $L:\widehat{\textnormal{Shv}}_{\textnormal{\'{e}t}}(\infty\cT op)\rightarrow \widehat{\textnormal{Shv}}(\infty\cT op)$ preserves pullbacks.

    \item [$(\textnormal{b})$] If $L$ is a localization at a (possibly large) set of arrows $S$ in $\widehat{\textnormal{Shv}}_{\textnormal{\'{e}t}}(\infty\cT op)$, then $L_F$ is a localization at $p^{-1}(S)$, where $p:\widehat{\textnormal{Shv}}_{\textnormal{\'{e}t}}(\infty\cT op)_{/F}\rightarrow \widehat{\textnormal{Shv}}_{\textnormal{\'{e}t}}(\infty\cT op)$ is the projection map (\ref{Reflective Localizations and Slices}).

    \item [$(\textnormal{c})$] An edge $f:X\rightarrow Y$ of $\widehat{\textnormal{Shv}}_{\textnormal{\'{e}t}}(\infty\cT op)_{/F}$ is $\infty$-connective if and only if $p(f):p(X)\rightarrow p(Y)$ is $\infty$-connective (\citeq{highertopostheory} 6.5.1.19).
    
\end{itemize}

\noindent It follows that $L_F$ is a cotopological localization whenever $L$ is a cotopological localization (both in our larger universe), so we are reduced to proving that $L$ is cotopological. The fact that $L$ is left exact follows from combining (the proof of) \citeq{highertopostheory} 6.2.3.20 with \citeq{highertopostheory} 6.3.5.21, so it only remains to show that $L$ restricts to a reflective localization at a (possibly large) set of $\infty$-connective morphisms in $\widehat{\textnormal{Shv}}_{\textnormal{\'{e}t}}(\infty\cT op)$. We will achieve this using a technique very similar to that used in the proof of \ref{Sheaves on Topoi Result}. 

Let $K$ be a small simplicial set, let $\psi:K\rightarrow \infty\cT op^{\textnormal{\'{e}t}}\subset \infty\cT op$ be a map, and let $\overline{\psi}:K^\triangleright\rightarrow \infty\cT op$ be a diagram which exhibits $\cX\in \infty\cT op$ as the colimit of $\psi$. Let $i:\infty\cT op\rightarrow \widehat{\textnormal{Shv}}_{\textnormal{\'{e}t}}(\infty\cT op)$ denote the Yoneda embedding, and write $\overline{i\circ \psi}:K^\triangleright\rightarrow \widehat{\textnormal{Shv}}_{\textnormal{\'{e}t}}(\infty\cT op)$ for a map which exhibits some $G\in \widehat{\textnormal{Shv}}_{\textnormal{\'{e}t}}(\infty\cT op)$ as the colimit of $i\circ \psi$, yielding a canonical morphism $j:G\rightarrow i(\cX)$. In order to prove our claim, it suffices to show that $j$ is $\infty$-connective. Writing $j^\prime:\Delta^1\rightarrow \widehat{\textnormal{Shv}}_{\textnormal{\'{e}t}}(\infty\cT op)_{/i(\cX)}$ for the map given by the right degenerate $2$-simplex $\Delta^2\rightarrow \widehat{\textnormal{Shv}}_{\textnormal{\'{e}t}}(\infty\cT op)$ corresponding to $j$, it suffices to prove that for all $n\in \ZZ_{\geq 0}$ and $n$-truncated $U\in \widehat{\textnormal{Shv}}_{\textnormal{\'{e}t}}(\infty\cT op)_{/i(X)}$, $U$ is local with respect to $j^\prime$ (\citeq{highertopostheory} 6.5.1.14). Fix such an $n\in \ZZ_{\geq 0}$ and $U\in \widehat{\textnormal{Shv}}_{\textnormal{\'{e}t}}(\infty\cT op)_{/i(X)}$. Identifying $j$ as an edge $K\star \Delta^1\rightarrow \widehat{\textnormal{Shv}}_{\textnormal{\'{e}t}}(\infty\cT op)$ of $\widehat{\textnormal{Shv}}_{\textnormal{\'{e}t}}(\infty\cT op)_{i\circ \psi/}$, and observing that the inclusion $\Delta^1\hookrightarrow K\star \Delta^1$ is right anodyne (\citeq{lurie2024kerodon} 0198), the map $K\star \Delta^1\rightarrow \widehat{\textnormal{Shv}}_{\textnormal{\'{e}t}}(\infty\cT op)$ admits a lift to an arrow $J:K\star \Delta^1\rightarrow \widehat{\textnormal{Shv}}_{\textnormal{\'{e}t}}(\infty\cT op)_{/i(X)}$ such that the composition $\Delta^1\hookrightarrow K\star \Delta^1\rightarrow \widehat{\textnormal{Shv}}_{\textnormal{\'{e}t}}(\infty\cT op)_{/i(X)}$ is precisely $j^\prime$. Noting that the composition $K^\triangleright\xrightarrow{\sim}K\star \{0\}\hookrightarrow K\star \Delta^1\xrightarrow{J}\widehat{\textnormal{Shv}}_{\textnormal{\'{e}t}}(\infty\cT op)_{/i(X)}$ is a colimit diagram, in order to prove our claim it suffices to prove that the composition 

    $$(K^{\textnormal{opp}})^\triangleleft\xrightarrow{\sim}(K\star \{1\})^\textnormal{opp}\hookrightarrow (K\star \Delta^1)^\textnormal{opp}\xrightarrow{J^\textnormal{opp}}(\widehat{\textnormal{Shv}}_{\textnormal{\'{e}t}}(\infty\cT op)_{/i(X)})^\textnormal{opp}\xrightarrow{h_U}\tau_{\leq n}\widehat{\mS},$$

\noindent which we will denote by $\psi^\prime$, is a limit diagram. Write $\phi:\widehat{\textnormal{Shv}}_{\textnormal{\'{e}t}}(\infty\cT op)_{/i(\cX)}\xrightarrow{\sim}\widehat{\textnormal{Shv}}_{\textnormal{\'{e}t}_{/\cX}}(\infty\cT op_{/\cX})$ for the categorical equivalence induced by \ref{Etale Slices of Sheaf Topoi}, and let $\psi_\cX:K^\triangleright\rightarrow (\infty\cT op^{\textnormal{\'{e}t}})_{/\cX}$ be a lifting of $\phi$ along the cone point of $K^\triangleright$. It now follows from \ref{Equivalence of slice topoi explicit Remark} that $\psi^\prime$ may be identified as the composition $(K^{\textnormal{opp}})^\triangleleft=(K^\triangleright)^{\textnormal{opp}}\xrightarrow{\psi_X^{\textnormal{opp}}}(\infty\cT op^{\textnormal{\'{e}t}}_{/\cX})^\textnormal{opp}\hookrightarrow (\infty\cT op_{/\cX})^\textnormal{opp}\xrightarrow{\phi(U)}\tau_{\leq n}\widehat{\mS}$. Since the composition $(\infty\cT op^{\textnormal{\'{e}t}}_{/\cX})^\textnormal{opp}\hookrightarrow (\infty\cT op_{/\cX})^\textnormal{opp}\xrightarrow{\phi(U)}\tau_{\leq n}\widehat{\mS}$ is an $n$-truncated sheaf for the effective epimorphism topology on $\infty\cT op^{\textnormal{\'{e}t}}_{/\cX}\cong \cX$ (\ref{preservation of trunactions 1}), it now follows from \ref{Sheaves on Topoi Result} that $\psi^\prime$ is a limit diagram, as desired.

\end{proof}

\newpage

\section{Appendix}\label{Appendix}

\subsection{Subcategories of Modules}\label{Subcategories of Modules}

\noindent The goal of this section is to prove the following, which we used to prove \ref{Internal Vs External Group Objects Theorem}:

\begin{proposition}{}\label{Localization of Modules}

    Let $\C^\otimes$ be a monoidal $(\infty,1)$-category satisfying the property that $\C$ admits geometric realizations and the tensor product $\otimes:\C\times \C\rightarrow \C$ preserves geometric realizations separately in each variable. Let $\cD\subseteq \C$ be a reflective subcategory such that the localization functor $L:\C\rightarrow \cD$ is compatible with the monoidal structure on $\C$, and let $A\in \textnormal{Alg}(\C)$. Then:

    \begin{itemize}
    
        \item [$(1)$] The map $L^\prime:\textnormal{LMod}_{A}(\C)\rightarrow \textnormal{LMod}_{L(A)}(\cD)$ induced by post-composition with $L^\otimes$ is a reflective localization.

        \item [$(2)$] An object $M\in \textnormal{LMod}_{A}(\C)$ is $L^\prime$-local if and only if the underlying $\C$-object of $M$ is contained in the full subcategory $\cD\subseteq \C$.
        
    \end{itemize}
    
\end{proposition}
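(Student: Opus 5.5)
We identify $\textnormal{LMod}_{L(A)}(\cD)$ with the full subcategory $\textnormal{LMod}_A(\C)^{\cD}\subseteq \textnormal{LMod}_A(\C)$ of modules whose underlying object lies in $\cD$, and then produce a left adjoint to the inclusion. Throughout we use the characterization of ``compatible with the monoidal structure'' (HA 2.2.1.7): $L$-equivalences are stable under $X\otimes(-)$ and $(-)\otimes X$. In that case $L^\otimes$ is a monoidal functor and $\cD^\otimes$ inherits a monoidal structure. Then HA 2.2.1.9 (or its $\textnormal{LM}^\otimes$-operadic variant) yields an adjunction $\textnormal{LMod}(\C)\rightleftarrows \textnormal{LMod}(\cD)$ which is a reflective localization on the level of pairs (algebra, module). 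Its local objects are exactly the pairs whose underlying objects both lie in $\cD$.

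The first step is to fix the algebra. We restrict to the fibre over $A$ on the source side and over $L(A)$ on the target side. The composite $\textnormal{LMod}_A(\C)\xrightarrow{L'}\textnormal{LMod}_{L(A)}(\cD)$ sends $M$ to $L(M)$ with its induced $L(A)$-action. For a candidate right adjoint we take restriction of scalars along the unit $A\rightarrow L(A)$: an $L(A)$-module $N$ in $\cD$, viewed in $\C$, becomes an $A$-module. We check the adjunction by computing mapping spaces. Maps of $A$-modules $M\rightarrow N$ are computed as a totalization of the cosimplicial space $\textnormal{Hom}_\C(A^{\otimes k}\otimes M,N)$. Similarly, maps of $L(A)$-modules $L(M)\rightarrow N$ are the totalization of $\textnormal{Hom}_\cD(L(A)^{\otimes_\cD k}\otimes_\cD L(M),N)$. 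Since $N\in\cD$ and $L(A^{\otimes k}\otimes M)\simeq L(A)^{\otimes_\cD k}\otimes_\cD L(M)$ by compatibility, the two cosimplicial objects agree termwise, and hence the totalizations agree. The hypothesis on geometric realizations is what guarantees this bar-type description of module mapping spaces (HA 4.7.x: the bar resolution $A^{\otimes\bullet+1}\otimes M\rightarrow M$ is a colimit diagram of modules). This requires $\otimes$ to preserve geometric realizations so that free modules and relative tensor products behave.

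Next we show the right adjoint is fully faithful with essential image as in (2). Restriction along $A\rightarrow L(A)$ is fully faithful on objects with underlying object in $\cD$. To see this, note that an $A$-module structure on $N\in\cD$ extends uniquely to an $L(A)$-module structure, because $A\otimes N\rightarrow L(A)\otimes N$ is an $L$-equivalence between $L$-local objects, hence an equivalence after applying $L$, and the action factors through $L(A\otimes N)$. This is the step we expect to be the main obstacle: making ``the action extends uniquely'' rigorous at the $\infty$-operadic level rather than objectwise. We would first try to deduce it from the fibration $\textnormal{LMod}(\cD)\rightarrow\textnormal{Alg}(\cD)$ together with the identification $\textnormal{Alg}(\cD)\simeq\textnormal{Alg}(\C)^{\cD}$ from HA 2.2.1.9. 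Pulling back along $A\mapsto L(A)$ and using that the global localization restricts on fibres then gives the result, provided the unit of the global adjunction lies over the unit $A\rightarrow L(A)$. Given full faithfulness, (1) follows, and (2) is immediate, since the local objects are precisely the image, namely the modules whose underlying $\C$-object lies in $\cD$.
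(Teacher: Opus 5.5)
\textbf{There is a genuine gap.} The global input you cite is right: HA 2.2.1.9 applied over $\mathcal{LM}^\otimes$ gives the reflective localization $\textnormal{LMod}(\C)\rightarrow\textnormal{LMod}(\cD)$, composition with $L^\otimes$, whose local objects are the pairs lying in $\cD$. But the proposal never proves the heart of the statement. That is the claim that restriction along $f\colon A\rightarrow L(A)$ is fully faithful on $\textnormal{LMod}_{L(A)}(\cD)$, with essential image the $A$-modules whose underlying object lies in $\cD$. You argue this only objectwise, and you flag it yourself as unresolved.

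The objectwise argument also contains a false step. Neither $A\otimes N$ nor $L(A)\otimes N$ is $L$-local in general. Only $N$ is, and that is all you need to get $\textnormal{Hom}_\C(L(A)\otimes N,N)\simeq\textnormal{Hom}_\C(A\otimes N,N)$. Your suggested repair, ``the global localization restricts on fibres'', is also inaccurate. The global localization carries the fibre over $A$ into the fibre over $L(A)$, and the whole point is to compare these two fibres.

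The missing idea, which is exactly what the paper uses, is about cartesian arrows.
\begin{itemize}
\item $\textnormal{LMod}(\C)\rightarrow\textnormal{Alg}(\C)$ is a cartesian fibration whose cartesian arrows are those with invertible image in $\C$ (HA 4.2.3.2).
\item Composition with $L^\otimes$ commutes with the forgetful functors to $\C$ and $\cD$, so it carries cartesian arrows to cartesian arrows.
\item Hence the cartesian arrow $f^*N\rightarrow N$ over $f$ goes to a cartesian arrow over $L(f)$. Since $L(f)$ is an equivalence, it goes to an equivalence.
\item Therefore the counit $L^\prime f^*N\rightarrow N$ is invertible, so $R^\prime=f^*$ is fully faithful.
\item The unit $M\rightarrow f^*L^\prime M$ has underlying map $M\rightarrow L(M)$. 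By conservativity of $\textnormal{LMod}_A(\C)\rightarrow\C$, it is invertible exactly when $M\in\cD$, which gives (2).
\end{itemize}

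Your adjunction step also has a gap as written. Termwise equivalences of the two cosimplicial spaces give nothing until you produce a map of cosimplicial objects. That requires knowing that $L^\prime$ carries the bar resolution of $M$ to a free resolution of $L^\prime M$; in particular, that $L^\prime(A\otimes X)$ is the free $L(A)$-module on $L(X)$. This is essentially the adjunction you are trying to prove.

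The bar resolution is unnecessary. Precomposition with the global unit $(A,M)\rightarrow (L(A),L(M))$ gives an equivalence
\[
\textnormal{Hom}_{\textnormal{LMod}(\cD)}\big((L(A),L(M)),(L(A),N)\big)\simeq \textnormal{Hom}_{\textnormal{LMod}(\C)}\big((A,M),(L(A),N)\big).
\]
This lies over the equivalence $\textnormal{Hom}(L(A),L(A))\simeq\textnormal{Hom}(A,L(A))$ sending $\textnormal{id}$ to $f$. Taking fibres over these points, and using the cartesian lift along $f$, yields
\[
\textnormal{Hom}_{\textnormal{LMod}_{L(A)}(\cD)}(L^\prime M,N)\simeq \textnormal{Hom}_{\textnormal{LMod}_A(\C)}(M,f^*N).
\]
The paper instead obtains the left adjoint by identifying $L^\prime$ with extension of scalars $L(A)\otimes_A(\cdot)$ followed by the fibrewise localization, using preservation of cocartesian arrows. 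That is where the geometric-realization hypothesis enters; the fibre argument above does not use it.
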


\noindent Our proof of \ref{Localization of Modules} will require some preliminaries.

\begin{lemma}{}\label{Tensor product commutes}

    Let $\C^\otimes$ and $\cD^\otimes$ be monoidal $(\infty,1)$-categories whose monoidal structures are each compatible with $\Delta^{\textnormal{opp}}$-indexed colimits. Let $F^\otimes:\C^\otimes\rightarrow \cD^\otimes$ be a monoidal $(\infty,1)$-functor such that $F:\C\rightarrow \cD$ preserves geometric realizations. Then, the square 

    $$
            \begin{tikzpicture}[node distance=2.9cm, auto]
                \node (A) {$ \textnormal{BMod}(\C)\times_{\textnormal{Alg}(\C)} \textnormal{BMod}(\C)$};
                \node (A1) [right of=A] {$ $};
                \node (B) [right of=A1] {$ \textnormal{BMod}(\cD)\times_{\textnormal{Alg}(\cD)} \textnormal{BMod}(\cD)  $};
                \node (C) [below of=A] {$ \textnormal{BMod}(\C) $};
                \node (D) [below of=B] {$ \textnormal{BMod}(\cD)   $};
                \draw[->] (A) to node {$ F^\otimes\circ $} (B);
                \draw[->] (A) to node [swap] {$  $} (C);
                \draw[->] (C) to node [swap] {$ F^\otimes \circ  $} (D);
                \draw[->] (B) to node {$  $} (D);
            \end{tikzpicture}
        $$
        
    \noindent commutes up to homotopy, where the vertical maps are given by the corresponding relative tensor product functors.

\end{lemma}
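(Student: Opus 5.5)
We need to prove that the relative tensor product commutes with monoidal functors preserving geometric realizations. The plan is to use the bar construction description of the relative tensor product from Higher Algebra 4.4.2. Recall that for a monoidal $(\infty,1)$-category $\C^\otimes$ compatible with $\Delta^{\textnormal{opp}}$-indexed colimits, the relative tensor product functor $\textnormal{BMod}(\C)\times_{\textnormal{Alg}(\C)}\textnormal{BMod}(\C)\rightarrow \textnormal{BMod}(\C)$ carries a pair $(M,N)$, with $M$ an $A$-$B$ bimodule and $N$ a $B$-$C$ bimodule, to the geometric realization of the two-sided bar construction $\textnormal{Bar}_B(M,N)_\bullet$. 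Here $\textnormal{Bar}_B(M,N)_n\simeq M\otimes B^{\otimes n}\otimes N$, regarded as a simplicial object of $A$-$C$ bimodules. More precisely, the functor is constructed as an operadic left Kan extension along the inclusion of $\infty$-operads $\mathcal{BM}\times_{\mathcal{A}ss}\mathcal{BM}\rightarrow \mathcal{T}$ (Lurie's $\mathcal{T}\textnormal{en}$ construction), or equivalently via the simplicial bar construction functor $\textnormal{Fun}(\Delta^{\textnormal{opp}},-)$ followed by colimits.

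First, I would observe that composition with $F^\otimes$ is compatible with the bar construction before realization. The bar construction $\textnormal{Bar}$ is induced by a map of $\infty$-operads (restriction along a fixed functor of operads, HA 4.4.2.7), and $F^\otimes$ acts by postcomposition. Restriction and postcomposition commute strictly, so the square with $\textnormal{Fun}(\Delta^{\textnormal{opp}},\textnormal{BMod}(-))$ in the bottom corners commutes. Second, I would show that $F^\otimes\circ$ commutes with geometric realization in $\textnormal{BMod}$. Geometric realizations in $\textnormal{BMod}(\C)$ (for fixed algebras, or over $\textnormal{Alg}(\C)$) are computed underlying in $\C$ (HA 4.3.3.9/4.4.2.x, using that $\otimes$ preserves geometric realizations separately in each variable, hence jointly since $\Delta^{\textnormal{opp}}$ is sifted). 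Since $F$ preserves geometric realizations, the canonical comparison map $|F\circ\textnormal{Bar}_B(M,N)_\bullet|\rightarrow F(M\otimes_B N)$ is an equivalence on underlying objects of $\cD$, hence an equivalence in $\textnormal{BMod}(\cD)$ by conservativity of the forgetful functor.

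To make this a homotopy-commutative square rather than a pointwise statement, I would build the natural comparison transformation universally. By the universal property of the relative tensor product, namely that $\textnormal{BMod}(\C)\times_{\textnormal{Alg}(\C)}\textnormal{BMod}(\C)\rightarrow\textnormal{BMod}(\C)$ is the operadic left Kan extension / the $\mathcal{T}$-algebra exhibiting the tensor product as a colimit, the lax structure of $F^\otimes$ on $\mathcal{T}$-algebras yields a natural map $(F M)\otimes_{FB}(F N)\rightarrow F(M\otimes_B N)$. Concretely, I would take the map of colimits induced by the identification of bar constructions from the first step. Checking that this map is an equivalence reduces, via the first and second steps, to $F$ preserving geometric realizations.

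The main obstacle I expect is the coherence: producing the natural transformation functorially in $(M,N)$ and over the varying algebras, rather than objectwise. I would handle this by working with HA 4.4.2.8, which characterizes the tensor product via the $\mathcal{T}$-operad: a $\mathcal{T}$-algebra is ``quasi-free'' exactly when its value on the composite color is the colimit of the bar construction. I would then show that postcomposition with $F^\otimes$ preserves this colimit condition. The fully faithful inclusion of such algebras into all $\mathcal{T}$-algebras, together with the trivial fibration onto $\textnormal{BMod}\times_{\textnormal{Alg}}\textnormal{BMod}$, then makes the square commute up to homotopy automatically.
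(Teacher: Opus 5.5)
Your proposal is correct and is essentially the paper's argument. The paper proves the lemma by directly combining HA 4.4.2.8, 4.4.3.2 and 4.4.3.9, i.e.\ Lurie's bar-construction criterion for when an algebra over the tensor-product operad exhibits the relative tensor product, together with the construction of the tensor product functor from such algebras. Your final paragraph unpacks exactly that citation: postcomposition with $F^\otimes$ preserves the criterion, because $F^\otimes$ is strongly monoidal (so $F\circ\textnormal{Bar}_B(M,N)_\bullet\simeq\textnormal{Bar}_{F(B)}(F(M),F(N))_\bullet$) and $F$ preserves geometric realizations; any two lifts through the trivial fibration onto $\textnormal{BMod}(\mathcal{D})\times_{\textnormal{Alg}(\mathcal{D})}\textnormal{BMod}(\mathcal{D})$ are then homotopic.
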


\begin{proof}

    Combine \citeq{lurie2017higheralgebra} 4.4.2.8, \citeq{lurie2017higheralgebra} 4.4.3.2, and \citeq{lurie2017higheralgebra} 4.4.3.9. 
    
\end{proof}

\begin{corollary}{}\label{Pres. of cocart arrows for LMod cor}

    Under the same assumptions as \ref{Tensor product commutes}, write $p:\textnormal{LMod}(\C)\rightarrow \textnormal{Alg}(\C)$ and $q:\textnormal{LMod}(\cD)\rightarrow \textnormal{Alg}(\cD)$ for the forgetful functors. Then, the map $\textnormal{LMod}_F(\cdot):\textnormal{LMod}(\C)\rightarrow \textnormal{LMod}(\cD)$ given by post-composition with $F^\otimes$ satisfies the following two properties:

    \begin{itemize}
    
        \item [$(1)$] $\textnormal{LMod}_F(\cdot)$ carries $p$-cartesian arrows to $q$-cartesian arrows. 

        \item [$(2)$] $\textnormal{LMod}_F(\cdot)$ carries $p$-cocartesian arrows to $q$-cocartesian arrows. 
        
    \end{itemize}
    
\end{corollary}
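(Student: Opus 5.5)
The plan is to work with the description of $\textnormal{LMod}(\C)\rightarrow \textnormal{Alg}(\C)$ as a bicartesian fibration. The cocartesian transport along an algebra map $A\rightarrow B$ is the relative tensor product $B\otimes_A(-)$, computed as the geometric realization of the bar construction $\textnormal{Bar}_A(B,M)_\bullet$ with terms $B\otimes A^{\otimes n}\otimes M$ (\citeq{lurie2017higheralgebra} 4.4.2.8, 4.4.3.9). The cartesian transport is restriction of scalars. With this description, claim (1) and claim (2) are handled separately.

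For (1), a morphism $f:M\rightarrow N$ in $\textnormal{LMod}(\C)$ over $\alpha:A\rightarrow B$ is $p$-cartesian exactly when the underlying map $M\rightarrow N$ is an equivalence in $\C$. This holds because restriction of scalars does not change underlying objects, and cartesian lifts are characterized as those edges whose comparison map to $\alpha^*N$ is an equivalence. Since $\textnormal{LMod}_F(\cdot)$ is compatible with the forgetful functors to $\C$ and $\cD$ (it is post-composition with $F^\otimes$, which covers $F$ on underlying objects), it carries maps that are equivalences on underlying objects to maps that are equivalences on underlying objects. It therefore preserves cartesian edges. This step only needs that the forgetful functor $\textnormal{LMod}(\C)\rightarrow \C$ detects the cartesian edges, which follows from \citeq{lurie2017higheralgebra} 4.2.3.2 (conservativity and limits) together with the description of $\alpha^*$.

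For (2), a morphism $M\rightarrow N$ over $\alpha$ is $p$-cocartesian exactly when the induced map $B\otimes_A M\rightarrow N$ is an equivalence in $\textnormal{LMod}_B(\C)$. Applying $\textnormal{LMod}_F(\cdot)$ sends this to the map $F(B)\otimes_{F(A)}F(M)\rightarrow F(N)$, after identifying $F(B\otimes_A M)$ with $F(B)\otimes_{F(A)}F(M)$. That identification is exactly \ref{Tensor product commutes}, applied to the pair $(B,M)$ viewed in $\textnormal{BMod}(\C)\times_{\textnormal{Alg}(\C)}\textnormal{BMod}(\C)$, where $B$ is regarded as a $B$-$A$-bimodule and $M$ as an $A$-$\bm{1}$-bimodule. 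The actual input is that $F$ preserves geometric realizations and is monoidal, so it carries the bar construction to the bar construction.

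The main obstacle is to check that the equivalence supplied by \ref{Tensor product commutes} is compatible with the canonical comparison map. Concretely, the composite
$$F(B)\otimes_{F(A)}F(M)\xrightarrow{\sim}F(B\otimes_A M)\rightarrow F(N)$$
must agree with the map $F(B)\otimes_{F(A)}F(M)\rightarrow F(N)$ induced by the image edge $F(M)\rightarrow F(N)$. I would handle this by computing both maps from the universal property of the relative tensor product as a colimit of the bar construction, restricted to the $0$-simplices $F(B)\otimes F(M)\rightarrow F(N)$, where both maps visibly coincide. The identification then follows because maps out of a geometric realization are determined by compatible data on the simplicial diagram.
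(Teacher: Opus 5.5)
Your part (1) is the paper's argument. The gap is in (2), at exactly the step you flag as the main obstacle: your proposed resolution does not work.

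Suppose the two $F(B)$-module maps $F(B)\otimes_{F(A)}F(M)\rightarrow F(N)$ agree on the $0$-simplices $F(B)\otimes F(M)$. Equivalently, their underlying maps $F(M)\rightarrow F(N)$ in $\cD$ are homotopic. This does not make the two module maps homotopic. A map out of a geometric realization is a point of $\lim_{[n]\in\Delta}$ of the mapping spaces out of the terms, and projecting to the $[0]$-term need not be injective on $\pi_0$. Here, by adjunction, such maps are $F(A)$-linear maps $F(M)\rightarrow F(\alpha)^*F(N)$. The higher simplices of the bar construction carry exactly the $F(A)$-linearity data that your check forgets.

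What you actually need is the following. Write $u:M\rightarrow B\otimes_AM$ and $u^\prime:F(M)\rightarrow F(B)\otimes_{F(A)}F(M)$ for the unit edges. The identification $\phi:F(B)\otimes_{F(A)}F(M)\simeq F(B\otimes_AM)$ must carry $u^\prime$ to $\textnormal{LMod}_F(u)$, as edges of $\textnormal{LMod}(\cD)$ over $F(\alpha)$. Uniqueness of factorization through the cocartesian edge $u^\prime$ then gives the compatibility of comparison maps. But that requirement is essentially (2) for the edge $u$, so it cannot be taken for granted.

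The paper obtains this coherence by transporting the construction of cocartesian edges, rather than comparing maps afterwards. In the proof of HA 4.6.2.17, the cocartesian edges of $p$ are built by applying the relative tensor product functor to data coming from cartesian edges of $\textnormal{BMod}(\C)\rightarrow\textnormal{Alg}(\C)\times\textnormal{Alg}(\C)$. An example is restricting $B$ along $\alpha$ to a $B$-$A$-bimodule.

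The paper first notes, by the argument of (1), that post-composition with $F^\otimes$ preserves these $\textnormal{BMod}$-cartesian edges. You use this step implicitly when you treat $F$ of the $B$-$A$-bimodule $B$ as the $F(B)$-$F(A)$-bimodule $F(B)$.

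Next, \ref{Tensor product commutes} is a natural equivalence of functors, so it applies to edges as well as objects. Hence the image of the constructed $p$-cocartesian edge is equivalent to the constructed $q$-cocartesian edge. Any $p$-cocartesian edge differs from the constructed one by an equivalence in the fibre, so (2) follows. Replacing your $0$-simplex comparison with this naturality argument repairs the proof.
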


\begin{proof}

 Claim $(1)$ is true in general without any assumptions about the existence of certain geometric realizations, and may be deduced from combining \citeq{lurie2017higheralgebra} 4.2.3.2 with the fact that the compositions $\textnormal{LMod}(\C)\xrightarrow{\textnormal{LMod}_F(\cdot)}\textnormal{LMod}(\cD)\rightarrow \cD$ and $\textnormal{LMod}(\C)\rightarrow \C \xrightarrow{F}\cD$ agree. 

 Noting that a similar proof to $(1)$ shows that $\textnormal{BMod}(\C)\rightarrow \textnormal{BMod}(\cD)$ carries arrows cartesian over $\textnormal{Alg}(\C)\times \textnormal{Alg}(\C)$ to arrows cartesian over $\textnormal{Alg}(\cD)\times \textnormal{Alg}(\cD)$, claim $(2)$ follows from combining \ref{Tensor product commutes} with the respective contructions of $p$-cocartesian arrows and $q$-cocartesian arrows given in the proof of \citeq{lurie2017higheralgebra} 4.6.2.17.

\end{proof}

\begin{proof}(\ref{Localization of Modules}.) Acknowledging the slight abuse of notation, we will prove the analogous claim for the map $\textnormal{LMod}^{\mathbb{A}_\infty}_A(\C)\rightarrow \textnormal{LMod}^{\mathbb{A}_\infty}_{\textnormal{Alg}_{L}(A)}(\cD)$, where we identify $\C^\otimes$ and $\cD^\otimes$ with cocartesian fibrations $P:\C^\otimes\rightarrow \Delta^{\textnormal{opp}}$ and $Q:\cD^\otimes\rightarrow \Delta^{\textnormal{opp}}$, respectively. Consider now the counit map $\mu:\Delta^1\times \cD^\otimes\rightarrow \cD^\otimes$, which is a natural isomorphism from $L^\otimes\circ i$ to $\textnormal{id}_{\cD^\otimes}$, where $i:\cD^\otimes\hookrightarrow \C^\otimes$ is the inclusion. In particular, since the only isomorphisms in $\Delta^{\textnormal{opp}}$ are the identity morphisms, the map $\textnormal{Fun}(\cD^\otimes,\cD^\otimes)\rightarrow \textnormal{Fun}(\cD^\otimes,\Delta^{\textnormal{opp}})$ carries $\mu$ to the identity map $\textnormal{id}_Q$. Consider now the edge of $\textnormal{Fun}(\C^\otimes,\textnormal{Assoc}^\otimes)\times_{\textnormal{Fun}(\cD^\otimes,\textnormal{Assoc}^\otimes)}\textnormal{Fun}(\cD^\otimes,\cD^\otimes)$ given by $(\textnormal{id}_P,\mu)$, which is an isomorphism from $(P,L^\otimes|_{\cD^\otimes})$ to $(P,\textnormal{id}_{\cD^\otimes})$. Next, combine \citeq{lurie2024kerodon} 01TB with (the opposite of) \citeq{lurie2024kerodon} 039R to deduce that $L^\otimes$ is $Q$-right Kan extended from $\cD^\otimes\subseteq \C^\otimes$. Since the map $\textnormal{Fun}(\C^\otimes,\cD^\otimes)\rightarrow \textnormal{Fun}(\C^\otimes,\textnormal{Assoc}^\otimes)\times_{\textnormal{Fun}(\cD^\otimes,\textnormal{Assoc}^\otimes)}\textnormal{Fun}(\cD^\otimes,\cD^\otimes)$ carries $L^\otimes$ to $(P,L^\otimes|_{\cD^\otimes})$, it follows from \citeq{lurie2024kerodon} 030R that there exists an arrow $L^{\prime,\otimes}:\C^\otimes\rightarrow \cD^\otimes$ isomorphic to $L^\otimes$, such that $\textnormal{Fun}(\C^\otimes,\cD^\otimes)\rightarrow \textnormal{Fun}(\C^\otimes,\textnormal{Assoc}^\otimes)\times_{\textnormal{Fun}(\cD^\otimes,\textnormal{Assoc}^\otimes)}\textnormal{Fun}(\cD^\otimes,\cD^\otimes)$ carries $L^{\prime,\otimes}$ to $(P,\textnormal{id}_{\cD^\otimes})$. In particular, $L^{\prime,\otimes}$ is a monoidal $(\infty,1)$-functor, and satisfies the property that $L^{\prime,\otimes}|_{\cD^\otimes}=\textnormal{id}_{\cD^\otimes}$ (as a map of simplicial sets). For the remainder of this proof, we will suppose that $L^\otimes:\C^\otimes\rightarrow \cD^\otimes$ is equal (as a map of simplicial sets) to $L^{\prime,\otimes}$.

Consider now the reflective localization $L^{\prime\prime}:\textnormal{Alg}_{/\Delta^{\textnormal{opp}}}(\C)\rightarrow \textnormal{Alg}_{/\Delta^{\textnormal{opp}}}(\cD)$ given by post-composition with $L^{\otimes}$. Noting that the map $\textnormal{Hom}(A,L(A))\rightarrow \textnormal{Hom}(L(A),L(A))$ is a homotopy equivalence, we may choose some $f:A\rightarrow L(A)$ which is carried to $\textnormal{id}_{L(A)}$ (up to equivalence). Next, combining \ref{Pres. of cocart arrows for LMod cor} with \citeq{lurie2024kerodon} 05J5 implies that the compositions $\textnormal{LMod}^{\AA_\infty}_A(\C)\xrightarrow{L(A)\otimes_A(\cdot)}\textnormal{LMod}^{\AA_\infty}_{L(A)}(\C)\rightarrow \textnormal{LMod}^{\AA_\infty}_{L(A)}(\cD)$ and $\textnormal{LMod}^{\AA_\infty}_A(\C)\rightarrow \textnormal{LMod}^{\AA_\infty}_{L(A)}(\cD)\xrightarrow{L(A)\otimes_{L(A)}(\cdot)}\textnormal{LMod}^{\AA_\infty}_{L(A)}(\cD)$ agree (up to natural isomorphism). Since $f:A\rightarrow L(A)$ was carried to $\textnormal{id}_{L(A)}$ (up to equivalence), it follows that the composition $\textnormal{LMod}^{\AA_\infty}_A(\C)\xrightarrow{L(A)\otimes_A(\cdot)}\textnormal{LMod}^{\AA_\infty}_{L(A)}(\C)\rightarrow \textnormal{LMod}^{\AA_\infty}_{L(A)}(\cD)$ is naturally isomorphic to $L^\prime:\textnormal{LMod}^{\AA_\infty}_{A}(\C)\rightarrow \textnormal{LMod}^{\AA_\infty}_{L(A)}(\cD)$. In particular, $L^\prime$ admits a right adjoint $R^\prime:\textnormal{LMod}^{\AA_\infty}_{L(A)}(\cD)\rightarrow \textnormal{LMod}^{\AA_\infty}_{A}(\C)$ given by the composition $\textnormal{LMod}^{\AA_\infty}_{L(A)}(\cD)\hookrightarrow \textnormal{LMod}^{\AA_\infty}_{L(A)}(\C)\xrightarrow{(\cdot)|_{A}}\textnormal{LMod}^{\AA_\infty}_{L(A)}(\C)$. In order to prove our claim, it now suffices to show the following:

\begin{itemize}

    \item [$(i)$] The map $R^\prime:\textnormal{LMod}^{\AA_\infty}_{L(A)}(\cD)\rightarrow \textnormal{LMod}^{\AA_\infty}_{A}(\C)$ is fully faithful. 

    \item [$(ii)$] Let $\eta:\Delta^1\times \textnormal{LMod}^{\AA_\infty}_{A}(\C)\rightarrow \textnormal{LMod}^{\AA_\infty}_{A}(\C)$ be the unit map which exhibits $L^\prime$ as left adjoint to $R^\prime$. Then, the underlying $\C$-object of some $M\in \textnormal{LMod}^{\AA_\infty}_{A}(\C)$ is in $\cD$ precisely when $\eta_M:\Delta^1\rightarrow \textnormal{LMod}^{\AA_\infty}_{A}(\C)$ is an isomorphism. 
    
\end{itemize}

\noindent We now prove these in order. Let $\gamma^0:\Delta^1\times \textnormal{LMod}^{\AA_\infty}_{L(A)}(\C)\rightarrow \textnormal{LMod}^{\AA_\infty}_{L(A)}(\C)$ be the counit which exhibits $L(A)\otimes_A(\cdot)$ as left adjoint to $(\cdot)|_{A}$, and let $\gamma^1:\Delta^1\times \textnormal{LMod}^{\AA_\infty}_{L(A)}(\cD)\rightarrow \textnormal{LMod}^{\AA_\infty}_{L(A)}(\cD)$ be the counit which exhibits the inclusion $i^\prime:\textnormal{LMod}^{\AA_\infty}_{L(A)}(\cD)\hookrightarrow \textnormal{LMod}^{\AA_\infty}_{L(A)}(\C)$ as right adjoint to $\textnormal{LMod}^{\AA_\infty}_{L(A)}(\C)\rightarrow \textnormal{LMod}^{\AA_\infty}_{L(A)}(\cD)$. Let $\gamma^{0,\prime}$ be the composition 

$$\Delta^1\times \textnormal{LMod}^{\AA_\infty}_{L(A)}(\cD)\xrightarrow{\textnormal{id}\times i^\prime}\Delta^1\times \textnormal{LMod}^{\AA_\infty}_{L(A)}(\C)\xrightarrow{\gamma^0}\textnormal{LMod}^{\AA_\infty}_{L(A)}(\C)\rightarrow \textnormal{LMod}_{L(A)}(\cD),$$

\noindent and let $\gamma:\Delta^1\times \textnormal{LMod}^{\AA_\infty}_{L(A)}(\cD)\rightarrow \textnormal{LMod}^{\AA_\infty}_{L(A)}(\cD)$ be a composition $\gamma^1\circ \gamma^{0,\prime}$. Noting that $\gamma:\Delta^1\times \textnormal{LMod}^{\AA_\infty}_{L(A)}(\cD)\rightarrow \textnormal{LMod}^{\AA_\infty}_{L(A)}(\cD)$ may be identified as a counit exhibiting $R^\prime$ as right adjoint to $L^\prime$, in order to prove $(i)$ it suffices to show that $\gamma$ is a natural isomorphism (\citeq{lurie2024kerodon} 02FF). Noting further that $\gamma^1$ is a natural isomrphism, it suffices to show that $\gamma^{0,\prime}$ is a natural isomorphism. In particular, it suffices to show that for each $N\in \textnormal{LMod}^{\AA_\infty}_{L(A)}(\cD)$, the map $\gamma^0_N:L(A)\otimes_A(i^\prime(N)|_A)\rightarrow N$ in $\textnormal{LMod}^{\AA_\infty}_{L(A)}(\C)$ is carried to an isomorphism by $\textnormal{LMod}^{\AA_\infty}_{L(A)}(\C)\rightarrow \textnormal{LMod}^{\AA_\infty}_{L(A)}(\cD)$. Since \ref{Pres. of cocart arrows for LMod cor} implies that the map $i^\prime(N)|_A\rightarrow L(A)\otimes^\C_A(i^\prime(N)|_A)$ over $f:A\rightarrow L(A)$ is taken to an isomorphism in $\textnormal{LMod}^{\AA_\infty}(\cD)$ by $\textnormal{LMod}^{\AA_\infty}(\C)\rightarrow \textnormal{LMod}^{\AA_\infty}(\cD)$, the proof of \citeq{highertopostheory} 5.2.2.8 implies that it suffices to prove that the cartesian arrow $i^\prime(N)|_A\rightarrow i^\prime(N)$ over $f:A\rightarrow L(A)$ is taken to an isomorphism by $\textnormal{LMod}^{\AA_\infty}(\C)\rightarrow \textnormal{LMod}^{\AA_\infty}(\cD)$, which follows immediately from \ref{Pres. of cocart arrows for LMod cor}. We now show $(ii)$. Since the composition $\textnormal{LMod}^{\AA_\infty}_{L(A)}(\cD)\xrightarrow{R^\prime}\textnormal{LMod}^{\AA_\infty}_{A}(\C)\rightarrow \C$ is canonically equivalent to the composition $\textnormal{LMod}^{\AA_\infty}_{L(A)}(\cD)\rightarrow \cD\hookrightarrow \C$, we have that if the map $\eta_M:\Delta^1\rightarrow \textnormal{LMod}^{\AA_\infty}_{A}(\C)$ of $(ii)$ is an isomorphism then the underlying $\C$-object of $M$ is in $\cD$. Supposing now that the underlying $\C$-object of $M$ is in $\cD$, it suffices to show that the image of $\gamma_M$ in $\textnormal{LMod}^{\AA_\infty}_{L(A)}(\cD)$ is an isomorphism (since $\textnormal{LMod}^{\AA_\infty}_{A}(\C)\rightarrow \C$ is conservative). This follows immediately from the fact that $L^\prime:\textnormal{LMod}^{\AA_\infty}_{A}(\C)\rightarrow \textnormal{LMod}^{\AA_\infty}_{L(A)}(\cD)$ is a reflective localization (by $(i)$), completing the proof of our claim.

\end{proof}

\subsection{(Hyper)sheafification and Change of Universe}\label{(Hyper)sheafification and Change of Universe}

The main goal of this section is to show the following:

\begin{proposition}{}\label{HyperSheafification Ind of Choice of Universe}

Let $\C$ be a small $(\infty,1)$-category equipped with a Grothendieck topology $J$. Write $i:\textnormal{Fun}(\C^{\textnormal{opp}},\mS)\hookrightarrow \textnormal{Fun}(\C^{\textnormal{opp}},\widehat{\mS})$ and  $i^\prime:\textnormal{Shv}_J(\C)\hookrightarrow \widehat{\textnormal{Shv}}_J(\C)$ for the corresponding inclusions, let $L:\textnormal{Fun}(\C^{\textnormal{opp}},\mS)\rightarrow \textnormal{Shv}_J(\C)$ denote the sheafification functor, which we identify as an arrow $L:\textnormal{Fun}(\C^{\textnormal{opp}},\mS)\rightarrow \textnormal{Fun}(\C^{\textnormal{opp}},\mS)$, and write $L^{\textnormal{hyp}}:\textnormal{Shv}_J(\C)\rightarrow \textnormal{Shv}_J(\C)^{\textnormal{hyp}}$ for the map left adjoint to the inclusion, which we identify as an arrow $L^{\textnormal{hyp}}:\textnormal{Shv}_J(\C)\rightarrow \textnormal{Shv}_J(\C)$. For each $n\in \ZZ$ let $\tau_{\leq n}:\textnormal{Shv}_J(\C)\rightarrow\tau_{\leq n}\textnormal{Shv}_J(\C)$ be the corresponding truncation functor, which we identify as an arrow $\tau_{\leq n}:\textnormal{Shv}_J(\C)\rightarrow \textnormal{Shv}_J(\C)$. Similarly define the maps $L^\prime:\textnormal{Fun}(\C^{\textnormal{opp}},\widehat{\mS})\rightarrow \widehat{\textnormal{Shv}}_J(\C)\subseteq \textnormal{Fun}(\C^{\textnormal{opp}},\widehat{\mS})$, $L^{\prime,\textnormal{hyp}}:\widehat{\textnormal{Shv}}_J(\C)\rightarrow\widehat{\textnormal{Shv}}_J(\C)^\textnormal{hyp}\subseteq \widehat{\textnormal{Shv}}_J(\C)$ and $\tau^\prime_{\leq n}:\widehat{\textnormal{Shv}}_J(\C)\rightarrow \tau_{\leq n}\widehat{\textnormal{Shv}}_J(\C)\subseteq \widehat{\textnormal{Shv}}_J(\C)$. Then: 

    \begin{itemize}
    
        \item [$(1)$] $i^\prime \circ L\cong L^\prime\circ i$ as maps $\textnormal{Fun}(\C^{\textnormal{opp}},\mS)\rightarrow \textnormal{Fun}(\C^{\textnormal{opp}},\widehat{\mS})$.

        \item [$(2)$] $i^\prime\circ \tau_{\leq n} \cong \tau^\prime_{\leq n}\circ i^\prime$ as maps $\textnormal{Shv}_J(\C)\rightarrow \widehat{\textnormal{Shv}}_J(\C)$. 

        \item [$(3)$] $i^\prime\circ L^{\textnormal{hyp}}\cong L^{\prime,\textnormal{hyp}}\circ i^\prime$ as maps $\textnormal{Shv}_J(\C)\rightarrow\widehat{\textnormal{Shv}}_J(\C)$.
        
    \end{itemize}

\end{proposition}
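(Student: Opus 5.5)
The strategy is to reduce each of (1)--(3) to one statement: the inclusion $i^\prime:\textnormal{Shv}_J(\C)\hookrightarrow \widehat{\textnormal{Shv}}_J(\C)$ preserves small colimits and finite limits. I would prove this first, as a separate lemma (this is presumably the \ref{Small Sheaves are stable under small limits} and \ref{small hypersheaves are large hypersheaves} circle of results).

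For (1), let $T$ be the class of monomorphisms $m:S\rightarrow h^\C_X$ corresponding to $J$-covering sieves. Then $\textnormal{Shv}_J(\C)$ is the $T$-local objects of $\textnormal{Fun}(\C^{\textnormal{opp}},\mS)$, and $\widehat{\textnormal{Shv}}_J(\C)$ is the $i(T)$-local objects of $\textnormal{Fun}(\C^{\textnormal{opp}},\widehat{\mS})$, since $i$ preserves representables and colimits. The map $i$ preserves small colimits, so it carries the strongly saturated class $\overline{T}$ into $\overline{i(T)}$ (\citeq{highertopostheory} 5.5.4.10). Hence for $F$ small, $i(F)\rightarrow i(LF)$ is an $L^\prime$-equivalence. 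Its target is $i(T)$-local, because a small sheaf remains local for maps between small presheaves after enlarging the universe (mapping spaces are unchanged). Therefore $i(LF)\cong L^\prime i(F)$. This step is essentially \citeq{lurie2009derivedalgebraicgeometryv} 2.4.10.

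Two consequences follow from (1). First, $i^\prime$ preserves small colimits: colimits in $\textnormal{Shv}_J(\C)$ are $L$ of presheaf colimits, and $i$ preserves presheaf colimits. Second, $i^\prime$ is left exact, because limits of sheaves are computed pointwise.

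For (2), truncation in the presheaf topos $\textnormal{Fun}(\C^{\textnormal{opp}},\widehat{\mS})$ is pointwise, and $\tau_{\leq n}:\widehat{\mS}\rightarrow\widehat{\mS}$ restricts to $\tau_{\leq n}$ on $\mS$. Truncation in a left exact localization is $L^\prime\circ\tau_{\leq n}$, and likewise $L\circ\tau_{\leq n}$ in the small universe (\citeq{highertopostheory} 5.5.6.28). Combining these with (1) gives $i^\prime\tau_{\leq n}F\cong L^\prime i\tau_{\leq n}F\cong L^\prime\tau_{\leq n}iF\cong\tau^\prime_{\leq n}i^\prime F$.

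For (3), I would use two facts. First, $L^{\textnormal{hyp}}$ is the localization at $\infty$-connective morphisms. Second, the unit $F\rightarrow L^{\textnormal{hyp}}F$ is itself $\infty$-connective, hence is carried by $i^\prime$ to an $\infty$-connective map. This holds because, by (2) and left exactness, $i^\prime$ preserves $n$-connectivity: connectivity is detected by truncations of diagonals.

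The main obstacle is showing that $i^\prime$ carries hypercomplete small sheaves to hypercomplete large sheaves. Locality against small $\infty$-connective maps is automatic. The difficulty is that a large $\infty$-connective map $G\rightarrow H$ in $\widehat{\textnormal{Shv}}_J(\C)$ need not come from the small universe. My first attempt would test hypercompleteness by hypercovers, via \citeq{lurie2018sag} A.5.7.2 applied after enlarging the universe. Hypercompleteness of $E$ means $E(X)\simeq\lim E(U_\bullet)$ for $J$-hypercovers $U_\bullet$ of representables $X$ (a Dugger--Hollander--Isaksen type criterion). Every hypercover of a representable in the large topos can be refined by one built from small coproducts of representables, using Aoki's basis result \ref{Hypersheaf Criteria} with basis $\C$. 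Under this description the condition on $E$ depends only on small data, so it is universe-independent.

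Granting this, $i^\prime L^{\textnormal{hyp}}F$ is hypercomplete, and the map $i^\prime F\rightarrow i^\prime L^{\textnormal{hyp}}F$ is $\infty$-connective. By uniqueness of the hypercompletion, $i^\prime L^{\textnormal{hyp}}F\cong L^{\prime,\textnormal{hyp}}i^\prime F$.
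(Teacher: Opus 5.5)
\textbf{Verdict.} Your (1) is the paper's argument. Your derivation of (2) from (1) via $\tau_{\leq n}\simeq L\circ\tau^{\textnormal{pre}}_{\leq n}$ is a valid alternative to the paper's pretopos argument; the paper itself remarks that (2) can be deduced from (1). Your reduction of (3) to ``$i^\prime$ carries $\textnormal{Shv}_J(\C)^{\textnormal{hyp}}$ into $\widehat{\textnormal{Shv}}_J(\C)^{\textnormal{hyp}}$'' is exactly the paper's. The gap is that this last statement, which you correctly flag as the crux, is not actually proved.

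\textbf{Why the sketch for (3) does not work.} A descent criterion for hypercompleteness in the large universe must quantify over all hypercovers $V_\bullet\rightarrow h_X$ whose levels are $\Omega^\prime$-small coproducts of representables. Equivalently, it must handle all $\infty$-connective maps of $\widehat{\textnormal{Shv}}_J(\C)$. Knowing that each such $V_\bullet$ is refined by some $U_\bullet\rightarrow V_\bullet$ with small levels does not transfer descent. Descent of $E$ along $U_\bullet$ only says that the composite $E(X)\rightarrow\lim E(V_\bullet)\rightarrow\lim E(U_\bullet)$ is an equivalence. So $E(X)\rightarrow\lim E(V_\bullet)$ acquires a retraction, but need not be an equivalence, and ``the condition depends only on small data'' does not follow.

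The tools you cite do not close this either:
\begin{itemize}
\item SAG A.5.7.2 concerns finitary sites, whereas $(\C,J)$ here is arbitrary.
\item Applying \ref{Hypersheaf Criteria} to $\widehat{\textnormal{Shv}}_J(\C)$ with basis $\C$ forces you into a still larger universe. It then requires knowing that $J$-hypersheaves remain $J$-hypersheaves when the universe is enlarged, which is precisely the claim at issue. This is why the paper's \ref{Hypersheaf independent of universe} rests on \ref{small hypersheaves are large hypersheaves}.
\end{itemize}

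\textbf{What is needed.} The paper supplies the missing input in \ref{small hypersheaves are large hypersheaves}, using \ref{Change of universe and Compact Generation of Sheaf Topoi}. It shows that the full subcategory of $\textnormal{Fun}(\Delta^1,\textnormal{Shv}^J_{<\Omega^\prime}(\C))$ spanned by the $\infty$-connective morphisms is $(\Omega^\prime,\kappa)$-accessible. Its $(\Omega^\prime,\kappa)$-compact objects come from $\textnormal{Fun}(\Delta^1,\textnormal{Shv}^J_{<\Omega}(\C)_{<\kappa})$. The proof writes this subcategory as the limit of the $n$-connective subcategories, each cut out by relative truncations built from iterated diagonals, and uses closure of $\kappa$-presentable categories under limits. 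Consequently every large $\infty$-connective map is a $\kappa$-filtered colimit, in the arrow category, of small $\infty$-connective maps, and locality of $i^\prime E$ passes to such colimits.

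If you want to keep your hypercover approach, you have two options.
\begin{itemize}
\item Upgrade ``refinement'' to such a filtered-colimit presentation of large hypercovers by small ones.
\item Adapt the connectivity induction from the proof of \ref{Chech Hyperdescent} to the unit $\eta:i^\prime E\rightarrow L^{\prime,\textnormal{hyp}}i^\prime E$. Since $\eta$ is $\infty$-connective, your refinement procedure lets you lift any section of the target along a hypercover with small levels. Both source and target satisfy descent along such hypercovers, which yields $\pi_0$-surjectivity of each $\eta_X$. Iterating with $i^\prime E\rightarrow i^\prime E\times_{L^{\prime,\textnormal{hyp}}i^\prime E}i^\prime E$ then shows $\eta$ is an equivalence.
\end{itemize}
An argument of one of these kinds is the real content of (3), and the proposal omits it.
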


\begin{remark}{}

   A proof of \ref{HyperSheafification Ind of Choice of Universe} (1) is given in Lurie's \citeq{lurie2009derivedalgebraicgeometryv} (see \citeq{lurie2009derivedalgebraicgeometryv} 2.4.10). At the time of writing this paper, the results \ref{HyperSheafification Ind of Choice of Universe} (2) and \ref{HyperSheafification Ind of Choice of Universe} (3) do not appear to be present in the literature. 
    
\end{remark}

\noindent Our proof will require some preliminaries.

\begin{notation}{}

    Let $\Omega$ be a strongly inaccessible cardinal, and let $\kappa$ be a regular cardinal which is $\Omega$-small. We will use the following notation:

    \begin{itemize}
        \item [$\bullet$] An $(\infty,1)$-category $\cD$ will be called \emph{$(\Omega,\kappa)$-accessible} if $\cD$ is $\kappa$-accessible with respect to the universe defined by $\Omega$.

        \item [$\bullet$] An $(\Omega,\kappa)$-accessible $(\infty,1)$-category $\cD$ will be called an \emph{$(\Omega,\kappa)$-presentable} if $\cD$ admits $\Omega$-small colimits.

        \item [$\bullet$] For any essentially $\Omega$-small $(\infty,1)$-category $\C$, we will write $\C\rightarrow \textnormal{Ind}_\kappa^{\Omega}(\C)$ for the $\textnormal{Ind}_\kappa$-completion of $\C$ with respect to the universe defined by $\Omega$. More generally, we will call any $(\infty,1)$-functor with source $\C$ isomorphic to $\C\rightarrow \textnormal{Ind}_\kappa^{\Omega}(\C)$ an \emph{$(\Omega,\kappa)$-Ind completion} of $\C$. 

        \item [$\bullet$] Let $\cD$ be an $(\infty,1)$-category which admits $\Omega$-small $\kappa$-filtered colimits. Then, we will call an object $X\in \cD$ \emph{$(\Omega,\kappa)$-compact} if $\textnormal{Hom}_\cD(X,-):\cD\rightarrow \mS_{<\Omega}$ preserves $\Omega$-small $\kappa$-filtered colimits. 
        
    \end{itemize}

\end{notation}

\begin{lemma}{}\label{Change of universe and Compact Generation of Sheaf Topoi}

    Let $\Omega$ and $\Omega^\prime$ be strongly inaccessible cardinals satisfying $\Omega\leq \Omega^\prime$, and let $\C$ be an $\Omega$-small $(\infty,1)$-category equipped with a Grothendieck topology $J$. Choose an $\Omega$-small uncountable regular cardinal $\kappa$ such that $\textnormal{Shv}^J_{<\Omega}(\C)$ is $(\Omega,\kappa)$-accessible, and the inclusion $\textnormal{Shv}^J_{<\Omega}(\C)\hookrightarrow \textnormal{Fun}(\C^{\textnormal{opp}},\mS_{<\Omega})$ preserves $\Omega$-small $\kappa$-filtered colimits. Write $\textnormal{Shv}^J_{<\Omega}(\C)_{<\kappa}$ for the full subcategory of $\textnormal{Shv}^J_{<\Omega}(\C)$ spanned by the $(\Omega,\kappa)$-compact objects, and similarly define $\textnormal{Shv}^J_{<\Omega^\prime}(\C)_{<\kappa}\subseteq \textnormal{Shv}^J_{<\Omega^\prime}(\C)$. Then:

    \begin{itemize}
        \item [$(1)$] The map $\textnormal{Shv}^J_{<\Omega}(\C)_{<\kappa}\hookrightarrow \textnormal{Shv}^J_{<\Omega^\prime}(\C)$ exhibits  $\textnormal{Shv}^J_{<\Omega^\prime}(\C)$ as the $(\Omega^\prime,\kappa)$-Ind completion of $\textnormal{Shv}^J_{<\Omega}(\C)_{<\kappa}$. 
        
        \item [$(2)$] The inclusion $\textnormal{Shv}^J_{<\Omega}(\C)_{<\kappa}\hookrightarrow \textnormal{Shv}^J_{<\Omega^\prime}(\C)$ yields a categorical equivalence $\textnormal{Shv}^J_{<\Omega}(\C)_{<\kappa}\xrightarrow{\sim} \textnormal{Shv}^J_{<\Omega^\prime}(\C)_{<\kappa}$, where $\textnormal{Shv}^J_{<\Omega^\prime}(\C)_{<\kappa}\subseteq \textnormal{Shv}^J_{<\Omega^\prime}(\C)$ denotes the full subcategory of $\textnormal{Shv}^J_{<\Omega^\prime}(\C)$ spanned by the $(\Omega^\prime,\kappa)$-compact objects.

    \end{itemize}
    
\end{lemma}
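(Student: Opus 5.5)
The plan is to compare the two universes through compact objects. The key input is that $(\Omega,\kappa)$-compactness in $\textnormal{Shv}^J_{<\Omega}(\C)$ can be detected in the presheaf category, and that compact objects of presheaf categories do not change when the universe is enlarged.

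The first step treats the presheaf case. For $\C$ $\Omega$-small and $\kappa$ uncountable, the $(\Omega,\kappa)$-compact objects of $\textnormal{Fun}(\C^{\textnormal{opp}},\mS_{<\Omega})$ are the retracts of $\kappa$-small colimits of representables. The same description holds in $\textnormal{Fun}(\C^{\textnormal{opp}},\mS_{<\Omega^\prime})$, and in that category $\textnormal{Fun}(\C^{\textnormal{opp}},\mS_{<\Omega^\prime})\simeq \textnormal{Ind}^{\Omega^\prime}_\kappa$ of this common subcategory (HTT 5.3.5 and 5.4.2). Consequently the inclusion $i:\textnormal{Fun}(\C^{\textnormal{opp}},\mS_{<\Omega})\hookrightarrow \textnormal{Fun}(\C^{\textnormal{opp}},\mS_{<\Omega^\prime})$ preserves $\Omega$-small colimits, and hence also $\kappa$-compact objects.

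The second step identifies sheaves in both universes. The $J$-sheaves in either universe are the objects local with respect to the same small set $T$ of monomorphisms $S\rightarrow h^\C_X$ coming from covering sieves. Each such map has $\kappa$-compact source and target, after enlarging $\kappa$ if necessary; the hypotheses of the lemma are chosen so that this holds. Given this, $\textnormal{Shv}^J_{<\Omega^\prime}(\C)$ is closed under $\Omega^\prime$-small $\kappa$-filtered colimits, because mapping out of $\kappa$-compact objects commutes with such colimits. Moreover $\textnormal{Shv}^J_{<\Omega^\prime}(\C)\cap\textnormal{Fun}(\C^{\textnormal{opp}},\mS_{<\Omega})=\textnormal{Shv}^J_{<\Omega}(\C)$. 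Since the inclusion $\textnormal{Shv}^J_{<\Omega}(\C)\hookrightarrow \textnormal{Fun}(\C^{\textnormal{opp}},\mS_{<\Omega})$ preserves $\kappa$-filtered colimits, any $A\in\textnormal{Shv}^J_{<\Omega}(\C)_{<\kappa}$ satisfies $\textnormal{Hom}(A,-)=\textnormal{Hom}(LA,-)$ on sheaves. Writing $A$ as a retract of $L$ applied to a $\kappa$-compact presheaf $P$, mapping out of $A$ in $\textnormal{Shv}^J_{<\Omega^\prime}(\C)$ is a retract of $\textnormal{Hom}(P,-)$. This commutes with $\Omega^\prime$-small $\kappa$-filtered colimits of sheaves by the closure just established. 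So $A$ remains $(\Omega^\prime,\kappa)$-compact, and the inclusion $\textnormal{Shv}^J_{<\Omega}(\C)_{<\kappa}\hookrightarrow\textnormal{Shv}^J_{<\Omega^\prime}(\C)_{<\kappa}$ is well defined and fully faithful.

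The third step proves (1), and with it (2). By HTT 5.3.5.11, it suffices to show that every object of $\textnormal{Shv}^J_{<\Omega^\prime}(\C)$ is an $\Omega^\prime$-small $\kappa$-filtered colimit of objects of $\textnormal{Shv}^J_{<\Omega}(\C)_{<\kappa}$. Take $F$ in the larger sheaf category. It is a $\kappa$-filtered colimit of $\kappa$-compact presheaves $P_\alpha$, so $F\simeq L^\prime F\simeq \textnormal{colim}\, L^\prime P_\alpha$, where $L^\prime$ denotes sheafification in the larger universe. By \ref{HyperSheafification Ind of Choice of Universe} (1), or the cited DAG V 2.4.10, $L^\prime P_\alpha\simeq L P_\alpha$, and $LP_\alpha$ lies in $\textnormal{Shv}^J_{<\Omega}(\C)_{<\kappa}$ because $L$ preserves compactness, its right adjoint preserving $\kappa$-filtered colimits. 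Since the colimit is filtered in the sheaf category, the closure from the second step shows it agrees with the presheaf colimit. Statement (2) then follows from (1): in an $\textnormal{Ind}_\kappa$-completion the $\kappa$-compact objects are exactly the retracts of the generators, and $\textnormal{Shv}^J_{<\Omega}(\C)_{<\kappa}$ is idempotent complete.

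I expect the main obstacle to be the second step, namely justifying that the generating covering monomorphisms are $\kappa$-compact so that sheaves are closed under $\kappa$-filtered colimits. If $\kappa$ is not a priori large enough for this, I would first enlarge $\kappa$ to bound the sizes of all covering sieves. I would then check that the stated hypothesis on the inclusion preserving $\kappa$-filtered colimits already forces this.
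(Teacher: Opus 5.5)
Your overall strategy is legitimate: check the three conditions of HTT 5.3.5.11 for $\textnormal{Shv}^J_{<\Omega}(\C)_{<\kappa}\hookrightarrow\textnormal{Shv}^J_{<\Omega^\prime}(\C)$, then read off (2). Several pieces are fine as written:
\begin{itemize}
\item the presheaf comparison;
\item the generation statement in your third step, which only uses $L^\prime P\simeq LP$ for $\kappa$-compact $P$ together with the fact that $F\simeq L^\prime F$ is the colimit, taken in the sheaf category, of the $L^\prime P_\alpha$;
\item the deduction of (2) from (1).
\end{itemize}

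The gap is the remaining condition: that every object of $\textnormal{Shv}^J_{<\Omega}(\C)_{<\kappa}$ is still $(\Omega^\prime,\kappa)$-compact in $\textnormal{Shv}^J_{<\Omega^\prime}(\C)$. You reduce this to closure of $\textnormal{Shv}^J_{<\Omega^\prime}(\C)$ under $\Omega^\prime$-small $\kappa$-filtered colimits in $\textnormal{Fun}(\C^{\textnormal{opp}},\mS_{<\Omega^\prime})$. You then justify the closure by claiming that the covering monomorphisms $S\rightarrow h^\C_X$ have $\kappa$-compact source. That claim does not follow from the hypotheses. Take $\C$ to be a poset with an object $X$ and $\kappa$ pairwise incomparable objects $U_i<X$, and let $J$ be the topology in which every sieve covers. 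The only sheaf is then the terminal presheaf, so every hypothesis of the lemma holds. Yet the covering sieve generated by the $U_i$ is $\coprod_i h^\C_{U_i}$, which is not $\kappa$-compact. Even sieves generated by $\kappa$-small families can fail to be $\kappa$-compact, since their \v{C}ech nerves involve $h^\C_U\times_{h^\C_X}h^\C_V$.

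Enlarging $\kappa$ is not available: $\kappa$ is fixed in the statement, and an $\textnormal{Ind}_{\kappa^\prime}$-description for $\kappa^\prime>\kappa$ does not yield the $\textnormal{Ind}_\kappa$-description. Nor does the hypothesis ``already force'' the closure. It concerns only $\Omega$-small diagrams of $\Omega$-small sheaves. Closure in the larger universe is equivalent to each $L^\prime h^\C_X$ being $(\Omega^\prime,\kappa)$-compact, and given the rest of your argument that is essentially the entire content of the lemma. So the crux is deferred rather than proved.

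The paper never has to confront compactness in the large universe; it uses a universal-property argument instead.
\begin{itemize}
\item For every $\cD$ with $\Omega^\prime$-small limits, restriction identifies $\textnormal{Fun}^{\ell im_{<\Omega^\prime}}(\textnormal{Shv}^J_{<\Omega^\prime}(\C)^{\textnormal{opp}},\cD)$ with $\textnormal{Shv}^J_\cD(\C)$. This is SAG 1.3.1.7 in the generality of Remark \ref{Small error in 1.3.1.7}.
\item Using $\textnormal{Shv}^J_{<\Omega}(\C)\simeq\textnormal{Ind}^{\Omega}_\kappa(\textnormal{Shv}^J_{<\Omega}(\C)_{<\kappa})$, restriction also identifies $\textnormal{Fun}^{\ell im_{<\kappa}}((\textnormal{Shv}^J_{<\Omega}(\C)_{<\kappa})^{\textnormal{opp}},\cD)$ with $\textnormal{Shv}^J_\cD(\C)$.
\item Since $\textnormal{Shv}^J_{<\Omega}(\C)_{<\kappa}\hookrightarrow\textnormal{Shv}^J_{<\Omega^\prime}(\C)$ preserves $\kappa$-small colimits (Lemma \ref{Small Sheaves are stable under small limits}), restriction along it is an equivalence.
\item Hence $\textnormal{Shv}^J_{<\Omega^\prime}(\C)$ and $\textnormal{Ind}^{\Omega^\prime}_\kappa(\textnormal{Shv}^J_{<\Omega}(\C)_{<\kappa})$ satisfy the same universal property, and the comparison functor is an equivalence.
\end{itemize}
The compactness you needed then comes out as a consequence rather than an input. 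To salvage your route you would need an independent proof of that compactness, and the natural one is exactly this universal-property argument.
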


\begin{proof}

    We first prove (1). To begin, we observe that since $\textnormal{Shv}^J_{<\Omega}(\C)\hookrightarrow \textnormal{Fun}(\C^{\textnormal{opp}},\mS_{<\Omega})$ preserves $\Omega$-small $\kappa$-filtered colimits, the left adjoint $L:\textnormal{Fun}(\C^{\textnormal{opp}},\mS)\rightarrow \textnormal{Shv}^J_{<\Omega}(\C)$ preserves $(\Omega,\kappa)$-compact objects (\citeq{highertopostheory} 5.5.7.2). It follows that the canonical map $\C\rightarrow \textnormal{Shv}^J_{<\Omega}(\C)$ factors as $\C\rightarrow \textnormal{Shv}_{<\Omega}^J(\C)_{<\kappa}\hookrightarrow \textnormal{Shv}_{<\Omega}^J(\C)$. In particular, combining \citeq{lurie2024kerodon} 06NW with \ref{Small error in 1.3.1.7} implies that for all $(\infty,1)$-categories $\cE$ which admit $\Omega$-small limits, the map $\textnormal{Fun}^{\ell im_{<\kappa}}((\textnormal{Shv}_{<\Omega}^J(\C)_{<\kappa})^{\textnormal{opp}},\cE)\rightarrow \textnormal{Fun}(\C^{\textnormal{opp}},\cE)$ is fully faithful, with essential image $\textnormal{Shv}_{\cE}^J(\C)\subseteq \textnormal{Fun}(\C^{\textnormal{opp}},\cE)$. Let $\cD$ be an $(\infty,1)$-category which admits $\Omega^\prime$-small limits. Applying  \ref{Small error in 1.3.1.7}, we deduce that the arrow $\textnormal{Fun}^{\ell im_{<\Omega^\prime}}(\textnormal{Shv}^J_{<\Omega^\prime}(\C)^{\textnormal{opp}},\cD)\rightarrow \textnormal{Fun}(\C^{\textnormal{opp}},\cD)$ is fully faithful, with essential image $\textnormal{Shv}_{\cD}^J(\C)\subseteq \textnormal{Fun}(\C^{\textnormal{opp}},\cD)$. Applying \ref{Small Sheaves are stable under small limits}, we observe that $\textnormal{Shv}_{<\Omega}^J(\C)_{<\kappa}\hookrightarrow \textnormal{Shv}^J_{<\Omega^\prime}(\C)$ preserves $\kappa$-small colimits. In particular, the map $\textnormal{Fun}^{\ell im_{<\Omega^\prime}}(\textnormal{Shv}^J_{<\Omega^\prime}(\C)^{\textnormal{opp}},\cD)\rightarrow \textnormal{Fun}(\C^{\textnormal{opp}},\cD)$ factors as 

    $$\textnormal{Fun}^{\ell im_{<\Omega^\prime}}(\textnormal{Shv}^J_{<\Omega^\prime}(\C)^{\textnormal{opp}},\cD)\rightarrow \textnormal{Fun}^{\ell im_{<\kappa}}((\textnormal{Shv}_{<\Omega}^J(\C)_{<\kappa})^{\textnormal{opp}},\cD)\rightarrow \textnormal{Fun}(\C^{\textnormal{opp}},\cD).$$

    \noindent It follows that $\textnormal{Fun}^{\ell im_{<\Omega^\prime}}(\textnormal{Shv}^J_{<\Omega^\prime}(\C)^{\textnormal{opp}},\cD)\rightarrow \textnormal{Fun}^{\ell im_{<\kappa}}((\textnormal{Shv}_{<\Omega}^J(\C)_{<\kappa})^{\textnormal{opp}},\cD)$ is a categorical equivalence. Since $\textnormal{Fun}^{\ell im_{<\Omega^\prime}}(\textnormal{Ind}_\kappa^{\Omega^\prime}(\textnormal{Shv}^J_{<\Omega}(\C)_{<\kappa})^{\textnormal{opp}},\cD)\rightarrow \textnormal{Fun}^{\ell im_{<\kappa}}((\textnormal{Shv}_{<\Omega}^J(\C)_{<\kappa})^{\textnormal{opp}},\cD)$ is also an equivalence, the canonical map $\textnormal{Ind}_\kappa^{\Omega^\prime}(\textnormal{Shv}^J_{<\Omega}(\C)_{<\kappa})^{\textnormal{opp}}\rightarrow \textnormal{Shv}^J_{<\Omega^\prime}(\C)^{\textnormal{opp}}$ right Kan extended from $(\textnormal{Shv}^J_{<\Omega}(\C)_{<\kappa})^{\textnormal{opp}}$ is an equivalence, proving (1). Claim (2) now follows from combining (1) with the fact that $\textnormal{Shv}^J_{<\Omega^\prime}(\C)_{<\kappa}$ is idempotent complete (\citeq{highertopostheory} 5.4.2.4).

   ,

\end{proof}

\begin{lemma}{}\label{Small Sheaves are stable under small limits}

    Let $\Omega$ be a strongly inaccessible cardinal, and let $\Omega^\prime$ be a strongly inaccessible cardinal satisfying $\Omega^\prime\geq \Omega$. Let $\C$ be an $\Omega$-small $(\infty,1)$-category equipped with a Grothendieck topology $J$. Then, the inclusion $i:\textnormal{Shv}^J_{<\Omega}(\C)\hookrightarrow \textnormal{Shv}^J_{<\Omega^\prime}(\C)$ preserves all $\Omega$-small limits and all $\Omega$-small colimits.
    
\end{lemma}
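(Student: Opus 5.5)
We reduce the whole statement to a pointwise computation in presheaf categories. Since $\textnormal{Shv}^J_{<\Omega}(\C)$ and $\textnormal{Shv}^J_{<\Omega^\prime}(\C)$ are full subcategories of $\textnormal{Fun}(\C^{\textnormal{opp}},\mS_{<\Omega})$ and $\textnormal{Fun}(\C^{\textnormal{opp}},\mS_{<\Omega^\prime})$ respectively, the inclusion $i$ is the restriction of the inclusion $\iota:\textnormal{Fun}(\C^{\textnormal{opp}},\mS_{<\Omega})\hookrightarrow \textnormal{Fun}(\C^{\textnormal{opp}},\mS_{<\Omega^\prime})$. The map $\iota$ is given by post-composition with $\mS_{<\Omega}\hookrightarrow \mS_{<\Omega^\prime}$. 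This latter inclusion preserves $\Omega$-small limits and colimits, because $\Omega$ is strongly inaccessible and so $\mS_{<\Omega}$ is closed under $\Omega$-small limits and colimits in $\mS_{<\Omega^\prime}$. Since limits and colimits in functor categories are computed pointwise, $\iota$ preserves $\Omega$-small limits and colimits.

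\emph{Limits.} Sheaves are closed under all small limits in presheaves, in either universe, because the sheaf condition of \ref{Sheaf Definition} is a limit condition and limits commute with limits. So limits in $\textnormal{Shv}^J_{<\Omega}(\C)$ and in $\textnormal{Shv}^J_{<\Omega^\prime}(\C)$ are computed in the respective presheaf categories. Preservation of $\Omega$-small limits by $i$ then follows immediately from the corresponding statement for $\iota$.

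\emph{Colimits.} Colimits in $\textnormal{Shv}^J_{<\Omega}(\C)$ are computed as $L(\textnormal{colim}\,\iota\circ -)$, and in the larger universe as $L^\prime(\textnormal{colim})$, where $L$ and $L^\prime$ are the two sheafification functors. Because $\iota$ commutes with the presheaf colimit, it suffices to show that $\iota\circ L\cong L^\prime\circ\iota$. This is exactly \ref{HyperSheafification Ind of Choice of Universe} (1), also \citeq{lurie2009derivedalgebraicgeometryv} 2.4.10.

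The main obstacle is circularity. \ref{Small Sheaves are stable under small limits} is itself used in the proof of \ref{Change of universe and Compact Generation of Sheaf Topoi}, which presumably feeds into \ref{HyperSheafification Ind of Choice of Universe}, so the colimit step should not rely on it. I would instead argue directly that $\iota$ carries the strongly saturated class generated by the covering-sieve monomorphisms $S\rightarrow h^\C_X$ into the corresponding class for $\Omega^\prime$. Both classes are generated by the same $\Omega$-small set of monomorphisms, and $\iota$ preserves $\Omega$-small colimits and pushouts, so the image of a generating class lies in the larger saturated class. Hence the unit $F\rightarrow LF$ is an $L^\prime$-equivalence whose target is an $\Omega^\prime$-sheaf, which gives $\iota(LF)\cong L^\prime(\iota F)$.

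The one delicate point is that the saturated class is generated by colimits that need not be $\Omega$-small. To handle this, I would use the small-object-argument description of $L$ (\citeq{highertopostheory} 5.5.4.15). There $LF$ is constructed by a transfinite composite of pushouts of generators, indexed by a regular cardinal $\kappa<\Omega$. This construction is $\Omega$-small, so it is preserved by $\iota$, and this completes the colimit step.
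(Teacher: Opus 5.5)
Your proof is correct. The limit half is the argument the paper calls immediate, but for the colimit half you take a different route.

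\textbf{The paper's route.} It avoids sheafification altogether. Using Remark~\ref{Small error in 1.3.1.7}, it identifies $\textnormal{Shv}^J_{<\Omega}(\C)$ and $\textnormal{Shv}^J_{<\Omega^\prime}(\C)$ with the categories of $\Omega$-small-limit-preserving functors from $\textnormal{Shv}^J_{<\Omega}(\C)^{\textnormal{opp}}$ to $\mS_{<\Omega}$ and to $\mS_{<\Omega^\prime}$ respectively. Under these identifications $i$ becomes post-composition with $\mS_{<\Omega}\hookrightarrow\mS_{<\Omega^\prime}$. The paper then quotes HTT 6.3.5.17 for the fact that this inclusion preserves $\Omega$-small colimits.

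\textbf{Your route.} You reduce to $\iota\circ L\cong L^\prime\circ\iota$ and prove this by a saturation argument. That is exactly the paper's own proof of \ref{HyperSheafification Ind of Choice of Universe}~(1): $L^\prime\circ\iota$ preserves $\Omega$-small colimits and inverts $T$, hence inverts every $L$-equivalence by \ref{Univ property of Presentable Localizations}. As written, that proof does not invoke the present lemma. Only parts (2) and (3) do, through \ref{Change of universe and Compact Generation of Sheaf Topoi} and \ref{small hypersheaves are large hypersheaves}. So your ordering is not circular, and you could simply cite (1).

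\textbf{Comparison.} Your route is more elementary and makes the logical dependencies transparent. The paper's is shorter, but it rests on HTT 6.3.5.17 as a black box. It also needs the extension of the universal property of sheaf categories to non-locally-small targets.

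Your ``delicate point'' is not a real issue. The strongly saturated class generated by $T$ in $\textnormal{Fun}(\C^{\textnormal{opp}},\mS_{<\Omega})$ is defined by closure under pushouts, $2$-out-of-$3$, and small colimits in the arrow category. In the $\Omega$-universe, small means $\Omega$-small, and $\iota$ preserves all of these operations. Hence the preimage under $\iota$ of the $\Omega^\prime$-saturated class generated by $\iota(T)$ is itself strongly saturated and contains $T$. It therefore contains every $L$-equivalence, with no explicit description of $L$ needed.

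The small-object detour can be dropped. If you keep it, note that transfinite composites of pushouts of maps in $T$ alone only force surjectivity on $\pi_0$ of the relevant mapping spaces. The generating set would have to be enlarged (e.g.\ by iterated codiagonals $B\coprod_A B\rightarrow B$) before such a construction produces local objects.
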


\begin{proof}

    The piece of our claim concerning $\Omega$-small limits is immediate. The case of $\Omega$-small colimits may be deduced from \citeq{lurie2009derivedalgebraicgeometryv} 2.4.10, though to avoid circular reasoning in our proof of \ref{HyperSheafification Ind of Choice of Universe} $(1)$ we include here an alternative argument. \citeq{highertopostheory} 6.3.5.17 establishes the fact that the $(\infty,1)$-functor $\textnormal{Fun}^{\ell im_{<\Omega}}(\textnormal{Shv}^J_{<\Omega}(\C)^{\textnormal{opp}},\mS_{<\Omega})\hookrightarrow \textnormal{Fun}^{\ell im_{<\Omega}}(\textnormal{Shv}^J_{<\Omega}(\C)^{\textnormal{opp}},\mS_{<\Omega^\prime})$ preserves $\Omega$-small colimits, and \ref{Small error in 1.3.1.7} implies that pulling-back along $\C^{\textnormal{opp}}\rightarrow \textnormal{Shv}^J_{<\Omega}(\C)^{\textnormal{opp}}$ yields equivalences $\textnormal{Fun}^{\ell im_{<\Omega}}(\textnormal{Shv}^J_{<\Omega}(\C)^{\textnormal{opp}},\mS_{<\Omega})\xrightarrow{\sim}\textnormal{Shv}^J_{<\Omega}(\C)$ and $\textnormal{Fun}^{\ell im_{<\Omega}}(\textnormal{Shv}^J_{<\Omega}(\C)^{\textnormal{opp}},\mS_{<\Omega^\prime})\xrightarrow{\sim}\textnormal{Shv}^J_{<\Omega^\prime}(\C)$. Our claim now follows from the fact that under these equivalences the inclusion $\textnormal{Fun}^{\ell im_{<\Omega}}(\textnormal{Shv}^J_{<\Omega}(\C)^{\textnormal{opp}},\mS_{<\Omega})\hookrightarrow \textnormal{Fun}^{\ell im_{<\Omega}}(\textnormal{Shv}^J_{<\Omega}(\C)^{\textnormal{opp}},\mS_{<\Omega^\prime})$ may be identified as the inclusion $i:\textnormal{Shv}^J_{<\Omega}(\C)\hookrightarrow \textnormal{Shv}^J_{<\Omega^\prime}(\C)$.

\end{proof}

\begin{lemma}{}\label{kappa accessible functors}

    Fix a small regular cardinal $\kappa$, let $\C$ be an $(\infty,1)$-category which admits small $\kappa$-filtered colimits, and let $\cD$ be a $\kappa$-accessible $(\infty,1)$-category which admits $\kappa$-small limits. Then, the full subcategory of $\textnormal{Fun}(\C,\cD)_\kappa$ of $\textnormal{Fun}(\C,\cD)$ spanned by maps $F:\C\rightarrow \cD$ which preserve small $\kappa$-filtered colimits is closed under $\kappa$-small limits. 
    
\end{lemma}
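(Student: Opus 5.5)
The plan is to reduce everything to the fact that limits in a functor category are computed pointwise, combined with the fact that $\kappa$-small limits commute with $\kappa$-filtered colimits in $\cD$. Let $\{F_\alpha\}_{\alpha\in K}$ be a $\kappa$-small diagram $K\rightarrow \textnormal{Fun}(\C,\cD)$ taking values in $\textnormal{Fun}(\C,\cD)_\kappa$. Since $\cD$ admits $\kappa$-small limits, this diagram has a limit $F=\varprojlim_\alpha F_\alpha$ in $\textnormal{Fun}(\C,\cD)$, computed objectwise. For a small $\kappa$-filtered diagram $X_\bullet:I\rightarrow \C$ with colimit $X$, we must show that the canonical map $\varinjlim_i F(X_i)\rightarrow F(X)$ is an isomorphism. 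Unwinding, this map factors as
$$\varinjlim_{i}\varprojlim_{\alpha}F_\alpha(X_i)\rightarrow \varprojlim_\alpha\varinjlim_i F_\alpha(X_i)\xrightarrow{\sim}\varprojlim_\alpha F_\alpha(X)=F(X),$$
where the second map is an isomorphism because each $F_\alpha$ preserves small $\kappa$-filtered colimits. We are therefore reduced to showing that in $\cD$, $\kappa$-small limits commute with small $\kappa$-filtered colimits.

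For this key step, I would use that $\cD$ is $\kappa$-accessible, so $\cD\simeq \textnormal{Ind}_\kappa(\cD_{<\kappa})$ and the restricted Yoneda embedding $j:\cD\rightarrow \textnormal{Fun}(\cD_{<\kappa}^{\textnormal{opp}},\mS)$ is fully faithful and preserves small $\kappa$-filtered colimits (HTT 5.3.5.10 and 5.3.5.15). Being a restricted Yoneda embedding, $j$ also preserves all limits that exist in $\cD$. Moreover, $j$ is conservative on isomorphisms, since it is fully faithful. It therefore suffices to check the comparison map after applying $j$, where the claim becomes that $\kappa$-small limits commute with $\kappa$-filtered colimits in $\textnormal{Fun}(\cD_{<\kappa}^{\textnormal{opp}},\mS)$. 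This reduces objectwise to the same statement in $\mS$, which is HTT 5.3.3.3.

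The main obstacle I expect is the bookkeeping needed to identify the comparison map canonically. Concretely, one must check that the map built from the universal properties in $\textnormal{Fun}(\C,\cD)$ agrees with the interchange map in $\cD$, and that $j$ carries this interchange map to the interchange map in presheaves. I would handle this by working with a single diagram $K\times I\rightarrow \cD$, namely $(\alpha,i)\mapsto F_\alpha(X_i)$, together with its cone extensions. Applying $j$ to this diagram and using that $j$ preserves both the relevant limits and the relevant colimits gives the identification directly.

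If it proves cleaner, there is an alternative route: note that in $\cD$ the functor $\varinjlim:\textnormal{Fun}(I,\cD)\rightarrow\cD$ preserves $\kappa$-small limits by the above commutation. Then $\textnormal{Fun}(\C,\cD)_\kappa$ is the full subcategory on which two limit-preserving functors into $\textnormal{Fun}(\textnormal{Fun}(I,\C),\cD)$ agree via a canonical transformation, and such a subcategory is automatically closed under $\kappa$-small limits.
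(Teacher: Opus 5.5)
Your proposal is correct and follows essentially the same route as the paper. Both arguments use pointwise limits in $\textnormal{Fun}(\C,\cD)$ to reduce to showing that colimit diagrams $K^\triangleright\rightarrow\cD$ with $K$ small and $\kappa$-filtered are closed under $\kappa$-small limits, then transport this along the fully faithful embedding $\cD\hookrightarrow\textnormal{Fun}((\cD_{<\kappa})^{\textnormal{opp}},\mS)$ (which preserves small $\kappa$-filtered colimits and all existing limits) to the presheaf case; the only difference is that the paper cites Kerodon 05Y0 for that final step, where you apply HTT 5.3.3.3 objectwise, which is the same fact.
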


\begin{proof}

    It suffices to prove that, for all small $\kappa$-filtered $(\infty,1)$-categories $K$, the full subcategory of $\textnormal{Fun}(K^\triangleright,\cD)$ spanned by the colimit diagrams $F:K^\triangleright\rightarrow \cD$ is closed under $\kappa$-small limits. Write $\cD_{<\kappa}$ for the full subcategory of $\cD$ spanned by the $\kappa$-compact objects, and let $K$ be a small $\kappa$-filtered $(\infty,1)$-category. Since the fully faithful map $\cD\rightarrow \textnormal{Fun}((\cD_{<\kappa})^{\textnormal{opp}},\mS)$ preserves small $\kappa$-filtered colimits and all existing limits, it suffices to prove that the full subcategory of $\textnormal{Fun}(K^\triangleright,\textnormal{Fun}((\cD_{<\kappa})^{\textnormal{opp}},\mS))$ spanned by the colimit diagrams $F^\prime:K^\triangleright\rightarrow \textnormal{Fun}((\cD_{<\kappa})^{\textnormal{opp}},\mS)$ is closed under $\kappa$-small limits. This follows immediately from \citeq{lurie2024kerodon} 05Y0.
    
\end{proof}

\begin{lemma}{}[]\label{kappa small limits in PrL,kappa}

    Fix a small uncountable regular cardinal $\kappa$, and write $\textnormal{Pr}^{L,\kappa}$ for the subcategory of $\textnormal{Pr}^{L}$ spanned by those $\kappa$-presentable $(\infty,1)$-categories $\C$ and morphisms $F:\C\rightarrow \cD$ which preserve $\kappa$-compact objects. Write $\textnormal{Pr}^{R,\kappa}$ for the subcategory of $\textnormal{Pr}^{R}$ spanned by the $\kappa$-presentable $(\infty,1)$-categories and $\kappa$-accessible functors, and write $\widehat{\C at}_\infty$ for the $(\infty,1)$-category of (possibly large) $(\infty,1)$-categories. Then, $\textnormal{Pr}^{L,\kappa}\subset \widehat{\C at}_\infty$ is stable under $\kappa$-small limits and $\textnormal{Pr}^{R,\kappa}\subset \widehat{\C at}_\infty$ is stable under small limits. 
    
\end{lemma}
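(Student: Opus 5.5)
We treat the two claims separately, since each reduces to a closure statement for a specific class of functors inside $\widehat{\C at}_\infty$.

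For $\textnormal{Pr}^{R,\kappa}$, the plan is to compare it with $\textnormal{Pr}^R$. By HTT 5.5.3.18, $\textnormal{Pr}^R\subset\widehat{\C at}_\infty$ is closed under small limits. Let $p:K\rightarrow \textnormal{Pr}^{R,\kappa}$ be a small diagram and let $\C=\varprojlim p$ be its limit in $\widehat{\C at}_\infty$, which is therefore presentable. The steps, in order, are as follows.

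\begin{enumerate}
\item[(a)] Small $\kappa$-filtered colimits in $\C$ are computed componentwise. This holds because every transition functor preserves them, and we apply HTT 5.4.7.11 (the limit of $\kappa$-accessible categories along $\kappa$-accessible functors).
\item[(b)] From (a), $\C$ is $\kappa$-accessible, and each projection $\C\rightarrow p(k)$ is $\kappa$-accessible.
\item[(c)] A cone $\cD\rightarrow \C$ is $\kappa$-accessible if and only if each of its components is, again by (a). This gives the universal property in $\textnormal{Pr}^{R,\kappa}$.
\end{enumerate}

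The remaining point is that HTT 5.4.7.11 requires $\kappa$-accessible functors between $\kappa$-accessible categories to preserve $\kappa$-compact objects. This is where we use that $\kappa$ is uncountable and that all objects are $\kappa$-presentable; alternatively, we bypass the compact-object requirement by using only that filtered colimits commute with the limit.

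For $\textnormal{Pr}^{L,\kappa}$, the plan is to pass to compact objects via the equivalence $\textnormal{Pr}^{L,\kappa}\simeq \C at^{\textnormal{rex}(\kappa),\textnormal{idem}}$ of HTT 5.5.7.10, given by $\C\mapsto\C_{<\kappa}$ with inverse $\textnormal{Ind}_\kappa$. We would carry this out as follows.

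\begin{enumerate}
\item[(a)] Let $p:K\rightarrow \textnormal{Pr}^{L,\kappa}$ be a $\kappa$-small diagram and form $\cE=\varprojlim_k p(k)_{<\kappa}$ in $\C at_\infty$. This category is small, and it admits $\kappa$-small colimits computed componentwise, since the transition functors are $\kappa$-right exact. It is also idempotent complete.
\item[(b)] Show that $\textnormal{Ind}_\kappa(\cE)\rightarrow \varprojlim_k p(k)$ (limit in $\widehat{\C at}_\infty$) is an equivalence. Dually, this is the statement that the functor $\textnormal{Fun}^{\ell im_{<\kappa}}(-^{\textnormal{opp}},\mS)$ carries $\kappa$-small limits of $\kappa$-cocomplete categories to limits.
\end{enumerate}

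Fully faithfulness in (b) holds because each object of $\cE$ is $\kappa$-compact in the limit. Here we use \ref{kappa accessible functors}: mapping spaces in the limit are $\kappa$-small limits of mapping spaces, and $\textnormal{Hom}(X,-)$ commutes with $\kappa$-filtered colimits, since $\kappa$-small limits commute with $\kappa$-filtered colimits in $\mS$.

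Essential surjectivity in (b) is where I expect the main obstacle. Concretely, we must show that every object $(X_k)$ of the limit is a $\kappa$-filtered colimit of objects of $\cE$. The first thing we would try is to write each $X_k$ as the colimit of $p(k)_{<\kappa,/X_k}$. We then intend to use that a $\kappa$-small limit of $\kappa$-filtered categories is again $\kappa$-filtered, with cofinal projections, in order to produce a compatible $\kappa$-filtered indexing diagram. The $\kappa$-smallness of $K$ is used exactly here.

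Finally, to conclude that the limit cone lies in $\textnormal{Pr}^{L,\kappa}$ and satisfies its universal property, we observe that a colimit-preserving functor into the limit preserves $\kappa$-compact objects if and only if each of its components does. This follows from the identification of the compact objects of the limit with $\cE$.
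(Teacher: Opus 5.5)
For $\textnormal{Pr}^{R,\kappa}$, your step (b) does not follow from (a). Knowing that small $\kappa$-filtered colimits in $\C=\varprojlim p$ exist and are computed componentwise only shows that the projections $\pi_k:\C\rightarrow p(k)$ are $\kappa$-accessible. It says nothing about $\C$ being generated under $\kappa$-filtered colimits by a small set of $\kappa$-compact objects, and that is the whole content of the claim. The accessibility-of-limits results you allude to require transition functors that preserve $\kappa$-compact objects, and in their $\kappa$-version also $\kappa$-small diagrams. Neither holds here. $K$ is only small, and morphisms of $\textnormal{Pr}^{R,\kappa}$ are right adjoints, which need not preserve $\kappa$-compact objects for any $\kappa$: evaluation $\textnormal{Fun}(\mathcal{A}^{\textnormal{opp}},\mS)\rightarrow\mS$, for a one-object $\mathcal{A}$ whose endomorphism monoid has cardinality $\geq\kappa$, sends the representable to a non-$\kappa$-compact set. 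So uncountability plays no role in this half, and your ``bypass'' merely restates (a).

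The missing idea is to use the left adjoints $\lambda_k$ of the projections, which exist because the $\pi_k$ are morphisms of $\textnormal{Pr}^R$ (HTT 5.5.3.18). Since $\pi_k$ preserves $\kappa$-filtered colimits, $\lambda_k$ preserves $\kappa$-compact objects (HTT 5.5.7.2). The objects $\lambda_k(A)$, for $k\in K$ and $A\in p(k)_{<\kappa}$, form a small set of $\kappa$-compact objects that jointly detects equivalences. This uses $\textnormal{Hom}_\C(\lambda_k(A),X)\simeq\textnormal{Hom}_{p(k)}(A,\pi_kX)$, that equivalences in $\C$ are detected componentwise, and that each $p(k)$ is generated by $p(k)_{<\kappa}$. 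Closing this set under $\kappa$-small colimits gives $\C_0$ such that $\textnormal{Ind}_\kappa(\C_0)\rightarrow\C$ is fully faithful and colimit-preserving with conservative right adjoint, hence an equivalence. This is HTT 5.5.7.6, which is all the paper invokes here.

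For $\textnormal{Pr}^{L,\kappa}$, your set-up is fine: the category $\cE$, full faithfulness of $\textnormal{Ind}_\kappa(\cE)\rightarrow\varprojlim_kp(k)$, and the final bookkeeping on compact objects. You also correctly locate the difficulty in essential surjectivity. However, the principle you propose for that step is false. A $\kappa$-small limit of $\kappa$-filtered categories need not be $\kappa$-filtered or have cofinal projections, even when the transition functors are cofinal: the even and the odd ordinals below $\kappa$ are cofinal $\kappa$-filtered subposets of $\kappa$ with empty pullback.

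In your situation, $\kappa$-filteredness of $\cE_{/X}:=\cE\times_{\varprojlim p}(\varprojlim p)_{/X}$ is actually automatic, since it has $\kappa$-small colimits ($\cE$ is closed under them in $\varprojlim p$). But this holds for every regular $\kappa$. What must be shown is that the projections $\cE_{/X}\rightarrow(p(k)_{<\kappa})_{/X_k}$ are cofinal, or equivalently that $\cE$ generates $\varprojlim p$ under colimits. This is exactly where uncountability of $\kappa$ is needed, and your argument never uses it.

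Uncountability cannot be dispensed with, because for $\kappa=\aleph_0$ the statement is false. The pullback of $\mS\xrightarrow{\Delta}\mS\times\mS\xleftarrow{\Delta}\mS$ in $\widehat{\C at}_\infty$ is $\textnormal{Fun}(S^1,\mS)$. Its projection to $\mS$ carries the compact object given by the universal cover of $S^1$ ($\mathbb{Z}$ with its translation action) to the non-compact space $\mathbb{Z}$.

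For uncountable $\kappa$ a real argument is needed. For a pullback, one can use a back-and-forth construction: alternately factor maps from $\kappa$-compact objects through the $\kappa$-filtered presentations of the two components, then pass to a sequential colimit. That colimit stays $\kappa$-compact only because $\kappa>\aleph_0$. The paper does not do this by hand; it quotes Ramzi 2.31 together with HTT 5.5.3.13.
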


\begin{proof}

    The fact that $\textnormal{Pr}^{L,\kappa}\subset \widehat{\C at}_\infty$ is stable under $\kappa$-small limits follows from combining \citeq{ramzi2024dualizablepresentableinftycategories} 2.31 with \citeq{highertopostheory} 5.5.3.13, and the fact that $\textnormal{Pr}^{R,\kappa}\subset \widehat{\C at}_\infty$ is stable under $\kappa$-small limits is precisely \citeq{highertopostheory} 5.5.7.6. 
    
\end{proof}

\noindent The idea for the following proof is thanks to Jiacheng Liang:

\begin{corollary}{}\label{small hypersheaves are large hypersheaves}

    Under the notation of \ref{Change of universe and Compact Generation of Sheaf Topoi}, the inclusion $\textnormal{Shv}^J_{<\Omega}(\C)\hookrightarrow \textnormal{Shv}^J_{<\Omega^\prime}(\C)$ restricts to a map $\textnormal{Shv}^J_{<\Omega}(\C)^{\textnormal{hyp}}\hookrightarrow \textnormal{Shv}^J_{<\Omega^\prime}(\C)^{\textnormal{hyp}}$. 
    
\end{corollary}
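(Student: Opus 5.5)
The plan is to characterize hypercompleteness by locality against a class of morphisms, and to check that this class, and the locality condition against it, are unchanged when we enlarge the universe. Write $i:\textnormal{Shv}^J_{<\Omega}(\C)\hookrightarrow \textnormal{Shv}^J_{<\Omega^\prime}(\C)$. An object of an $(\infty,1)$-topos is hypercomplete exactly when it is local with respect to all $\infty$-connective morphisms. It therefore suffices to show two things for $F\in\textnormal{Shv}^J_{<\Omega}(\C)^{\textnormal{hyp}}$:
\begin{itemize}
\item[(a)] every $\infty$-connective map $f:X\rightarrow Y$ in $\textnormal{Shv}^J_{<\Omega^\prime}(\C)$ is an $\Omega^\prime$-small colimit, in $\textnormal{Fun}(\Delta^1,\textnormal{Shv}^J_{<\Omega^\prime}(\C))$, of images under $i$ of $\infty$-connective maps between $(\Omega,\kappa)$-compact objects of $\textnormal{Shv}^J_{<\Omega}(\C)$;
\item[(b)] $\textnormal{Hom}(-,i(F))$ inverts the maps of (a).
\end{itemize}
Locality is closed under colimits of arrows, so (a) and (b) together give the claim.

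For (b), use \ref{Small Sheaves are stable under small limits}: $i$ is fully faithful, so for $X,Y$ in the small universe $\textnormal{Hom}(i(Y),i(F))\rightarrow \textnormal{Hom}(i(X),i(F))$ is the corresponding small map, which is an equivalence because $F$ is hypercomplete. For this to apply, we also need that $i$ preserves and reflects $\infty$-connectivity. Since $i$ preserves $\Omega$-small limits and colimits, it commutes with truncations and diagonals, so it preserves $n$-connectivity for each $n$. Reflection follows by the same computation, because $n$-connectivity of a map is detected by truncations of iterated diagonals, which are computed in the small universe. Alternatively, I would cite \ref{HyperSheafification Ind of Choice of Universe} (2) for truncations, provided that part is proved independently of the present corollary.

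For (a), I would follow the hint that $\kappa$-accessibility data transports, via \ref{Change of universe and Compact Generation of Sheaf Topoi}, \ref{kappa accessible functors} and \ref{kappa small limits in PrL,kappa}. By \ref{Change of universe and Compact Generation of Sheaf Topoi} (1), $\textnormal{Shv}^J_{<\Omega^\prime}(\C)=\textnormal{Ind}^{\Omega^\prime}_\kappa(\mathcal{K})$ with $\mathcal{K}=\textnormal{Shv}^J_{<\Omega}(\C)_{<\kappa}$. Enlarging $\kappa$ if needed, choose it so that the truncation functors and the diagonal construction on the arrow category are $\kappa$-accessible and preserve $\kappa$-compact objects; \ref{kappa small limits in PrL,kappa} provides this, since these categories are built from $\kappa$-small limits in $\textnormal{Pr}^{L,\kappa}$. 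Then the full subcategory $\textnormal{Fun}(\Delta^1,-)^{\infty\textnormal{-conn}}$ is an intersection of countably many equalizer conditions among $\kappa$-accessible functors. By \ref{kappa accessible functors} and \ref{kappa small limits in PrL,kappa}, this subcategory is itself $\kappa$-accessible, with its $\kappa$-compact objects being $\infty$-connective maps between $\kappa$-compact objects, and $\kappa$ uncountable makes the countable intersection legitimate. Applied in both universes, this shows that the $\Omega^\prime$-version is $\textnormal{Ind}^{\Omega^\prime}_\kappa$ of the same small category of $\infty$-connective maps in $\mathcal{K}$, which gives (a).

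The main obstacle is this accessibility bookkeeping in (a): one must show that $\infty$-connective morphisms form a $\kappa$-accessible subcategory of the arrow category, generated by compact ones, with a $\kappa$ that works uniformly in both universes. If that proves awkward, I would fall back on writing the hypercompletion as the localization at $\infty$-connective maps between $\kappa$-compact objects. The generating set is then small and visibly identical in both universes, and one checks via \ref{Change of universe and Compact Generation of Sheaf Topoi} (2) that it generates the same strongly saturated class after enlargement.
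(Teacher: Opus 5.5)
Your proposal is correct and takes essentially the same approach as the paper. You show that the $\infty$-connective morphisms of $\textnormal{Shv}^J_{<\Omega^\prime}(\C)$ form an $(\Omega^\prime,\kappa)$-accessible full subcategory of the arrow category whose $\kappa$-compact objects come from $\textnormal{Fun}(\Delta^1,\textnormal{Shv}^J_{<\Omega}(\C)_{<\kappa})$, built via truncation and iterated-diagonal functors and a countable limit in $\textnormal{Pr}^{L,\kappa}$ (where $\kappa$ uncountable is needed); you then use that the inclusion preserves and reflects $\infty$-connectivity and that locality is closed under colimits of arrows. The differences are cosmetic: the paper works with the given $\kappa$ rather than enlarging it, and leaves implicit the final locality step that you spell out.
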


\begin{proof}

     Combining \ref{Change of universe and Compact Generation of Sheaf Topoi} with \citeq{lurie2024kerodon} 06JY, we have that $\textnormal{Fun}(\Delta^1,\textnormal{Shv}^J_{<\Omega^\prime}(\C))$ is $(\Omega^{\prime},\kappa)$-presentable, and the full subcategory of $\textnormal{Fun}(\Delta^1,\textnormal{Shv}^J_{<\Omega^\prime}(\C))_{<\kappa}$ spanned by the $(\Omega^{\prime},\kappa)$-compact objects may be identified as the essential image of $\textnormal{Fun}(\Delta^1,\textnormal{Shv}^J_{<\Omega}(\C)_{<\kappa})\hookrightarrow \textnormal{Fun}(\Delta^1,\textnormal{Shv}^J_{<\Omega^\prime}(\C))$. Writing $\textnormal{Shv}^J_{<\Omega^\prime}(\C)(\infty)$ for the full subcategory of $\textnormal{Fun}(\Delta^1,\textnormal{Shv}^J_{<\Omega^\prime}(\C))$ spanned by the $\infty$-connective morphisms, we now wish to show the following:

      \begin{itemize}
     
         \item [(a)] $\textnormal{Shv}^J_{<\Omega^\prime}(\C)(\infty)$ is $(\Omega^\prime,\kappa)$-accessible, and $\textnormal{Shv}^J_{<\Omega^\prime}(\C)(\infty)\hookrightarrow \textnormal{Fun}(\Delta^1,\textnormal{Shv}^J_{<\Omega^\prime}(\C))$ preserves $(\Omega^\prime,\kappa)$-compact objects. 
         
     \end{itemize}

     \noindent Since a map $f:X\rightarrow Y$ is $\infty$-connective in $\textnormal{Shv}^J_{<\Omega}(\C)$ if and only if the image of $f$ is $\infty$-connective in $\textnormal{Shv}^J_{<\Omega^\prime}(\C)$ (combine \ref{Small Sheaves are stable under small limits} with \citeq{highertopostheory} 6.5.1.18), (a) would immediately imply that for all $\infty$-connective morphisms $f^\prime:X^\prime\rightarrow Y^\prime$ in $\textnormal{Shv}^J_{<\Omega^\prime}(\C)$ there exists an $\Omega^{\prime}$-small $\kappa$-filtered $(\infty,1)$-category $K$ and a diagram $F:K\rightarrow \textnormal{Fun}(\Delta^1,\textnormal{Shv}^J_{<\Omega}(\C)_{<\kappa})\subseteq \textnormal{Fun}(\Delta^1,\textnormal{Shv}^J_{<\Omega^\prime}(\C)_{<\kappa})$ such that for each $k\in K$ we have that $F(k)\in \textnormal{Shv}^J_{<\Omega^\prime}(\C)$, and that there exists a colimit diagram $\overline{F}:K^\triangleright\rightarrow \textnormal{Fun}(\Delta^1,\textnormal{Shv}^J_{<\Omega^\prime}(\C))$ exhibiting $f^\prime$ as the colimit of the composition $K\xrightarrow{F}\textnormal{Fun}(\Delta^1,\textnormal{Shv}^J_{<\Omega}(\C)_{<\kappa})\hookrightarrow \textnormal{Fun}(\Delta^1,\textnormal{Shv}^J_{<\Omega^\prime}(\C))$. Thus, in order to prove our claim it suffices to show (a). We will deduce (a) from the following:

     \begin{itemize}
     
         \item [(b)] Let $n\in \ZZ_{\geq 0}$, and write $\textnormal{Shv}^J_{<\Omega^\prime}(\C)(n)$ for the full subcategory of $\textnormal{Fun}(\Delta^1,\textnormal{Shv}^J_{<\Omega^\prime}(\C))$ spanned by the $n$-connective morphisms. Then, $\textnormal{Shv}^J_{<\Omega^\prime}(\C)(n)$ is $(\Omega^\prime,\kappa)$-presentable, and the inclusion $\textnormal{Shv}^J_{<\Omega^\prime}(\C)(n)\hookrightarrow \textnormal{Fun}(\Delta^1,\textnormal{Shv}^J_{<\Omega^\prime}(\C))$ preserves $\Omega^\prime$-small colimits and $(\Omega^\prime,\kappa)$-compact objects. 
         
     \end{itemize}

     \noindent Temporarily assuming (b), write $U:\ZZ_{\geq 0}^{\textnormal{opp}}\rightarrow \textnormal{Pr}^{L,\Omega^\prime}_\kappa$ for the diagram 
     
     $$\cdots \hookrightarrow \textnormal{Shv}^J_{<\Omega^\prime}(\C)(2)\hookrightarrow\textnormal{Shv}^J_{<\Omega^\prime}(\C)(1)\hookrightarrow \textnormal{Shv}^J_{<\Omega^\prime}(\C)(0),$$
     
     \noindent where $\textnormal{Pr}^{L,\Omega^\prime}_\kappa$ denotes the $(\infty,1)$-category of $(\Omega^\prime,\kappa)$-presentable $(\infty,1)$-categories. Combining the fact that each transition map $\textnormal{Shv}^J_{<\Omega^\prime}(\C)(n+1)\hookrightarrow\textnormal{Shv}^J_{<\Omega^\prime}(\C)(n)$ is an isofibration with \citeq{lurie2024kerodon} 034E, \citeq{lurie2024kerodon} 03BP, and \citeq{lurie2024kerodon} 03BU, we deduce that the limit of $U$ (taken in the $(\infty,1)$-category of $\Omega^\prime$-small $(\infty,1)$-categories) is precisely $\textnormal{Shv}^J_{<\Omega^\prime}(\C)(\infty)$. We may now combine (b) and \ref{kappa small limits in PrL,kappa} to deduce that $\textnormal{Shv}^J_{<\Omega^\prime}(\C)(\infty)$ is $(\Omega^\prime,\kappa)$-accessible, and that the inclusion $\textnormal{Shv}^J_{<\Omega^\prime}(\C)(\infty)\hookrightarrow \textnormal{Fun}(\Delta^1,\textnormal{Shv}^J_{<\Omega^\prime}(\C))$ preserves $(\Omega^\prime,\kappa)$-compact objects, proving (a). We now show (b). Let $n\in \ZZ_{\geq 0}$, and write $\textnormal{Shv}^J_{<\Omega^\prime}(\C)(\tau,n)$ for the full subcategory of $\textnormal{Fun}(\Delta^1,\textnormal{Shv}^J_{<\Omega^\prime}(\C))$ spanned by the $n$-truncated morphisms. Write $L_n:\textnormal{Fun}(\Delta^1,\textnormal{Shv}^J_{<\Omega^\prime}(\C))\rightarrow \textnormal{Shv}^J_{<\Omega^\prime}(\C)(\tau,n)$ for the map left adjoint to the inclusion (\citeq{highertopostheory} 6.5.2.6), and observe that $\textnormal{Shv}^J_{<\Omega^\prime}(\C)(n)\hookrightarrow \textnormal{Fun}(\Delta^1,\textnormal{Shv}^J_{<\Omega^\prime}(\C))$ may be identified as the homotopy pullback of the constant functor $\textnormal{Shv}^J_{<\Omega^\prime}(\C)\rightarrow \textnormal{Shv}^J_{<\Omega^\prime}(\C)(\tau,n)$ along $L_n:\textnormal{Fun}(\Delta^1,\textnormal{Shv}^J_{<\Omega^\prime}(\C))\rightarrow \textnormal{Shv}^J_{<\Omega^\prime}(\C)(\tau,n)$ (see the proof of \citeq{highertopostheory} 6.5.2.7). Since the constant functor $\textnormal{Shv}^J_{<\Omega^\prime}(\C)\rightarrow \textnormal{Shv}^J_{<\Omega^\prime}(\C)(\tau,n)$ preserves $\Omega^\prime$-small colimits, a second application of \ref{kappa small limits in PrL,kappa} implies that in order to prove (b), it suffices to prove the following claim:

     \begin{itemize}
     
         \item [(c)] For each $n\in \ZZ_{\geq 0}$, the $(\infty,1)$-category $\textnormal{Shv}^J_{<\Omega^\prime}(\C)(\tau,n)$ is $(\Omega^\prime,\kappa)$-presentable. Moreover, both the diagonal map $\textnormal{Shv}^J_{<\Omega^\prime}(\C)\rightarrow \textnormal{Shv}^J_{<\Omega^\prime}(\C)(\tau,n)$ and the map $L_n:\textnormal{Fun}(\Delta^1,\textnormal{Shv}^J_{<\Omega^\prime}(\C))\rightarrow \textnormal{Shv}^J_{<\Omega^\prime}(\C)(\tau,n)$ preserve $(\Omega^\prime,\kappa)$-compact objects. 
         
     \end{itemize}

     \noindent Noting that $\textnormal{Shv}^J_{<\Omega^\prime}(\C)(\tau,n)$ and the map $L_n:\textnormal{Fun}(\Delta^1,\textnormal{Shv}^J_{<\Omega^\prime}(\C))\rightarrow \textnormal{Shv}^J_{<\Omega^\prime}(\C)(\tau,n)$ are well-defined for $n\in \ZZ_{\geq -2}$ (\citeq{highertopostheory} 6.5.2.6), we will prove a stronger version of (c) for $n\in \ZZ_{\geq -2}$. For each natural transformation $\alpha:\Delta^1\rightarrow \textnormal{Fun}(\textnormal{Fun}(\Delta^1,\textnormal{Shv}^J_{<\Omega^\prime}(\C)),\textnormal{Shv}^J_{<\Omega^\prime}(\C))$, write $F_\alpha:\textnormal{Fun}(\Delta^1,\textnormal{Shv}^J_{<\Omega^\prime}(\C))\rightarrow \textnormal{Fun}(\Delta^1,\textnormal{Shv}^J_{<\Omega^\prime}(\C))$ for the map adjoint to $\alpha$. For each such $\alpha$ write $F_\alpha^\delta:\textnormal{Fun}(\Delta^1,\textnormal{Shv}^J_{<\Omega^\prime}(\C))\rightarrow \textnormal{Fun}(\Delta^1,\textnormal{Shv}^J_{<\Omega^\prime}(\C))$ for $F_{\delta_\alpha}$, where $\delta_\alpha$ is the diagonal of $\alpha$. Observe now that whenever the map $F_\alpha:\textnormal{Fun}(\Delta^1,\textnormal{Shv}^J_{<\Omega^\prime}(\C))\rightarrow \textnormal{Fun}(\Delta^1,\textnormal{Shv}^J_{<\Omega^\prime}(\C))$ preserves $\Omega^\prime$-small limits and is $(\Omega^\prime,\kappa)$-accessible, we also have that the map $F_{\alpha}^\delta:\textnormal{Fun}(\Delta^1,\textnormal{Shv}^J_{<\Omega^\prime}(\C))\rightarrow \textnormal{Fun}(\Delta^1,\textnormal{Shv}^J_{<\Omega^\prime}(\C))$ preserves $\Omega^\prime$-small limits and is $(\Omega^\prime,\kappa)$-accessible (\ref{kappa accessible functors}). Noting that the diagonal map $\textnormal{Shv}^J_{<\Omega^\prime}(\C)\rightarrow \textnormal{Fun}(\Delta^1,\textnormal{Shv}^J_{<\Omega^\prime}(\C))$ preserves $\Omega^\prime$-small limits and is $(\Omega^\prime,\kappa)$-accessible, whenever $F_\alpha:\textnormal{Fun}(\Delta^1,\textnormal{Shv}^J_{<\Omega^\prime}(\C))\rightarrow \textnormal{Fun}(\Delta^1,\textnormal{Shv}^J_{<\Omega^\prime}(\C))$ preserves $\Omega^\prime$-small limits and is $(\Omega^\prime,\kappa)$-accessible, the homotopy pullback $\cE\rightarrow \textnormal{Fun}(\Delta^1,\textnormal{Shv}^J_{<\Omega^\prime}(\C))$ of $\textnormal{Shv}^J_{<\Omega^\prime}(\C)\rightarrow \textnormal{Fun}(\Delta^1,\textnormal{Shv}^J_{<\Omega^\prime}(\C))$ along $F^\delta_\alpha:\textnormal{Fun}(\Delta^1,\textnormal{Shv}^J_{<\Omega^\prime}(\C))\rightarrow \textnormal{Fun}(\Delta^1,\textnormal{Shv}^J_{<\Omega^\prime}(\C))$ also preserves $\Omega^\prime$-small limits and is $(\Omega^\prime,\kappa)$-accessible (\ref{kappa small limits in PrL,kappa}). Next, set $\alpha_{-2}$ to be the edge $\Delta^1\rightarrow \textnormal{Fun}(\textnormal{Fun}(\Delta^1,\textnormal{Shv}^J_{<\Omega^\prime}(\C)),\textnormal{Shv}^J_{<\Omega^\prime}(\C))$ where $F_{\alpha_{-2}}:\textnormal{Fun}(\textnormal{Fun}(\Delta^1,\textnormal{Shv}^J_{<\Omega^\prime}(\C))\rightarrow \textnormal{Fun}(\textnormal{Fun}(\Delta^1,\textnormal{Shv}^J_{<\Omega^\prime}(\C))$ is the identity map. For each $n\in \ZZ_{\geq -2}$, inductively define $\alpha_{n+1}$ to be the diagonal of $\alpha_{n}$. Since the homotopy pullback $\cE_n\rightarrow \textnormal{Fun}(\textnormal{Fun}(\Delta^1,\textnormal{Shv}^J_{<\Omega^\prime}(\C))$ of $F_{\alpha_{n}}:\textnormal{Fun}(\textnormal{Fun}(\Delta^1,\textnormal{Shv}^J_{<\Omega^\prime}(\C))\rightarrow \textnormal{Fun}(\textnormal{Fun}(\Delta^1,\textnormal{Shv}^J_{<\Omega^\prime}(\C))$ along the diagonal map $\textnormal{Shv}^J_{<\Omega^\prime}(\C)\rightarrow \textnormal{Fun}(\Delta^1,\textnormal{Shv}^J_{<\Omega^\prime}(\C))$ is fully faithful (\citeq{lurie2024kerodon} 046R), we may identify $\cE_n\rightarrow \textnormal{Fun}(\textnormal{Fun}(\Delta^1,\textnormal{Shv}^J_{<\Omega^\prime}(\C))$ as the inclusion of the full subcategory of $\textnormal{Fun}(\textnormal{Fun}(\Delta^1,\textnormal{Shv}^J_{<\Omega^\prime}(\C))$ spanned by those arrows $f:X\rightarrow Y$ in $\textnormal{Shv}^J_{<\Omega^\prime}(\C)$ such that the $(n+2)$ iteration of the diagonal of $f$ is an isomorphism, which is precisely the inclusion $\textnormal{Shv}^J_{<\Omega^\prime}(\C)(\tau,n)\hookrightarrow \textnormal{Fun}(\textnormal{Fun}(\Delta^1,\textnormal{Shv}^J_{<\Omega^\prime}(\C))$ (\citeq{highertopostheory} 5.5.6.15). In particular, the inclusion $\textnormal{Shv}^J_{<\Omega^\prime}(\C)(\tau,n)\hookrightarrow \textnormal{Fun}(\textnormal{Fun}(\Delta^1,\textnormal{Shv}^J_{<\Omega^\prime}(\C))$ preserves $\Omega^\prime$-small limits and $(\Omega^\prime,\kappa)$-compact objects, so \citeq{highertopostheory} 5.5.7.2 implies that $L_n:\textnormal{Fun}(\textnormal{Fun}(\Delta^1,\textnormal{Shv}^J_{<\Omega^\prime}(\C))\rightarrow \textnormal{Shv}^J_{<\Omega^\prime}(\C)(\tau,n)$ preserves $(\Omega^\prime,\kappa)$-compact objects, which in turn also implies that $\textnormal{Shv}^J_{<\Omega^\prime}(\C)(\tau,n)$ is $(\Omega^\prime,\kappa)$-presentable (\citeq{highertopostheory} 5.5.7.3). Finally, since the diagonal map $\textnormal{Shv}^J_{<\Omega^\prime}(\C)\rightarrow \textnormal{Shv}^J_{<\Omega^\prime}(\C)(\tau,n)$ factors as $\textnormal{Shv}^J_{<\Omega^\prime}(\C)\rightarrow  \textnormal{Fun}(\Delta^1,\textnormal{Shv}^J_{<\Omega^\prime}(\C))\xrightarrow{L_n}\textnormal{Shv}^J_{<\Omega^\prime}(\C)(\tau,n)$ where $\textnormal{Shv}^J_{<\Omega^\prime}(\C)\rightarrow \textnormal{Fun}(\Delta^1,\textnormal{Shv}^J_{<\Omega^\prime}(\C))$ is (also) the diagonal map, it follows that said diagonal map $\textnormal{Shv}^J_{<\Omega^\prime}(\C)\rightarrow \textnormal{Shv}^J_{<\Omega^\prime}(\C)(\tau,n)$ preserves $(\Omega^\prime,\kappa)$-compact objects, completing the proof of our claim. 
     
\end{proof}

\begin{proof} (\ref{HyperSheafification Ind of Choice of Universe}.) We begin by proving $(1)$. First, write $T$ for the isomorphism class of monomorphisms of the form $m:S\rightarrow h^\C_X$ in $\textnormal{Fun}(\C^{\textnormal{opp}},\mS_{<\Omega})$, where $X\in \C$ and $m:S\rightarrow h^\C_X$ corresponds to a $J$-sieve on $X$ under the bijection of \citeq{highertopostheory} 6.2.2.5. Noting that the composition $\textnormal{Fun}(\C^{\textnormal{opp}},\mS_{<\Omega})\hookrightarrow \textnormal{Fun}(\C^{\textnormal{opp}},\mS_{<\Omega^\prime})\xrightarrow{L^\prime}\textnormal{Shv}^J_{<\Omega^\prime}(\C)$ preserve small colimits and carries each arrow in $T$ to an isomorphism, it follows from \ref{Univ property of Presentable Localizations} that $\textnormal{Fun}(\C^{\textnormal{opp}},\mS_{<\Omega})\rightarrow \textnormal{Shv}^J_{<\Omega^\prime}(\C)$ carries each $L$-equivalence to an isomorphism. Thus, $i:\textnormal{Fun}(\C^{\textnormal{opp}},\mS_{<\Omega})\hookrightarrow \textnormal{Fun}(\C^{\textnormal{opp}},\mS_{<\Omega^\prime})$ carries $L$-equivalences to $L^\prime$-equivalences. Next, write $\textnormal{Fun}(\C^{\textnormal{opp}},\mS_{<\Omega})^0$ for the essential image of $\textnormal{Fun}(\C^{\textnormal{opp}},\mS_{<\Omega})$ in $\textnormal{Fun}(\C^{\textnormal{opp}},\mS_{<\Omega^\prime})$. Since $i:\textnormal{Fun}(\C^{\textnormal{opp}},\mS_{<\Omega})\hookrightarrow \textnormal{Fun}(\C^{\textnormal{opp}},\mS_{<\Omega^\prime})$ restricts to the map $i^\prime:\textnormal{Shv}^J_{<\Omega}(\C)\rightarrow\textnormal{Shv}^J_{<\Omega^\prime}(\C)$, it immediately follows that $L^\prime:\textnormal{Fun}(\C^{\textnormal{opp}},\mS_{<\Omega^\prime})\rightarrow \textnormal{Fun}(\C^{\textnormal{opp}},\mS_{<\Omega^\prime})$ restricts to a map $\textnormal{Fun}(\C^{\textnormal{opp}},\mS_{<\Omega^\prime})^0\rightarrow \textnormal{Fun}(\C^{\textnormal{opp}},\mS_{<\Omega^\prime})^0$, proving (1).

Claim (2) may be deduced from (1), though here we present an alternative argument. Let $n\in \ZZ_{\geq -2}$. To begin, observe that $i^\prime:\textnormal{Shv}^J_{<\Omega}(\C)\hookrightarrow \textnormal{Shv}^J_{<\Omega^\prime}(\C)$ carries $n$-truncated objects to $n$-truncated objects. So, just as in the proof of (1), it suffices to prove that $i^\prime$ carries $\tau_{\leq n}$-equivalences to $\tau^\prime_{\leq n}$-equivalences. However, it follows from \ref{Small Sheaves are stable under small limits} that $i^\prime$ is a map of $\infty$-pretopoi in the sense of \citeq{lurie2018sag} A.6.4.1, so our claim follows from \citeq{lurie2018sag} A.6.7.1.

We now show (3). Combining \ref{Small Sheaves are stable under small limits} with \citeq{highertopostheory} 6.5.1.18, we have that $i^\prime$ carries $L^{\textnormal{hyp}}$-equivalences to $L^{\prime,\textnormal{hyp}}$-equivalences. Thus, just as in the proof of (1) (and the proof of (2)), it suffices to show that $i^\prime$ carries $\textnormal{Shv}^J_{<\Omega}(\C)^{\textnormal{hyp}}\subseteq \textnormal{Shv}^J_{<\Omega}(\C)$ to $\textnormal{Shv}^J_{<\Omega^\prime}(\C)^{\textnormal{hyp}}\subseteq \textnormal{Shv}^J_{<\Omega^\prime}(\C)$, which we have shown in \ref{small hypersheaves are large hypersheaves}.

\end{proof}

\newpage

\section*{References}
\printbibliography[heading=none]

\addcontentsline{toc}{section}{References}

\vspace{20pt}

\noindent Department of Mathematics, University of Minnesota, Minneapolis, MN, 55455, \emph{email:} bass0161@umn.edu

\end{document}